\newtheorem{thm}[equation]{Theorem}
\newtheorem{cor}[equation]{Corollary}
\newtheorem{prop}[equation]{Proposition}
\newtheorem*{conj*}{Conjecture}
\theoremstyle{definition}
\newtheorem{definition}[equation]{Definition}
\newtheorem{rem}[equation]{Remark}
\newtheorem{examp}[equation]{Example}
\numberwithin{equation}{section}
 \newcommand\stirling[2]{\left\{\genfrac{}{}{0pt}{}{#1}{#2}\right\}}
  \newcommand\stirlinginline[2]{\{\genfrac{}{}{0pt}{}{#1}{#2}\}}
\renewcommand{\S}{\mathsf{S}}  
\newcommand{\GG}{\mathsf{G}}  
\newcommand{\G}{\mathsf{G}}  
\newcommand{\HH}{\mathsf{H}}  
\renewcommand{\P}{\mathsf{P}}
\newcommand{\Z}{\mathsf{Z}} 
\newcommand{\FF}{\mathbb{F}}  
\newcommand{\CC}{\mathbb{C}} 
\newcommand{\ZZ}{\mathbb{Z}}  
\newcommand{\M}{\mathsf{M}} 
\newcommand{\VV}{\mathsf{V}}
\newcommand{\UU}{\mathsf{U}}
\def\ot{\otimes}
\def\End{ \mathsf{End}}
\def\Res{\mathsf{Res}}
\def\Ind{\mathsf{Ind}}
\newcommand{\ef}{\mathsf{e}}
\newcommand{\im}{\mathsf{im}\,}
\renewcommand{\ker}{\mathsf{ker}\,}
\newcommand{\modu}{\mathsf{M}_n}
\newcommand{\vf}{\mathsf{v}}
\renewcommand{\v}{\mathsf{v}}
\newcommand{\m}{\mathsf{m}}
\newcommand{\half}{\frac{1}{2}}
\newcommand{\xx}{\mathsf{X}}
\newcommand{\yy}{\mathsf{Y}}
\newcommand{\lam}{\lambda}
\newcommand{\Ks}{\mathsf{K}}  
\newcommand{\vr}{\varrho} 
\newcommand{\rs}{\mathsf{r}}
\newcommand{\sff}{\mathsf{s}}
\newcommand{\EE}{\mathsf{E}}  
\newcommand{\tf}{\mathsf{t}}  
\newcommand{\bs}{\mathsf{b}}
\newcommand{\B}{\mathsf{B}}
\newcommand{\dimm}{\mathsf{dim}}
\newcommand{\spann}{\mathsf{span}}
\font \nmm = cmbx9
\newcommand{\blvertedge}{ 
\begin{array}{c}
\begin{tikzpicture}[xscale=.25,yscale=.25,line width=1.0pt] 
\foreach \i in {1}  { \path (\i,1.25) coordinate (T\i); \path (\i,.25) coordinate (B\i); } 
\draw[blue] (B1) -- (T1);
\foreach \i in {1}  { \filldraw[fill=black,draw=black,line width = 1pt] (T\i) circle (5pt); \filldraw[fill=black,draw=black,line width = 1pt]  (B\i) circle (5pt); } 
\end{tikzpicture}
\end{array}}
\newcommand{\emptydiagram}{ 
\begin{array}{c}
\begin{tikzpicture}[xscale=.25,yscale=.25,line width=1.0pt] 
\foreach \i in {1}  { \path (\i,1.25) coordinate (T\i); \path (\i,.25) coordinate (B\i); } 
\foreach \i in {1}  { \filldraw[fill=black,draw=black,line width = 1pt] (T\i) circle (5pt); \filldraw[fill=black,draw=black,line width = 1pt]  (B\i) circle (5pt); } 
\end{tikzpicture}
\end{array}}
\def\abcd{{
\begin{tikzpicture}[xscale=.5,yscale=.5,line width=1.25pt] 
\foreach \i in {1,2} 
{ \path (\i,1.25) coordinate (T\i); \path (\i,.25) coordinate (B\i); } 
\filldraw[fill= black!12,draw=black!12,line width=4pt]  (T1) -- (T2) -- (B2) -- (B1) -- (T1);
\draw[blue] (T2) -- (B1);\draw[blue] (T1) -- (B2);\draw[blue] (T1) -- (B1);\draw[blue] (T2) -- (B2);
\draw[blue] (T1) -- (T2);\draw[blue] (B1) -- (B2);
\foreach \i in {1,2} 
{ \fill (T\i) circle (4pt); \fill (B\i) circle (4pt); } 
\end{tikzpicture}}}
\def\orbitabcd{{
\begin{tikzpicture}[xscale=.5,yscale=.5,line width=1.25pt] 
\foreach \i in {1,2} 
{ \path (\i,1.25) coordinate (T\i); \path (\i,.25) coordinate (B\i); } 
\filldraw[fill= black!12,draw=black!12,line width=4pt]  (T1) -- (T2) -- (B2) -- (B1) -- (T1);
\draw[blue] (T2) -- (B1);\draw[blue] (T1) -- (B2);\draw[blue] (T1) -- (B1);\draw[blue] (T2) -- (B2);
\draw[blue] (T1) -- (T2);\draw[blue] (B1) -- (B2);
\foreach \i in {1,2} 
{ \filldraw[fill=white,draw=black,line width = 1pt] (T\i) circle (4pt); \filldraw[fill=white,draw=black,line width = 1pt]  (B\i) circle (4pt); } 
\end{tikzpicture}}}
\def\orbitabcld{{
\begin{tikzpicture}[xscale=.5,yscale=.5,line width=1.25pt] 
\foreach \i in {1,2} 
{ \path (\i,1.25) coordinate (T\i); \path (\i,.25) coordinate (B\i); } 
\filldraw[fill= black!12,draw=black!12,line width=4pt]  (T1) -- (T2) -- (B2) -- (B1) -- (T1);
\draw[blue] (T1) -- (B2);\draw[blue] (T1) -- (B1);
\draw[blue] (B1) -- (B2);
\foreach \i in {1,2} 
{ \filldraw[fill=white,draw=black,line width = 1pt] (T\i) circle (4pt); \filldraw[fill=white,draw=black,line width = 1pt]  (B\i) circle (4pt); } 
\end{tikzpicture}}}
\def\abcld{{
\begin{tikzpicture}[xscale=.5,yscale=.5,line width=1.25pt] 
\foreach \i in {1,2} 
{ \path (\i,1.25) coordinate (T\i); \path (\i,.25) coordinate (B\i); } 
\filldraw[fill= black!12,draw=black!12,line width=4pt]  (T1) -- (T2) -- (B2) -- (B1) -- (T1);
\draw[blue] (T1) -- (B2);\draw[blue] (T1) -- (B1);
\draw[blue] (B1) -- (B2);
\foreach \i in {1,2} 
{ \fill (T\i) circle (4pt); \fill (B\i) circle (4pt); } 
\end{tikzpicture}}}
\def\abdlc{{
\begin{tikzpicture}[xscale=.5,yscale=.5,line width=1.25pt] 
\foreach \i in {1,2} 
{ \path (\i,1.25) coordinate (T\i); \path (\i,.25) coordinate (B\i); } 
\filldraw[fill= black!12,draw=black!12,line width=4pt]  (T1) -- (T2) -- (B2) -- (B1) -- (T1);
\draw[blue] (B1) -- (B2);\draw[blue] (T2) -- (B2);
\draw[blue] (B1) -- (T2);
\foreach \i in {1,2} 
{ \fill (T\i) circle (4pt); \fill (B\i) circle (4pt); } 
\end{tikzpicture}}}
\def\ablcld{{
\begin{tikzpicture}[xscale=.5,yscale=.5,line width=1.25pt] 
\foreach \i in {1,2} 
{ \path (\i,1.25) coordinate (T\i); \path (\i,.25) coordinate (B\i); } 
\filldraw[fill= black!12,draw=black!12,line width=4pt]  (T1) -- (T2) -- (B2) -- (B1) -- (T1);
\draw[blue] (B1) -- (B2);
\foreach \i in {1,2} 
{ \fill (T\i) circle (4pt); \fill (B\i) circle (4pt); } 
\end{tikzpicture}}}
\def\aclbld{{
\begin{tikzpicture}[xscale=.5,yscale=.5,line width=1.25pt] 
\foreach \i in {1,2} 
{ \path (\i,1.25) coordinate (T\i); \path (\i,.25) coordinate (B\i); } 
\filldraw[fill= black!12,draw=black!12,line width=4pt]  (T1) -- (T2) -- (B2) -- (B1) -- (T1);
\draw[blue] (B1) -- (T1);
\foreach \i in {1,2} 
{ \fill (T\i) circle (4pt); \fill (B\i) circle (4pt); } 
\end{tikzpicture}}}
\def\orbitaclbld{{
\begin{tikzpicture}[xscale=.5,yscale=.5,line width=1.25pt] 
\foreach \i in {1,2} 
{ \path (\i,1.25) coordinate (T\i); \path (\i,.25) coordinate (B\i); } 
\filldraw[fill= black!12,draw=black!12,line width=4pt]  (T1) -- (T2) -- (B2) -- (B1) -- (T1);
\draw[blue] (B1) -- (T1);
\foreach \i in {1,2} 
{ \filldraw[fill=white,draw=black,line width = 1pt] (T\i) circle (4pt); \filldraw[fill=white,draw=black,line width = 1pt]  (B\i) circle (4pt); } 
\end{tikzpicture}}}
\def\albcld{{
\begin{tikzpicture}[xscale=.5,yscale=.5,line width=1.25pt] 
\foreach \i in {1,2} 
{ \path (\i,1.25) coordinate (T\i); \path (\i,.25) coordinate (B\i); } 
\filldraw[fill= black!12,draw=black!12,line width=4pt]  (T1) -- (T2) -- (B2) -- (B1) -- (T1);
\draw[blue] (B2) -- (T1);
\foreach \i in {1,2} 
{ \fill (T\i) circle (4pt); \fill (B\i) circle (4pt); } 
\end{tikzpicture}}}
\def\alblcld{{
\begin{tikzpicture}[xscale=.5,yscale=.5,line width=1.25pt] 
\foreach \i in {1,2} 
{ \path (\i,1.25) coordinate (T\i); \path (\i,.25) coordinate (B\i); } 
\filldraw[fill= black!12,draw=black!12,line width=4pt]  (T1) -- (T2) -- (B2) -- (B1) -- (T1);
\foreach \i in {1,2} 
{ \fill (T\i) circle (4pt); \fill (B\i) circle (4pt); } 
\end{tikzpicture}}}
\def\orbita|bc|d{{
\begin{tikzpicture}[xscale=.5,yscale=.5,line width=1.25pt] 
\foreach \i in {1,2} 
{ \path (\i,1.25) coordinate (T\i); \path (\i,.25) coordinate (B\i); } 
\filldraw[fill= black!12,draw=black!12,line width=4pt]  (T1) -- (T2) -- (B2) -- (B1) -- (T1);
\draw[blue] (T1) -- (B2); 
\foreach \i in {1,2} 
{ \filldraw[fill=white,draw=black,line width = 1pt] (T\i) circle (4pt); \filldraw[fill=white,draw=black,line width = 1pt]  (B\i) circle (4pt); } 
\end{tikzpicture}}}
\def\adlblc{{
\begin{tikzpicture}[xscale=.5,yscale=.5,line width=1.25pt] 
\foreach \i in {1,2} 
{ \path (\i,1.25) coordinate (T\i); \path (\i,.25) coordinate (B\i); } 
\filldraw[fill= black!12,draw=black!12,line width=4pt]  (T1) -- (T2) -- (B2) -- (B1) -- (T1);
\draw[blue] (B1) -- (T2);
\foreach \i in {1,2} 
{ \fill (T\i) circle (4pt); \fill (B\i) circle (4pt); } 
\end{tikzpicture}}}
\def\orbitadlblc{{
\begin{tikzpicture}[xscale=.5,yscale=.5,line width=1.25pt] 
\foreach \i in {1,2} 
{ \path (\i,1.25) coordinate (T\i); \path (\i,.25) coordinate (B\i); } 
\filldraw[fill= black!12,draw=black!12,line width=4pt]  (T1) -- (T2) -- (B2) -- (B1) -- (T1);
\draw[blue] (B1) -- (T2);
\foreach \i in {1,2} 
{ \filldraw[fill=white,draw=black,line width = 1pt] (T\i) circle (4pt); \filldraw[fill=white,draw=black,line width = 1pt]  (B\i) circle (4pt); } 
\end{tikzpicture}}}
\def\albdlc{{
\begin{tikzpicture}[xscale=.5,yscale=.5,line width=1.25pt] 
\foreach \i in {1,2} 
{ \path (\i,1.25) coordinate (T\i); \path (\i,.25) coordinate (B\i); } 
\filldraw[fill= black!12,draw=black!12,line width=4pt]  (T1) -- (T2) -- (B2) -- (B1) -- (T1);
\draw[blue] (B2) -- (T2);
\foreach \i in {1,2} 
{ \fill (T\i) circle (4pt); \fill (B\i) circle (4pt); } 
\end{tikzpicture}}}
\def\orbitalblcld{{
\begin{tikzpicture}[xscale=.5,yscale=.5,line width=1.25pt] 
\foreach \i in {1,2} 
{ \path (\i,1.25) coordinate (T\i); \path (\i,.25) coordinate (B\i); } 
\filldraw[fill= black!12,draw=black!12,line width=4pt]  (T1) -- (T2) -- (B2) -- (B1) -- (T1);
\foreach \i in {1,2} 
{ 
\filldraw[fill=white,draw=black,line width = 1pt] (T\i) circle (4pt); \filldraw[fill=white,draw=black,line width = 1pt]  (B\i) circle (4pt); 
} 
\end{tikzpicture}}}
\def\ablcd{{
\begin{tikzpicture}[xscale=.5,yscale=.5,line width=1.25pt] 
\foreach \i in {1,2} 
{ \path (\i,1.25) coordinate (T\i); \path (\i,.25) coordinate (B\i); } 
\filldraw[fill= black!12,draw=black!12,line width=4pt]  (T1) -- (T2) -- (B2) -- (B1) -- (T1);
\draw[blue] (B2) -- (B1);\draw[blue] (T2) -- (T1);
\foreach \i in {1,2} 
{ \fill (T\i) circle (4pt); \fill (B\i) circle (4pt); } 
\end{tikzpicture}}}
\def\aclbd{{
\begin{tikzpicture}[xscale=.5,yscale=.5,line width=1.25pt] 
\foreach \i in {1,2} 
{ \path (\i,1.25) coordinate (T\i); \path (\i,.25) coordinate (B\i); } 
\filldraw[fill= black!12,draw=black!12,line width=4pt]  (T1) -- (T2) -- (B2) -- (B1) -- (T1);
\draw[blue] (T1) -- (B1); 
\draw[blue] (T2) -- (B2); 
\foreach \i in {1,2} 
{ \fill (T\i) circle (4pt); \fill (B\i) circle (4pt); } 
\end{tikzpicture}}}
\def\acdlb{{
\begin{tikzpicture}[xscale=.5,yscale=.5,line width=1.25pt] 
\foreach \i in {1,2} 
{ \path (\i,1.25) coordinate (T\i); \path (\i,.25) coordinate (B\i); } 
\filldraw[fill= black!12,draw=black!12,line width=4pt]  (T1) -- (T2) -- (B2) -- (B1) -- (T1);
\draw[blue] (T2) -- (B1) -- (T1) -- (T2);
\foreach \i in {1,2} 
{ \fill (T\i) circle (4pt); \fill (B\i) circle (4pt); } 
\end{tikzpicture}}}
\def\orbitacdlb{{
\begin{tikzpicture}[xscale=.5,yscale=.5,line width=1.25pt] 
\foreach \i in {1,2} 
{ \path (\i,1.25) coordinate (T\i); \path (\i,.25) coordinate (B\i); } 
\filldraw[fill= black!12,draw=black!12,line width=4pt]  (T1) -- (T2) -- (B2) -- (B1) -- (T1);
\draw[blue] (T2) -- (B1) -- (T1) -- (T2);
\foreach \i in {1,2} 
{ \filldraw[fill=white,draw=black,line width = 1pt] (T\i) circle (4pt); \filldraw[fill=white,draw=black,line width = 1pt]  (B\i) circle (4pt);  } 
\end{tikzpicture}}}
\def\adlbc{{
\begin{tikzpicture}[xscale=.5,yscale=.5,line width=1.25pt] 
\foreach \i in {1,2} 
{ \path (\i,1.25) coordinate (T\i); \path (\i,.25) coordinate (B\i); } 
\filldraw[fill= black!12,draw=black!12,line width=4pt]  (T1) -- (T2) -- (B2) -- (B1) -- (T1);
\draw[blue] (T2) -- (B1);
\draw[blue] (T1) -- (B2);
\foreach \i in {1,2} 
{ \fill (T\i) circle (4pt); \fill (B\i) circle (4pt); } 
\end{tikzpicture}}}
\def\albcd{{
\begin{tikzpicture}[xscale=.5,yscale=.5,line width=1.25pt] 
\foreach \i in {1,2} 
{ \path (\i,1.25) coordinate (T\i); \path (\i,.25) coordinate (B\i); } 
\filldraw[fill= black!12,draw=black!12,line width=4pt]  (T1) -- (T2) -- (B2) -- (B1) -- (T1);
\draw[blue] (T2) -- (B2)--(T1)--(T2);
\foreach \i in {1,2} 
{ \fill (T\i) circle (4pt); \fill (B\i) circle (4pt); } 
\end{tikzpicture}}}
\def\alblcd{{
\begin{tikzpicture}[xscale=.5,yscale=.5,line width=1.25pt] 
\foreach \i in {1,2} 
{ \path (\i,1.25) coordinate (T\i); \path (\i,.25) coordinate (B\i); } 
\filldraw[fill= black!12,draw=black!12,line width=4pt]  (T1) -- (T2) -- (B2) -- (B1) -- (T1);
\draw[blue] (T2) -- (T1);
\foreach \i in {1,2} 
{ \fill (T\i) circle (4pt); \fill (B\i) circle (4pt); } 
\end{tikzpicture}}}
\def\orbitalblcd{{
\begin{tikzpicture}[xscale=.5,yscale=.5,line width=1.25pt] 
\foreach \i in {1,2} 
{ \path (\i,1.25) coordinate (T\i); \path (\i,.25) coordinate (B\i); } 
\filldraw[fill= black!12,draw=black!12,line width=4pt]  (T1) -- (T2) -- (B2) -- (B1) -- (T1);
\draw[blue] (T2) -- (T1);
\foreach \i in {1,2} 
{ \filldraw[fill=white,draw=black,line width = 1pt] (T\i) circle (4pt); \filldraw[fill=white,draw=black,line width = 1pt]  (B\i) circle (4pt);  } 
\end{tikzpicture}}}
\def\orbitablcd{{
\begin{tikzpicture}[xscale=.50,yscale=.50,line width=1.25pt] 
\foreach \i in {1,2} 
{ \path (\i,1.25) coordinate (T\i); \path (\i,.25) coordinate (B\i); } 
\filldraw[fill= black!12,draw=black!12,line width=4pt]  (T1) -- (T2) -- (B2) -- (B1) -- (T1);
\draw[blue] (B2) -- (B1);\draw[blue] (T2) -- (T1);
\foreach \i in {1,2} 
{ \filldraw[fill=white,draw=black,line width = 1pt] (T\i) circle (4pt); \filldraw[fill=white,draw=black,line width = 1pt]  (B\i) circle (4pt); } 
\end{tikzpicture}}}
\def\orbitadlbc{{
\begin{tikzpicture}[xscale=.50,yscale=.50,line width=1.25pt] 
\foreach \i in {1,2} 
{ \path (\i,1.25) coordinate (T\i); \path (\i,.25) coordinate (B\i); } 
\filldraw[fill= black!12,draw=black!12,line width=4pt]  (T1) -- (T2) -- (B2) -- (B1) -- (T1);
\draw[blue] (T2) -- (B1);
\draw[blue] (T1) -- (B2);
\foreach \i in {1,2} 
{ \filldraw[fill=white,draw=black,line width = 1pt] (T\i) circle (4pt); \filldraw[fill=white,draw=black,line width = 1pt]  (B\i) circle (4pt);  } 
\end{tikzpicture}}}
\def\orbitaclbd{{
\begin{tikzpicture}[xscale=.50,yscale=.50,line width=1.25pt] 
\foreach \i in {1,2} 
{ \path (\i,1.25) coordinate (T\i); \path (\i,.25) coordinate (B\i); } 
\filldraw[fill= black!12,draw=black!12,line width=4pt]  (T1) -- (T2) -- (B2) -- (B1) -- (T1);
\draw[blue] (T1) -- (B1); 
\draw[blue] (T2) -- (B2); 
\foreach \i in {1,2} 
{ \filldraw[fill=white,draw=black,line width = 1pt] (T\i) circle (4pt); \filldraw[fill=white,draw=black,line width = 1pt]  (B\i) circle (4pt); } 
\end{tikzpicture}}}
\def\orbitalbcd{{
\begin{tikzpicture}[xscale=.50,yscale=.50,line width=1.25pt] 
\foreach \i in {1,2} 
{ \path (\i,1.25) coordinate (T\i); \path (\i,.25) coordinate (B\i); } 
\filldraw[fill= black!12,draw=black!12,line width=4pt]  (T1) -- (T2) -- (B2) -- (B1) -- (T1);
\draw[blue] (T2) -- (B2)--(T1)--(T2);
\foreach \i in {1,2} 
{ \filldraw[fill=white,draw=black,line width = 1pt] (T\i) circle (4pt); \filldraw[fill=white,draw=black,line width = 1pt]  (B\i) circle (4pt);  } 
\end{tikzpicture}}}
\def\orbitabdlc{{
\begin{tikzpicture}[xscale=.50,yscale=.50,line width=1.25pt] 
\foreach \i in {1,2} 
{ \path (\i,1.25) coordinate (T\i); \path (\i,.25) coordinate (B\i); } 
\filldraw[fill= black!12,draw=black!12,line width=4pt]  (T1) -- (T2) -- (B2) -- (B1) -- (T1);
\draw[blue] (B1) -- (B2);\draw[blue] (T2) -- (B2);
\draw[blue] (B1) -- (T2);
\foreach \i in {1,2} 
{ \filldraw[fill=white,draw=black,line width = 1pt] (T\i) circle (4pt); \filldraw[fill=white,draw=black,line width = 1pt]  (B\i) circle (4pt); } 
\end{tikzpicture}}}
\def\orbitablcld{{
\begin{tikzpicture}[xscale=.50,yscale=.50,line width=1.25pt] 
\foreach \i in {1,2} 
{ \path (\i,1.25) coordinate (T\i); \path (\i,.25) coordinate (B\i); } 
\filldraw[fill= black!12,draw=black!12,line width=4pt]  (T1) -- (T2) -- (B2) -- (B1) -- (T1);
\draw[blue] (B1) -- (B2);
\foreach \i in {1,2} 
{ \filldraw[fill=white,draw=black,line width = 1pt] (T\i) circle (4pt); \filldraw[fill=white,draw=black,line width = 1pt]  (B\i) circle (4pt); } 
\end{tikzpicture}}}
\def\orbitalbcld{{
\begin{tikzpicture}[xscale=.50,yscale=.50,line width=1.25pt] 
\foreach \i in {1,2} 
{ \path (\i,1.25) coordinate (T\i); \path (\i,.25) coordinate (B\i); } 
\filldraw[fill= black!12,draw=black!12,line width=4pt]  (T1) -- (T2) -- (B2) -- (B1) -- (T1);
\draw[blue] (B2) -- (T1);
\foreach \i in {1,2} 
{ \filldraw[fill=white,draw=black,line width = 1pt] (T\i) circle (4pt); \filldraw[fill=white,draw=black,line width = 1pt]  (B\i) circle (4pt); } 
\end{tikzpicture}}}
\def\orbitalbdlc{{
\begin{tikzpicture}[xscale=.50,yscale=.50,line width=1.25pt] 
\foreach \i in {1,2} 
{ \path (\i,1.25) coordinate (T\i); \path (\i,.25) coordinate (B\i); } 
\filldraw[fill= black!12,draw=black!12,line width=4pt]  (T1) -- (T2) -- (B2) -- (B1) -- (T1);
\draw[blue] (B2) -- (T2);
\foreach \i in {1,2} 
{ \filldraw[fill=white,draw=black,line width = 1pt] (T\i) circle (4pt); \filldraw[fill=white,draw=black,line width = 1pt]  (B\i) circle (4pt); } 
\end{tikzpicture}}}
\begin{document}

\title{Partition Algebras and the Invariant Theory \\ of the Symmetric Group}

\author{Georgia Benkart \\
 {\small University of Wisconsin-Madison}
 \\{\small Madison, WI 53706, USA}\\
\texttt{\small benkart@math.wisc.edu}
\and Tom Halverson\footnote{
The second author gratefully acknowledges partial support from Simons Foundation grant 283311.}\\
{\small Macalester College} \\{\small Saint Paul, MN 55105, USA}\\
\texttt{\small halverson@macalester.edu}
} 
\date{September 20, 2017}

\maketitle

\begin{abstract}
The symmetric group $\S_n$ and the partition algebra $\P_k(n)$ centralize one another in their actions on the 
 $k$-fold tensor power $\mathsf{M}_n^{\otimes k}$ of the $n$-dimensional permutation module $\M_n$ of $\S_n$.  The duality afforded by
 the commuting actions determines an algebra homomorphism \ $\Phi_{k,n}: \P_k(n) \to \End_{\S_n}(\modu^{\otimes k})$ \ from the partition algebra to the centralizer algebra $\End_{\S_n}(\modu^{\otimes k})$,  which is a surjection for all \,$k, n \in \ZZ_{\ge 1}$, and an isomorphism when $n \ge 2k$. \, We 
present results that can be derived from the duality between $\S_n$ and $\P_k(n)$; for example, (i) expressions for the multiplicities of the irreducible $\S_n$-summands of $\mathsf{M}_n^{\otimes k}$, 
 (ii) formulas for the dimensions of the irreducible modules for  
the centralizer algebra $\End_{\S_n}(\modu^{\otimes k})$, (iii) a bijection between vacillating tableaux and set-partition tableaux,
(iv) identities relating Stirling numbers of the second kind and the number of fixed points of
permutations, and (v) character values for the partition algebra $\P_k(n)$.   When $2k >n$,  the map $\Phi_{k,n}$ has a nontrivial kernel  which is generated as a two-sided ideal by a single idempotent.  
We describe the kernel and image of $\Phi_{k,n}$ in terms
of the orbit basis of $\P_k(n)$ and explain how the surjection $\Phi_{k,n}$ can also be used to
obtain the fundamental theorems of invariant theory for the symmetric group.
\end{abstract}

\noindent
{\small {\bf Keywords}: {Symmetric group $\cdot$ Partition algebra $\cdot$ Schur-Weyl duality  $\cdot$ Invariant theory} }

\noindent
{\small {\bf Mathematics Subject Classification (2000)}: MSC 05E10 $\cdot$ MSC 20C30}

\section{Introduction}
\label{sec:intro}

\emph{Throughout we assume $\FF$ is a field of characteristic zero.} \   The symmetric group $\S_n$ has a natural action on its $n$-dimensional permutation
module $\M_n$ over $\FF$  by permuting the basis elements.  The focus of this article is on tensor powers
$\M_n^{\ot k}$ of $\M_n$, which are $\S_n$-modules under the diagonal action.  In \cite{J}, V. Jones constructed a surjective algebra homomorphism
\begin{equation}
\Phi_{k,n}:  \P_k(n) \rightarrow \End_{\S_n}(\M_n^{\ot k})
\end{equation}
from the partition algebra $\P_k(n)$ onto the centralizer algebra  $\End_{\S_n}(\M_n^{\ot k})$ of transformations that
commute with the action of $\S_n$ on $\M_n^{\ot k}$.   When $n \ge 2k$, this surjection is an isomorphism. 
The Schur-Weyl duality afforded by $\Phi_{k,n}$  between the partition algebra $\P_k(n)$ and the symmetric group $\S_n$ in their commuting actions on
$\M_n^{\ot k}$  enables information to flow back and forth between $\S_n$ and
$\P_k(n)$.      Indeed,  the symmetric group $\S_n$ has been used to
\begin{itemize}
\item[$\bullet$]  develop the combinatorial representation theory of the partition algebras  $\P_k(n)$ as $k$ varies  
\cite{M3,MR,H,FaH,HL,HR,BH,BHH}, 
\item[$\bullet$] study the Potts lattice model of interacting spins in statistical mechanics \cite{M1,M2,M3};
\end{itemize}
while the partition algebra $\P_k(n)$ has been used to
\begin{itemize}
\item[$\bullet$]   study eigenvalues of random permutation matrices  \cite{FaH},
\item[$\bullet$] prove results about Kronecker coefficients for $\S_n$-modules  \cite{BDO1,BDO2,BEG}, 
\item[$\bullet$] investigate  the centralizer algebras of the binary tetrahedral, octahedral, and icosahedral subgroups of the special unitary
group $\mathsf{SU}_2$ acting on  tensor powers of $\VV = \CC^2$ 
via the McKay correspondence  \cite{BBH},    
\item[$\bullet$] construct a non-homogeneous basis for the ring of symmetric functions and show that the
irreducible characters of $\S_n$ can be obtained by evaluating this basis on the eigenvalues of the permutation
matrices \cite{OZ}, which has enabled Orellana and Zabrocki to establish results on reduced Kronecker coefficients. 
\end{itemize}

The irreducible modules $\S_n^\lambda$  for the symmetric group $\S_n$ are indexed by partitions $\lambda$ of $n$, indicated here by $\lambda \vdash n$.     The $n$-dimensional permutation module $\M_n$  for $\S_n$ has basis elements
$\vf_1, \vf_2, \dots, \v_n$, which are permuted by the elements  $\sigma$ of $\S_n$, 
$\sigma.\vf_i = \vf_{\sigma(i)}$.  The module $\M_n$  has a decomposition  into irreducible $\S_n$-modules, 
$\M_n = \S_n^{[n]} \oplus \S_n^{[n-1,1]}$, where the vector $\v_1 + \v_2 + \cdots + \v_n$ spans
a copy of the trivial $\S_n$-module $\S_n^{[n]}$, and  the vectors $\v_i-\v_{i+1}$
for $1 \leq i \le n-1$ form a basis for a copy of the $(n-1)$-dimensional module $\S_n^{[n-1,1]}$ corresponding to the two-part partition $[n-1,1]$. 
This module is often referred to as the ``reflection representation,''  since the transposition switching $i$ and $j$ is a reflection about the hyperplane orthogonal to  $\vf_i - \vf_{j}$.   The $\S_n$-action on $\M_n$  extends to give the tensor power $\M_n^{\ot k}$ the structure of an $\S_n$-module:  
$$\sigma.\left(\vf_{i_1} \ot \vf_{i_2} \ot \cdots \ot v_{i_k}\right) = \sigma.\vf_{i_1} \ot \sigma.\vf_{i_2} \ot \cdots \ot \sigma.\vf_{i_k} = \vf_{\sigma(i_1)} \ot \vf_{\sigma(i_2)} \ot \cdots \ot \vf_{\sigma(i_k)}.$$
 
In Sections \ref{sec:res-ind}  and \ref{sec:setpart},  we describe two different ways to determine
the multiplicity  $\mathsf{m}_{k,n}^\lambda$ of $\S_n^\lambda$ in the decomposition
$$\M_n^{\ot k} = \bigoplus_{\lambda \vdash n}  \mathsf{m}_{k,n}^\lambda \, \S_n^\lambda$$ 
of $\M_n^{\ot k}$ into irreducible $\S_n$-summands.  
The first approach, using restriction and induction on the pair
$(\S_n,\S_{n-1})$,  leads naturally to a bijection between the set of vacillating $k$-tableaux
of shape $\lambda$  and the set of paths in the Bratteli diagram $\mathcal B(\S_n, \S_{n-1})$ from the partition $[n]$ at the top of $\mathcal B(\S_n, \S_{n-1})$  to $\lambda$ at level $k$.  Both sets have cardinality $\mathsf{m}_{k,n}^\lambda$.  
 The second way, adopted from \cite{BHH},  uses permutation modules for $\S_n$ and results in an exact expression for the multiplicity
of an irreducible $\S_n$-summand of $\M_n^{\ot k}$.
 In particular,  we describe how 
  \cite[Thm.~5.5]{BHH}  implies that
$$\mathsf{m}_{k,n}^\lambda = \sum_{t=|\lambda^\#|}^n  \stirling{k}{t} f^{\lambda/[n-t]},$$
where the Stirling number of the second kind  $\stirlinginline{k}{t}$
 counts the number of ways to partition a set of  $k$ objects into $t$ nonempty 
disjoint blocks (subsets), $f^{\lambda/[n-t]}$ is the number of standard tableaux of skew
shape $\lambda/[n-t]$, and $\lambda^\#$ is the partition obtained from $\lambda$ by removing one copy of its
largest part.    Therefore,
\begin{equation*}
\m_{k,n}^\lambda 
= \bigg |\left\{ (\pi,\mathsf{S})\ \bigg\vert
\begin{array}{l}
\pi  \text{ is a set partition of $\{1,2,\ldots, k\}$ into $t$ parts, where $|\lambda^\#|\le t \le n$} \\
\mathsf{S}  \text{ is a standard  tableau of shape $\lambda/[n-t]$}
\end{array}
\right\}\bigg | \, .
\end{equation*}

In Section \ref{subsec:setparttabs},  we consider set-partition tableaux (tableaux whose boxes are filled with sets of numbers
from $\{0,1,\dots, k\})$  and demonstrate a bijection between set-partition tableaux and vacillating tableaux.  Different bijections
between these same sets are given in \cite{HL} and \cite{CDDSY}. However,  the bijections in those papers work only when $n \ge 2k$. The bijection 
here has the advantage of working for all $k,n \in \ZZ_{\ge1}$.
Such set-partition tableaux also appear in the recent work of Orellana and Zabrocki \cite[Prop. 2]{OZ}.    

At the core of these results is the duality between the representation
theories of the symmetric group $\S_n$ and the centralizer algebra,  
\begin{equation} \Z_{k,n}: =\End_{\S_n}(\M_n^{\ot k}) =\{\varphi \in \End(\M_n^{\ot k}) \mid \varphi \sigma (x) = 
\sigma \varphi(x), \  \  \sigma \in \S_n, x \in \M_n^{\ot k}\},\end{equation} 
of its action on $\M_n^{\ot k}$.   
Schur-Weyl duality tells us that the 
irreducible modules $\Z_{k,n}^\lambda$  for the semisimple associative algebra 
 $\Z_{k,n}$ 
are indexed by the subset $\Lambda_{k,\S_n}$  of  partitions $\lambda$ of  $n$ such that $\S_n^\lambda$ occurs with multiplicity at least one in the
decomposition of $\M_n^{\ot k}$ into irreducible $\S_n$-summands.     Moreover,
\begin{align} 
 \bullet &\quad \M_n^{\otimes k} \ \cong \ \underbrace{\bigoplus_{\lambda \in \Lambda_{k,\S_n}} \mathsf{m}_{k,n}^\lambda \S_n^\lambda}_{\text{as an $\S_n$-module}}
\ \cong \  \underbrace{\bigoplus_{\lambda \in \Lambda_{k,\S_n}} f^\lambda \Z_{k,n}^\lambda}_{\text{as a $\Z_{k,n}$-module} }; \label{SW1} \\
 \bullet &\quad \dim(\Z_{k,n}^\lambda)   = \mathsf{m}_{k,n}^\lambda   \hspace{.78in}   \text{(the multiplicity  of $\S_n^\lambda$ in $\M_n^{\ot k}$)}; \label{SW2} \\
  \bullet &\quad  \mathsf{mult}(\Z_{k,n}^\lambda) = \dim(\S_n^\lambda) =  f^\lambda  \quad   \text{(the number of standard tableaux of shape $\lambda$);} \label{SW3} \\  
 \bullet &\quad  \dim(\Z_{k,n}) = \displaystyle{\sum_{\lambda\in \Lambda_{k,\S_n}} (\mathsf{m}_{k,n}^\lambda)^2} = \m_{2k,n}^{[n]} = \dim\big(\Z_{2k,n}^{[n]}\big). \label{SW4}
\end{align} 
The first equality in (\ref{SW4}) is a consequence of (\ref{SW2})  and Artin-Wedderburn theory, since
$\dim(\Z_{k,n})$ is the sum of the squares of the dimensions of its irreducible modules $\Z_{k,n}^\lambda$.
The second equality in (\ref{SW4}) comes from the isomorphism $\Z_{k,n} =\End_{\S_n}(\M_n^{\ot k}) \cong (\M_n^{\ot 2k})^{\S_n}$, since the   
$\S_n$-invariants in $\M_n^{\ot 2k}$ correspond to copies of the trivial one-dimensional module $\S_n^{[n]}$ indexed by the one-part partition $[n]$,
 and  $\m_{2k,n}^{[n]} = \dim\big(\Z_{2k,n}^{[n]}\big)$ is the number of them in $\M_n^{\ot 2k}$.   
 
 Three additional Schur-Weyl duality results can be found in \cite[Cor.~2.5]{BM}.  From those results 
 we know that for any finite group $\G$ and any finite-dimensional
$\G$-module $\VV$,  the dimension of the space $\left(\VV^{\ot k}\right)^\G$ of $\G$-invariants is the average of the character values $\chi_{\VV^{\ot k}}(g) = \chi_{{}_\VV}(g)^k$  as $g$ ranges over the elements of $\G$  (compare also \cite[Sec.~2.2]{FuH}).  When $\VV$ is isomorphic to its dual $\G$-module,  the dimension of the centralizer algebra $\End_\G(\VV^{\ot k})$  is the average of the character values $\chi_{\VV^{\ot {2k}}}(g)
= \chi_{{}_\VV}(g)^{2k}$.    Specializing those results and part (i) of  \cite[Cor.~2.5]{BM}
to the case $\G =\S_n$ and  $\VV = \M_n$ gives the following: 
\begin{align}\bullet&\qquad \dim\big(\M_n^{\ot k}\big)^{\S_n} =  \frac{1}{n!} \sum_{\sigma \in \S_n} \chi_{\M_n}(\sigma)^k =  \frac{1}{n!} \sum_{\sigma \in \S_n} \mathsf{F}(\sigma)^k; 
 \label{eq:dim1}\\
\bullet&\qquad \dim(\Z_{k,n}) = \dim(\Z_{2k,n}^{[n]} )=  \frac{1}{n!} \sum_{\sigma  \in \S_n} \mathsf{F}(\sigma)^{2k};  \label{eq:dim2} \\
\bullet&\qquad \dim(\Z_{k,n}^\lambda) =   \frac{1}{n!} \sum_{\sigma \in \S_n}  \chi_{\M_n}(\sigma)^k \ \chi_{\lambda}(\sigma^{-1})
=   \frac{1}{n!}  \sum_{\sigma \in \S_n} \mathsf{F}(\sigma)^k \ \chi_{\lambda}(\sigma); \label{eq:dim3} 
\end{align}
where $\chi_{\M_n}(\sigma) = \mathsf{F}(\sigma)$ (the number of fixed points of $\sigma$), and $\chi_\lambda$ is the character of the irreducible 
$\S_n$-module $\S_n^\lambda$. In \eqref{eq:dim3}, we have used the fact that $\sigma$ and $\sigma^{-1}$ have
the same cycle structures, hence the same character values.

In \cite{FaH}, Farina and Halverson develop the character theory of partition algebras and use partition algebra characters
to prove \eqref{eq:dim2}.   The arguments in \cite{FaH},  which are based on results from \cite{H}, require the assumption $n \ge 2k$ so that 
$\Z_{k,n} \cong \P_k(n)$.   The results in \eqref{eq:dim1}-\eqref{eq:dim3} are true for all $k,n \ge 1$.

When $\VV$ is a finite-dimensional module for a finite group $\G$, the tensor power $\VV^{\ot k}$ has a multiplicity-free decomposition 
$$\VV^{\ot k} = \bigoplus_{\nu} \left( \G^\nu \ot \Z_{k,\G}^\nu\right)$$
into irreducible summands as a bimodule for $\G \times \Z_{k,\G}$, where $\Z_{k,\G}=\End_{\G}(\VV^{\ot k})$.  
Consequently, the characters of $\G$, $\Z_{k,\G}$ and $\G \times \Z_{k,\G}$  are intertwined by the following equation, 
 \begin{equation*}\label{g}
\psi_{\VV^{\ot k}}(g \,\times\, z) = \sum_{\nu}  \chi_\nu(g) \  \xi_\nu(z). 
\end{equation*}

The irreducible characters of $\G$ are orthonormal with respect to the  well-known inner product on class functions of $\G$ 
defined by $\langle \alpha, \beta \rangle = \vert \G \vert^{-1} \sum_{g \in \G} \alpha(g) \beta(g^{-1})$ 
(see, for example, \cite[Thm.~2.12]{FuH}  or \cite[Thm.~1.93]{Sa}).  Therefore, since $\psi_{\VV^{\ot k}}( \, \mathbf{\cdot} \,\times  z): \G \rightarrow \FF$ is a class function on $\G$ for each $z\in \Z_{k,\G}$,  we have  
\begin{equation*}
\xi_\nu(z) =  \langle \psi_{\VV^{\ot k}}( \mathbf{\cdot} \,\times z),  \chi_\nu \rangle= \frac{1}{|\G|} \sum_{g \in \G} \psi_{\VV^{\ot k}}(g \,\times\, z)\chi_{\nu}(g^{-1}).
\end{equation*}
Thus,  the commuting actions of $\G$ and $\Z_{k,\G}$ on $\VV^{\ot k}$ lead to one further Schur-Weyl duality result, namely, an expression for 
the irreducible characters $\xi_\nu$
of $\Z_{k,\G}$:
\begin{align}\label{eq:SWchar}
 \bullet &  \qquad \xi_\nu(z) =  \frac{1}{|\G|} \sum_{g \in \G} \psi_{\VV^{\ot k}}(g \,\times\, z)\chi_{\nu}(g^{-1}).
 \hskip1.51in
 \end{align}
In Section \ref{subsec:chars}, we explain how these ideas, when combined with results from \cite{H}, provide the formula in Theorem \ref{T:partitionchars}
for the characters of the partition algebra $\P_k(n)$.

The surjective algebra homomorphism
\begin{equation}\label{eq:Phikn} \Phi_{k,n}:\P_k(n) \to \Z_{k,n} = \End_{\S_n}(\modu^{\otimes k} ) \end{equation}  
enables us to study $\M_n^{\ot k}$ using partition algebra considerations.   The set partitions of the set $[1,2k]: =\{1,2,\ldots,2k\}$ index an $\FF$-basis for $\P_k(n)$, and thus,  $\P_k(n)$ has dimension equal to the Bell number $\mathsf{B}(2k) = \sum_{t=1}^{2k}  \stirlinginline{2k}{t}$.  

 We let $\Pi_{2k}$ be the set of set partitions of $[1,2k]$;   for example, 
$\big\{1,7,8,10 \,|\,  2,5 \,|\,  4,9,11\,|\, 3,6,12,14$ $\,|\, 13\big\}$ is a set partition in
$ \Pi_{14}$ with 5 blocks. 

The algebra $\P_k(n)$ has  two distinguished bases --  the diagram basis $\{ d_\pi \mid \pi \in \Pi_{2k}\}$  and the orbit basis $\{ x_\pi \mid \pi \in \Pi_{2k}\}$. We describe the change of basis matrix  between the diagram basis and the orbit basis in terms of the M\"obius function of the set-partition lattice (see Section \ref{subsec:change}).  Section \ref{subsec:mult} is devoted to a description of multiplication in the orbit basis of
$\P_k(n)$.    The actions of the diagram basis element $d_\pi$ and the orbit basis element $x_\pi$ on $\M_n^{\ot k}$
afforded by the representation $\Phi_{k,n}$ are given explicitly in Section \ref{subsec:repn} for any $\pi \in \Pi_{2k}$.  
The orbit basis is key to understanding the image and the kernel of $\Phi_{k,n}$, as we explain in Section \ref{subsec:kernels}.
An analog of the orbit basis exists in a broader context, and in Section \ref{subsec:orbitbasis}  we 
show how to construct an orbit basis for the centralizer algebra of a tensor power of any permutation module for an arbitrary finite group $\G$.

The centralizer algebra $\Z_{k,n}  = \End_{\S_n}(\M_n^{\ot k})$ has a basis consisting of the images $\Phi_{k,n}(x_\pi)$, where $\pi$ ranges over the set partitions in $\Pi_{2k}$ with
no more than $n$ blocks.   As a consequence of that result and \eqref{eq:dim2},  we have
\begin{equation}
\dim(\Z_{k,n}) = \sum_{t =1}^n \stirling{2k}{t} = \frac{1}{n!} \sum_{\sigma \in \S_n} \mathsf{F}(\sigma)^{2k}.
\end{equation}
We prove additional relations between Stirling numbers of the second kind and fixed points of permutations in Section \ref{subsec:multperm} (more specifically,
see Theorem \ref{T:cent}, Proposition \ref{P:fixed}, and Corollary \ref{C:fixedBell}). 

The diagram basis elements $d_\pi$ in $\P_{k+1}(n)$ corresponding to set partitions $\pi$  having  $k+1$ and $2(k+1)$ in the same block form a subalgebra  $\P_{k+\half}(n)$ of $\P_{k+1}(n)$  under diagram multiplication.
Identifying $\P_k(n)$ with the span of the diagrams in $\P_{k+\half}(n)$ which have a block consisting
of the two elements $k+1,2(k+1)$ gives 
a tower of algebras,  
\begin{equation}\label{eq:tower} \FF  = \P_0(n) \cong  \P_\half (n) \subset  \P_1(n) \subset \cdots \subset \P_k(n) \subset \P_{k+\half}(n) \subset \P_{k+1}(n) \subset  \cdots \  . \end{equation}
If we regard  $\M_n$ as a module for the symmetric group $\S_{n-1}$ by restriction, where elements of $\S_{n-1}$ fix the last basis vector $\vf_{n}$, then there is a surjective algebra homomorphism 
\begin{equation}
\Phi_{k+\half,n}:\P_{k+\half}(n) \rightarrow \End_{\S_{n-1}}(\modu^{\ot {k}})\cong
\End_{\S_{n-1}}(\modu^{\ot {k}} \ot \vf_n),
\end{equation} 
which is an isomorphism when $n \geq 2k+1$.    The intermediate algebras $\P_{k+\half}(n)$ have proven very useful in  
developing the structure and representation theory  of partition algebras (see for example, \cite{MR,HR}), and they come into play here in the construction of vacillating $k$-tableaux.  The centralizer algebras  $\End_{\S_{n-1}}(\modu^{\ot {k}})$ are also closely tied
to the restriction and induction functors that produce the Bratteli diagram ${\mathcal B}(\S_n,\S_{n-1})$ in Section \ref{subsec:BratSn}. \

For a group $\G$ and a finite-dimensional $\G$-module $\VV$,  the tensor product $\VV \ot \VV^{\ast}$ of $\VV$ with its dual module $\VV^{\ast}$
 is isomorphic as a $\G$-module to $\End(\VV)$ via the mapping $\vf \ot \phi \mapsto
 A_{\vf,\phi}$, where $A_{\vf,\phi}(\mathsf{u}) = \phi(\mathsf{u})\vf$. 
Since  $g.(\vf \ot \phi) = g.\vf \ot g.\phi$,   where $(g.\phi)(\mathsf{u}) = \phi(g^{-1}.\mathsf{u})$,
 and  $g.A = gAg^{-1}$ as transformations on $\VV$ for all $A \in \End(\VV)$, 
$$g.(\vf \ot \phi) = \vf \ot \phi \iff  gA_{\vf, \phi}g^{-1} = A_{\vf,\phi} \iff 
g A_{\phi,\vf} = A_{\phi,\vf}g \iff  A_{\vf, \phi} \in \End_{\G}(\VV).$$
Thus,  the space of $\G$-invariants, $(\VV \ot \VV^\ast)^\GG = \{ \vf \ot \phi \mid g.(\vf \ot \phi) = \vf \ot \phi\}$, is
isomorphic to the centralizer algebra  $\End_\G(\VV).$

In the particular case of the symmetric group $\S_n$ and its permutation module $\M_n$, we can identify the centralizer
 algebra $\End_{\S_n}(\M_n^{\ot k})$ with the space 
$$
(\M_n^{\ot 2k})^{\S_n} \cong \left(\M_n^{\ot k} \ot (\M_n^{\ot k})^\ast\right)^{\S_n}
$$
of $\S_n$-invariants,  as $\M_n$ is isomorphic to its dual  $\S_n$-module $\M_n^\ast$ (this was used  in (\ref{SW4}) and \eqref{eq:dim2}).   
 The fact that $\Phi_{k,n}: \P_k(n) \to \End_{\S_n}(\M_n^{\ot k})$ is a surjection tells us that $\P_k(n)$ generates all of the $\S_n$-tensor invariants
 $(\M_n^{\ot 2k})^{\S_n}$  and provides the First Fundamental Theorem of Invariant Theory for $\S_n$ (see Theorem \ref{T:1stfund} originally proved by Jones \cite{J}).   When $2k > n$, the surjection $\Phi_{k,n}$ has a nontrivial kernel.  As $2k$ increases in relation to $n$, the kernel becomes quite significant, and $\End_{\S_n}(\M_n^{\ot k})$ is only a  shadow of the full partition algebra. (This is illustrated in the table of  dimensions in Figure \ref{table:BellNumbers}.)  We have shown in \cite{BH}, that 
 when $2k > n$, the kernel of $\Phi_{k,n}$ is generated as a two-sided ideal by a single essential idempotent $\ef_{k,n}$ (see \eqref{eq:ef}).     Identifying $\End_{\S_n}(\M_n^{\ot k}) \cong \left(\M_n^{\ot 2k}\right)^{\S_n}$ with $\P_k(n)/\ker \Phi_{k,n}$ \  
 ($= \P_k(n)/ \langle \ef_{k,n} \rangle$ when $2k > n$),   we have the following:  

\begin{thm} {\rm \cite[Thm.~5.19]{BH}} \ (Second Fundamental Theorem of Invariant Theory for $\S_n$)  \ For all $k,n\in \ZZ_{\ge 1}$,   $\im \Phi_{k,n}
= \End_{\S_n}(\modu^{\ot k})$ is generated by the partition algebra generators and relations in Theorem \ref{T:present} (a)-(c) together with the one additional relation $\ef_{k,n} = 0$ in the case that $2k > n$.   When $k \ge n$,  the relation $\ef_{k,n} = 0$ can be replaced with  $\ef_{n,n} = 0$.
\end{thm}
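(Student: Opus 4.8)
The plan is to obtain the statement by combining three facts already available in the excerpt: the presentation of $\P_k(n)$ (Theorem~\ref{T:present}), the surjectivity of $\Phi_{k,n}$ for all $k,n$ together with its being an isomorphism when $n\ge 2k$, and the description of $\ker\Phi_{k,n}$ from \cite{BH}. The organizing principle is the elementary fact that if an algebra $A$ has a presentation $\langle X \mid R\rangle$ and $I\trianglelefteq A$ is the two-sided ideal generated by a subset $S$, then $A/I$ is presented by $\langle X \mid R\cup\{\, s=0 : s\in S\,\}\rangle$ once each $s$ is rewritten as a word in the generators $X$. Since $\Phi_{k,n}$ is surjective, $\im\Phi_{k,n}=\End_{\S_n}(\modu^{\ot k})\cong \P_k(n)/\ker\Phi_{k,n}$, so the theorem reduces to producing a generating set for the two-sided ideal $\ker\Phi_{k,n}$ and transporting it through Theorem~\ref{T:present}.

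If $n\ge 2k$, then $\Phi_{k,n}$ is an isomorphism, so $\ker\Phi_{k,n}=0$ and $\im\Phi_{k,n}$ is presented by the generators and relations of Theorem~\ref{T:present}(a)-(c) with no extra relation. If $2k>n$, then by the result of \cite{BH} quoted in the introduction the ideal $\ker\Phi_{k,n}$ is generated by the single essential idempotent $\ef_{k,n}$ of \eqref{eq:ef}; writing $\ef_{k,n}$ as a word in the generators and adjoining the one relation $\ef_{k,n}=0$ to Theorem~\ref{T:present}(a)-(c) therefore yields a presentation of $\im\Phi_{k,n}$. This proves the first assertion.

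For the refinement, suppose $k\ge n\ge 1$, so that $2k>n$ and $\ker\Phi_{k,n}=\langle\ef_{k,n}\rangle$; it suffices to show $\langle\ef_{k,n}\rangle=\langle\ef_{n,n}\rangle$ as two-sided ideals of $\P_k(n)$, where $\ef_{n,n}\in\P_n(n)$ is regarded inside $\P_k(n)$ by means of the tower inclusion \eqref{eq:tower}. The inclusion $\langle\ef_{n,n}\rangle\subseteq\langle\ef_{k,n}\rangle$ is immediate: under \eqref{eq:tower} a diagram of $\P_n(n)$ acts on $\modu^{\ot k}\cong\modu^{\ot n}\ot\M_n^{\ot(k-n)}$ as its $\Phi_{n,n}$-image tensored with the identity on $\M_n^{\ot(k-n)}$, and since $\ef_{n,n}$ generates $\ker\Phi_{n,n}$ we have $\Phi_{n,n}(\ef_{n,n})=0$, whence $\Phi_{k,n}(\ef_{n,n})=0$ and $\ef_{n,n}\in\ker\Phi_{k,n}=\langle\ef_{k,n}\rangle$. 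For the reverse inclusion one must show $\ef_{k,n}\in\langle\ef_{n,n}\rangle$. Here I would use the explicit form \eqref{eq:ef}: the essential idempotent $\ef_{k,n}$ is built from an antisymmetrizing configuration involving only $n+1$ of its $2k$ vertices, with vertical (identity) strands elsewhere, so that when $k\ge n$ it can be written as $d_\alpha\,\bigl(\ef_{n,n}\otimes(\text{vertical strands})\bigr)\,d_\beta$ for diagram basis elements $d_\alpha,d_\beta$ that relocate those $n+1$ vertices into the first $2n$ positions; hence $\ef_{k,n}$ lies in the two-sided ideal generated by $\ef_{n,n}$. Equivalently, one may invoke the orbit-basis description recalled before \eqref{eq:tower}, by which $\ker\Phi_{k,n}$ is the span of the orbit basis elements $x_\pi$ indexed by set partitions with more than $n$ blocks, and check that $\ef_{n,n}$ already generates this span as a two-sided ideal.

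The routine parts of the argument are entirely formal: they follow from Theorem~\ref{T:present}, from the surjectivity and isomorphism-range of $\Phi_{k,n}$, and from the kernel statement of \cite{BH}. The one place that requires genuine work is the inclusion $\ef_{k,n}\in\langle\ef_{n,n}\rangle$ for $k\ge n$, which rests on the explicit formula \eqref{eq:ef} for the essential idempotent and on careful tracking of how the tower embedding places $\P_n(n)$ inside $\P_k(n)$; an induction on $k\ge n$ that reduces $\ef_{k,n}$ modulo $\langle\ef_{n,n}\rangle$ to $\ef_{k-1,n}$, using the standard relationship between consecutive algebras in the tower \eqref{eq:tower}, is a viable alternative to the direct diagram computation.
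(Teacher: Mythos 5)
Your overall plan — use the presentation of $\P_k(n)$ from Theorem~\ref{T:present}, surjectivity of $\Phi_{k,n}$, and the fact (from \cite{BH}) that $\ker\Phi_{k,n}$ is principally generated, then pass to the quotient — is exactly the logical structure underlying the statement, and the paper itself does no more for this theorem than cite \cite[Thms.~5.6, 5.8, 5.15, 5.19]{BH}. The cases $n\ge 2k$ and $2k>n$ with relation $\ef_{k,n}=0$ are handled correctly, as is the inclusion $\langle \ef_{n,n}\rangle\subseteq\langle\ef_{k,n}\rangle$ via the tower embedding and the factorization $\Phi_{k,n}|_{\P_n(n)}=\Phi_{n,n}\ot\mathrm{id}$.

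The gap is in the reverse inclusion $\ef_{k,n}\in\langle\ef_{n,n}\rangle$ for $k\ge n$, which is precisely the content of \cite[Thm.~5.15]{BH} and the one place requiring genuine work. Your heuristic — that $\ef_{k,n}$ is ``built from an antisymmetrizing configuration involving only $n+1$ of its $2k$ vertices, with vertical (identity) strands elsewhere'' — misdescribes $\ef_{k,n}$ for $k>n$: by \eqref{eq:ef}, in that regime $\ef_{k,n}=x_\pi$ with $\pi=\{1,k{+}1\mid 2,k{+}2\mid\cdots\mid k,2k\}$, i.e.\ the \emph{orbit} element $\mathsf{I}_k^{\boldsymbol\circ}$ with every one of the $2k$ vertices joined into $k$ vertical pairs — and, as Remark~\ref{R:orbid} emphasizes, vertical pairs in the orbit basis are \emph{not} identity strands. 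So the proposed factorization $\ef_{k,n}=d_\alpha(\ef_{n,n}\ot(\blvertedge)^{k-n})d_\beta$ has no self-evident motivation, and your alternative ``check that $\ef_{n,n}$ already generates this span'' is a restatement of the goal rather than an argument. That this step is not routine is underscored by the paper's observation (citing \cite[Rem.~5.20]{BH}) that $\ker\Phi_{k,n}$ is \emph{not} generated by $\ef_{\ell,n}\ot(\blvertedge)^{\ot(k-\ell)}$ for $k\ge n>\ell\ge\tfrac12(n+1)$: the fact that $\ell=n$ succeeds where smaller $\ell$ fail needs a real proof, which both you and the paper are in effect outsourcing to \cite{BH}.
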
 

Classical Schur-Weyl duality provides an analogous result for the general linear group $\mathsf{GL}_n$ and its action by matrix multiplication on the space
$\VV$ on $n \times 1$ column vectors (more details can be found in \cite[Sec.~9.1]{GW}).  The surjection  $\FF \S_k \rightarrow  \End_{\mathsf{GL}_n}(\VV^{\ot k})$, given by the place permutation action of $\S_k$ on the tensor factors of $\VV$,   is an isomorphism if $n \geq k$, 
and in that case, $\FF \S_k \cong \left(\VV^{\ot k} \ot (\VV^{\ot k})^\ast \right)^{\mathsf{GL}_n}$.  
When $k \ge n+1$,  the kernel is
generated by a single essential idempotent $\sum_{\sigma \in \S_{n+1}}  (-1)^{\mathsf{sgn}(\sigma)} \sigma$  in the group algebra $\FF\S_k$.   Thus, the second fundamental theorem comes from the standard presentation by generators and relations for the symmetric group $\S_k$ by imposing the additional relation $\sum_{\sigma \in \S_{n+1}}  (-1)^{\mathsf{sgn}(\sigma)} \sigma = 0$
when $k \ge n+1$. 

In \cite{Br},  Brauer  introduced algebras, now known as \emph{Brauer algebras}, that centralize the action of the orthogonal group $\mathsf{O}_n$ and symplectic group $\mathsf{Sp}_{2n}$ on
tensor powers of their defining modules.  The surjective algebra homomorphisms
$\mathsf{B}_k(n) \to \End_{\mathsf{O}_n}(\VV^{\ot k})$ ($\dim(\VV)=n$) and  $\mathsf{B}_k(-2n) \to
\End_{\mathsf{Sp}_{2n}}(\VV^{\ot k})$ ($\dim(\VV) = 2n$) defined in \cite{Br} provide 
the First Fundamental Theorem of 
Invariant Theory for these groups.  (An exposition of these results appears in \cite[Sec.~4.3.2]{GW}.)  
Generators for the kernels of these surjections give the Second Fundamental
Theorem of Invariant Theory for the orthogonal and symplectic groups.  
As shown in the recent work of Hu and Xiao \cite{HX}, Lehrer and Zhang \cite{LZ1,LZ2}, and Rubey and Westbury \cite{RW1}, the kernels of these surjections are  principally generated by a single idempotent when $k \ge n+1$.   In \cite[Sec.~7.4]{RW1} (see also \cite{RW2}), Rubey and Westbury  show that the central idempotent $E = \big((n+1)!\big)^{-1}
\sum
d$, obtained by summing all  the Brauer diagrams $d$ with $2(n+1)$ nodes,
generates the kernel of
the surjection $\mathsf{B}_k(-2n) \to \End_{\mathsf{Sp}_{2n}} (\VV^{\ot k})$ for all $k \ge n+1$.   As a result,
they obtain the fundamental theorems of invariant theory for the symplectic groups from  
Brauer algebra considerations.     Bowman, Enyang, and Goodman \cite{BEG} adopt a cellular 
basis approach to describing the kernels in the orthogonal and symplectic cases, as well as in the case
of the general linear group $\mathsf{GL}_n$ acting on mixed tensor powers $\VV^{\ot k} \ot (\VV^*)^{\ot \ell}$
of its natural $n$-dimensional module $\VV$ and its dual $\VV^*$.   A surjection of the walled Brauer
algebra $\mathsf{B}_{k,\ell}(n) \to \End_{\mathsf{GL}_n}(\VV^{\ot k} \ot (\VV^*)^{\ot \ell})$  is used for this purpose. (The algebra $\mathsf{B}_{k,\ell}(n)$ and some of its representation-theoretic properties including its action on $\VV^{\ot k} \ot (\VV^*)^{\ot \ell}$ can be found, for example,  in \cite{BCHLLS}.) 

Although this  article is largely a survey discussing
recent work on partition algebras and the fundamental theorems
of invariant theory for symmetric groups, it features new results.
Among them are  a new bijection between vacillating tableaux and set-partition tableaux,   a new expression for the characters of  
partition algebras,  new identities relating Stirling numbers of the
second kind and fixed points of permutations, as well as a
general method for constructing the orbit basis for any permutation module of an arbitrary finite group $\G$.

 Our main point of emphasis is that the Schur-Weyl duality  afforded by the surjection
$\Phi_{k,n}: \P_k(n) \to \End_{\S_n}(\M_n^{\otimes k})$  furnishes an effective framework for
studying the symmetric group $\S_n$ and its invariant theory.   
Here   $n$ is fixed, and  
the  tensor power $k$ is allowed to grow arbitrarily large. When $2k > n$,  the representation $\Phi_{k,n}$
has a  nontrivial kernel, which can best be described using  the orbit basis of the partition algebra and which can be used
to give the Second Fundamental Theorem of Invariant Theory for $\S_n$.

\section{Restriction-induction  Bratteli diagrams and vacillating tableaux}\label{sec:res-ind}  
In this section, we discuss the restriction and induction functors for a group-subgroup pair $(\GG,\HH)$  and the Bratteli diagram that
comes from applying them and then specialize to the case of the pair
$(\S_n, \S_{n-1})$.    Further details can be found, for example,  in \cite[Sec.~4]{BHH}.   

\subsection{Generalities on restriction and induction} \label{subsec:genresind} 
Let $(\GG,\HH)$ be a pair consisting of a finite group $\GG$ and a subgroup $\HH$ of  $\GG$.  
 Let   $\UU^0$ be the trivial one-dimensional $\GG$-module, and assume for $k \in  \ZZ_{\ge 0}$ that  the $\GG$-module $\UU^{k}$ has been defined.    
Let $\UU^{k + \half}$ be the $\HH$-module defined by restriction to $\HH$:  \ $\UU^{k+\half} = \Res_{\HH}^{\GG}(\UU^k)$,  and 
then let  $\UU^{k+1}$ be  the $\GG$-module specified by induction to $\GG$: \   $\UU^{k+1} = \Ind_{\HH}^{\GG}(\UU^{k+\half}) = \FF\GG \ot_{\FF\HH}\UU^{k+\half}$.   In this way,  $\UU^\ell$ is defined inductively for all $\ell \in \half \ZZ_{\ge 0}$,   and $\UU^k = \big( \Ind_\HH^\GG  \Res_\HH^\GG\big)^k(\UU^0)$
for all $k \in \ZZ_{\ge 0}$.    The module  $\M := \mathsf{Ind}_\HH^\GG(\mathsf{Res}_\HH^\GG(\UU^0)) = \UU^1$  is isomorphic to $\GG/\HH$ as a $\GG$-module, where $\GG$ acts on the left cosets
of $\GG/\HH$ by multiplication.

For a $\GG$-module $\xx$ and an $\HH$-module  $\yy$, the ``tensor identity"  says that
\begin{equation}
\Ind^{\GG}_{\HH}(\Res^{\GG}_{\HH}(\xx) \otimes \yy) \cong \xx \otimes \Ind^{\GG}_{\HH}(\yy).
\end{equation}
The mapping $(g \otimes_{\FF\HH} x) \otimes y \mapsto g x \otimes (g \otimes_{\FF\HH} y)$ can be used to establish this isomorphism. (See, for example, \cite[(3.18)]{HR}.)
Hence, when $\xx= \UU^{k}$ and $\yy = \mathsf{Res}_\HH^\GG(\UU^0)$, this  implies that
\begin{equation*}
\begin{split}
\Ind^{\GG}_{\HH}(\Res^{\GG}_{\HH}(\UU^k)) & \cong \Ind^{\GG}_{\HH}(\Res^{\GG}_{\HH}(\UU^k)\otimes \mathsf{Res}_\HH^\GG(\UU^0))  \cong \UU^k \otimes \Ind^{\GG}_{\HH}( \mathsf{Res}_\HH^\GG(\UU^0)) = \UU^{k} \otimes \M.
\end{split}
\end{equation*}
Therefore, by induction, we have the following isomorphisms for all $k \in \ZZ_{\ge 0}$:  
\begin{equation}\label{eq:tensid}
 \M^{\otimes k} \cong \UU^k\quad (\text{as $\GG$-modules)} \qquad\text{and}\qquad  \Res^\GG_\HH(\M^{\otimes k}) \cong \UU^{k + \frac{1}{2}}\quad (\text{as $\HH$-modules}).
\end{equation}

Consider the centralizer algebra, 
$$\Z_{k,\GG}:=\End_\GG(\M^{\otimes k}) = \{ \varphi \in \End(\M^{\ot k}) \mid  \varphi(g.x) = g.\varphi(x)
\ \text{for all} \ x \in \M^{\ot k}, g \in \GG\}$$ 
of the $\GG$-action on $\M^{\ot k}$.   
It follows that there are algebra isomorphisms:
\begin{align}\begin{split}\label{eq:tensisos}  \Z_{k,\GG} &:=\End_\GG(\M^{\otimes k}) \cong \End_\GG(\UU^k), \\ 
\Z_{k+\half,\HH}& :=\End_\HH(\Res^\GG_\HH(\M^{\otimes k})) \cong \End_\HH(\UU^{k+\frac{1}{2}}).
\end{split}\end{align}    
Suppose for $k \in \ZZ_{\ge 0}$ that 

\begin{itemize}
\item[$\bullet$] $\Lambda_{k,\GG} \subseteq \Lambda_\GG$ indexes the irreducible $\GG$-modules, and hence also the irreducible $\Z_{k,\GG}$-modules, occurring  in $\UU^{k} \cong \M^{\otimes k}$;

\item[$\bullet$]  $\Lambda_{k+\half,\HH}\subseteq \Lambda_\HH$  indexes the irreducible $\HH$-modules, and hence also the irreducible $\Z_{k+\frac{1}{2},\HH}$-{modules}, occurring  in $\UU^{k + \frac{1}{2}}\cong\Res^\GG_\HH(\M^{\otimes k})$.
\end{itemize}

\subsection{The restriction-induction Bratteli diagram} \label{subsec:Bratdiag}

The \emph{restriction-induction Bratteli diagram} for the pair $(\GG,\HH)$  is an infinite, rooted tree $\mathcal{B}(\GG,\HH)$ whose vertices are organized into rows labeled by half integers $\ell$  in $\half\ZZ_{\ge 0}$.  For $\ell =k \in \ZZ_{\ge 0}$, the vertices on row
$k$ are the elements of $\Lambda_{k,\GG}$, and the vertices on row $\ell = k+\half$ are the elements of $\Lambda_{k+\half,\HH}$. 
The vertex on row $0$ is the root, the label of the trivial $\GG$-module, and the vertex on row $\half$ is  the label
of the trivial $\HH$-module.

 The edges of $\mathcal{B}(\GG,\HH)$ are constructed from the restriction and induction rules for $\HH\subseteq \GG$. Let 
$\{\GG^\lambda\}_{\lambda \in \Lambda_\GG}$ and $\{\HH^\alpha\}_{\alpha \in \Lambda_\HH}$ be the sets of irreducible modules for these
groups over $\FF$. By Frobenius reciprocity, the multiplicity $c^\lambda_\alpha$ of $\HH^\alpha$ in the restricted module $\Res_{\HH}^\GG ( \GG^\lambda)$ equals the multiplicity of $\G^\lambda$ in the induced module $\Ind_{\HH}^\GG(\HH^\alpha)$, and thus
\begin{equation}\label{eq:ResInd}
\Res_{\HH}^\GG ( \GG^\lambda) = \bigoplus_{\alpha  \in \Lambda_\HH}  c^\lambda_\alpha\,   \HH^\alpha
\qquad\hbox{and}\qquad
\Ind_{\HH}^\GG(\HH^\alpha) = \bigoplus_{\lambda \in \Lambda_\GG} c^\lambda_\alpha\, \GG^\lambda.
\end{equation}
In  $\mathcal{B}(\GG,\HH)$ we draw $c^\lambda_\alpha$ edges from $\lambda \in \Lambda_{k,\GG}$ to $\alpha \in \Lambda_{k+\frac{1}{2},\HH}$ and $c^\lambda_\alpha$ edges from $\alpha \in \Lambda_{k+\frac{1}{2},\HH}$ to $\lambda \in \Lambda_{k+1,\GG}$.  The Bratteli diagram is constructed in such a way that 
 \begin{itemize}
\item[$\bullet$] the number of paths from the root at level 0 to $\lambda \in\Lambda_{k,\GG}$ equals the multipicity of $\GG^\lambda$ in $\UU^{k}\cong\M^{\otimes k}$ and thus also equals the dimension of the irreducible $\Z_{k,\GG}$-module  $\Z_{k,\GG}^\lambda$ by (\ref{SW2}) (these numbers are computed in Pascal-triangle-like fashion and are placed beneath each vertex);

\item[$\bullet$] the number of paths  from the root at level 0 to $\alpha \in\Lambda_{k+\frac{1}{2},\HH}$ equals the multiplicity of $\HH^\alpha$ in $\UU^{k + \frac{1}{2}}$ and thus also equals the dimension of
the $\Z_{k+\half,\HH}$-module  $\Z_{k+\half,\HH}^\alpha$ (and is indicated below each vertex);

\item[$\bullet$] the sum of the squares of the labels on row $k$ (resp. row $k+\half$)  equals $\dim(\Z_{k,\GG})$ (resp. $\dim(\Z_{k+\half,\HH})$.
\end{itemize}
The restriction-induction Bratteli diagram for the pair $(\S_5,\S_4)$ is displayed in  Figure \ref{fig:Sbratteli}.

\subsection{Restriction and induction for the symmetric group pair  $(\S_n, \S_{n-1})$} \label{subsec:BratSn}

Assume $\lambda = [\lambda_1, \lambda_2, \dots, \lambda_n]$ is a partition of $n$ with parts $
\lambda_1 \geq \lambda_2 \geq \ldots \geq \lambda_n \ge 0$,  and identify $\lambda$ with its Young diagram.    
Thus, if $\lambda = [6,5,3,3] \vdash 17$,  then

\begin{center}{$\lambda= [6,5,3,3]=\begin{array}{c} 
{\begin{tikzpicture}[scale=.35,line width=5pt] 
\tikzstyle{Pedge} = [draw,line width=.7pt,-,black]    
\foreach \i in {1,...,7} 
{\path (\i,1) coordinate (T\i); 
\path (\i,0) coordinate (B\i);
\foreach \i in {1,...,6}  \path (\i,-1) coordinate (S\i);
\foreach \i in {1,...,4}  \path (\i,-2) coordinate (R\i);  
\foreach \i in {1,...,4}  \path (\i,-3) coordinate (Q\i);  
} 
\filldraw[fill= white!10,draw=white!10,line width=8pt]   (T1) -- (T7) -- (B7) -- (B1) -- (T1);
\filldraw[fill= white!10,draw=white!10,line width=8pt]   (B1) -- (B5) -- (S5) -- (S1) -- (B1);
\filldraw[fill= white!10,draw=white!10,line width=8pt]   (S1) -- (S4) -- (R4) -- (R1) -- (S1);
\filldraw[fill= white!10,draw=white!10,line width=8pt]   (R1) -- (R4) -- (Q4) -- (Q1) -- (R1);
\filldraw[fill= gray!60,draw=gray!60,line width=.35pt]   (B2) -- (B3) -- (S3) -- (S2) -- (B2);
\path (T1) edge[Pedge] (Q1);
\path(T2)  edge[Pedge] (Q2);
\path (T3)  edge[Pedge] (Q3);
\path (T4)  edge[Pedge] (Q4);
\path (T5)  edge[Pedge] (S5);
\path (T6)  edge[Pedge] (S6);
\path (T7)  edge[Pedge] (B7);
\path(T1) edge[Pedge] (T7);
\path(B1) edge[Pedge] (B7);
\path(S1) edge[Pedge] (S6);
\path(R1) edge[Pedge] (R4); 
\path(Q1) edge[Pedge] (Q4); 
\end{tikzpicture}}\end{array}.$}\end{center} 
\noindent The  \emph{hook length}  $h(b)$ of a box $b$ in the diagram  is 
 1 plus the number of boxes to the right of $b$ in the same row, plus  the number of boxes below  $b$ in the same column,  and  $h(b) = 1+ 3 + 2 = 6$ for the shaded box above.   
The dimension $f^\lambda$  of the irreducible $\S_n$-module $\S_n^\lambda$  can be computed by the  hook-length formula,  
\begin{equation}\label{eq:hook} f^\lambda = \frac{n!}{\prod_{b \in \lambda} h(b)},\end{equation}
where the denominator is the product of the hook lengths as $b$ ranges over the boxes in the Young diagram of $\lambda$. 
This is  equal to the number of standard Young tableaux of shape $\lambda$, where a standard Young tableau $T$ is a filling of the boxes in the Young diagram of $\lambda$ with the numbers $\{1,2,\ldots,n\}$ such that the entries increase in every row from left to right  and in every column from top to bottom.

The restriction and induction rules for  irreducible symmetric group modules $\S_n^\lambda$ are well known (and can be found, for example, in \cite[Thm.~2.43]{JK}):
\begin{equation}\label{eq:RI}  \mathsf{Res}^{\S_n}_{\S_{n-1}}(\S_n^\lambda)  = \bigoplus_{\mu = \lambda-\square} \S_{n-1}^\mu, 
\qquad  \mathsf{Ind}^{\S_{n+1}}_{\S_{n}}(\S_n^\lambda)  = \bigoplus_{\kappa = \lambda+\square} \S_{n+1}^\kappa, \end{equation}
where the first sum is over all partitions $\mu$ of $n-1$ obtained from $\lambda$ by removing a box from the end of a row of the diagram of
$\lambda$, and
the second sum is over all partitions $\kappa$ of $n+1$ obtained by adding a box to the end of a row of $\lambda$.

Applying these rules to the trivial one-dimensional $\S_n$-module $\S_n^{[n]}$ we see that
$$ \Ind^{\S_n}_{\S_{n-1}}\big(\Res^{\S_n}_{\S_{n-1}}(\S_n^{[n]})\big)  = 
 \Ind^{\S_n}_{\S_{n-1}}\big(\S_{n-1}^{[n-1]}\big) =
 \S_n^{[n]} \oplus \S_n^{[n-1,1]} \cong \M_n.$$ 
Thus, in the notation of  Section \ref{subsec:genresind},   $\UU^1$ is the permutation module $\M_n$,  and by \eqref{eq:tensid},  
\begin{equation}\label{eq:tensid2}
 \M_n^{\otimes k} \cong \UU^k\ \  (\text{as $\S_n$-modules)} \quad\text{and}\quad  \Res^{\S_n}_{\S_{n-1}}(\M_n^{\otimes k}) \cong \UU^{k + \frac{1}{2}}\ \  (\text{as $\S_{n-1}$-modules}).
\end{equation}
Then \eqref{eq:tensisos} implies that the centralizer algebras are given by 
\begin{align} \Z_{k,n} &:=  \End_{\S_n}(\modu^{\otimes k})\cong\End_{\S_n}(\UU^k) \\
\Z_{k+\half,n}  &:=  \End_{\S_{n-1}}(\modu^{\otimes k})\cong\End_{\S_{n-1}}(\UU^k),\end{align}
where we are writing $\Z_{k,n}$ rather than $\Z_{k,\S_n}$ and
$\Z_{k+\half,n}$  instead of  $\Z_{k+\half,\S_{n-1}}$  to simplify the notation.  (\emph{The use of $n$  
in place of the more natural choice of $n-1$ for the second one should be especially noted.}) 

 From our discussions in Sections \ref{subsec:Bratdiag} and \ref{subsec:BratSn},  we know that the following holds.  
 \medskip

\emph{If  $k,n \in \ZZ_{\ge 0}$ and $n \geq 1$, then for all $\lam \in \Lambda_{k,\S_n}$,
\begin{equation}\label{eq:pathdim}
\dim(\Z_{k,n}^\lambda)  =  \m_{k,n}^\lambda  =  
\big \vert \big\{\text{paths in $\mathcal{B}(\S_n,\S_{n-1})$ from $[n]$ at level 0 to $\lambda$ at level $k$}\big\} \big \vert.
\end{equation}
}

\Yboxdim{6pt}  
\Ylinethick{.6pt} 
\begin{figure}[h!]
$$
\begin{tikzpicture}[line width=.5pt,xscale=0.12,yscale=0.22]  
\path (-15,0)  node[anchor=west]  {$\ell = 0$};
\path (0,0)  node[anchor=west] (S5-0) {\yng(5)};
\draw (S5-0) node[below=4pt, black] {\color{blue}\nmm 1};
\path (-15,-5)  node[anchor=west]  {$\ell = \half$};
\path (5,-5)  node[anchor=west] (S4-1) {\yng(4)};
\draw (S4-1) node[below=4pt,black] {\color{blue}\nmm 1};
\path (-15,-10)  node[anchor=west]  {$\ell = 1$};
\path (0,-10)  node[anchor=west] (S5-2) {\yng(5)};
\path (15,-10)  node[anchor=west] (S41-2) {\yng(4,1)};
\draw (S5-2) node[below=4pt,black] {\color{blue}\nmm 1};
\draw (S41-2) node[below=4pt,black] {\color{blue}\nmm 1};
\path (-15,-15)  node[anchor=west]  {$\ell= 1\half$};
\path (5,-15)  node[anchor=west] (S4-3) {\yng(4)};
\path (22,-15)  node[anchor=west] (S31-3) {\yng(3,1)};
\draw (S4-3) node[below=4pt, black] {\color{blue}\nmm 2};
\draw (S31-3) node[below=4pt,black] {\color{blue}\nmm 1};
\path (-15,-20)node[anchor=west]  {$\ell=2$};
\path (0,-20)  node[anchor=west] (S5-4) {\yng(5)};
\path (15,-20)  node[anchor=west] (S41-4) {\yng(4,1)};
\path (27,-20)  node[anchor=west] (S32-4) {\yng(3,2)};
\path (38,-20)  node[anchor=west] (S311-4) {\yng(3,1,1)};
\draw (S5-4) node[below=5pt, black] {\color{blue}\nmm 2};
\draw (S41-4) node[below=5pt, black] {\color{blue}\nmm 3};
\draw (S32-4) node[below=5pt, black] {\color{blue}\nmm 1};
\draw (S311-4) node[below=5pt, black] {\quad\color{blue}\nmm 1};
\path (-15,-25) node[anchor=west]  {$\ell = 2\half$};
\path (5,-25)  node[anchor=west] (S4-5) {\yng(4)};
\path (22,-25)  node[anchor=west]  (S31-5) {\yng(3,1)};
\path (38,-25)  node[anchor=west] (S22-5) {\yng(2,2)};
\path (48,-25)  node[anchor=west] (S211-5) {\yng(2,1,1)};
\draw (S4-5) node[below=5pt,black] {\, \color{blue}\nmm 5};
\draw (S31-5) node[below=5pt,black] {\color{blue}\nmm 5};
\draw (S22-5) node[below=5pt, black]  {\color{blue}\nmm 1};
\draw (S211-5) node[below=5pt, black] {\color{blue}\quad \nmm 1};
\path (-15,-31)  node[anchor=west]  {$\ell=3$};
\path (0,-31)  node[anchor=west] (S5-6) {\yng(5)};
\path (15,-31)  node[anchor=west] (S41-6) {\yng(4,1)};
\path (27,-31)  node[anchor=west] (S32-6) {\yng(3,2)};
\path (38,-31)  node[anchor=west] (S311-6) {\yng(3,1,1)};
\path (48,-31)  node[anchor=west] (S221-6) {\yng(2,2,1)};
\path (57,-31)  node[anchor=west] (S2111-6) {\yng(2,1,1,1)};
\draw (S5-6) node[below=5pt,black] {\!\color{blue}\nmm 5};
\draw (S41-6) node[below=5pt, black] {\!\color{blue}\nmm 10};
\draw (S32-6) node[below=5pt, black] {\color{blue}\nmm 6};
\draw (S311-6) node[below=5pt,  black]  {\color{blue}\nmm 6};
\draw (S221-6) node[below=5pt, black] {\color{blue}\quad \nmm 2};
\draw (S2111-6) node[below=5pt,black] {\color{blue}\quad \nmm 1};
\path (-15,-37)  node[anchor=west]  {$\ell = 3\half$};
\path (5,  -37)  node[anchor=west] (S4-7) {\yng(4)};
\path (22,-37)  node[anchor=west] (S31-7) {\yng(3,1)};
\path (38,-37)  node[anchor=west] (S22-7) {\yng(2,2)};
\path (48,-37)  node[anchor=west] (S211-7) {\yng(2,1,1)};
\path (57,-37)  node[anchor=west] (S1111-7) {\yng(1,1,1,1)};
\draw (S4-7) node[below=5pt,black] {\,\color{blue}\,\nmm 15};
\draw (S31-7) node[below=6pt,black] {\color{blue}\nmm 22};
\draw (S22-7) node[below=5pt, black] {\color{blue}\nmm 8};
\draw (S211-7) node[below=5pt, black] {\color{blue}\quad\nmm 9};
\draw (S1111-7) node[below=6pt, black] {\color{blue}\,\quad\nmm 1};
\path (-15,-43)  node[anchor=west]  {$\ell= 4$};
\path (0,-43)  node[anchor=west] (S5-8) {\yng(5)};
\path (15,-43)  node[anchor=west] (S41-8) {\yng(4,1)};
\path (27,-43)  node[anchor=west] (S32-8) {\yng(3,2)};
\path (38,-43)  node[anchor=west] (S311-8) {\yng(3,1,1)};
\path (48,-43)  node[anchor=west] (S221-8) {\yng(2,2,1)};
\path (57,-43)  node[anchor=west] (S2111-8) {\yng(2,1,1,1)};
\path (66,-43)  node[anchor=west] (S11111-8) {\yng(1,1,1,1,1)};
\draw (S5-8) node[below=4pt, black] {\color{blue}\,\nmm 15};
\draw (S41-8) node[below=4pt, black] {\color{blue}\,\nmm 37};
\draw (S32-8) node[below=5pt, black] {\color{blue}\,\nmm 30};
\draw (S311-8) node[below=4pt, black] {\color{blue}\quad\nmm 31};
\draw (S221-8) node[below=4pt, black] {\color{blue}\qquad\nmm 17};
\draw (S2111-8) node[below=4pt, black] {\color{blue}\qquad\,\nmm 10};
\draw (S11111-8) node[below=4pt, black] {\color{blue}\qquad\,\,\nmm 1};
\path (-15,-49)  node[anchor=west]  {$\ell = 4\half$};
\path (5,  -49)  node[anchor=west] (S4-9) {\yng(4)};
\path (22,-49)  node[anchor=west] (S31-9) {\yng(3,1)};
\path (38,-49)  node[anchor=west] (S22-9) {\yng(2,2)};
\path (48,-49)  node[anchor=west] (S211-9) {\yng(2,1,1)};
\path (57,-49)  node[anchor=west] (S1111-9) {\yng(1,1,1,1)};
\draw (S4-9) node[below=5pt,black] {\,\color{blue}\,\nmm 52};
\draw (S31-9) node[below=8pt,black] {\color{blue}\nmm 98};
\draw (S22-9) node[below=5pt, black] {\color{blue}\nmm 47};
\draw (S211-9) node[below=0pt, black] {\color{blue}\qquad\nmm 58};
\draw (S1111-9) node[below=0pt, black] {\color{blue}\ \qquad\nmm 11};
\path (-15,-55)  node[anchor=west]  {$\ell= 5$};
\path (0,-55)  node[anchor=west] (S5-10) {\yng(5)};
\path (15,-55)  node[anchor=west] (S41-10) {\yng(4,1)};
\path (27,-55)  node[anchor=west] (S32-10) {\yng(3,2)};
\path (38,-55)  node[anchor=west] (S311-10) {\yng(3,1,1)};
\path (48,-55)  node[anchor=west] (S221-10) {\yng(2,2,1)};
\path (57,-55)  node[anchor=west] (S2111-10) {\yng(2,1,1,1)};
\path (66,-55)  node[anchor=west] (S11111-10) {\yng(1,1,1,1,1)};
\draw (S5-10) node[below=4pt, black] {\color{blue}\,\nmm 52};
\draw (S41-10) node[below=4pt, black] {\color{blue}\,\nmm 150};
\draw (S32-10) node[below=5pt, black] {\color{blue}\,\nmm 145};
\draw (S311-10) node[below=4pt, black] {\color{blue}\quad\nmm 156};
\draw (S221-10) node[below=4pt, black] {\color{blue}\qquad\nmm 105};
\draw (S2111-10) node[below=4pt, black] {\color{blue}\qquad\,\nmm 69};
\draw (S11111-10) node[below=4pt, black] {\color{blue}\qquad\,\,\nmm 11};
\path   (-12.5,-58.5)  node[anchor=west]  {$\vdots$};
\path   (5,-58.5)  node[anchor=west]  {$\vdots$};
\path (17,-58.5)  node[anchor=west]  {$\vdots$};
\path (29,-58.5)  node[anchor=west]  {$\vdots$};
\path (39,-58.5)  node[anchor=west]  {$\vdots$};
\path (49,-58.5)  node[anchor=west]  {$\vdots$};
\path (57.5,-58.5)  node[anchor=west]  {$\vdots$};
\path (66.5,-58.5)  node[anchor=west]  {$\vdots$};
\path  (S5-0) edge[black,thick] (S4-1);
\path  (S5-2) edge[black,thick] (S4-1);
\path  (S41-2) edge[black,thick] (S4-1);
\path  (S5-2) edge[black,thick] (S4-3);
\path  (S41-2) edge[black,thick] (S4-3);
\path  (S41-2) edge[black,thick] (S31-3);
\path  (S5-4) edge[black,thick] (S4-3);
\path  (S41-4) edge[black,thick] (S4-3);
\path  (S41-4) edge[black,thick] (S31-3);
\path  (S32-4) edge[black,thick] (S31-3);
\path  (S311-4) edge[black,thick] (S31-3);
\path  (S5-4) edge[black,thick] (S4-5);
\path  (S41-4) edge[black,thick] (S4-5);
\path  (S41-4) edge[black,thick] (S31-5);
\path  (S32-4) edge[black,thick] (S31-5);
\path  (S32-4) edge[black,thick] (S22-5);
\path  (S311-4) edge[black,thick] (S31-5);
\path  (S311-4) edge[black,thick] (S211-5);
\path  (S5-6) edge[black,thick] (S4-5);
\path  (S41-6) edge[black,thick] (S4-5);
\path  (S41-6) edge[black,thick] (S31-5);
\path  (S32-6) edge[black,thick] (S31-5);
\path  (S32-6) edge[black,thick] (S22-5);
\path  (S311-6) edge[black,thick] (S31-5);
\path  (S221-6) edge[black,thick] (S22-5);
\path  (S311-6) edge[black,thick] (S211-5);
\path  (S221-6) edge[black,thick] (S211-5);
\path  (S2111-6) edge[black,thick] (S211-5);
\path  (S5-6) edge[black,thick] (S4-7);
\path  (S41-6) edge[black,thick] (S4-7);
\path  (S41-6) edge[black,thick] (S31-7);
\path  (S311-6) edge[black,thick] (S211-7);
\path  (S32-6) edge[black,thick] (S31-7);
\path  (S32-6) edge[black,thick] (S22-7);
\path  (S221-6) edge[black,thick] (S22-7);
\path  (S311-6) edge[black,thick] (S31-7);
\path  (S221-6) edge[black,thick] (S211-7);
\path  (S221-6) edge[black,thick] (S211-7);
\path  (S2111-6) edge[black,thick] (S211-7);
\path  (S2111-6) edge[black,thick] (S1111-7);
\path  (S2111-8) edge[black,thick] (S1111-7);
\path  (S2111-8) edge[black,thick] (S211-7);
\path  (S11111-8) edge[black,thick] (S1111-7);
\path  (S221-8) edge[black,thick] (S211-7);
\path  (S221-8) edge[black,thick] (S22-7);
\path  (S311-8) edge[black,thick] (S211-7);
\path  (S311-8) edge[black,thick] (S31-7);
\path  (S32-8) edge[black,thick] (S22-7);
\path  (S32-8) edge[black,thick] (S31-7);
\path  (S5-8) edge[black,thick] (S4-7);
\path  (S41-8) edge[black,thick] (S4-7);
\path  (S41-8) edge[black,thick] (S31-7);
\path  (S5-8) edge[black,thick] (S4-9);
\path  (S41-8) edge[black,thick] (S4-9);
\path  (S41-8) edge[black,thick] (S31-9);
\path  (S311-8) edge[black,thick] (S211-9);
\path  (S32-8) edge[black,thick] (S31-9);
\path  (S32-8) edge[black,thick] (S22-9);
\path  (S221-8) edge[black,thick] (S22-9);
\path  (S311-8) edge[black,thick] (S31-9);
\path  (S221-8) edge[black,thick] (S211-9);
\path  (S221-8) edge[black,thick] (S211-9);
\path  (S2111-8) edge[black,thick] (S211-9);
\path  (S2111-8) edge[black,thick] (S1111-9);
\path  (S11111-8) edge[black,thick] (S1111-9);
\path  (S2111-10) edge[black,thick] (S1111-9);
\path  (S2111-10) edge[black,thick] (S211-9);
\path  (S11111-10) edge[black,thick] (S1111-9);
\path  (S221-10) edge[black,thick] (S211-9);
\path  (S221-10) edge[black,thick] (S22-9);
\path  (S311-10) edge[black,thick] (S211-9);
\path  (S311-10) edge[black,thick] (S31-9);
\path  (S32-10) edge[black,thick] (S22-9);
\path  (S32-10) edge[black,thick] (S31-9);
\path  (S5-10) edge[black,thick] (S4-9);
\path  (S41-10) edge[black,thick] (S4-9);
\path  (S41-10) edge[black,thick] (S31-9);
\draw (90,-1) node[anchor=east,black] {$~~{\color{black}\mathbf{1}}$};
\draw (90,-6) node[anchor=east,black] {$~~{\color{black}\mathbf{1}}$};
\draw (90,-11) node[anchor=east,black] {$~~{\color{black}\mathbf{2}}$};
\draw (90,-15.5) node[anchor=east,black] {$~~{\color{black}\mathbf{5}}$};
\draw (90,-20) node[anchor=east,black] {$~~{\color{black}\mathbf{15}}$};
\draw (90,-25) node[anchor=east,black] {$~~{\color{black}\mathbf{52}}$};
\draw (90,-31) node[anchor=east,black] {$~~{\color{black}\mathbf{202}}$};
\draw (90,-37) node[anchor=east,black] {$~~{\color{black}\mathbf{855}}$};
\draw (90,-43) node[anchor=east,black] {$~~{\color{black}\mathbf{3845}}$};
\draw (90,-49) node[anchor=east,black] {$~~{\color{black}\mathbf{18002}}$};
\draw (90,-55) node[anchor=east,black] {$~~{\color{black}\mathbf{86472}}$};
\end{tikzpicture}
$$
\caption{Levels  \ $0,\half,1,\ldots,3\half,4$ \  of the Bratteli diagram for the pair $(\S_5, \S_4)$.
All of the partitions of 5  appear on level $\ell = 4$, and the structure of the diagram stabilizes
for $\ell \ge 4$.
  \label{fig:Sbratteli}}
\end{figure}
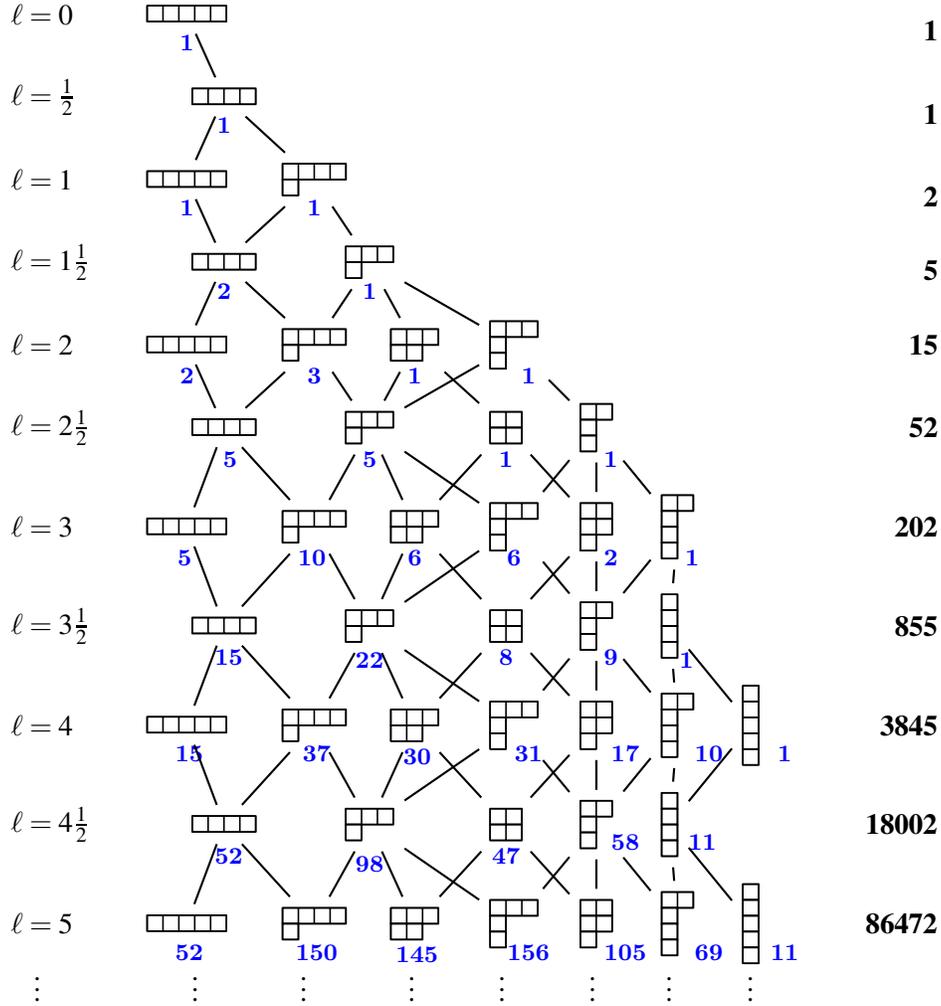

In Figure \ref{fig:Sbratteli}, we display the top portion of the restriction-induction Bratteli diagram $\mathcal{B}(\S_5,\S_4)$.  Below each partition, we record the number of paths from the top of the diagram to that partition.  For
integer values of $k$, the number beneath the partition  $\lambda\vdash n$ represents the multiplicity $\m_{k,n}^\lambda$ of the irreducible $\S_n$-module
$\S_n^\lambda$,  in $\M_n^{\ot k}$ (with $n=5$ in this particular example).  For values $k +\half$,  it indicates
the  multiplicity of $\S_{n-1}^\mu$, $\mu \vdash n-1$,  in the restriction of $\M_n^{\ot k}$ to $\S_{n-1}$.  
The number on the right of each line is $\dim(\Z_{k,n})$ (or $\dim(\Z_{k+\half,n})$), which is
the sum of the squares of the subscripts on the line. 

 The module $\M_n$ is faithful, so by Burnside's theorem every irreducible $\S_n$-module appears in $\M_n^{\otimes k}$, for some $k \ge 0$. In the example of Figure \ref{fig:Sbratteli},  all of the irreducible  $\S_5$-modules appear in $\M_5^{\otimes 4}$ (i.e., all of the partitions of 5 appear in $\mathcal{B}(\S_5,\S_4)$ at level $\ell = 4$). More generally, all of the partitions  $\lambda$ of $n$ appear in $\M_n^{\otimes n-1}$. This can be seen by taking any standard tableau $T$ of shape $\lambda$ and removing boxes in succession
starting with the one containing $n$ and proceeding down to the box with $2$.  Each removed box is placed at the end of the
first row as  it is removed. If the box is already in the first row, then remove the box at the end of the first row, and put it back at the end of the first row. Read in reverse, this sequence of partitions  obtained in this way will determine a path of length $2(n-1)$  from $[n]$ at level $1$ to $\lambda$ at level
$n-1$ in $\mathcal{B}(\S_n,\S_{n-1})$,  counting the removal of a box as one step and the adjoining of the box to the first row as another.
Furthermore, if $k \in \ZZ_{\ge 0}$ and if $\lambda\in\Lambda_{k,\S_n}$ for some integer $k$, then $\lambda\in\Lambda_{k+1,\S_n}$, since we can always remove any removable box in $\lambda$ and place it back in its same position. Thus,
\begin{equation}
\Lambda_{k,\S_n} = \Lambda_{\S_n} = \{ \lambda \mid \lambda \vdash n \} \, \ \ \text{ for all \ $k \ge n-1$},
\end{equation}
and  all partitions of $n$ occur  at level $k$, and all partitions of $n-1$ occur at level $k-\half$  in the Bratteli diagram for all $k \ge n-1$.  The first time the partition $[1^n]$ appears in $\mathcal{B}(\S_n,\S_{n-1})$ is when $k = n - 1$, since there is exactly one path from $[n]$ at level 0 to $[1^n]$ at level $k = n-1$ given by removing a box from row 1 at each  half-integer step and placing it at the bottom of the first column of the diagram
at the next integer step. Thus, $k \ge n-1$ is a necessary and sufficient condition for $\Lambda_{k,\S_n} = \Lambda_{\S_n}$.

Paths in the Bratteli diagram $\mathcal{B}(\S_n,\S_{n-1})$ give the dimension $\m_{k,n}^\lambda$ of the irreducible module 
$\Z_{k,n}^\lambda$ of the centralizer algebra $\Z_{k,n}$ in the same way that paths in Young's lattice \cite[Sec.~5.1]{Sa} give the dimension of the symmetric group module $\S_n^\lambda$. For this reason, we make the following definition (see also \cite{HL}, \cite{CDDSY}, \cite{BHH}, where this  definition is used).

\begin{definition} 
Let $k \in  \ZZ_{\ge 0}$ and let $\lambda \in \Lambda_{k,\S_n}$.  Each path of length $k$ from $[n]$ at level $0$ to $\lambda$ at level $k$ in the Bratteli diagram $\mathcal{B}(\S_n,\S_{n-1})$  determines a \emph{vacillating tableau $\underline \upsilon$ of shape $\lambda$ and length $k$}, which is  an alternating sequence
$$
\underline \upsilon =\left(\lambda^{(0)} = [n],\, \lambda^{(\half)} = [n-1],\, \lambda^{(1)},\lambda^{(1+\half)}, \ldots, \lambda^{(k-\half)},\lambda^{(k)} = \lambda\right)
$$
 of partitions starting at $\lambda^{(0)} = [n]$  and terminating at the partition $\lambda^{(k)} = \lambda$  
 such that $\lambda^{(i)} \in \Lambda_{i,n}$, \, $\lambda^{(i+\half)} \in \Lambda_{i+\half,{n-1}}$ for each
 integer $0 \le i < k$, and 
 \begin{enumerate}
\item[(a)] $\lambda^{(i+\half)} = \lambda^{(i)} - \square$,
\item[(b)] $\lambda^{(i+1)} = \lambda^{(i +\half)} + \square$.
\end{enumerate} 
\end{definition}

There is an easy bijection between paths to $\lambda$ of length $k$ in Young's lattice to standard tableaux of shape $\lambda \vdash k$. In Section \ref{subsec:biject}, we describe a bijection between vacillating tableaux of shape  $\lambda$ and length $k$ and set-partition tableaux 
of shape $\lambda$ whose nonzero entries are $1,2, \ldots,k$.
Since paths in the Bratteli diagram correspond to vacillating tableaux, \eqref{eq:pathdim} tells us the following:
\medskip

\emph{If $k,n \in \ZZ_{\ge 0}$ and $n \geq 1$, then  for all $\lam \in \Lambda_{k,\S_n}$
\begin{equation}
\dim(\Z_{k,n}^\lambda)  =  \m_{k,n}^\lambda  =  
\big \vert \big\{\text{vacillating tableaux of shape $\lambda$ and length $k$}\big\} \big \vert.
\end{equation}}

\section{Set partitions}\label{sec:setpart}

For $k \in \ZZ_{\ge 1}$, let  $[1,2k] = \{1,2,\ldots,2k\}$,  as before.
The set partitions in \begin{equation}
\begin{array}{rcl}\label{def:setparts}
\Pi_{2k} &=& \left\{ \text{\,set partitions of $[1,2k]$\,} \right\}, \\
\Pi_{2k-1} &=& \left\{ \pi \in \Pi_{2k} \mid k \text{ and } 2k \text{\,are in the same block of\,} \pi \right\}.
\end{array}
\end{equation}
 index bases of the partition algebras $\P_k(n)$ and $\P_{k+\half}(n)$, respectively. 
We let $|\pi|$ equal the number of blocks of $\pi$. 
For example, if
\begin{equation}\label{ex:setparts}
\begin{array}{rcl}
\pi &=& \big\{1,8,9,10 \,|\,  2,3 \,|\, 4,7\,|\, 5,6,11,12,14\,|\, 13\big\} \in \Pi_{14}, \\
\varrho &=& \big\{1,8,9,10\,|\,  2,3 \,|\,4 \,|\,  5,6,11,12 \,|\, 7,13, 14\big\} \in \Pi_{13} \subseteq \Pi_{14},
\end{array}
\end{equation}
then $\pi \not \in \Pi_{13}$ and  $|\pi| = |\varrho|=5$.   

For $\ell,n \in \Z_{\ge 1}$, define
\begin{equation}\label{def:restrictedsetparts}
\Pi_{\ell,n} = \{ \pi \in \Pi_\ell \mid \pi \text{ has at most $n$ blocks } \}. 
\end{equation}
Then the cardinalities of these sets are the Bell numbers: $|\Pi_{\ell,n}| = \mathsf{B}(\ell,n)$ and $|\Pi_{\ell}| = \mathsf{B}(\ell)$. We refer to the $\mathsf{B}(\ell,n) = \sum_{t = 1}^n \stirlinginline{\ell}{t}$ as  an \emph{$n$-restricted Bell number}.

\subsection{Multiplicities from a permutation module perspective}\label{subsec:multperm}
We begin by discussing a second approach to decomposing $\M_n^{\ot k}$ using permutation modules
for $\S_n$, and then describe the connections with set-partition tableaux.   

If  $\{\vf_1,\vf_2, \dots, \vf_n\}$ is the basis of $\M_n$ that $\S_n$ permutes, then for each set partition
$\pi$ of $\{1,2, \dots, k\}$,  
\begin{equation}\label{eq:PermutationCondition}
\M^\pi = \text{span}_\CC\left\{\vf_{j_1} \ot \vf_{j_2} \ot \cdots \ot \vf_{j_k} \mid  j_a = j_b\ \iff\ a, b \text{ are in the same block of } \pi
\right\}
\end{equation}
is an $\S_n$-submodule of $\M_n^{\otimes k}$.
To see this, recall that  $\sigma \in \S_{n}$ acts diagonally on simple tensors,
$\sigma.(\vf_{j_1} \ot \vf_{j_2} \ot \cdots \ot \vf_{j_k}) =  \vf_{\sigma(\vf_{j_1})} \ot \cdots \ot \vf_{\sigma(\vf_{j_k})},$ and so it preserves the condition in
\eqref{eq:PermutationCondition}.

For  $1 \leq t \leq n$,  every ordered subset of $t$ elements of $\{1,2,\dots, n\}$ determines a tabloid of
shape  $[n-t,1^t]$, where the $t$ elements are placed in order from top to bottom in the boxes corresponding to the $t$ singleton 
boxes, and the remaining
$n-t$ elements are placed in the first row (their order in the first row is unimportant, since this is a tabloid, not a tableau).  Thus, the ordered subsets of $t$ elements are in one-to-one correspondence with the  tabloids of shape $[n-t,1^t]$.  Such
tabloids form a basis for  the  permutation module $\M^{[n-t,1^t]}$, which is the $\S_n$-module obtained by inducing the trivial module
for the subgroup $\S_{n-t} \times \underbrace{\S_1 \times \cdots \times \S_1}_{t \ \text{terms}}$ to $\S_n$
(see for example \cite[Sec.~2.1]{Sa}).  

Now a set partition $\pi$  of $\{1,2, \ldots, k\}$ into $t$ blocks for $1 \le t \le n$ determines a collection of simple tensors
$\vf_{j_1} \ot \vf_{j_2} \ot \cdots \ot \vf_{j_k}$ satisfying \eqref{eq:PermutationCondition}, and the distinct  simple tensors
form a basis for $\M^\pi$.  Each such simple tensor determines an ordered set of $t$ elements, and $\M^\pi \cong
\M^{[n-t,1^t]}$. Since $\M_n = \M^{[n-1,1]}$, use of the term ``permutation module'' for both is consistent.   
For example, the simple tensor $\vf: =\v_3 \ot \v_1 \ot \v_3 \ot \v_4 \ot \v_4 \ot \v_1 \ot \v_1 \ot \v_3 \ot \v_5 \in \M_n^{\ot 9}$ corresponds to the set partition $\pi = \{1,3,8 \mid 2, 6, 7 \mid 4, 5 \mid 9\}$ having $t = 4$ blocks, and the linear span of such simple tensors gives $\M^\pi$, which is isomorphic to $\M^{[n-4,1^4]}$.   This correspondence can be made explicit by
having $\vf$ correspond to the tabloid with $2,6,7,8,9$ in its first row, then with $3,1,4,5$ in the boxes corresponding to
$1^4$ in succession from top to bottom.  Observe that the sizes of the blocks are immaterial to this isomorphism, what 
is relevant is the number $t$ of blocks.
  
The number of set partitions $\pi$ of the tensor positions $\{1, 2,\ldots, k\}$ into $1 \le t \le n$ parts is the Stirling number
$\stirlinginline{k}{t}$ of the second kind,  so by partitioning the simple tensors this way, we obtain the following decomposition
of $\M_n^{\ot k}$ into permutation modules for $\S_n$:
\begin{equation}
\M_n^{\ot k} = \bigoplus_{t=1}^n \stirling{k}{t} \M^{[n-t,1^t]}.
\end{equation} 
Note that $\stirlinginline{k}{t} = 0$ whenever $t > k$.

Young's rule (see for example, \cite[Thm.~2.11.2]{Sa}) gives the decomposition of the permutation module $\M^\gamma$  for $\gamma = [\gamma_1, \gamma_2, \ldots,\gamma_n] \vdash n$ into irreducible $\S_n$-modules $\S_n^\lambda$. It states that the multiplicity of $\S_n^\lambda$ in $\M^\gamma$  equals the Kostka number $\Ks_{\lam,\gamma}$,  which counts the number of \emph{semistandard tableaux} $T$ of shape $\lam$
and type $\gamma$,  where $T$ is a filling of the boxes of the Young diagram of $\lam$ with numbers from $\{1,2,\dots, n\}$ such that $j$ occurs $\gamma_j$ times, and the entries of $T$ weakly increase across the rows from left to right and strictly increase down the columns.   It follows that 
 \begin{equation*}
\M_n^{\ot k} = \bigoplus_{t=1}^n \stirling{k}{t} \M^{[n-t,1^t]} \cong
\bigoplus_{t=1}^n \stirling{k}{t} \left(\sum_{\lambda \vdash n} \Ks_{\lam, [n-t,1^t]} \S_n^\lambda\right)
\cong  \bigoplus_{\lambda \vdash n} \left(\sum _{t=1}^n \stirling{k}{t}  \Ks_{\lam, [n-t,1^t]}\right) \S_n^\lambda.
\end{equation*}

In this case, the Kostka number $\Ks_{\lam, [n-t, 1^t]}$ counts the number of semistandard tableaux of shape $\lambda$ filled with the entries 
$\{ 0^{n-t}, 1, 2, \ldots, t\}$.  A semistandard
tableau of shape $\lam$ whose entries are  $n-t$ zeros and the numbers $1, 2, \dots, t$  must have the $n-t$ zeros in the first row
and have a standard filling of the skew shape $\lam/[n-t]$ with the numbers $1,2,\dots,t$.  Consequently,  the number of such fillings is given by $f^{\lam/[n-t]}$  (see for example, \cite[Cor. 7.16.3]{S1}), and  $\Ks_{\lam, [n-t, 1^t]} = 0$
unless  $\lam_1 \geq n-t$, i.e.  unless $t \geq n-\lam_1 = |\lam^\#|$,  where $\lam^\#$ is the partition
obtained by removing one copy of the largest part of $\lam$.  

The next result combines  \cite[Thm.~5.5]{BHH} with \eqref{eq:dim1}-\eqref{eq:dim3}.   In the statement,  $\M_n^{\ot 0}$ should be  
interpreted as being $\FF \cong \S_n^{[n]}$ (as an $\S_n$-module).       The discussion above provides a proof of
the final equality in part (a) of the following theorem.  

 \begin{thm} \label{T:cent}   
 For   $k\in \ZZ_{\ge 0}$ and $n \in \ZZ_{\ge 1}$,  suppose $\M_n^{\ot k} = \bigoplus_{\lambda \in \Lambda_{k,\S_n}}
 \m_{k,n}^\lambda \S_n^\lambda$.     Assume $\Z_{k,n}^\lam$ is the irreducible
 module indexed by $\lam$ for $\Z_{k,n} =\End_{\S_n}(\M_n^{\ot k})$, as in (\ref{SW1})-(\ref{SW3}).  
 \begin{itemize}
\item[{\rm (a)}]  If $\lam \in \Lambda_{k,\S_n}$, then  
\begin{equation*} \frac{1}{n\,!} \sum_{\sigma \in \S_n} 
\mathsf{F}(\sigma)^{k}\chi_{\lam}{(\sigma)} \,=\, \dim(\Z_{k,n}^\lambda) \,=\, \m_{k,n}^\lambda \,=\, \sum_{t=|\lam^\#|}^n \stirling{k}{t} \, f^{\lam/[n-t]},
 \end{equation*}
where $\mathsf{F}(\sigma)$ is the number of fixed points of $\sigma$, and  $f^{\lam/[n-t]}$ is the number of standard tableaux of skew shape $\lambda/[n-t]$. 
 
\smallskip
\item[{\rm (b)}] $\displaystyle{\dim(\Z_{k,n}) = \dim\big(\Z_{2k,n}^{[n]}\big) =   \frac{1}{n\,!}  \sum_{\sigma \in \S_n}\mathsf{F}(\sigma)^{2k} =
\sum_{t =0}^{n} \stirling{2k}{t}}   = \mathsf{B}(2k,n).$ 

\smallskip

Furthermore,  $\dim(\Z_{k,n}) = \mathsf{B}(2k,n) =   \mathsf{B}(2k)$  if  $n \geq 2k.$

\medskip
\item[{\rm (c)}]   If $\mu \in \Lambda_{k+\half, \S_{n-1}}$, then
\begin{align*} \dim\big(\Z_{k+\half,n}^\mu\big) 
&= \sum_{t=|\mu^\#|}^{n-1}\stirling{k+1}{t+1} \, f^{\mu/[n-1-t]} =  \frac{1}{(n-1)!}  \sum_{\tau \in \S_{n-1}}\big(\mathsf{F}(\tau) + 1\big)^k \chi_\mu (\tau).
 \end{align*}
 
 \item[{\rm (d)}]   $\displaystyle{\dim(\Z_{k+\half,n}) \ =  \ \dim\big(\Z_{2k+1,n}^{[n-1]}\big)= \sum_{t =1}^{n} \stirling{2k+1}{t} \ = \mathsf{B}(2k+1,n). }$
 
 \smallskip
 
 Furthermore, $\dim(\Z_{k+\half,n}) \ =\mathsf{B}(2k+1,n) = \mathsf{B}(2k+1)$ if $n \geq 2k+1$.

 \end{itemize}
\end{thm}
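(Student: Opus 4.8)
The plan is to obtain all four parts from the final equality of part~(a) --- whose proof the discussion preceding the theorem supplies --- together with the Schur--Weyl identities \eqref{SW2}, \eqref{SW4}, \eqref{eq:dim2}, \eqref{eq:dim3} already recorded. Part~(a) then needs nothing further: $\frac{1}{n!}\sum_{\sigma\in\S_n}\mathsf{F}(\sigma)^k\chi_\lambda(\sigma)=\dim(\Z_{k,n}^\lambda)$ is \eqref{eq:dim3}, $\dim(\Z_{k,n}^\lambda)=\m_{k,n}^\lambda$ is \eqref{SW2}, and the concatenation of these with the combinatorial formula just proved is exactly the assertion.

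For part~(b) the first two equalities are \eqref{SW4} and \eqref{eq:dim2}, and the third is part~(a) applied with $\lambda=[n]$ and $k$ replaced by $2k$: here $\lambda^\#$ is empty, so the sum begins at $t=0$, and the skew shape $[n]/[n-t]$ is a single row of $t$ boxes with a unique standard filling, so $f^{[n]/[n-t]}=1$ for $0\le t\le n$; thus $\dim(\Z_{2k,n}^{[n]})=\sum_{t=0}^n\stirling{2k}{t}=\mathsf{B}(2k,n)$. Since $\stirling{2k}{t}=0$ for $t>2k$, the truncated sum already equals the full Bell number $\mathsf{B}(2k)$ once $n\ge 2k$.

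For part~(c) the crucial observation is that, restricted to the copy of $\S_{n-1}$ fixing $\vf_n$, one has $\M_n\cong\M_{n-1}\oplus\FF\vf_n$ with $\FF\vf_n$ trivial, so $\chi_{\M_n}(\tau)=\mathsf{F}(\tau)+1$ and $\chi_{\M_n^{\ot k}}(\tau)=(\mathsf{F}(\tau)+1)^k$ for $\tau\in\S_{n-1}$. Carrying out the character computation behind \eqref{eq:dim3}, now for the commuting pair $\bigl(\S_{n-1},\,\Z_{k+\half,n}=\End_{\S_{n-1}}(\M_n^{\ot k})\bigr)$ acting on $\M_n^{\ot k}$, produces the second asserted equality $\dim(\Z_{k+\half,n}^\mu)=\frac{1}{(n-1)!}\sum_{\tau\in\S_{n-1}}(\mathsf{F}(\tau)+1)^k\chi_\mu(\tau)$. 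Expanding $(\mathsf{F}(\tau)+1)^k=\sum_{j=0}^{k}\binom{k}{j}\mathsf{F}(\tau)^j$ and applying part~(a) (with $\S_n$ replaced by $\S_{n-1}$) term by term gives $\dim(\Z_{k+\half,n}^\mu)=\sum_{j=0}^{k}\binom{k}{j}\m_{j,n-1}^\mu=\sum_{j=0}^{k}\binom{k}{j}\sum_{t=|\mu^\#|}^{n-1}\stirling{j}{t}f^{\mu/[n-1-t]}$; interchanging the sums and invoking the classical identity $\sum_{j=0}^{k}\binom{k}{j}\stirling{j}{t}=\stirling{k+1}{t+1}$ (partition $\{1,\dots,k+1\}$ into $t+1$ blocks by choosing which $j$ of $1,\dots,k$ avoid the block of $k+1$ and partitioning those into $t$ blocks) collapses the expression to $\sum_{t=|\mu^\#|}^{n-1}\stirling{k+1}{t+1}f^{\mu/[n-1-t]}$. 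Equivalently one restricts directly, $\mathsf{Res}^{\S_n}_{\S_{n-1}}\M_n^{\ot k}\cong(\M_{n-1}\oplus\FF)^{\ot k}\cong\bigoplus_{j=0}^{k}\binom{k}{j}\M_{n-1}^{\ot j}$, and feeds each summand into the permutation-module decomposition of part~(a).

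For part~(d) I would reduce $\dim(\Z_{k+\half,n})$ to a single multiplicity. Just as for \eqref{SW4}, $\Z_{k+\half,n}=\End_{\S_{n-1}}(\M_n^{\ot k})\cong(\M_n^{\ot 2k})^{\S_{n-1}}$ since $\M_n\cong\M_n^{\ast}$ over $\S_{n-1}$; this invariant space has dimension equal to the multiplicity of the trivial $\S_{n-1}$-module in $\mathsf{Res}^{\S_n}_{\S_{n-1}}\M_n^{\ot 2k}$, which is the middle term of the statement. By Frobenius reciprocity (alternatively by the tensor identity \eqref{eq:tensid}, which gives $\mathsf{Ind}^{\S_n}_{\S_{n-1}}\mathsf{Res}^{\S_n}_{\S_{n-1}}\M_n^{\ot 2k}\cong\M_n^{\ot 2k}\ot\M_n$), one has $(\M_n^{\ot 2k})^{\S_{n-1}}\cong\mathrm{Hom}_{\S_n}(\M_n,\M_n^{\ot 2k})\cong(\M_n^{\ot(2k+1)})^{\S_n}$, whose dimension is $\m_{2k+1,n}^{[n]}$. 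Part~(a) with $\lambda=[n]$ and $k$ replaced by $2k+1$ evaluates this as $\sum_{t=0}^n\stirling{2k+1}{t}=\sum_{t=1}^n\stirling{2k+1}{t}=\mathsf{B}(2k+1,n)$ (the $t=0$ term vanishes since $2k+1\ge1$), and $\stirling{2k+1}{t}=0$ for $t>2k+1$ forces $\mathsf{B}(2k+1,n)=\mathsf{B}(2k+1)$ once $n\ge 2k+1$. I expect the main obstacle to lie in part~(c): one must transport the Schur--Weyl character identity to $\S_{n-1}$ acting on $\M_n^{\ot k}$ --- where, as the text warns, the module remains $\M_n^{\ot k}$ and is not $\M_{n-1}^{\ot k}$ --- verify the binomial--Stirling identity, and track the index ranges together with the conventions $\M_n^{\ot 0}=\FF$ and $f^{\mu/[n-1-t]}=0$ for $t<|\mu^\#|$. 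Parts~(b) and~(d) are then short specializations of part~(a).
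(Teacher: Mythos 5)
Your proposal is sound and tracks the paper's architecture closely: part~(a) is the juxtaposition of \eqref{eq:dim3}, \eqref{SW2}, and the permutation-module/Young's-rule computation developed in Section~\ref{subsec:multperm}; parts~(b) and~(d) specialize (a) at $\lambda=[n]$ with $k$ replaced by $2k$ (resp.\ $2k+1$) and use \eqref{SW4}, \eqref{eq:dim2}, together with the self-duality of $\M_n$ over $\S_{n-1}$; and part~(c) rests on $\chi_{\M_n}(\tau)=\mathsf{F}(\tau)+1$ for $\tau\in\S_{n-1}$. All of this is the route the paper takes.

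Where you go beyond the paper's exposition is in making part~(c) self-contained: the paper states the Stirling formula $\sum_{t}\stirling{k+1}{t+1}f^{\mu/[n-1-t]}$ with a reference to \cite[Thm.~5.5]{BHH} and to the substitution $\chi_{\M_n}|_{\S_{n-1}}=\mathsf{F}+1$, but does not spell out how the shifted Stirling numbers arise. Your reduction --- expanding $(\mathsf{F}(\tau)+1)^k$ binomially, applying the analog of (a) for $\S_{n-1}$ term by term, interchanging sums, and invoking the classical identity $\sum_{j=0}^{k}\binom{k}{j}\stirling{j}{t}=\stirling{k+1}{t+1}$ --- explains the $\stirling{k+1}{t+1}$ in a way the paper does not, as does the equivalent module-level decomposition $\Res^{\S_n}_{\S_{n-1}}\M_n^{\ot k}\cong\bigoplus_{j=0}^k\binom{k}{j}\,\M_{n-1}^{\ot j}$. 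Your identification $(\M_n^{\ot 2k})^{\S_{n-1}}\cong\mathrm{Hom}_{\S_n}(\M_n,\M_n^{\ot 2k})\cong(\M_n^{\ot(2k+1)})^{\S_n}$ for part~(d) lands on $\m_{2k+1,n}^{[n]}$, which by Proposition~\ref{P:fixed} is the multiplicity the paper denotes $\dim(\Z_{2k+1,n}^{[n-1]})$; so you reach the stated number by a slightly different route (Frobenius reciprocity on the trivial module rather than the tensor identity), but the conclusion is the same. Both approaches buy the same thing; yours trades a citation to \cite{BHH} for a short combinatorial calculation, which is arguably the more informative choice in a survey aiming to be self-contained.
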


 \begin{rem}  When $n > k$, the top limit in the summation in part (a) can be taken to be $k$ 
as the Stirling numbers $\stirlinginline{k}{t}$
are 0 for $t > k$.   When $n \leq k$, the
term $[n-t,1^t]$ for $t = n$ is assumed to be the partition $[1^n]$.  
 In that particular case,  $\mathsf{K}_{\lam, [1^n]} = f^\lam$, the
number of standard tableaux of shape $\lam$, as each entry in the tableau appears exactly once.   When $t = n-1$, the 
Kostka number is the same, $\mathsf{K}_{\lam, [1^n]} = f^\lam$. 
The only time that the term $t=0$ contributes is when $k = 0$.  In that case, $\stirlinginline{0}{0} = 1$, and the Kostka number 
$\mathsf{K}_{\lam, [n]} = 0$ if $\lam \neq [n]$ and $\mathsf{K}_{[n],[n]} = 1$.     Thus,  $\dim \Z_0^\lam(n) = \delta_{\lam,[n]}$,
as expected, since $\M_n^{\ot 0} = \S_n^{[n]}$.  
\end{rem} 

 In items (c) and (d) of  Theorem \ref{T:cent}, we have used the fact that as an $\S_{n-1}$-module, \  $\M_n = \M_{n-1} \oplus \FF \vf_n$, so the character value
$\chi_{\M_n}(\tau)$ of $\tau \in \S_{n-1}$ is the number of fixed points of $\tau$ plus 1.  Then since $\Z_{k+\half,n} = \End_{\S_{n-1}}(\M_n^{\ot k})$
and $\M_n$ is self-dual as an $\S_{n-1}$-module,  the  first line of part (d) holds.   The expressions in (a) and (c) can be related by the next result.

\begin{prop}\label{P:fixed} For all $k \in \ZZ_{\ge 0}$,  
\begin{align}\begin{split}\label{eq:fixedequals}
\frac{1}{n\,!}  \sum_{\sigma \in \S_n}
\mathsf{F}(\sigma)^{k+1} \ &= \m_{k+1,n}^{[n]} = \dim\big(\Z_{k+1,n}^{[n]}\big) \\
& =  \dim\big(\Z_{k+\half,n}^{[n-1]}\big) = \m_{k+\half,n-1}^{[n-1]}  =  \frac{1}{(n-1)!} \sum_{\tau \in \S_{n-1}}\big(\mathsf{F}(\tau) + 1\big)^{k},\end{split}
\end{align}
where $\m_{k+1,n}^{[n]}$ is the   multiplicity of $\S_n^{[n]}$ in $\M_n^{\ot k+1}$,
and $\m_{k+\half,n-1}^{[n-1]}$ is the multiplicity
of $\S_{n-1}^{[n-1]}$ in $\M_n^{\ot k}$, viewed as an $\S_{n-1}$-module. 
\end{prop}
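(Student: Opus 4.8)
The plan is to establish the chain of equalities in \eqref{eq:fixedequals} by reading off most of them from results already available in the excerpt and proving the one genuinely new link directly. The leftmost equality $\frac{1}{n!}\sum_{\sigma\in\S_n}\mathsf{F}(\sigma)^{k+1}=\m_{k+1,n}^{[n]}=\dim(\Z_{k+1,n}^{[n]})$ is exactly the case $\lam=[n]$ of \eqref{eq:dim3} together with the general fact $\dim(\Z_{k,n}^\lam)=\m_{k,n}^\lam$ from \eqref{SW2}; here one uses $\chi_{[n]}\equiv 1$. Symmetrically, the rightmost equality $\dim(\Z_{k+\half,n}^{[n-1]})=\m_{k+\half,n-1}^{[n-1]}=\frac{1}{(n-1)!}\sum_{\tau\in\S_{n-1}}(\mathsf{F}(\tau)+1)^{k}$ is the case $\mu=[n-1]$ of Theorem~\ref{T:cent}(c), using $\chi_{[n-1]}\equiv 1$ on $\S_{n-1}$ and the identification $\chi_{\M_n}(\tau)=\mathsf{F}(\tau)+1$ for $\tau\in\S_{n-1}$ recorded just before the proposition. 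The notational translation $\m_{k+\half,n-1}^{[n-1]}=\dim(\Z_{k+\half,n}^{[n-1]})$ is built into the conventions of Section~\ref{subsec:BratSn}, where $\Z_{k+\half,n}=\End_{\S_{n-1}}(\M_n^{\ot k})$ and its irreducibles are indexed by partitions of $n-1$.

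The one equality that requires an argument is the middle one, $\dim(\Z_{k+1,n}^{[n]})=\dim(\Z_{k+\half,n}^{[n-1]})$, equivalently $\m_{k+1,n}^{[n]}=\m_{k+\half,n-1}^{[n-1]}$. The cleanest route is via Frobenius reciprocity on the pair $(\S_n,\S_{n-1})$: the multiplicity of the trivial module $\S_n^{[n]}$ in $\M_n^{\ot(k+1)}\cong \UU^{k+1}=\Ind_{\S_{n-1}}^{\S_n}\Res_{\S_{n-1}}^{\S_n}(\M_n^{\ot k})$ equals, by Frobenius reciprocity, the multiplicity of $\Res_{\S_{n-1}}^{\S_n}(\S_n^{[n]})=\S_{n-1}^{[n-1]}$ in $\Res_{\S_{n-1}}^{\S_n}(\M_n^{\ot k})\cong \UU^{k+\half}$, which is precisely $\m_{k+\half,n-1}^{[n-1]}$. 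Alternatively, one can see this at the level of the Bratteli diagram $\mathcal{B}(\S_n,\S_{n-1})$: a path from $[n]$ at level $0$ to $[n]$ at level $k+1$ must pass through level $k+\half$, and since the only partition of $n-1$ adjacent to $[n]$ is $[n-1]$, every such path factors through $[n-1]$ at level $k+\half$; deleting the final two steps $[n-1]\to[n]$ (which are forced) gives a bijection with paths from $[n]$ to $[n-1]$ of length $k+\half$, and \eqref{eq:pathdim} identifies the two path counts with the two multiplicities.

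The main (and only real) obstacle is bookkeeping: making sure the index shifts and the $n$-versus-$(n-1)$ conventions line up, particularly the nonstandard choice — flagged in the text — of writing $\Z_{k+\half,n}$ for $\End_{\S_{n-1}}(\M_n^{\ot k})$ and indexing its irreducibles by $\mu\vdash n-1$. Once those conventions are fixed, each equality is a one-line citation (to \eqref{eq:dim3}, to Theorem~\ref{T:cent}(c), or to the Frobenius-reciprocity/Bratteli-diagram argument above) and no computation is needed. I would present the proof as: (1) invoke \eqref{eq:dim3} with $\lam=[n]$ for the first two equalities; (2) give the Frobenius reciprocity argument for the central equality $\m_{k+1,n}^{[n]}=\m_{k+\half,n-1}^{[n-1]}$; (3) invoke Theorem~\ref{T:cent}(c) with $\mu=[n-1]$ for the last two, noting the dictionary $\chi_{\M_n}(\tau)=\mathsf{F}(\tau)+1$ on $\S_{n-1}$.
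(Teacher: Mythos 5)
Your proposal is correct and follows essentially the same route as the paper's own proof: you read the first pair of equalities from the case $\lambda=[n]$ of Theorem~\ref{T:cent}(a) (equivalently, \eqref{eq:dim3} combined with \eqref{SW2}), read the last pair from the case $\mu=[n-1]$ of Theorem~\ref{T:cent}(c), and close the middle link $\m_{k+1,n}^{[n]}=\m_{k+\half,n-1}^{[n-1]}$ by observing via Frobenius reciprocity (equivalently, the uniqueness of the edge $[n-1]\to[n]$ in the Bratteli diagram) that the trivial $\S_n$-summand of $\M_n^{\ot(k+1)}=\Ind\Res(\M_n^{\ot k})$ arises exactly from the trivial $\S_{n-1}$-summands of $\Res(\M_n^{\ot k})$. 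This is precisely the argument given in the paper, including both the reciprocity phrasing and the Bratteli-diagram alternative.
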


\begin{proof}  The equalities in the first line of \eqref{eq:fixedequals} are a consequence of taking $\lambda = [n]$ in part (a) of Theorem \ref{T:cent},
and the equalities in the second line come from setting $\mu$  equal to $[n-1]$ in part (c).  
Now observe that the only way $\S_n^{[n]}$ can be obtained by inducing the $\S_{n-1}$-module $\Res_{\S_{n-1}}^{\S_n}\left(\M_n^{\ot k}\right)$  to $\S_n$
is from the summands $\S_{n-1}^{[n-1]}$, so the multiplicity of $\S_n^{[n]}$ in $\M_n^{\ot {k+1}} =\Ind_{\S_{n-1}}^{\S_n}\big(\Res_{\S_{n-1}}^{\S_n}\left(\M_n^{\ot k}\big)
\right)$, which equals $\dim\big(\Z_{k+1,n}^{[n]}\big)$,
is the same as the multiplicity of $\S_{n-1}^{[n-1]}$ in $\Res_{\S_{n-1}}^{\S_n}\left(\M_n^{\ot k}\right)$,
which equals  $\dim\big(\Z_{k+\half,n}^{[n-1]}\big)$.   This can also be seen in the Bratteli 
diagram $\mathcal{B}(\S_n,\S_{n-1})$, as there is a unique path from level $k + \half$ to the node $[n]$ at level $k$, and that path connects $[n-1]$ to $[n]$
(compare Fig. \ref{fig:Sbratteli}).    \end{proof}  

 We know from Theorem \ref{T:cent}\,(b) that the next result holds for even values of $\ell$.
Proposition \ref{P:fixed} combined with part (d) of the theorem allows us to conclude that it holds for odd values of $\ell$ as well.  Consequently,
we have the following:

\begin{cor}\label{C:fixedBell}  For all $\ell \in \ZZ_{\ge 0}$ and $n \in \ZZ_{\ge 1}$,
\begin{equation}\label{eq:Bellfixed}  \mathsf{B}(\ell,n) = \sum_{t=0}^n  \stirling{\ell}{t}  =  \frac{1}{n!} \sum_{\sigma \in \S_n}  \mathsf{F}(\sigma)^\ell.
\end{equation} \end{cor}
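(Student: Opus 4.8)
The plan is to deduce the corollary directly from Theorem~\ref{T:cent} and Proposition~\ref{P:fixed}, treating even and odd $\ell$ separately. First I would dispose of the first equality $\mathsf{B}(\ell,n)=\sum_{t=0}^{n}\stirling{\ell}{t}$: this is purely notational, being the definition $\mathsf{B}(\ell,n)=\sum_{t=1}^{n}\stirling{\ell}{t}$ of the $n$-restricted Bell number together with the convention $\stirling{\ell}{0}=\delta_{\ell,0}$, so the added term $t=0$ contributes nothing when $\ell\ge 1$ and gives the correct value $1$ when $\ell=0$ (consistent with $\M_n^{\ot 0}=\S_n^{[n]}$). Hence the substance is the second equality, and it suffices to show
$$\frac{1}{n!}\sum_{\sigma\in\S_n}\mathsf{F}(\sigma)^{\ell}=\mathsf{B}(\ell,n)\qquad\text{for all }\ell\in\ZZ_{\ge 0},\ n\in\ZZ_{\ge 1}.$$

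For $\ell$ even, write $\ell=2k$; then the identity is exactly the chain $\dim(\Z_{k,n})=\frac{1}{n!}\sum_{\sigma\in\S_n}\mathsf{F}(\sigma)^{2k}=\mathsf{B}(2k,n)$ already recorded in Theorem~\ref{T:cent}\,(b) (with $\ell=0$ being the case $k=0$, where both sides equal $1$), so nothing further is needed. For $\ell$ odd, write $\ell=2k+1$ and apply Proposition~\ref{P:fixed} with its summation exponent ``$k+1$'' taken equal to $\ell$ (equivalently, its parameter equal to $2k$); the outermost equality there becomes
$$\frac{1}{n!}\sum_{\sigma\in\S_n}\mathsf{F}(\sigma)^{2k+1}=\frac{1}{(n-1)!}\sum_{\tau\in\S_{n-1}}\big(\mathsf{F}(\tau)+1\big)^{2k}.$$
I would then identify the right-hand side: since $\M_n=\M_{n-1}\oplus\FF\vf_n$ as an $\S_{n-1}$-module, $\chi_{\M_n}(\tau)=\mathsf{F}(\tau)+1$ on $\S_{n-1}$, and since $\M_n$ is self-dual over $\S_{n-1}$ the averaging identity underlying \eqref{eq:dim2} gives $\frac{1}{(n-1)!}\sum_{\tau\in\S_{n-1}}(\mathsf{F}(\tau)+1)^{2k}=\dim\End_{\S_{n-1}}(\M_n^{\ot k})=\dim(\Z_{k+\half,n})$. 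Finally Theorem~\ref{T:cent}\,(d) gives $\dim(\Z_{k+\half,n})=\sum_{t=1}^{n}\stirling{2k+1}{t}=\mathsf{B}(2k+1,n)$, which closes the odd case. (Alternatively, one could get $\mathsf{B}(\ell,n)$ uniformly for all $\ell\ge 1$ by combining Proposition~\ref{P:fixed} with Theorem~\ref{T:cent}\,(c) applied to $\mu=[n-1]$, where $f^{[n-1]/[n-1-t]}=1$ and $\chi_{[n-1]}\equiv 1$; but splitting by parity keeps the argument parallel to parts (b) and (d).)

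I do not anticipate a genuine obstacle: the statement is a corollary in the strict sense, with all the real work already carried by Theorem~\ref{T:cent} and Proposition~\ref{P:fixed}. The only points requiring care are the index bookkeeping---matching the internal parameter of Proposition~\ref{P:fixed} to $2k$ and tracking the half-integer level $k+\half$ together with the descent from $\S_n$ to $\S_{n-1}$ in the odd case---and the harmless reconciliation of the two presentations $\sum_{t=1}^{n}\stirling{\ell}{t}$ and $\sum_{t=0}^{n}\stirling{\ell}{t}$ of the restricted Bell number.
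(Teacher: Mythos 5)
Your proof is correct and follows the paper's own argument exactly: the even case $\ell=2k$ is Theorem~\ref{T:cent}\,(b), and the odd case $\ell=2k+1$ is obtained by running Proposition~\ref{P:fixed} (with its internal parameter set to $2k$) to pass to the right-hand $\S_{n-1}$-average, identifying that average as $\dim(\Z_{k+\half,n})$, and then invoking Theorem~\ref{T:cent}\,(d). The alternative you note---feeding Proposition~\ref{P:fixed} into Theorem~\ref{T:cent}\,(c) with $\mu=[n-1]$, where $f^{[n-1]/[n-1-t]}=1$---is also valid and handles all $\ell\ge 1$ uniformly, but your main parity-split argument is the one the paper intends.
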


\begin{rem} {\rm  The identity in \eqref{eq:Bellfixed} has connections with moments of random permutations.  If the random variable $X$ denotes the number of fixed
points of a uniformly distributed random permutation of a set of size $\ell \ge 1$ into no more than $n$ parts, then the $\ell$th moment of $X$ is
$$E(X^{\ell})=\sum _{t=1}^{n} \stirling{\ell}{t} = \B(\ell,n) =  \frac{1}{n!} \sum_{\sigma \in \S_n}  \mathsf{F}(\sigma)^\ell.$$
Compare this with \cite[(18)]{FaH}, where this formula is proved for $\ell \le n$.}\end{rem}

\begin{rem} {\rm The results of this section relate Stirling numbers of the second kind to the number of fixed points of permutations.
In the case of parts (a) and (c) of Theorem \ref{T:cent}, the expressions  involve Stirling numbers and also the number of standard tableaux of 
certain skew shapes.  
 It would be interesting to have combinatorial bijections which demonstrate the identities.  
A determinantal formula for the number $f^{\lambda/\nu}$ of standard tableaux of skew shape $\lambda/\nu$ was given by Aitken (see \cite[Cor. 7.16.3]{S1}). Hook-length formulas for  $f^{\lambda/\nu}$ are studied in \cite{MPP} and the references therein.} \end{rem}

\subsection{Set-partition tableaux} \label{subsec:setparttabs}

Part (a) of the Theorem \ref{T:cent} has inspired the following definition:

\begin{definition}\label{D:setpart} For $\lambda = [\lambda_1,\lam_2, \ldots,  \lambda_n]$ a partition of $n$, 
assume $\lam^\# =[\lam_2,\dots,\lam_n]$ and $t\in \ZZ$ is such that $|\lam^\#| \le t \le n$.
A \emph{set-partition tableau $\mathsf{T}$ of shape $\lambda$  and content $\{0^{n-t},1, \ldots, k\}$}
is a filling of the boxes of $\lambda$ so that the following requirements are met:

\begin{enumerate}
\item[{\rm(i)}] the first $n-t$ boxes of the first row of $\lambda$ are filled with $0$;
\item[{\rm(ii)}]  the boxes of the skew shape $\lambda/[n-t]$ are filled with the numbers in $[1,k]$ such that 
 the entries in each box of $\lambda/[n-t]$ form a block of a set partition $\pi(\mathsf{T})$  of $[1,k]$ having $t$ blocks; 
\item[{\rm(iii)}] the boxes of $\mathsf{T}$ in the skew shape  $\lambda/[n-t]$  strictly increase across the rows and down the columns of  $\lambda/[n-t]$, where if $b_1$ and $b_2$ are two boxes of $\lambda/[n-t]$,  then  $b_1 < b_2$ holds if 
the maximum entries in these boxes satisfy $\mathsf{max}(b_1) < \mathsf{max}(b_2)$.
\end{enumerate}
\end{definition} 

\begin{examp}{\rm  Below is a  set-partition tableau $\mathsf{T}$ of shape $\lambda = [5,4,2,1] \vdash 12$ and content $\{0^4,1,2,$ $\ldots, 20\}$ with corresponding set partition $\pi(\mathsf{T})=\{1,\underline{6} \mid
4,7,9,\underline{10} \mid 
2,11,\underline{12} \mid 
8,\underline{14} \mid
15,\underline{16} \mid
5,13,\underline{18} \mid 
3,17,\underline{19} \mid 
\underline{20}  
\} \in \Pi_{20}$ consisting of $t = 8$ blocks. 
The blocks  of $\pi(\mathsf{T})$ are listed in increasing order according to their largest elements, which are  the underlined numbers.
$$
\begin{tikzpicture}[xscale=1.5,yscale=.6]
\fill[black!10!white] (0,3) rectangle (4,4);
\draw (0,0) -- (0,4) -- (5,4) -- (5,3) -- (4,3) -- (4,2) -- (2,2) -- (2,1) -- (1,1) -- (1,0) -- (0,0);
\draw (0,3) -- (4,3) -- (4,4);
\draw (0,2) -- (2,2) -- (2,4);
\draw (3,2) -- (3,4);
\draw (0,1) -- (1,1) -- (1,4);
\path (.5,3.5) node {$0$}; \path (1.5,3.5) node {$0$}; \path (2.5,3.5) node {$0$}; \path (3.5,3.5) node {$0$};
\path (.5,2.5) node {$1,\underline{6}$};
\path (1.5,2.5) node {$4,7,9,\underline{10}$};
\path (.5,1.5) node {$2, 11, \underline{12}$};
\path (1.5,1.5) node {$8,\underline{14}$};
\path (0.5,0.5) node {$15, \underline{16}$};
\path (2.5,2.5) node {$5,13,\underline{18}$};
\path (4.5,3.5) node {$3,17,\underline{19}$};
\path (3.5,2.5) node {$\underline{20}$};
\end{tikzpicture}
$$     
}
\end{examp}

\begin{rem}{\rm If $|\lambda^\#| = k$, and $\mathsf{T}$ is a set-partition tableau  of shape $\lambda$ and content $\{0^{n-t},1, \ldots,k\}$, then the $k$ boxes of $\mathsf{T}$ corresponding to $\lambda^\#$  form a set partition of $\{1, \ldots, k\}$, so they must be a standard tableau of shape $\lambda^\#$. The first row of $\mathsf{T}$ must then consist of a single row of zeros of length of $t=\lambda_1 = n - |\lambda^\#| = n-k$. For example,
if $n = 9$ and $k = 5$, then displayed below is a set partition tableau of shape $\lambda$ with $\lambda^\# = [3,2]$,
$$
\begin{array}{c}\begin{tikzpicture}[xscale=.5,yscale=.5]
\draw (0,0) rectangle (1,1); \path (.5,.5) node {$0$};
\draw (1,0) rectangle (2,1); \path (1.5,.5) node {$0$};
\draw (2,0) rectangle (3,1); \path (2.5,.5) node {$0$};
\draw (3,0) rectangle (4,1); \path (3.5,.5) node {$0$};
\draw (0,-1) rectangle (1,0);\path (.5,-.5) node {$\underline{1}$};
\draw (1,-1) rectangle (2,0);\path (1.5,-.5) node {$\underline{3}$};
\draw (2,-1) rectangle (3,0);\path (2.5,-.5) node {$\underline{5}$};
\draw (0,-2) rectangle (1,-1);\path (.5,-1.5) node {$\underline{2}$};
\draw (1,-2) rectangle (2,-1);\path (1.5,-1.5) node {$\underline{4}$};
\end{tikzpicture}\end{array}.
$$ 
}\end{rem}
 
 \noindent
The following statement is an immediate consequence of Definition \ref{D:setpart}  and  Theorem \ref{T:cent}\,(a): \\
 \medskip 
 
 \emph{
 If  $k,n \in \ZZ_{\ge 0}$ and $n \geq 1$,  then for all 
$\lam\in \Lambda_{k,\S_n}$,
\begin{equation}
\dim(\Z_{k,n}^\lambda) =  \m_{k,n}^\lambda
 =  
\left | \left \{
\begin{array}{l} 
\text{set-partition tableaux of shape $\lambda$ and content}\\
\text{$[0^{n-t},1,\ldots,k]$ for some $t$ such that $|\lam^\#| \le t \le n$}
\end{array}
\right\}
\right |. 
 \end{equation}}

\subsection{Bijections} \label{subsec:biject}

Combining the results of the previous two sections with what we know from Schur-Weyl duality \eqref{SW1}--\eqref{SW4}, we have the following:
\begin{thm}  For $\Z_{k,n} = \End_{\S_n}(\M_n^{\ot k})$ and for $\lambda \vdash n$,   the  following are equal:
\begin{itemize}
\item[{\rm (i)}] the multiplicity $\m_{k,n}^\lambda$ of $\S_n^\lambda$ in $\M_n^{\ot k}$,   
\item[{\rm(ii)}] the dimension of the irreducible $\Z_{k,n}$-module $\Z_{k,n}^\lambda$ indexed by $\lambda$,
\item[{\rm (iii)}]  the number of paths in the Bratteli diagram $\mathcal{B}(\S_n, \S_{n-1})$ from $[n]$ at level 0 to $\lambda$ at level $k$,
\item[{\rm (iv)}]  the number of vacillating tableaux  
of shape $\lambda$ and length $k$, 
\item[{\rm (v)}] the number of pairs $(\pi,\mathsf{S})$, where $\pi$ is a set partition of $\{1,2,\dots,k\}$ with $t$ blocks, and $\mathsf{S}$ is a standard tableau of shape $\lambda/[n-t]$  for
some $t$ such that  $|\lam^\#| \le t \le n$,
\item[{\rm (vi)}]  the number of set-partition tableaux of shape $\lambda$ and content $[0^{n-t},1,\dots,k]$, 
for some $t$ such that $ |\lambda^\#|\le t \le n$.
\end{itemize}\end{thm}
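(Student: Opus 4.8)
The plan is to assemble the equalities (i)$=$(ii)$=$(iii)$=$(iv) and (i)$=$(v)$=$(vi) from what has already been proved, and then to supply the piece that is genuinely the business of Section~\ref{subsec:biject}: a direct combinatorial bijection realizing (iv)$=$(vi). For the first chain, (i)$=$(ii) is the Schur--Weyl statement $\dim(\Z_{k,n}^\lambda)=\m_{k,n}^\lambda$ recorded in \eqref{SW1}--\eqref{SW3}; (ii)$=$(iii) is exactly \eqref{eq:pathdim}, which counts paths in $\mathcal{B}(\S_n,\S_{n-1})$ by means of the branching rule \eqref{eq:RI}; and (iii)$=$(iv) is immediate, since by definition a path of length $k$ from $[n]$ to $\lambda$ in $\mathcal{B}(\S_n,\S_{n-1})$ \emph{is} a vacillating tableau of shape $\lambda$ and length $k$. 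For the second chain, (i)$=$(v) is read off Theorem~\ref{T:cent}(a): in the identity $\m_{k,n}^\lambda=\sum_{t=|\lambda^\#|}^{n}\stirling{k}{t}\,f^{\lambda/[n-t]}$ the factor $\stirling{k}{t}$ counts set partitions $\pi$ of $\{1,\dots,k\}$ into $t$ blocks and $f^{\lambda/[n-t]}$ counts standard tableaux $\mathsf{S}$ of shape $\lambda/[n-t]$.

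The identification (v)$=$(vi) is the $\max$-ordering dictionary already displayed after Definition~\ref{D:setpart} in Section~\ref{subsec:setparttabs}: given a pair $(\pi,\mathsf{S})$ with $\pi$ having $t$ blocks, list the blocks $B_1,\dots,B_t$ of $\pi$ in increasing order of their largest elements, replace the entry $i$ of $\mathsf{S}$ by the block $B_i$, and pad the first $n-t$ boxes of row~$1$ with $0$. Standardness of $\mathsf{S}$ is exactly what forces condition~(iii) of Definition~\ref{D:setpart}, and recording, for each box of $\lambda/[n-t]$, the rank of its block in the $\max$-order recovers $\mathsf{S}$ and hence $\pi$; so the map is a bijection. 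At this point the equality of all six quantities is formally complete, and the remaining task is to exhibit a \emph{direct} bijection between the objects in (iv) and in (vi) that bypasses the dimension count.

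For that bijection I would process the integers $1,2,\dots,k$ in order, maintaining a partially filled set-partition tableau whose underlying Young diagram after step $i$ is $\lambda^{(i)}$, with the first row padded by zeros. At the $i$-th pair of half-steps of the vacillating tableau, the box deleted in passing $\lambda^{(i-1)}\to\lambda^{(i-\half)}$ launches a jeu-de-taquin / bumping path that determines into which already-occupied box the symbol $i$ is adjoined as a new largest block element, and the box added in passing $\lambda^{(i-\half)}\to\lambda^{(i)}$ records where, if anywhere, a fresh singleton block $\{i\}$ is opened; the zeros in the first row take up the slack whenever the shape contracts. One then checks the invariant that after every half-step the filled skew shape is a legitimate set-partition tableau for the current block count — in particular that the $\max$-ordering condition of Definition~\ref{D:setpart}(iii) survives both a contraction and the immediately following expansion — together with step-by-step reversibility, so that from a set-partition tableau one peels off $k$, then $k-1$, and so on.

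The hard part will be the regime $2k>n$, precisely where the bijections of \cite{HL} and \cite{CDDSY} fail, because there the shapes $\lambda^{(i)}$ can no longer grow freely: $\Lambda_{i,\S_n}$ bounds both the number of parts and the size of the first part, so one must verify that the bumping paths never leave the admissible shapes and that the zero row is always long enough to absorb a contraction. Establishing reversibility under these constraints — that the final remove/add pair together with the location of the symbol $k$ is uniquely reconstructible from the set-partition tableau — is the technical core; the rest is bookkeeping with the $\max$-order on blocks.
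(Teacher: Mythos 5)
Your chain of equalities is exactly the paper's: (i)$=$(ii) is Schur--Weyl, (ii)$=$(iii) is \eqref{eq:pathdim}, (iii)$=$(iv) is definitional, (i)$=$(v) reads off Theorem~\ref{T:cent}(a), and (v)$=$(vi) is the max-ordering dictionary from Definition~\ref{D:setpart}. That already proves the theorem, and the paper's treatment is the same; the direct (iv)$\leftrightarrow$(vi) bijection is supplementary content (Algorithms~A and B of Section~\ref{subsec:biject}).

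Where your sketch departs from the paper is in the mechanism of that bijection, and the departure hides a conceptual slip. You treat the remove-step $\lambda^{(i-1)}\to\lambda^{(i-\frac12)}$ and the add-step $\lambda^{(i-\frac12)}\to\lambda^{(i)}$ as encoding two \emph{independent} pieces of data about $i$ (``into which existing box $i$ is adjoined'' versus ``where, if anywhere, a singleton $\{i\}$ is opened''), but that overcounts. In the paper's Algorithm~B a single box $b$ is extracted by Schensted row \emph{un}-insertion governed by the removed corner; $i$ is adjoined to $b$ (with the $0$ deleted if $b$ was a zero-box, so the singleton case is not a separate branch but the degenerate case of this one); and then the augmented $b$ is placed at the added corner. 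The remove and add positions are linked through that one traveling box, not through two choices, and the relevant combinatorial machine is Schensted bumping on the maximum entries, not jeu de taquin. Your framing would, if pushed through literally, manufacture extra degrees of freedom that break the bijection. You also correctly flag the regime $2k>n$ as where earlier bijections fail, but you only state the worry (shapes constrained by $\Lambda_{i,\S_n}$, zero row possibly too short) without resolving it; in the paper's argument this is handled automatically because row un-insertion on the max-entries of a set-partition tableau always terminates in the first row, where the padding zeros absorb the contraction, and the filled shapes are vacillating-tableau shapes by construction. If you want your bijection proof to be self-contained, that invariance --- that every intermediate $\mathsf{T}^{(j\pm\frac12)}$ is again a set-partition tableau on $\{1,\dots,j\}$ of the appropriate shape, with no restriction on $k$ versus $n$ --- is the step you still owe.
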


The fact that (iii) and (iv) have the same cardinality is immediate from the definition of vacillating tableaux. The fact that (v) and (vi) have equal cardinalities can be seen by taking a set-partition tableau as in (v), and replacing the entries in the
boxes having nonzero entries with the numbers $1,2,\dots,t$ according to their maximal entries from smallest to largest.   The reverse process
fills the boxes of the standard tableau $\mathsf{S}$ with the entries in the blocks of $\pi$ according to their maximal elements
with 1 for the block with smallest maximal entry and proceeding  to $t$ for the block with the largest entry.

We now describe an algorithm that gives a bijection between set-partition tableaux of shape $\lambda$ 
and content $\{0^{n-t},1,\dots,k\}$ for some $|\lam^\#| \le t \le n$ and vacillating tableaux of length $k$. 
The algorithm assumes familiarity with Schensted row insertion (see \cite[Sec. 7.11]{S2}). 
We use $\mathsf{T} \leftarrow b$ to mean row insertion of the box $b$ (along with its entries) into the 
set-partition tableau $\mathsf{T}$ governed by their maximum elements as in Definition \ref{D:setpart}.  
(That is, do usual Schensted insertion on the maximal elements of each box, but then also include all the entries of the box.)

\bigskip
\noindent
{\bf A.\  Set-partition Tableaux} $\Rightarrow$ {\bf Vacillating Tableaux}

\medskip\noindent
Given a set-partition tableau $\mathsf{T}$ of shape $\lambda \vdash n$ and content $[0^{n-t},1,\ldots,k]$,
 where $|\lam^\#| \le t \le n$, 
the following algorithm recursively produces a vacillating $k$-tableau
 $([n]=\lambda^{(0)}, \lambda^{(\frac{1}{2})}, \lambda^{(1)}, \ldots,$ $\lambda^{(k)}=\lambda)$ of shape $\lambda$.  Examples can be found in Figures \ref{fig:BijectionExample} and \ref{fig:BijectionDimensionExample}.

\begin{itemize}
\item[(1)] Let $\lambda^{(k)} = \lambda$,  and set $\mathsf{T}^{(k)} = \mathsf{T}$.

\item[(2)] For $j = k, k-1, \ldots, 1$ (in descending order), do the following:

\begin{itemize}
\item[(a)] Let $\mathsf{T}^{(j-\frac{1}{2})}$ be the tableau obtained from $\mathsf{T}^{(j)}$ by removing the box $b$ that contains $j$. At this stage, $j$ will be the largest entry of $\mathsf{T}$ so this box will be removable. Let $\lambda^{(j-\frac{1}{2})}$ be the shape of $\mathsf{T}^{(j-\frac{1}{2})}$.
\item[(b)] Delete the entry $j$ from $b$. If $b$ is then empty,  add 0 to it.

\item[(c)] Let $\mathsf{T}^{(j-1)} = \mathsf{T}^{(j-\frac{1}{2})} \leftarrow b$ be the Schensted row insertion of $b$ into $\mathsf{T}^{(j-\frac{1}{2})}$, and let $\lambda^{(j-1)}$ be the shape of $\mathsf{T}^{(j-1)}$.
\end{itemize}

\end{itemize}

We delete the largest number $j$ at the $j$th step, so by our construction $\mathsf{T}^{(j)}$ is a tableau containing a set partition of $\{1, 2, \ldots, j\}$ for each $k \ge j \ge 1$. Furthermore, Schensted insertion keeps the rows weakly increasing and columns strictly increasing at each step.  At the conclusion,  $\mathsf{T}^{(0)}$ is a semistandard tableau that contains only  zeros, and as such it must have shape $\lambda^{(0)} = [n]$. The sequence of underlying shapes $(\lambda^{(0)}, \lambda^{(\frac{1}{2})}, \lambda^{(1)}, \ldots, \lambda^{(k)})$, listed in reverse order from the way they are constructed, is obtained by removing and adding a box at each step, so it is a vacillating tableau of shape $\lambda$ and length $k$.

\bigskip
\noindent
{\bf B. \ Vacillating Tableaux} $\Rightarrow$ {\bf  Set-partition Tableaux}

\medskip\noindent
Algorithm A is easily seen to be invertible. Given a vacillating tableau $(\lambda^{(0)}, \lambda^{(\frac{1}{2})}, \lambda^{(1)}, \ldots,$ $\lambda^{(k)})$ of shape $\lambda$ and length $k$, the following algorithm produces a set-partition tableau $\mathsf{T}$ of shape $\lambda$. This process recursively fills the boxes of the shapes $\lambda^{(j)}$ to produce the same fillings as the algorithm above.

\begin{itemize}
\item[(1)] Let $\mathsf{T}^{(0)}$ be the semistandard tableau of shape $\lambda^{(0)}=[n]$ with each of its boxes filled with 0. 

\item[(2)] For $j = 0, 1, \ldots, k$, do the following:

\begin{itemize}
\item[(a${}'$)] Let $\mathsf{T}^{(j+\frac{1}{2})}$ be the tableau given by un-inserting the box of $\mathsf{T}^{(j)}$ at position $\lambda^{(j)}/\lambda^{(j+\frac{1}{2})}$, and let $b$ be the box that is un-inserted in this process. 
That is,  $ \mathsf{T}^{(j+\frac{1}{2})}$ and $b$ are the unique tableau of shape $\lambda^{(j+\frac{1}{2})}$ and box, respectively, such that $\mathsf{T}^{(j)} = \mathsf{T}^{(j+\frac{1}{2})}  \leftarrow b$.

\item[(b${}'$)] Add $j$ to box $b$. If $b$ contains 0, delete $0$ from it.

\item[(c${}'$)] Add the content of the box $b$ to the box in position $\lambda^{(j+1)}/\lambda^{(j+\frac{1}{2})}$, and fill the rest of $\mathsf{T}^{(j+1)}$ with the same entries as in the boxes of  $\mathsf{T}^{(j+\frac{1}{2})}$.

\end{itemize}
\end{itemize}

Algorithms A and B invert one another step-by-step,  since (a) and (c${}'$), (b) and (b${}'$), and (c) and (a${}'$) are easily seen to be the inverses of one another. The fact that
steps (a) and (c${}'$) are inverses comes from the fact that Schensted insertion is invertible.

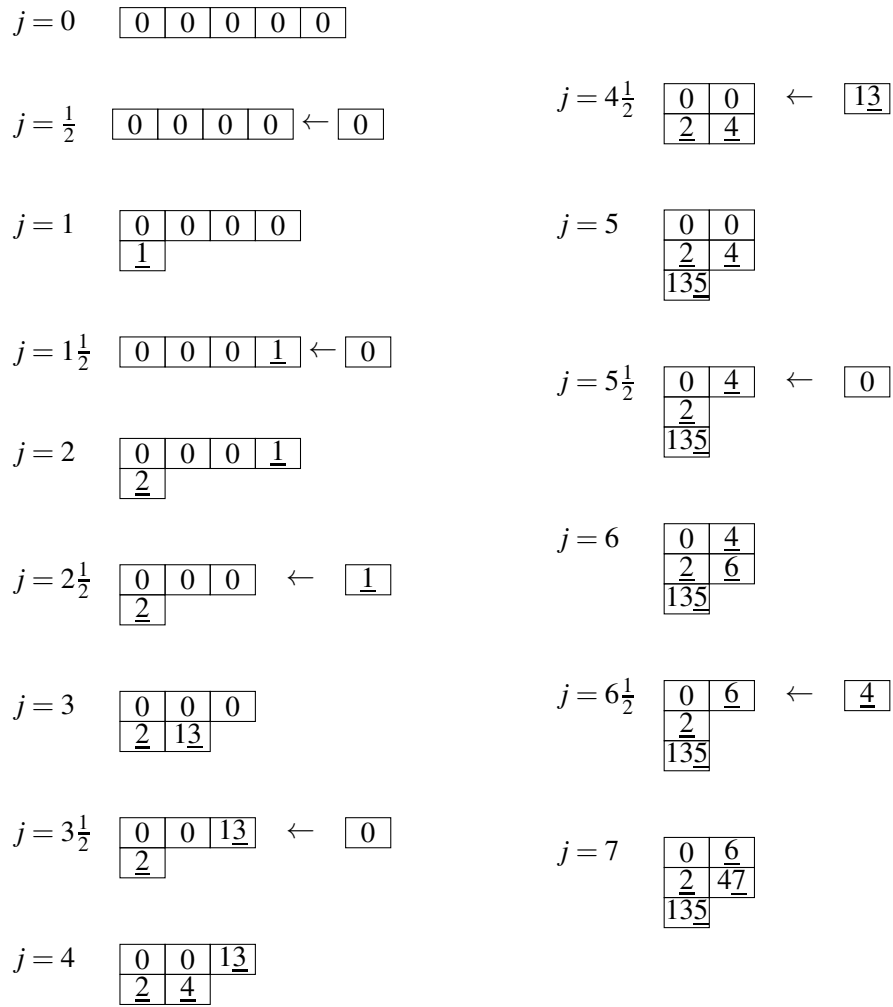
\begin{figure}
$$
\begin{array}{lcl}
\begin{array}{l}
\begin{tikzpicture}[xscale=.6,yscale=.4]
\draw (0,0) rectangle (1,1); \path (.5,.5) node {$0$};
\draw (1,0) rectangle (2,1); \path (1.5,.5) node {$0$};
\draw (2,0) rectangle (3,1);\path (2.5,.5) node {$0$};
\draw (3,0) rectangle (4,1);\path (3.5,.5) node {$0$};
\draw (4,0) rectangle (5,1);\path (4.5,.5) node {$0$};
\path (-1.5,.5) node {${j=0\phantom{\half}}$};
\end{tikzpicture} 
\\ \\
\begin{tikzpicture}[xscale=.6,yscale=.4]
\draw (0,0) rectangle (1,1); \path (.5,.5) node {$0$};
\draw (1,0) rectangle (2,1); \path (1.5,.5) node {$0$};
\draw (2,0) rectangle (3,1);\path (2.5,.5) node {$0$};
\draw (3,0) rectangle (4,1);\path (3.5,.5) node {$0$};
\path (4.5,.5) node {$\leftarrow$};
\draw (5,0) rectangle (6,1);\path (5.5,.5) node {$0$};
\path (-1.5,.5) node {${j=\half}$};
\end{tikzpicture} 
\\ \\
\begin{tikzpicture}[xscale=.6,yscale=.4]
\draw (0,0) rectangle (1,1); \path (.5,.5) node {$0$};
\draw (1,0) rectangle (2,1); \path (1.5,.5) node {$0$};
\draw (2,0) rectangle (3,1);\path (2.5,.5) node {$0$};
\draw (3,0) rectangle (4,1);\path (3.5,.5) node {$0$};
\draw (0,-1) rectangle (1,0);\path (.5,-.5) node {$\underline{1}$};
\path (-1.5,.5) node {${j=1\phantom{\half}}$};
\end{tikzpicture} 
\\ \\
\begin{tikzpicture}[xscale=.6,yscale=.4]
\draw (0,0) rectangle (1,1); \path (.5,.5) node {$0$};
\draw (1,0) rectangle (2,1); \path (1.5,.5) node {$0$};
\draw (2,0) rectangle (3,1);\path (2.5,.5) node {$0$};
\draw (3,0) rectangle (4,1);\path (3.5,.5) node {$\underline{1}$};
\path (4.5,.5) node {$\leftarrow$};
\draw (5,0) rectangle (6,1);\path (5.5,.5) node {$0$};
\path (-1.5,.5) node {${j=1\half}$};
\end{tikzpicture} 
\\ \\
\begin{tikzpicture}[xscale=.6,yscale=.4]
\draw (0,0) rectangle (1,1); \path (.5,.5) node {$0$};
\draw (1,0) rectangle (2,1); \path (1.5,.5) node {$0$};
\draw (2,0) rectangle (3,1);\path (2.5,.5) node {$0$};
\draw (3,0) rectangle (4,1);\path (3.5,.5) node {$\underline{1}$};
\draw (0,-1) rectangle (1,0);\path (.5,-.5) node {$\underline{2}$};
\path (-1.5,.5) node {${j=2\phantom{\half}}$};
\end{tikzpicture}
\\ \\
\begin{tikzpicture}[xscale=.6,yscale=.4]
\draw (0,0) rectangle (1,1); \path (.5,.5) node {$0$};
\draw (1,0) rectangle (2,1); \path (1.5,.5) node {$0$};
\draw (2,0) rectangle (3,1);\path (2.5,.5) node {$0$};
\draw (0,-1) rectangle (1,0);\path (0.5,-.5) node {$\underline{2}$};
\path (4,.5) node {$\leftarrow$};
\draw (5,0) rectangle (6,1);\path (5.5,.5) node {$\underline{1}$};
\path (-1.5,.5) node {${j=2\half}$};
\end{tikzpicture} 
\\ \\
\begin{tikzpicture}[xscale=.6,yscale=.4]
\draw (0,0) rectangle (1,1); \path (.5,.5) node {$0$};
\draw (1,0) rectangle (2,1); \path (1.5,.5) node {$0$};
\draw (2,0) rectangle (3,1);\path (2.5,.5) node {$0$};
\draw (0,-1) rectangle (1,0);\path (.5,-.5) node {$\underline{2}$};
\draw (1,-1) rectangle (2,0);\path (1.5,-.5) node {$1 \underline{3}$};
\path (-1.5,.5) node {${j=3\phantom{\half}}$};
\end{tikzpicture} 
\\ \\
\begin{tikzpicture}[xscale=.6,yscale=.4]
\draw (0,0) rectangle (1,1); \path (.5,.5) node {$0$};
\draw (1,0) rectangle (2,1); \path (1.5,.5) node {$0$};
\draw (2,0) rectangle (3,1);\path (2.5,.5) node {$1 \underline{3}$};
\draw (0,-1) rectangle (1,0);\path (.5,-.5) node {$\underline{2}$};
\path (4,.5) node {$\leftarrow$};
\draw (5,0) rectangle (6,1);\path (5.5,.5) node {$0$};
\path (-1.5,.5) node {${j=3\half}$};
\end{tikzpicture}
\\ \\
\begin{tikzpicture}[xscale=.6,yscale=.4]
\draw (0,0) rectangle (1,1); \path (.5,.5) node {$0$};
\draw (1,0) rectangle (2,1); \path (1.5,.5) node {$0$};
\draw (2,0) rectangle (3,1);\path (2.5,.5) node {$1 \underline{3}$};
\draw (0,-1) rectangle (1,0);\path (.5,-.5) node {$\underline{2}$};
\draw (1,-1) rectangle (2,0);\path (1.5,-.5) node {$\underline{4}$};
\path (-1.5,.5) node {${j=4\phantom{\half}}$};
\end{tikzpicture}
\end{array}
& \hskip.4in &
\begin{array}{l}
\begin{tikzpicture}[xscale=.6,yscale=.4]
\draw (0,0) rectangle (1,1); \path (.5,.5) node {$0$};
\draw (1,0) rectangle (2,1); \path (1.5,.5) node {$0$};
\draw (0,-1) rectangle (1,0);\path (.5,-.5) node {$\underline{2}$};
\draw (1,-1) rectangle (2,0);\path (1.5,-.5) node {$\underline{4}$};
\path (3,.5) node {$\leftarrow$};
\draw (4,0) rectangle (5,1);\path (4.5,.5) node {$1 \underline{3}$};
\path (-1.5,.5) node {${j=4\half}$};
\end{tikzpicture}  
\\ \\
\begin{tikzpicture}[xscale=.6,yscale=.4]
\draw (0,0) rectangle (1,1); \path (.5,.5) node {$0$};
\draw (1,0) rectangle (2,1); \path (1.5,.5) node {$0$};
\draw (0,-1) rectangle (1,0);\path (.5,-.5) node {$\underline{2}$};
\draw (1,-1) rectangle (2,0);\path (1.5,-.5) node {$\underline{4}$};
\draw (0,-2) rectangle (1,-1);\path (.5,-1.5) node {$1 3 \underline{5}$};
\path (-1.5,.5) node {${j=5\phantom{\half}}$};
\end{tikzpicture} 
\\ \\
\begin{tikzpicture}[xscale=.6,yscale=.4]
\draw (0,0) rectangle (1,1); \path (.5,.5) node {$0$};
\draw (1,0) rectangle (2,1); \path (1.5,.5) node {$\underline{4}$};
\draw (0,-1) rectangle (1,0);\path (.5,-.5) node {$\underline{2}$};
\draw (0,-2) rectangle (1,-1);\path (.5,-1.5) node {$1 3 \underline{5}$};
\path (3,.5) node {$\leftarrow$};
\draw (4,0) rectangle (5,1);\path (4.5,.5) node {$0$};
\path (-1.5,.5) node {${j=5\half}$};
\end{tikzpicture} 
\\ \\
\begin{tikzpicture}[xscale=.6,yscale=.4]
\draw (0,0) rectangle (1,1); \path (.5,.5) node {$0$};
\draw (1,0) rectangle (2,1); \path (1.5,.5) node {$\underline{4}$};
\draw (0,-1) rectangle (1,0);\path (.5,-.5) node {$\underline{2}$};
\draw (1,-1) rectangle (2,0);\path (1.5,-.5) node {$\underline{6}$};
\draw (0,-2) rectangle (1,-1);\path (.5,-1.5) node {$1 3 \underline{5}$};
\path (-1.5,.5) node {${j=6\phantom{\half}}$};
\end{tikzpicture}
\\ \\
\begin{tikzpicture}[xscale=.6,yscale=.4]
\draw (0,0) rectangle (1,1); \path (.5,.5) node {$0$};
\draw (1,0) rectangle (2,1); \path (1.5,.5) node {$\underline{6}$};
\draw (0,-1) rectangle (1,0);\path (.5,-.5) node {$\underline{2}$};
\draw (0,-2) rectangle (1,-1);\path (.5,-1.5) node {$1 3 \underline{5}$};
\path (3,.5) node {$\leftarrow$};
\draw (4,0) rectangle (5,1);\path (4.5,.5) node {$\underline{4}$};
\path (-1.5,.5) node {${j=6\half}$};
\end{tikzpicture}
\\ \\
\begin{tikzpicture}[xscale=.6,yscale=.4]
\draw (0,0) rectangle (1,1); \path (.5,.5) node {$0$};
\draw (1,0) rectangle (2,1); \path (1.5,.5) node {$\underline{6}$};
\draw (0,-1) rectangle (1,0);\path (.5,-.5) node {$\underline{2}$};
\draw (1,-1) rectangle (2,0);\path (1.5,-.5) node {$4 \underline{7}$};
\draw (0,-2) rectangle (1,-1);\path (.5,-1.5) node {$1 3 \underline{5}$};
\path (-1.5,.5) node {${j=7\phantom{\half}}$};
\end{tikzpicture}
\end{array}
\end{array}
$$
\caption{Bijection between a vacillating 7-tableau of shape $[2,2,1]$ and a 7-set-partition tableau of shape $[2,2,1]$.
\label{fig:BijectionExample}}
\end{figure}

\begin{figure}
$$
\begin{array}{lcl}
\begin{array}{l}
\begin{tikzpicture}[xscale=.6,yscale=.4]
\draw (0,0) rectangle (1,1); \path (.5,.5) node {$0$};
\draw (1,0) rectangle (2,1); \path (1.5,.5) node {$0$};
\draw (2,0) rectangle (3,1);\path (2.5,.5) node {$0$};
\draw (3,0) rectangle (4,1);\path (3.5,.5) node {$0$};
\draw (4,0) rectangle (5,1);\path (4.5,.5) node {$0$};
\path (-1.5,.5) node {${j=0\phantom{\half}}$};
\end{tikzpicture} 
\\ \\
\begin{tikzpicture}[xscale=.6,yscale=.4]
\draw (0,0) rectangle (1,1); \path (.5,.5) node {$0$};
\draw (1,0) rectangle (2,1); \path (1.5,.5) node {$0$};
\draw (2,0) rectangle (3,1);\path (2.5,.5) node {$0$};
\draw (3,0) rectangle (4,1);\path (3.5,.5) node {$0$};
\path (4.5,.5) node {$\leftarrow$};
\draw (5,0) rectangle (6,1);\path (5.5,.5) node {$0$};
\path (-1.5,.5) node {${j=\half\phantom{0}}$};
\end{tikzpicture} 
\\ \\
\begin{tikzpicture}[xscale=.6,yscale=.4]
\draw (0,0) rectangle (1,1); \path (.5,.5) node {$0$};
\draw (1,0) rectangle (2,1); \path (1.5,.5) node {$0$};
\draw (2,0) rectangle (3,1);\path (2.5,.5) node {$0$};
\draw (3,0) rectangle (4,1);\path (3.5,.5) node {$0$};
\draw (0,-1) rectangle (1,0);\path (.5,-.5) node {$\underline{1}$};
\path (-1.5,.5) node {${j=1\phantom{\half}}$};
\end{tikzpicture} 
\\ \\
\begin{tikzpicture}[xscale=.6,yscale=.4]
\draw (0,0) rectangle (1,1); \path (.5,.5) node {$0$};
\draw (1,0) rectangle (2,1); \path (1.5,.5) node {$0$};
\draw (2,0) rectangle (3,1);\path (2.5,.5) node {$0$};
\draw (3,0) rectangle (4,1);\path (3.5,.5) node {$\underline{1}$};
\path (4.5,.5) node {$\leftarrow$};
\draw (5,0) rectangle (6,1);\path (5.5,.5) node {$0$};
\path (-1.5,.5) node {${j=1\half}$};
\end{tikzpicture} 
\\ \\ 
\begin{tikzpicture}[xscale=.6,yscale=.4]
\draw (0,0) rectangle (1,1); \path (.5,.5) node {$0$};
\draw (1,0) rectangle (2,1); \path (1.5,.5) node {$0$};
\draw (2,0) rectangle (3,1);\path (2.5,.5) node {$0$};
\draw (3,0) rectangle (4,1);\path (3.5,.5) node {$\underline{1}$};
\draw (0,-1) rectangle (1,0);\path (.5,-.5) node {$\underline{2}$};
\path (-1.5,.5) node {${j=2\phantom{\half}}$};
\end{tikzpicture}
\\ \\
\begin{tikzpicture}[xscale=.6,yscale=.4]
\draw (0,0) rectangle (1,1); \path (.5,.5) node {$0$};
\draw (1,0) rectangle (2,1); \path (1.5,.5) node {$0$};
\draw (2,0) rectangle (3,1);\path (2.5,.5) node {$0$};
\draw (0,-1) rectangle (1,0);\path (0.5,-.5) node {$\underline{2}$};
\path (4.5,.5) node {$\leftarrow$};
\draw (5,0) rectangle (6,1);\path (5.5,.5) node {$\underline{1}$};
\path (-1.5,.5) node {${j=2\half}$};
\end{tikzpicture} 
\\ \\
\begin{tikzpicture}[xscale=.6,yscale=.4]
\draw (0,0) rectangle (1,1); \path (.5,.5) node {$0$};
\draw (1,0) rectangle (2,1); \path (1.5,.5) node {$0$};
\draw (2,0) rectangle (3,1);\path (2.5,.5) node {$0$};
\draw (0,-1) rectangle (1,0);\path (.5,-.5) node {$\underline{2}$};
\draw (1,-1) rectangle (2,0);\path (1.5,-.5) node {$1 \underline{3}$};
\path (-1.5,.5) node {${j=3\phantom{\half}}$};
\end{tikzpicture} 
\\ \\
\begin{tikzpicture}[xscale=.6,yscale=.4]
\draw (0,0) rectangle (1,1); \path (.5,.5) node {$0$};
\draw (1,0) rectangle (2,1); \path (1.5,.5) node {$0$};
\draw (2,0) rectangle (3,1);\path (2.5,.5) node {$1 \underline{3}$};
\draw (0,-1) rectangle (1,0);\path (.5,-.5) node {$\underline{2}$};
\path (4.5,.5) node {$\leftarrow$};
\draw (5,0) rectangle (6,1);\path (5.5,.5) node {$0$};
\path (-1.5,.5) node {${j=3\half}$};
\end{tikzpicture}
\\ \\
\begin{tikzpicture}[xscale=.6,yscale=.4]
\draw (0,0) rectangle (1,1); \path (.5,.5) node {$0$};
\draw (1,0) rectangle (2,1); \path (1.5,.5) node {$0$};
\draw (2,0) rectangle (3,1);\path (2.5,.5) node {$1 \underline{3}$};
\draw (0,-1) rectangle (1,0);\path (.5,-.5) node {$\underline{2}$};
\draw (0,-2) rectangle (1,-1);\path (.5,-1.5) node {$\underline{4}$};
\path (-1.5,.5) node {${j=4\phantom{\half}}$};
\end{tikzpicture}
\end{array}
& \hskip.2in &
\begin{array}{l}
\begin{tikzpicture}[xscale=.6,yscale=.4]
\draw (0,0) rectangle (1,1); \path (.5,.5) node {$0$};
\draw (1,0) rectangle (2,1); \path (1.5,.5) node {$0$};
\draw (0,-1) rectangle (1,0);\path (.5,-.5) node {$\underline{2}$};
\draw (0,-2) rectangle (1,-1);\path (.5,-1.5) node {$\underline{4}$};
\path (4.5,.5) node {$\leftarrow$};
\draw (5,0) rectangle (6,1);\path (5.5,.5) node {$1 \underline{3}$};
\path (-1.5,.5) node {${j=4\half}$};
\end{tikzpicture}  
\\ \\
\begin{tikzpicture}[xscale=.6,yscale=.4]
\draw (0,0) rectangle (1,1); \path (.5,.5) node {$0$};
\draw (0,-1) rectangle (1,0); \path (.5,-.5) node {$\underline{2}$};
\draw (0,-2) rectangle (1,-1); \path (.5,-1.5) node {$\underline{4}$};
\draw (1,0) rectangle (2,1); \path (1.5,.5) node {$0$};
\draw (2,0) rectangle (3,1);\path (2.5,.5) node {$1 3 \underline{5}$};
\path (-1.5,.5) node {${j=5\phantom{\half}}$};
\end{tikzpicture} 
\\ \\
\begin{tikzpicture}[xscale=.6,yscale=.4]
\draw (0,0) rectangle (1,1); \path (.5,.5) node {$0$};
\draw (0,-1) rectangle (1,0); \path (.5,-.5) node {$\underline{4}$};
\draw (1,0) rectangle (2,1); \path (1.5,.5) node {$\underline{2}$};
\draw (2,0) rectangle (3,1);\path (2.5,.5) node {$1 3 \underline{5}$};
\path (4.5,.5) node {$\leftarrow$};
\draw (5,0) rectangle (6,1);\path (5.5,.5) node {$0$};
\path (-1.5,.5) node {${j=5\half}$};
\end{tikzpicture} 
\\ \\
\begin{tikzpicture}[xscale=.6,yscale=.4]
\draw (0,0) rectangle (1,1); \path (.5,.5) node {$0$};
\draw (0,-1) rectangle (1,0); \path (.5,-.5) node {$\underline{4}$};
\draw (1,0) rectangle (2,1); \path (1.5,.5) node {$\underline{2}$};
\draw (2,0) rectangle (3,1);\path (2.5,.5) node {$1 3 \underline{5}$};
\draw (3,0) rectangle (4,1);\path (3.5,.5) node {$\underline{6}$};
\path (-1.5,.5) node {${j=6\phantom{\half}}$};
\end{tikzpicture}
\\ \\
\begin{tikzpicture}[xscale=.6,yscale=.4]
\draw (0,0) rectangle (1,1); \path (.5,.5) node {$0$};
\draw (0,-1) rectangle (1,0); \path (.5,-.5) node {$\underline{4}$};
\draw (1,0) rectangle (2,1); \path (1.5,.5) node {$\underline{2}$};
\draw (2,0) rectangle (3,1);\path (2.5,.5) node {$1 3 \underline{5}$};
\path (4.5,.5) node {$\leftarrow$};
\draw (5,0) rectangle (6,1);\path (5.5,.5) node {$\underline{6}$};
\path (-1.5,.5) node {${j=6\half}$};
\end{tikzpicture}
\\ \\
\begin{tikzpicture}[xscale=.6,yscale=.4]
\draw (0,0) rectangle (1,1); \path (.5,.5) node {$0$};
\draw (0,-1) rectangle (1,0); \path (.5,-.5) node {$\underline{4}$};
\draw (1,0) rectangle (2,1); \path (1.5,.5) node {$\underline{2}$};
\draw (2,0) rectangle (3,1);\path (2.5,.5) node {$1 3 \underline{5}$};
\draw (3,0) rectangle (4,1);\path (3.5,.5) node {$6 \underline{7}$};
\path (-1.5,.5) node {${j=7\phantom{\half}}$};
\end{tikzpicture}
\\ \\
\begin{tikzpicture}[xscale=.6,yscale=.4]
\draw (0,0) rectangle (1,1); \path (.5,.5) node {$0$};
\draw (1,0) rectangle (2,1); \path (1.5,.5) node {$\underline{4}$};
\draw (2,0) rectangle (3,1);\path (2.5,.5) node {$1 3 \underline{5}$};
\draw (3,0) rectangle (4,1);\path (3.5,.5) node {$6 \underline{7}$};
\path (4.5,.5) node {$\leftarrow$};
\draw (5,0) rectangle (6,1);\path (5.5,.5) node {$\underline{2}$};
\path (-1.5,.5) node {${j=7\half}$};
\end{tikzpicture}
\\ \\
\begin{tikzpicture}[xscale=.6,yscale=.4]
\draw (0,0) rectangle (1,1); \path (.5,.5) node {$0$};
\draw (1,0) rectangle (2,1); \path (1.5,.5) node {$\underline{4}$};
\draw (2,0) rectangle (3,1);\path (2.5,.5) node {$1 3 \underline{5}$};
\draw (3,0) rectangle (4,1);\path (3.5,.5) node {$6 \underline{7}$};
\draw (4,0) rectangle (5,1);\path (4.5,.5) node {$2 \underline{8}$};
\path (-1.5,.5) node {${j=8\phantom{\half}}$};
\end{tikzpicture}
\end{array}
\end{array}
$$
%
\caption{Bijection between the set partition $\pi = \{ 4 \mid 135\mid 67  \mid 2 8 \}$  and  an 8-vacillating tableau of shape $[5]$ illustrating Corollary \ref{cor:dimension}.
\label{fig:BijectionDimensionExample}}
\end{figure}
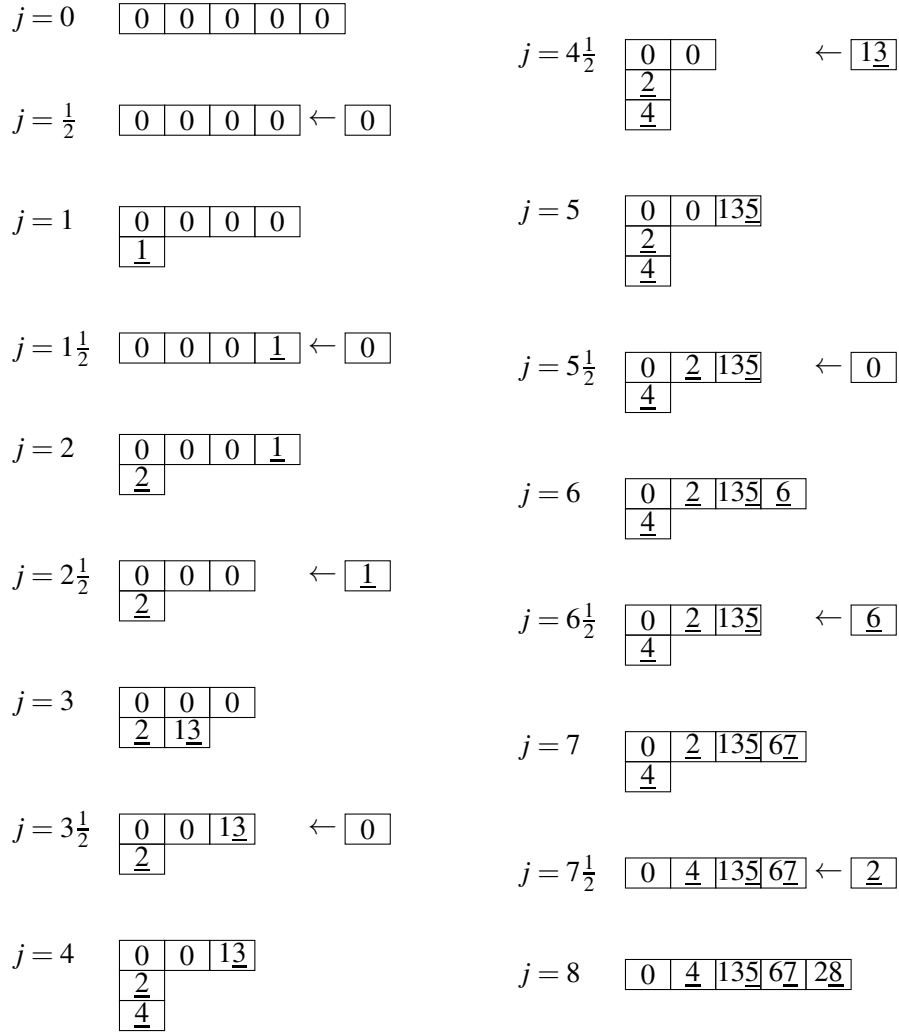

Our bijection implies the following result:

\begin{thm} For each $\lambda \in \Lambda_{k,\S_n}$, there is a bijection between the set of vacillating tableaux of shape $\lambda$ and length $k$ and the set of set-partition tableaux of shape $\lambda$ and content \\ $[0^{n-t},1,\ldots,k]$ for some $t$ such that
$|\lam^\#| \le t \le n$.
\end{thm}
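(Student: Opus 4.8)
The plan is to show that Algorithms A and B above are well defined and mutually inverse; since the algorithms are already spelled out, the proof reduces to checking the invariants that make each step legitimate and then invoking the invertibility of Schensted insertion. Throughout, ``set-partition tableau'' is read with the max-entry orderings of Definition~\ref{D:setpart}, and an empty (hence $0$-filled) box is treated as having maximal entry $0$.

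For Algorithm A, I would argue by \emph{downward} induction on $j$ that $\mathsf{T}^{(j)}$ is a set-partition tableau whose non-zero entries form a set partition of $\{1,\dots,j\}$; the base case $j=k$ is the hypothesis on $\mathsf{T}$. In the inductive step, $j$ is the unique largest entry of $\mathsf{T}^{(j)}$, so since entries strictly increase along rows and down columns under the max-ordering, the box $b$ containing $j$ sits at an outer corner and step~(2a) produces the genuine Young diagram $\lambda^{(j-\half)}=\lambda^{(j)}-\square$. After step~(2b) the box $b$ has content inside $\{1,\dots,j-1\}$ (or becomes a fresh $0$-box), and step~(2c) is an ordinary Schensted row insertion governed by maximal entries; this preserves the strict-increase conditions and adds exactly one box, so $\lambda^{(j-1)}=\lambda^{(j-\half)}+\square$ and $\mathsf{T}^{(j-1)}$ is again a set-partition tableau, now with non-zero entries partitioning $\{1,\dots,j-1\}$. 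Carrying the induction down to $j=0$, the tableau $\mathsf{T}^{(0)}$ contains only $0$'s and hence has shape the single row $[n]$; therefore $(\lambda^{(0)}=[n],\lambda^{(\half)},\lambda^{(1)},\dots,\lambda^{(k)}=\lambda)$ alternately removes and then adds one box, so it is a vacillating tableau of shape $\lambda$ and length $k$, the memberships $\lambda^{(i)}\in\Lambda_{i,\S_n}$ and $\lambda^{(i+\half)}\in\Lambda_{i+\half,\S_{n-1}}$ being automatic because the path itself witnesses them via \eqref{eq:pathdim}.

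For Algorithm B I would run the analogous \emph{upward} induction. At step $j$, the box in position $\lambda^{(j)}/\lambda^{(j+\half)}$ is an outer corner of $\mathsf{T}^{(j)}$ because $\lambda^{(j+\half)}=\lambda^{(j)}-\square$, so the reverse row insertion of step~(a$'$) is defined and ejects a box $b$; adjusting the content of $b$ as in step~(b$'$) and placing it in the addable corner $\lambda^{(j+1)}/\lambda^{(j+\half)}$ as in step~(c$'$) yields $\mathsf{T}^{(j+1)}$, which one checks is again a set-partition tableau whose non-zero entries partition $\{1,\dots,j+1\}$. At $j=k$ this produces a set-partition tableau of shape $\lambda$; its content is $[0^{n-t},1,\dots,k]$ where $t$ is the number of boxes with non-zero content, and $|\lambda^\#|\le t\le n$ is forced because the $0$-boxes must occupy an initial segment of the first row (a $0$ cannot lie below a positive entry), exactly as in the discussion preceding Definition~\ref{D:setpart}. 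Finally, Algorithms A and B undo each other step by step in the pairing indicated in the text: the box-removal step of A and the box-adjoining step of B are inverse; the ``delete $j$'' and ``add $j$'' steps are inverse, with the $0$-placeholder bookkeeping matching up; and the Schensted row insertion in A and the reverse row insertion in B are inverse precisely because the Schensted correspondence is invertible. Hence both composites are the identity and we obtain the asserted bijection.

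The step I expect to be the main obstacle is verifying that the strict-increase-under-max-ordering condition of Definition~\ref{D:setpart}(iii) really is preserved by Schensted insertion performed on maximal entries --- in particular that empty or $0$-filled boxes always stay within the initial segment of the first row, and that ``remove the box containing $j$'' always lands on an outer corner. The edge cases in which a block shrinks to the singleton $\{j\}$ and its box is replaced by a $0$-box deserve special care, since that is exactly where the value of $t$ changes between consecutive intermediate tableaux; everything else is the formal, standard invertibility of the Schensted correspondence.
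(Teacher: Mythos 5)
Your proposal is correct and follows essentially the same route as the paper: establish by induction that each intermediate $\mathsf{T}^{(j)}$ in Algorithm A remains a set-partition tableau containing a set partition of $\{1,\dots,j\}$, observe that $\mathsf{T}^{(0)}$ must be the all-zero one-row tableau, and deduce mutual inverseness of Algorithms A and B step by step from the invertibility of Schensted insertion. The paper's own argument is the short discussion following Algorithms A and B, and your additional checks (that the box containing $j$ is a removable corner, that $0$-boxes persist as an initial segment of the first row because $0$ is minimal under the max-ordering and can never be bumped to a lower row, and that block maxima are pairwise distinct so the max-ordering is a genuine total order) are exactly the details the paper leaves implicit.
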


Our bijection also provides a combinatorial proof of the dimension formula for $\Z_{k,n}$  (see  (b) of Theorem \ref{T:cent}),  which is illustrated with an example in Figure \ref{fig:BijectionDimensionExample}. In this case, the bijection is between set partitions of $[1,2k]$ with at most $n$ blocks and vacillating tableaux of shape $[n]$ and length $2k$.

\begin{cor} \label{cor:dimension}  For $k,n \in \ZZ_{\ge 1}$, 
 $\dim(\Z_{k,n}) = \dim(\Z_{2k,n}^{[n]}) = \m_{2k,n}^{[n]} = \mathsf{B}(2k,n) = \displaystyle{\sum_{t=1}^n} \stirling{2k}{t}$, the number of set partitions of a set of size $2k$ into at most $n$ parts. 
\end{cor}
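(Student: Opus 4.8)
The plan is to derive Corollary~\ref{cor:dimension} by specializing the bijection of Algorithms A and B to the shape $\lambda = [n]$ and length $2k$, and then chaining the resulting combinatorial count through Schur--Weyl duality. The equalities to be established are $\dim(\Z_{k,n}) = \dim(\Z_{2k,n}^{[n]}) = \m_{2k,n}^{[n]} = \mathsf{B}(2k,n) = \sum_{t=1}^{n}\stirlinginline{2k}{t}$. The first of these is the second equality of (\ref{SW4}), and the middle pair is the identity $\dim(\Z_{2k,n}^{[n]}) = \m_{2k,n}^{[n]}$ coming from (\ref{SW2}); so the real work is to exhibit $\m_{2k,n}^{[n]}$, which by (\ref{eq:pathdim}) and the vacillating-tableau interpretation of paths at the end of Section~\ref{sec:res-ind} is the number of vacillating tableaux of shape $[n]$ and length $2k$, as the number of set partitions of $[1,2k]$ into at most $n$ blocks.

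The key observation is that a set-partition tableau of the single-row shape $\lambda = [n]$ carries exactly the data of a set partition of $[1,2k]$ into at most $n$ blocks. Unwinding Definition~\ref{D:setpart} with $\lambda = [n]$: the partition $\lambda^\#$ is empty, so the parameter $t$ ranges over $0 \le t \le n$; condition (i) forces the first $n-t$ boxes of the row to contain $0$; and the skew shape $\lambda/[n-t] = [t]$ is a single row of $t$ boxes, so condition (iii) reduces to the requirement that the $t$ blocks be listed from left to right in increasing order of their largest entries. Hence such a tableau is precisely a set partition of $[1,2k]$ into $t$ nonempty blocks, recorded in its canonical left-to-right order, together with the choice of $t$; and $t$ effectively ranges over $1 \le t \le \min(n,2k)$, since the $t$ boxes of the skew shape must jointly contain $1,\ldots,2k$ with none empty. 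Summing over $t$ and using $\stirlinginline{2k}{t} = 0$ for $t > 2k$, the total number of set-partition tableaux of shape $[n]$ over all admissible $t$ is $\sum_{t=1}^{n}\stirlinginline{2k}{t} = \mathsf{B}(2k,n)$.

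Assembling the pieces: by the bijection of Algorithms A and B (the theorem immediately preceding this corollary, applied with $\lambda = [n]$ and $k$ replaced by $2k$), this count equals the number of vacillating tableaux of shape $[n]$ and length $2k$, hence equals $\m_{2k,n}^{[n]} = \dim(\Z_{2k,n}^{[n]})$ by (\ref{eq:pathdim}), (\ref{SW2}), and the definition of vacillating tableaux; then (\ref{SW4}) identifies this quantity with $\dim(\Z_{k,n})$, completing the chain. I do not anticipate a genuine obstacle: the equality $\dim(\Z_{k,n}) = \mathsf{B}(2k,n)$ was already obtained in Theorem~\ref{T:cent}(b) via the Burnside/character computation, so the only substantive content here is producing the explicit combinatorial model, and the single point deserving care is the bookkeeping of the range of $t$ together with the role of the $0$-padding of the first row --- both of which are immediate once Definition~\ref{D:setpart} is read in the single-row case.
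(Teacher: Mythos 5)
Your proof is correct and takes essentially the same route as the paper: cite (\ref{SW4}) for $\dim(\Z_{k,n}) = \dim(\Z_{2k,n}^{[n]})$, identify $\dim(\Z_{2k,n}^{[n]}) = \m_{2k,n}^{[n]}$ from Schur--Weyl duality, and then observe that set-partition tableaux of shape $[n]$ and content $\{0^{n-t},1,\dots,2k\}$ are exactly set partitions of $[1,2k]$ into $t \le n$ blocks. The paper invokes the preceding theorem's equality of counts directly rather than passing explicitly through vacillating tableaux and Algorithms A/B, but the combinatorial content is the same.
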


\begin{proof}  The first equality comes from (\ref{SW4}) and the second equality from the previous corollary (or alternatively
from (\ref{SW3})).    To see the third equality, we note that $\m_{2k,n}^{[n]}$ equals the number of set-partition tableaux of shape $[n]$
having content given by a set partition $\pi \in \Pi_{2k}$ with
$t$ blocks for $t=1,2,\dots,n$ (the case $t = |[n]^\#| = 0$ not allowed).  
This is the number of set partitions of $\{1, 2, \ldots, 2k\}$ which have at most $n$ blocks, which is exactly $\mathsf{B}(2k,n)$.
\end{proof}

\begin{rem}{\rm We know that $\dim(\Z_{k,n}) = \dim(\Z_{2k,n}^{[n]}) = \m_{2k,n}^{[n]} = \sum_{\lambda \in \Lambda_{k,\S_n}} (\m_{k,n}^\lambda)^2$. We can see the last equality combinatorially as well.  Given a vacillating tableau 
$([n],[n-1],\lambda^{(1)}, \ldots,$ $\lambda^{(2k-1)}, [n-1],[n])$ 
of shape $[n]$ and length $2k$, let $\lambda = \lambda^{(k)}$. Then the first half of this tableau $([n],[n-1],\lambda^{(1)}, \ldots, $ $\lambda^{(k-\half)}, \lambda^{(k)})$, and the second half of this tableau $([n],[n-1],\lambda^{(2k-1)},  \ldots,$ $\lambda^{(k-\half)}, \lambda^{(k)})$ read in reverse, form a pair of vacillating tableaux of shape $\lambda$ and length $k$. This gives a bijection between vacillating tableaux of shape $[n]$ and length $2k$ and pairs of vacillating tableaux of shape $\lambda$
and length $k$ for some $\lambda \in \Lambda_{k,\S_n}$.}
\end{rem}

\begin{rem}{\rm  Different bijections between set partitions of $[1,2k]$ and vacillating tableaux of shape $[n]$ and length $2k$  are given in \cite{HL} and \cite{CDDSY}. However,  the bijections in those papers  require that $n \ge 2k$ holds. The bijection 
here has the advantage of working for all $k,n \in \ZZ_{\ge1}$.
}
\end{rem}

\section{The partition algebra $\P_k(n)$}\label{sec:partition}

\subsection{The diagram basis of the partition algebra $\P_k(n)$}\label{subsec:diagbasis}
Let $\pi \in \Pi_{2k}$ be a set partition of  $[1,2k]=\{1,2, \ldots, 2k\}$. The diagram $d_\pi$  of $\pi$ has two rows of $k$ vertices each,
with the bottom vertices indexed by $1,2,\dots, k$, and the top vertices indexed by $k+1,k+2, \dots, 2k$
from left to right. Edges are drawn so that the connected components of $d_\pi$ are the blocks of $\pi$.  An example
of a set partition $\pi \in \Pi_{16}$ and its corresponding diagram $d_\pi$ is displayed here: 
\begin{align*}
\pi & =\big\{1,10 \mid 2,3 \mid 4,9,11 \mid 5,7 \mid 6, 12, 15, 16 \mid 8,14, \mid13 \big\}, \\
d_{\pi} & =  \begin{array}{c}
\begin{tikzpicture}[xscale=.55,yscale=.55,line width=1.5pt] 
\foreach \i in {1,...,8} 
{ \path (\i,1.5) coordinate (T\i); \path (\i,0) coordinate (B\i); } 
\filldraw[fill= gray!40,draw=gray!40,line width=4pt]  (T1) -- (T8) -- (B8) -- (B1) -- (T1);
\draw[blue] (T3) -- (B4);
\draw[blue] (T2) -- (B1);
\draw[blue] (T6) -- (B8);
\draw[blue] (T4)  .. controls +(.1,-.8) and +(-.1,.8) .. (B6);
\draw[blue] (T1) .. controls +(.1,-.5) and +(-.1,-.5) .. (T3) ;
\draw[blue] (T4) .. controls +(.1,-.5) and +(-.1,-.5) .. (T7) ;
\draw[blue] (T7) .. controls +(.1,-.5) and +(-.1,-.5) .. (T8) ;
\draw[blue] (B2) .. controls +(.1,.5) and +(-.1,.5) .. (B3) ;
\draw[blue] (B5) .. controls +(.1,.5) and +(-.1,.5) .. (B7) ;
{\draw[blue] (B5) .. controls +(.1,.5) and +(-.1,.5) .. (B7) ;}
\draw  (B1)  node[below=0.05cm]{${\scriptstyle 1}$}; \draw  (T1)  node[above=0.05cm]{${\scriptstyle 9}$};
\draw  (B2)  node[below=0.05cm]{${\scriptstyle 2}$}; \draw  (T2)  node[above=0.05cm]{${\scriptstyle 10}$};
\draw  (B3)  node[below=0.05cm]{${\scriptstyle 3}$}; \draw  (T3)  node[above=0.05cm]{${\scriptstyle 11}$};
\draw  (B4)  node[below=0.05cm]{${\scriptstyle 4}$}; \draw  (T4)  node[above=0.05cm]{${\scriptstyle 12}$};
\draw  (B5)  node[below=0.05cm]{${\scriptstyle 5}$}; \draw  (T5)  node[above=0.05cm]{${\scriptstyle 13}$};
\draw  (B6)  node[below=0.05cm]{${\scriptstyle 6}$}; \draw  (T6)  node[above=0.05cm]{${\scriptstyle 14}$};
\draw  (B7)  node[below=0.05cm]{${\scriptstyle 7}$}; \draw  (T7)  node[above=0.05cm]{${\scriptstyle 15}$};
\draw  (B8)  node[below=0.05cm]{${\scriptstyle 8}$}; \draw  (T8)  node[above=0.05cm]{${\scriptstyle 16}$};
\foreach \i in {1,...,8} 
{ \fill (T\i) circle (4pt); \fill (B\i) circle (4pt); } 
\end{tikzpicture} \end{array}.
\end{align*}
The way the edges are drawn is immaterial; what matters is that the connected components of the diagram $d_\pi$ 
correspond to the blocks of the set partition $\pi$. Thus,  $d_\pi$  represents the equivalence class of all diagrams with connected components equal to the blocks of   $\pi$.

Multiplication of two diagrams $d_{\pi_1}$, $d_{\pi_2}$ is accomplished by placing $d_{\pi_1}$ above $d_{\pi_2}$,
identifying the vertices in the bottom row of $d_{\pi_1}$ with those in the top row of $d_{\pi_2}$,
concatenating  the edges,  deleting all connected components that lie entirely in the middle row of the joined diagrams,  and
multiplying by a factor of $n$ for each such middle-row component.  
For example,  if  
\begin{equation*} 
d_{\pi_1} =  {\begin{array}{c}
\begin{tikzpicture}[scale=.55,line width=1.5pt] 
\foreach \i in {1,...,8} 
{ \path (\i,1.5) coordinate (T\i); \path (\i,0) coordinate (B\i); } 
\filldraw[fill= gray!40,draw=gray!40,line width=3.2pt]  (T1) -- (T8) -- (B8) -- (B1) -- (T1);
\draw[blue] (T3) -- (B4);
\draw[blue] (T6) -- (B8);
\draw[blue] (T8) -- (B6);
\draw[blue] (T1) .. controls +(.1,-.6) and +(-.1,-.6) .. (T3) ;
\draw[blue] (T4) .. controls +(.1,-.6) and +(-.1,-.6) .. (T5) ;
\draw[blue] (T5) .. controls +(.1,-.6) and +(-.1,-.6) .. (T7) ;
\draw[blue] (B1) .. controls +(.1,.6) and +(-.1,.6) .. (B3) ;
\draw[blue] (B5) .. controls +(.1,.6) and +(-.1,.6) .. (B7) ;
\foreach \i in {1,...,8}  { \fill (T\i) circle (4pt); \fill (B\i) circle (4pt); } 
\end{tikzpicture}\end{array}} 
\qquad\hbox{and}\qquad
d_{\pi_2} =
{\begin{array}{c}
\begin{tikzpicture}[scale=.55,line width=1.5pt] 
\foreach \i in {1,...,8} { \path (\i,1.5) coordinate (T\i); \path (\i,0) coordinate (B\i); } 
\filldraw[fill=gray!40,draw=gray!40,line width=3.2pt]  (T1) -- (T8) -- (B8) -- (B1) -- (T1);
\draw[blue] (T8) -- (B7);
\draw[blue] (T3) -- (B4);
\draw[blue] (T1) .. controls +(.1,-.6) and +(-.1,-.6) .. (T4) ;
\draw[blue] (T5) .. controls +(.1,-.6) and +(-.1,-.6) .. (T7) ;
\draw[blue] (B1) .. controls +(.1,.6) and +(-.1,.6) .. (B2) ;
\draw[blue] (B2) .. controls +(.1,.6) and +(-.1,.6) .. (B3) ;
\draw[blue] (B4) .. controls +(.1,.6) and +(-.1,.6) .. (B5) ;
\draw[blue] (B6) .. controls +(.1,.6) and +(-.1,.6) .. (B8) ;
\foreach \i in {1,...,8}  { \fill (T\i) circle (4pt); \fill (B\i) circle (4pt); } 
\end{tikzpicture} \end{array}} 
\end{equation*}  
then
\begin{equation*}
d_{\pi_1} d_{\pi_2} = 
\begin{array}{c}
\begin{tikzpicture}[scale=.55,line width=1.5pt] 
\foreach \i in {1,...,8} 
{ \path (\i,1.5) coordinate (T\i); \path (\i,0) coordinate (B\i); } 
\filldraw[fill= gray!40,draw=gray!40,line width=3.2pt]  (T1) -- (T8) -- (B8) -- (B1) -- (T1);
\draw[blue] (T3) -- (B4);
\draw[blue] (T6) -- (B8);
\draw[blue] (T8) -- (B6);
\draw[blue] (T1) .. controls +(.1,-.6) and +(-.1,-.6) .. (T3) ;
\draw[blue] (T4) .. controls +(.1,-.6) and +(-.1,-.6) .. (T5) ;
\draw[blue] (T5) .. controls +(.1,-.6) and +(-.1,-.6) .. (T7) ;
\draw[blue] (B1) .. controls +(.1,.6) and +(-.1,.6) .. (B3) ;
\draw[blue] (B5) .. controls +(.1,.6) and +(-.1,.6) .. (B7) ;
\foreach \i in {1,...,8}  { \fill (T\i) circle (4pt); \fill (B\i) circle (4pt); } 
\end{tikzpicture}
\\
\begin{tikzpicture}[scale=.55,line width=1.5pt] 
\foreach \i in {1,...,8} { \path (\i,1.5) coordinate (T\i); \path (\i,0) coordinate (B\i); } 
\filldraw[fill=gray!40,draw=gray!40,line width=3.2pt]  (T1) -- (T8) -- (B8) -- (B1) -- (T1);
\draw[blue] (T8) -- (B7);
\draw[blue] (T3) -- (B4);
\draw[blue] (T1) .. controls +(.1,-.6) and +(-.1,-.6) .. (T4) ;
\draw[blue] (T5) .. controls +(.1,-.6) and +(-.1,-.6) .. (T7) ;
\draw[blue] (B1) .. controls +(.1,.6) and +(-.1,.6) .. (B2) ;
\draw[blue] (B2) .. controls +(.1,.6) and +(-.1,.6) .. (B3) ;
\draw[blue] (B4) .. controls +(.1,.6) and +(-.1,.6) .. (B5) ;
\draw[blue] (B6) .. controls +(.1,.6) and +(-.1,.6) .. (B8) ;
\foreach \i in {1,...,8}  { \fill (T\i) circle (4pt); \fill (B\i) circle (4pt); } 
\end{tikzpicture} \end{array} 
=n^2 
{\begin{array}{c}  
{\begin{tikzpicture}[scale=.55,line width=1.5pt] 
\foreach \i in {1,...,8} 
{ \path (\i,1.5) coordinate (T\i); \path (\i,0) coordinate (B\i); } 
\filldraw[fill= gray!40,draw=gray!40,line width=3.2pt]  (T1) -- (T8) -- (B8) -- (B1) -- (T1);
\draw[blue] (T3) -- (B4);
\draw[blue] (T6) -- (B7);
\draw[blue] (T1) .. controls +(.1,-.5) and +(-.1,-.5) .. (T3) ;
\draw[blue] (T4) .. controls +(.1,-.5) and +(-.1,-.5) .. (T5) ;
\draw[blue] (T5) .. controls +(.1,-.5) and +(-.1,-.5) .. (T7) ;
\draw[blue] (B1) .. controls +(.1,.5) and +(-.1,.5) .. (B2) ;
\draw[blue] (B2) .. controls +(.1,.5) and +(-.1,.5) .. (B3) ;
\draw[blue] (B4) .. controls +(.1,.5) and +(-.1,.5) .. (B5) ;
\draw[blue] (B6) .. controls +(.1,.5) and +(-.1,.5) .. (B8) ;
\foreach \i in {1,...,8} { \fill (T\i) circle (4pt); \fill (B\i) circle (4pt); } 
\end{tikzpicture}}
 \end{array} =  n^2 d_{\pi_1 \ast \pi_2},} 
 \end{equation*} 
where $\pi_1 \ast \pi_2$ is the set partition obtained by the concatenation of $\pi_1$ and $\pi_2$ in this process.   It is easy to confirm that the product depends only on the underlying set partition and is independent of the diagram chosen to represent $\pi$.
For any two set partitions $\pi_1,\pi_2 \in \Pi_{2k}$, we let $[\pi_1 \ast \pi_2]$ denote the number of blocks removed from the middle of the product $d_{\pi_1}d_{\pi_2}$, so that the product is given by 
\begin{equation}\label{eq:diagmult} d_{\pi_1} d_{\pi_2} = n^{[\pi_1 \ast \pi_2]} d_{\pi_1 \ast \pi_2}. \end{equation} 

For  $n \in \ZZ_{\ge 1}$ and for $k \in\ZZ_{\ge 1}$,  the partition algebra $\P_k(n)$ is the $\FF$-span of $\{d_{\pi} \mid \pi \in \Pi_{2k}\}$ under the diagram multiplication in \eqref{eq:diagmult}.  Thus, $\dim(\P_{k}(n)) = \mathsf{B}(2k)$, the $2k$-th Bell number.
We refer to  $\{d_{\pi} \mid \pi \in \Pi_{2k}\}$ as the \emph{diagram basis}.  Diagram multiplication is easily seen to be associative with identity 
element  $\mathsf{I}_k$ corresponding to the set partition, \\  $\big\{ 1,k+1 \mid 2, k+2 \mid \cdots \mid k,2k\big\}$,
where
\begin{equation}\label{id-def}
\mathsf{I}_k   =  
\begin{array}{c}\begin{tikzpicture}[scale=.5,line width=1.5pt] 
\foreach \i in {1,...,6} 
{ \path (\i,1.25) coordinate (T\i); \path (\i,0) coordinate (B\i); } 
\filldraw[fill= gray!40,draw=gray!40,line width=3.2pt]  (T1) -- (T6) -- (B6) -- (B1) -- (T1);
\draw[blue] (T1) -- (B1);
\draw[blue] (T2) -- (B2);
\draw[blue] (T3) -- (B3);
\draw[blue] (T5) -- (B5);
\draw[blue] (T6) -- (B6);
\foreach \i in {1,2,3,5,6} { \fill (T\i) circle (4pt); \fill (B\i) circle (4pt); } 
\draw (T4) node  {$\qquad \cdots \qquad $ }; \draw (B4) node  { $\qquad \cdots\qquad $ }; 
\end{tikzpicture}\end{array}.
\end{equation}

If $\pi_1, \pi_2 \in \Pi_{2k-1}$, so that $k$ and $2k$ are in the same block in both $\pi_1$ and $\pi_2$, then $k$ and $2k$ are also in the same block of $\pi_1 \ast \pi_2$. Thus, for $k \in \ZZ_{\ge 1}$, we define $\P_{k-\frac{1}{2}}(n) \subset \P_{k}(n)$ to be the  $\FF$-span of $\{d_\pi \mid \pi \in \Pi_{2k-1} \subset \Pi_{2k}\}$. There is also an embedding  $\P_{k}(n) \subset  \P_{k+\frac{1}{2}}(n)$ given by adding a top and bottom node to the right of any diagram in $\P_k(n)$ and a vertical edge connecting them. Setting $\P_0(n) = \FF$, we have a tower of embeddings
\begin{equation}
\P_0(n)  \cong  \P_{\frac{1}{2}}(n) \subset  \P_{1}(n) \subset   \P_{1\frac{1}{2}}(n) \subset  \P_{2}(n)\subset  \P_{2\frac{1}{2}}(n) \subset  \cdots
\end{equation}
with $\dim(\P_k(n)) = |\Pi_{2k}| =  \mathsf{B}(2k)$ (the $2k$-th Bell number) for each $k \in \frac{1}{2}\ZZ_{\ge 1}$.

\subsection{The orbit basis}\label{subsec:orbitbasis}

For $k \in \ZZ_{\ge 1}$, the set partitions  $\Pi_{2k}$ of $[1,2k]$ form a  lattice (a partially ordered set (poset) for which each pair has a least upper bound and a greatest lower bound) under the partial order given by  
\begin{equation}
\pi \preceq \vr \quad \hbox{ if every block of $\pi$ is contained in a block of $\vr$.}
\end{equation}
In this case, we say that $\pi$ is a {\it refinement} of $\vr$  and that $\vr$ is a {\it coarsening} of $\pi$, so that $\Pi_{2k}$ is partially ordered by refinement.  

For each $k \in \half  \ZZ_{\ge 1}$, there is a second basis $\left\{x_\pi  \mid \pi \in \Pi_{2k} \right\}$ of $\P_k(n)$, called the \emph{orbit basis}, defined by the following coarsening relation with respect to the diagram basis:
\begin{equation}\label{refinement-relation}
d_\pi = \sum_{\pi \preceq \vr} x_\vr.
\end{equation}
Thus, the diagram basis element $d_\pi$ is the sum of all orbit basis elements $x_\vr$ for which $\vr$ is  coarser than $\pi$. 
For the remainder of the paper,  we adopt the following convention used in \cite{BH}: 
\begin{equation}
\begin{array}{l} \hbox{\emph{Diagrams with white vertices indicate orbit basis elements, and}} \\
\hbox{\emph{those with black vertices indicate diagram basis elements.}} 
\end{array}
\end{equation}
For example, the expression below writes the diagram $d_{1|23|4}$ in $\P_2(n)$ in terms of the orbit basis,
\begin{equation*}
\begin{array}{c}
\begin{tikzpicture}[xscale=.5,yscale=.5,line width=1.25pt] 
\foreach \i in {1,2}  { \path (\i,1.25) coordinate (T\i); \path (\i,.25) coordinate (B\i); } 
\filldraw[fill= black!12,draw=black!12,line width=4pt]  (T1) -- (T2) -- (B2) -- (B1) -- (T1);
\draw[blue] (B2) -- (T1);
\foreach \i in {1,2}  { \fill (T\i) circle (4pt); \fill  (B\i) circle (4pt); } 
\end{tikzpicture}
\end{array}
=
\begin{array}{c}
\begin{tikzpicture}[xscale=.5,yscale=.5,line width=1.25pt] 
\foreach \i in {1,2}  { \path (\i,1.25) coordinate (T\i); \path (\i,.25) coordinate (B\i); } 
\filldraw[fill= black!12,draw=black!12,line width=3pt]  (T1) -- (T2) -- (B2) -- (B1) -- (T1);
\draw[blue] (B2) -- (T1);
\foreach \i in {1,2}  { \filldraw[fill=white,draw=black,line width = 1pt] (T\i) circle (4pt); \filldraw[fill=white,draw=black,line width = 1pt]  (B\i) circle (4pt); } 
\end{tikzpicture}
\end{array}
+\begin{array}{c}
\begin{tikzpicture}[xscale=.5,yscale=.5,line width=1.25pt] 
\foreach \i in {1,2}  { \path (\i,1.25) coordinate (T\i); \path (\i,.25) coordinate (B\i); } 
\filldraw[fill= black!12,draw=black!12,line width=3pt]  (T1) -- (T2) -- (B2) -- (B1) -- (T1);
\draw[blue] (B2) -- (T1);
\draw[blue] (B1) -- (B2);
\draw[blue] (B1) -- (T1);
\foreach \i in {1,2}  { \filldraw[fill=white,draw=black,line width = 1pt] (T\i) circle (4pt); \filldraw[fill=white,draw=black,line width = 1pt]  (B\i) circle (4pt); } 
\end{tikzpicture}
\end{array}
+\begin{array}{c}
\begin{tikzpicture}[xscale=.5,yscale=.5,line width=1.25pt] 
\foreach \i in {1,2}  { \path (\i,1.25) coordinate (T\i); \path (\i,.25) coordinate (B\i); } 
\filldraw[fill= black!12,draw=black!12,line width=3pt]  (T1) -- (T2) -- (B2) -- (B1) -- (T1);
\draw[blue] (B2) -- (T1);
\draw[blue] (T1) -- (T2);
\draw[blue] (T2) -- (B2);
\foreach \i in {1,2}  { \filldraw[fill=white,draw=black,line width = 1pt] (T\i) circle (4pt); \filldraw[fill=white,draw=black,line width = 1pt]  (B\i) circle (4pt); } 
\end{tikzpicture}
\end{array}
+\begin{array}{c}
\begin{tikzpicture}[xscale=.5,yscale=.5,line width=1.25pt] 
\foreach \i in {1,2}  { \path (\i,1.25) coordinate (T\i); \path (\i,.25) coordinate (B\i); } 
\filldraw[fill= black!12,draw=black!12,line width=3pt]  (T1) -- (T2) -- (B2) -- (B1) -- (T1);
\draw[blue] (B1) -- (T2);
\draw[blue] (T1) -- (B2);
\foreach \i in {1,2}  { \filldraw[fill=white,draw=black,line width = 1pt] (T\i) circle (4pt); \filldraw[fill=white,draw=black,line width = 1pt]  (B\i) circle (4pt); } 
\end{tikzpicture}
\end{array}
+\begin{array}{c}
\begin{tikzpicture}[xscale=.5,yscale=.5,line width=1.25pt] 
\foreach \i in {1,2}  { \path (\i,1.25) coordinate (T\i); \path (\i,.25) coordinate (B\i); } 
\filldraw[fill= black!12,draw=black!12,line width=3pt]  (T1) -- (T2) -- (B2) -- (B1) -- (T1);
\draw[blue] (B1) -- (T2);
\draw[blue] (T1) -- (T2);\draw[blue] (B1) -- (B2);
\draw[blue] (T1) -- (B1);
\draw[blue] (T2) -- (B2);
\draw[blue] (T1) -- (B2);
\foreach \i in {1,2}  { \filldraw[fill=white,draw=black,line width = 1pt] (T\i) circle (4pt); \filldraw[fill=white,draw=black,line width = 1pt]  (B\i) circle (4pt); } 
\end{tikzpicture}
\end{array}.
\end{equation*}
 
\begin{rem}{\rm
We refer to the basis $\{x_\pi  \mid \pi \in \Pi_{2k}\}$ as the orbit basis,  because the elements in this basis act on the tensor space $\modu^{\ot k}$ in a natural way that corresponds to $\S_n$-orbits on simple tensors (see \eqref{eq:Phi-coeffs-orbit}).   Jones' original definition of the partition algebra in \cite{J} introduced the orbit basis  first and defined  the diagram basis later using the refinement relation \eqref{refinement-relation}.  The multiplication rule is 
more easily stated  in the diagram basis, and for that reason the diagram basis is usually the preferred basis when working with $\P_k(n)$.
 However, when working with $\P_k(n)$ for $2k > n$ the orbit basis is especially useful. For example, in Section \ref{sec:tensorrepresentation} we are able to describe the kernel of the 
action of $\P_k(n)$ on tensor space as the two-sided ideal generated by a single orbit basis element.
}
\end{rem}

\subsection{Change of basis}\label{subsec:change}   
  
The transition matrix determined by \eqref{refinement-relation} between the diagram basis  and the orbit basis is the matrix $\zeta_{2k}$, called the \emph{zeta matrix} of the poset $\Pi_{2k}$. It is unitriangular with respect to any extension to a linear order, and thus it is invertible, confirming that indeed the elements $x_{\pi}, \pi \in \Pi_{2k}$,  form a basis of $\P_k(n)$. 
The inverse of $\zeta_{2k}$ is the matrix $\mu_{2k}$ of the M\"obius function of the set-partition lattice, and it satisfies
\begin{equation}\label{eq:mobiusa}
x_\pi = \sum_{\pi \preceq \vr} \mu_{2k}(\pi,\vr) d_\vr,
\end{equation}
where $\mu_{2k}(\pi,\vr)$ is the $(\pi,\vr)$ entry of $\mu_{2k}$. The M\"obius function of the set-partition lattice can be readily computed
using the following formula.  If  $\pi \preceq \vr$,  and $\vr$ consists of $\ell$ blocks such that the $i$th block of $\vr$ is the union of $b_i$ blocks  of $\pi$, then (see, for example, \cite[p.~30]{S1}), 
\begin{equation}\label{eq:mobiusb}
 \mu_{2k}(\pi,\vr) = \prod_{i=1}^\ell (-1)^{b_i-1}(b_i-1)!.
\end{equation}
 
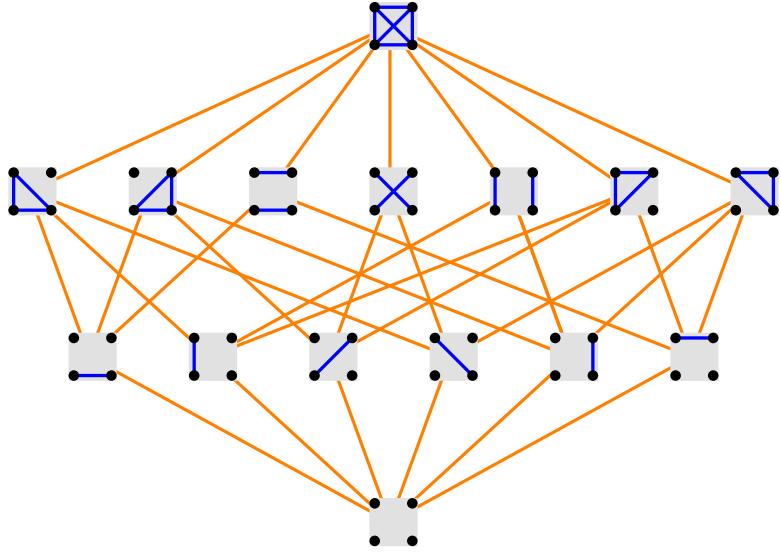
\begin{figure}[h!]
$$
  \begin{tikzpicture}[xscale=.08,yscale=.22,line width=1.25pt] 
  nodes={draw=white, 
      minimum width=.2cm, minimum height=.5cm
    }

\path (70,30) coordinate (m-1234); 

\path (10,20) coordinate (m-123-4); 
\path (30,20) coordinate (m-124-3); 
\path (50,20) coordinate (m-12-34); 
\path (70,20) coordinate (m-14-23); 
\path (90,20) coordinate (m-13-24); 
\path (110,20) coordinate (m-134-2); 
\path (130,20) coordinate (m-1-234); 

\path (20,10) coordinate (m-12-3-4); 
\path (40,10) coordinate (m-13-2-4); 
\path (60,10) coordinate (m-14-2-3); 
\path (80,10) coordinate (m-23-1-4); 
\path (100,10) coordinate (m-1-3-24); 
\path (120,10) coordinate (m-1-2-34); 

\path (70,0) coordinate (m-1-2-3-4);

\draw (m-1234) node  {\abcd}; 

\draw (m-123-4) node  {\abcld}; 
\draw (m-124-3) node  {\abdlc}; 
\draw (m-12-34) node  {\ablcd}; 
\draw (m-134-2) node  {\acdlb}; 
\draw (m-13-24) node  {\aclbd}; 
\draw (m-14-23) node  {\adlbc}; 
\draw (m-1-234) node  {\albcd}; 

\draw (m-12-3-4) node  {\ablcld}; 
\draw (m-13-2-4) node  {\aclbld}; 
\draw (m-14-2-3) node  {\adlblc}; 
\draw (m-23-1-4) node  {\albcld}; 
\draw (m-1-3-24) node  {\albdlc}; 
\draw (m-1-2-34) node  {\alblcd};

\draw (m-1-2-3-4) node  {\alblcld}; 

\begin{scope}[on background layer]
 
\path  (m-1234) edge[orange,line width=1.2pt] (m-123-4);
\path  (m-1234) edge[orange,line width=1.2pt] (m-124-3);
\path  (m-1234) edge[orange,line width=1.2pt] (m-12-34);
\path  (m-1234) edge[orange,line width=1.2pt] (m-134-2);
\path  (m-1234) edge[orange,line width=1.2pt] (m-13-24);
\path  (m-1234) edge[orange,line width=1.2pt] (m-14-23);
\path  (m-1234) edge[orange,line width=1.2pt] (m-1-234);

\path  (m-12-3-4) edge[orange,line width=1.2pt] (m-123-4);
\path  (m-13-2-4) edge[orange,line width=1.2pt] (m-123-4);
\path  (m-23-1-4) edge[orange,line width=1.2pt] (m-123-4);
\path  (m-12-3-4) edge[orange,line width=1.2pt] (m-124-3);
\path  (m-14-2-3) edge[orange,line width=1.2pt] (m-124-3);
\path  (m-1-3-24) edge[orange,line width=1.2pt] (m-124-3);
\path  (m-12-3-4) edge[orange,line width=1.2pt] (m-12-34);
\path  (m-1-2-34) edge[orange,line width=1.2pt] (m-12-34);
\path  (m-13-2-4) edge[orange,line width=1.2pt] (m-134-2);
\path  (m-14-2-3) edge[orange,line width=1.2pt] (m-134-2);
\path  (m-1-2-34) edge[orange,line width=1.2pt] (m-134-2);
\path  (m-13-2-4) edge[orange,line width=1.2pt] (m-13-24);
\path  (m-1-3-24) edge[orange,line width=1.2pt] (m-13-24);
\path  (m-1-3-24) edge[orange,line width=1.2pt] (m-13-24);
\path  (m-14-2-3) edge[orange,line width=1.2pt] (m-14-23);
\path  (m-23-1-4) edge[orange,line width=1.2pt] (m-14-23);
\path  (m-1-2-34) edge[orange,line width=1.2pt] (m-1-234);
\path  (m-1-3-24) edge[orange,line width=1.2pt] (m-1-234);
\path  (m-23-1-4) edge[orange,line width=1.2pt] (m-1-234);

\path  (m-12-3-4) edge[orange,line width=1.2pt] (m-1-2-3-4);
\path  (m-13-2-4) edge[orange,line width=1.2pt] (m-1-2-3-4);
\path  (m-14-2-3) edge[orange,line width=1.2pt] (m-1-2-3-4);
\path  (m-23-1-4) edge[orange,line width=1.2pt] (m-1-2-3-4);
\path  (m-1-3-24) edge[orange,line width=1.2pt] (m-1-2-3-4);
\path  (m-1-2-34) edge[orange,line width=1.2pt] (m-1-2-3-4);
    \end{scope}
  \end{tikzpicture}
$$  
\caption{Hasse diagram of the partition lattice $\Pi_4$ in the refinement ordering.\label{fig:Hasse}}
\end{figure}
The Hasse diagram of the partition lattice of $\Pi_4$ is shown in Figure \ref{fig:Hasse}. In the change of basis between the orbit basis and the diagram basis (in either direction), each basis element is an integer linear combination of the basis elements above or equal to it in the Hasse diagram. 
If, for example, we apply  formula \eqref{eq:mobiusa} to express $x_{1 \,|\,  2\,|\,  3\,|\,  4}$ and $x_{1\,|\,  23\,|\, 4}$ in the orbit basis in terms of the diagram basis in $\P_2(n)$, we get
\begin{align}
\begin{split}
\label{eq:orbit2diag}
\begin{array}{c}\orbitalblcld\end{array} ~=~ &  \begin{array}{c}\alblcld\end{array}  - \begin{array}{c}\ablcld\end{array} - \begin{array}{c}\aclbld\end{array} - 
 \begin{array}{c}\adlblc\end{array} - \begin{array}{c}\albcld\end{array} - \begin{array}{c}\albdlc\end{array} - \begin{array}{c}\alblcd\end{array}
 +2 \begin{array}{c}\abcld\end{array} \\ &+2 \begin{array}{c}\abdlc\end{array} + \begin{array}{c}\ablcd\end{array}+ \begin{array}{c}\adlbc\end{array}
+ \begin{array}{c}\aclbd\end{array} + 2 \begin{array}{c}\acdlb\end{array} +2 \begin{array}{c}\albcd\end{array}  - 6  \begin{array}{c}\abcd\end{array}, \\
\begin{array}{c}\orbita|bc|d \end{array} ~=~ & \begin{array}{c}\albcld\end{array} - \begin{array}{c}\abcld\end{array}
- \begin{array}{c}\albcd\end{array}  - \begin{array}{c}\adlbc\end{array}  + 2  \begin{array}{c}\abcd \end{array}. 
\end{split} \end{align}

\begin{rem}\label{R:Hasse}{ \rm 
If $\pi \in \Pi_{2k-1} \subset \Pi_{2k}$, then $k$ and $2k$ are in the same block of $\pi$. Since the expression for $x_\pi$ in  the diagram basis  and the expression for $d_\pi$ in the orbit basis are sums over coarsenings of $\pi$,  they involve only set partitions in $\Pi_{2k-1}$.  Thus, the relations in \eqref{refinement-relation} and \eqref{eq:mobiusa} apply equally well to the algebras $\P_{k-\frac{1}{2}}(n)$. The corresponding Hasse diagram  is the sublattice of the diagram for $\Pi_{2k}$ of partitions greater than or equal to $\{1\,|\,2\,| \cdots |\, k-1\,|\, k+1\,|\, k+2\,| \cdots |\, 2k-1\,|\, k, 2k \}$.  For example, the Hasse diagram for $\Pi_3$ is found inside that of $\Pi_4$ in Figure  \ref{fig:Hasse} as those partitions greater than or equal to $\begin{array}{c}\begin{tikzpicture}[scale=.4,line width=1.25pt] 
\foreach \i in {1,2}  { \path (\i,1) coordinate (T\i); \path (\i,0) coordinate (B\i); } 
\filldraw[fill= gray!40,draw=gray!40,line width=3.2pt]  (T1) -- (T2) -- (B2) -- (B1) -- (T1);
\draw[blue] (T2) -- (B2);
\foreach \i in {1,2} { \fill (T\i) circle (4pt); \fill (B\i) circle (4pt); } 
\end{tikzpicture}\end{array}$.  
}
\end{rem} 

\begin{rem}\label{R:orbid}{\rm The orbit diagram 
$\mathsf{I}_k^{\boldsymbol \circ} :=\begin{array}{c} 
\begin{tikzpicture}[xscale=.35,yscale=.35,line width=1.0pt] 
\foreach \i in {1,2,3,4}  { \path (\i,1.25) coordinate (T\i); \path (\i,.25) coordinate (B\i); } 
\filldraw[fill= black!12,draw=black!12,line width=4pt]  (T1) -- (T4) -- (B4) -- (B1) -- (T1);
\draw[blue] (T1) -- (B1);
\draw[blue] (T2) -- (B2);
\draw[blue] (T4) -- (B4);
\draw (T3) node {$\cdots$};\draw (B3) node {$\cdots$};
\foreach \i in {1,2,4}  { \filldraw[fill=white,draw=black,line width = 1pt] (T\i) circle (4pt); \filldraw[fill=white,draw=black,line width = 1pt]  (B\i) circle (4pt); } 
\end{tikzpicture} 
\end{array}$ is \emph{not} the identity element  in $\P_k(n)$. To get the identity element $\mathsf{I}_k$, we must sum all coarsenings of 
 $\mathsf{I}_k^{\boldsymbol \circ}$,
as in the first line below:
\begin{align*}
\mathsf{I}_k &= \begin{array}{c}
\begin{tikzpicture}[xscale=.45,yscale=.45,line width=1.0pt] 
\foreach \i in {1,2,3}  { \path (\i,1.25) coordinate (T\i); \path (\i,.25) coordinate (B\i); } 
\filldraw[fill= black!12,draw=black!12,line width=4pt]  (T1) -- (T3) -- (B3) -- (B1) -- (T1);
\draw[blue] (T1) -- (B1);
\draw[blue] (T2) -- (B2);
\draw[blue] (T3) -- (B3);
\foreach \i in {1,2,3}  { \filldraw[fill=black,draw=black,line width = 1pt] (T\i) circle (4pt); \filldraw[fill=black,draw=black,line width = 1pt]  (B\i) circle (4pt); } 
\end{tikzpicture}\end{array}
=
\begin{array}{c}\begin{tikzpicture}[xscale=.45,yscale=.45,line width=1.0pt] 
\foreach \i in {1,2,3}  { \path (\i,1.25) coordinate (T\i); \path (\i,.25) coordinate (B\i); } 
\filldraw[fill= black!12,draw=black!12,line width=4pt]  (T1) -- (T3) -- (B3) -- (B1) -- (T1);
\draw[blue] (T1) -- (B1);
\draw[blue] (T2) -- (B2);
\draw[blue] (T3) -- (B3);
\foreach \i in {1,2,3}  { \filldraw[fill=white,draw=black,line width = 1pt] (T\i) circle (4pt); \filldraw[fill=white,draw=black,line width = 1pt]  (B\i) circle (4pt); } 
\end{tikzpicture}\end{array}
+
\begin{array}{c}\begin{tikzpicture}[xscale=.45,yscale=.45,line width=1.0pt] 
\foreach \i in {1,2,3}  { \path (\i,1.25) coordinate (T\i); \path (\i,.25) coordinate (B\i); } 
\filldraw[fill= black!12,draw=black!12,line width=4pt]  (T1) -- (T3) -- (B3) -- (B1) -- (T1);
\draw[blue] (T1) -- (B1)--(B2);
\draw[blue] (T1)--(T2) -- (B2);
\draw[blue] (T3) -- (B3);
\foreach \i in {1,2,3}  { \filldraw[fill=white,draw=black,line width = 1pt] (T\i) circle (4pt); \filldraw[fill=white,draw=black,line width = 1pt]  (B\i) circle (4pt); } 
\end{tikzpicture}\end{array}
+
\begin{array}{c}\begin{tikzpicture}[xscale=.45,yscale=.45,line width=1.0pt] 
\foreach \i in {1,2,3}  { \path (\i,1.25) coordinate (T\i); \path (\i,.25) coordinate (B\i); } 
\filldraw[fill= black!12,draw=black!12,line width=4pt]  (T1) -- (T3) -- (B3) -- (B1) -- (T1);
\draw[blue] (T1) -- (B1);
\draw[blue] (T2) -- (B2);
\draw[blue] (T3) -- (B3);
\draw[blue] (T1) .. controls +(0,-.50) and +(0,-.50) .. (T3);
\draw[blue] (B1) .. controls +(0,.50) and +(0,.50) .. (B3);
\foreach \i in {1,2,3}  { \filldraw[fill=white,draw=black,line width = 1pt] (T\i) circle (4pt); \filldraw[fill=white,draw=black,line width = 1pt]  (B\i) circle (4pt); } 
\end{tikzpicture}\end{array}
+
\begin{array}{c}\begin{tikzpicture}[xscale=.45,yscale=.45,line width=1.0pt] 
\foreach \i in {1,2,3}  { \path (\i,1.25) coordinate (T\i); \path (\i,.25) coordinate (B\i); } 
\filldraw[fill= black!12,draw=black!12,line width=4pt]  (T1) -- (T3) -- (B3) -- (B1) -- (T1);
\draw[blue] (T1) -- (B1);
\draw[blue] (T2) -- (B2)--(B3);
\draw[blue] (T2)--(T3) -- (B3);
\foreach \i in {1,2,3}  { \filldraw[fill=white,draw=black,line width = 1pt] (T\i) circle (4pt); \filldraw[fill=white,draw=black,line width = 1pt]  (B\i) circle (4pt); } 
\end{tikzpicture}\end{array}
+
\begin{array}{c}\begin{tikzpicture}[xscale=.45,yscale=.45,line width=1.0pt] 
\foreach \i in {1,2,3}  { \path (\i,1.25) coordinate (T\i); \path (\i,.25) coordinate (B\i); } 
\filldraw[fill= black!12,draw=black!12,line width=4pt]  (T1) -- (T3) -- (B3) -- (B1) -- (T1);
\draw[blue] (T1) -- (B1) -- (B2);
\draw[blue] (T1)--(T2) -- (B2) -- (B3);
\draw[blue] (T2)--(T3) -- (B3);
\foreach \i in {1,2,3}  { \filldraw[fill=white,draw=black,line width = 1pt] (T\i) circle (4pt); \filldraw[fill=white,draw=black,line width = 1pt]  (B\i) circle (4pt); } 
\end{tikzpicture}\end{array}  \qquad \hbox{and} \\
\mathsf{I}_k^{\boldsymbol \circ}&=
\begin{array}{c}\begin{tikzpicture}[xscale=.45,yscale=.45,line width=1.0pt] 
\foreach \i in {1,2,3}  { \path (\i,1.25) coordinate (T\i); \path (\i,.25) coordinate (B\i); } 
\filldraw[fill= black!12,draw=black!12,line width=4pt]  (T1) -- (T3) -- (B3) -- (B1) -- (T1);
\draw[blue] (T1) -- (B1);
\draw[blue] (T2) -- (B2);
\draw[blue] (T3) -- (B3);
\foreach \i in {1,2,3}  { \filldraw[fill=white,draw=black,line width = 1pt] (T\i) circle (4pt); \filldraw[fill=white,draw=black,line width = 1pt]  (B\i) circle (4pt); } 
\end{tikzpicture}\end{array}
=
\begin{array}{c}\begin{tikzpicture}[xscale=.45,yscale=.45,line width=1.0pt] 
\foreach \i in {1,2,3}  { \path (\i,1.25) coordinate (T\i); \path (\i,.25) coordinate (B\i); } 
\filldraw[fill= black!12,draw=black!12,line width=4pt]  (T1) -- (T3) -- (B3) -- (B1) -- (T1);
\draw[blue] (T1) -- (B1);
\draw[blue] (T2) -- (B2);
\draw[blue] (T3) -- (B3);
\foreach \i in {1,2,3}  { \filldraw[fill=black,draw=black,line width = 1pt] (T\i) circle (4pt); \filldraw[fill=black,draw=black,line width = 1pt]  (B\i) circle (4pt); } 
\end{tikzpicture}\end{array}
-
\begin{array}{c}\begin{tikzpicture}[xscale=.45,yscale=.45,line width=1.0pt] 
\foreach \i in {1,2,3}  { \path (\i,1.25) coordinate (T\i); \path (\i,.25) coordinate (B\i); } 
\filldraw[fill= black!12,draw=black!12,line width=4pt]  (T1) -- (T3) -- (B3) -- (B1) -- (T1);
\draw[blue] (T1) -- (B1)--(B2);
\draw[blue] (T1)--(T2) -- (B2);
\draw[blue] (T3) -- (B3);
\foreach \i in {1,2,3}  { \filldraw[fill=black,draw=black,line width = 1pt] (T\i) circle (4pt); \filldraw[fill=black,draw=black,line width = 1pt]  (B\i) circle (4pt); } 
\end{tikzpicture}\end{array}
-
\begin{array}{c}\begin{tikzpicture}[xscale=.45,yscale=.45,line width=1.0pt] 
\foreach \i in {1,2,3}  { \path (\i,1.25) coordinate (T\i); \path (\i,.25) coordinate (B\i); } 
\filldraw[fill= black!12,draw=black!12,line width=4pt]  (T1) -- (T3) -- (B3) -- (B1) -- (T1);
\draw[blue] (T1) -- (B1);
\draw[blue] (T2) -- (B2);
\draw[blue] (T3) -- (B3);
\draw[blue] (T1) .. controls +(0,-.50) and +(0,-.50) .. (T3);
\draw[blue] (B1) .. controls +(0,.50) and +(0,.50) .. (B3);
\foreach \i in {1,2,3}  { \filldraw[fill=black,draw=black,line width = 1pt] (T\i) circle (4pt); \filldraw[fill=black,draw=black,line width = 1pt]  (B\i) circle (4pt); } 
\end{tikzpicture}\end{array}
-
\begin{array}{c}\begin{tikzpicture}[xscale=.45,yscale=.45,line width=1.0pt] 
\foreach \i in {1,2,3}  { \path (\i,1.25) coordinate (T\i); \path (\i,.25) coordinate (B\i); } 
\filldraw[fill= black!12,draw=black!12,line width=4pt]  (T1) -- (T3) -- (B3) -- (B1) -- (T1);
\draw[blue] (T1) -- (B1);
\draw[blue] (T2) -- (B2)--(B3);
\draw[blue] (T2)--(T3) -- (B3);
\foreach \i in {1,2,3}  { \filldraw[fill=black,draw=black,line width = 1pt] (T\i) circle (4pt); \filldraw[fill=black,draw=black,line width = 1pt]  (B\i) circle (4pt); } 
\end{tikzpicture}\end{array}
+2
\begin{array}{c}\begin{tikzpicture}[xscale=.45,yscale=.45,line width=1.0pt] 
\foreach \i in {1,2,3}  { \path (\i,1.25) coordinate (T\i); \path (\i,.25) coordinate (B\i); } 
\filldraw[fill= black!12,draw=black!12,line width=4pt]  (T1) -- (T3) -- (B3) -- (B1) -- (T1);
\draw[blue] (T1) -- (B1) -- (B2);
\draw[blue] (T1)--(T2) -- (B2) -- (B3);
\draw[blue] (T2)--(T3) -- (B3);
\foreach \i in {1,2,3}  { \filldraw[fill=black,draw=black,line width = 1pt] (T\i) circle (4pt); \filldraw[fill=black,draw=black,line width = 1pt]  (B\i) circle (4pt); } 
\end{tikzpicture}\end{array}.
\end{align*}
}
\end{rem}  

\subsection{Multiplication in the orbit basis}\label{subsec:mult}

In this section, we describe the rule for multiplying in the orbit basis of the partition algebra $\P_k(n)$
using  the following conventions:

For $\ell,m \in \ZZ_{\ge 0}$,  let
\begin{equation}
(m)_\ell = m(m-1) \cdots (m-\ell+1).
\end{equation} Thus,  $(m)_\ell = 0$ \, if $\ell >m $, \  $(m)_0 = 1$, \,and\, $(m)_\ell =  m!/(m-\ell)!$\, if $m \ge \ell$.

When $\pi \in \Pi_{2k}$,  then $\pi$ induces a set partition on the bottom row $\{1, 2, \ldots, k\}$ and a set partition on the top row $\{k+1, k+2, \ldots, 2k\}$. If 
$\pi_1, \pi_2 \in \Pi_{2k}$, then we say that $\pi_1 \ast \pi_2$ \emph{exactly matches in the middle} if the set partition that $\pi_1$ induces on its bottom row equals the set partition that $\pi_2$ induces on the top row modulo $k$.  When that happens,  $\pi_1 \ast \pi_2$ is the
concatenation of the two diagrams.   For example, if $k = 4$ then $\pi_1 = \{ 1, 4, 5\,|\, 2, 8 \,|\, 3, 6, 7\}$ induces the set partition $\{1,4 \,|\, 2 \,|\, 3\}$ on the bottom row of $\pi_1$, and $\pi_2 = \{ 1,5,8 \,|\, 2,6\,|\, 3\,|\, 4,7 \}$ induces the set partition $\{5,8\,|\, 6\,|\, 7\} \equiv \{1,4 \,|\, 2 \,|\, 3\}\, \mathsf{mod}\, 4$ on the top row of $\pi_2$. Thus, $\pi_1 \ast \pi_2$ exactly matches in the middle. This definition is easy to see in terms of the diagrams, as the examples below, particularly Example 
 \ref{Example:TwoBlocksRemoved},  demonstrate. 

In the next result, we describe the formula for multiplying two orbit basis diagrams.  This formula
was originally stated by Halverson and Ram in unpublished notes and was proven in \cite[Cor.~4.12]{BH}.   In the product expression
below,  $x_{\varrho} = 0$ whenever $\varrho$ has more than $n$ blocks.   Recall that $[\pi_1 \ast \pi_2]$ is the number of
blocks in the middle row of $\pi_1 \ast \pi_2$.  

\begin{thm}\label{T:mult}  Multiplication in $\P_k(n)$  in terms of the orbit basis $\{ x_\pi\}_{\pi \in \Pi_{2k}(n)}$ is given by
$$x_{\pi_1}x_{\pi_2} = 
\begin{cases}
\displaystyle{\sum_{\vr} (n-|\vr|)_{[\pi_1 \ast \pi_2]} \, x_\varrho,} &\quad \hbox{if $\pi_1 \ast \pi_2$ exactly matches in the middle,}\\
0 &\quad \hbox{otherwise,} 
\end{cases}
$$
where  the sum is over all coarsenings $\vr$ of $\pi_1 \ast \pi_2$ obtained by connecting blocks that lie entirely in the top row of $\pi_1$
 to blocks that lie entirely in the bottom row of $\pi_2$.     \end{thm}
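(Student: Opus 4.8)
The plan is to read the product off from the action of $\P_k(n)$ on tensor space, where the orbit basis has a transparent form, and then to pass to arbitrary $n$ by a polynomiality argument. Since the orbit-basis structure constants of $\P_k(n)$ are $\ZZ$-linear combinations of the diagram-basis structure constants $n^{[\pi_1\ast\pi_2]}$ (the change-of-basis matrices $\zeta_{2k},\mu_{2k}$ being independent of $n$), each structure constant is a polynomial in $n$, and two such polynomials agreeing for all integers $n\ge 2k$ agree identically. It therefore suffices to prove the asserted formula for every $n\ge 2k$; for such $n$ the map $\Phi_{k,n}\colon\P_k(n)\to\Z_{k,n}=\End_{\S_n}(\modu^{\ot k})$ is an isomorphism, so we may compute inside $\Z_{k,n}$.

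Recall from \eqref{eq:Phi-coeffs-orbit} the description of the orbit basis on $\modu^{\ot k}$: indexing a basis of $\End(\modu^{\ot k})$ by pairs of functions $(\mathbf s,\mathbf r)$ with $\mathbf s,\mathbf r\colon\{1,\dots,k\}\to\{1,\dots,n\}$ and writing $E^{\mathbf s}_{\mathbf r}$ for the associated matrix unit, a pair $(\mathbf s,\mathbf r)$ determines the set partition $\kappa(\mathbf s,\mathbf r)\in\Pi_{2k}$ whose blocks are the fibres of the function $i\mapsto\mathbf r(i)$, $k+i\mapsto\mathbf s(i)$; then $\Phi_{k,n}(x_\pi)=\sum E^{\mathbf s}_{\mathbf r}$, summed over the pairs with $\kappa(\mathbf s,\mathbf r)=\pi$ \emph{exactly}. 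Because $\S_n$ acts transitively on the pairs of any fixed kernel and $\Phi_{k,n}(x_{\pi_1})\Phi_{k,n}(x_{\pi_2})$ is $\S_n$-equivariant, the coefficient of $\Phi_{k,n}(x_\varrho)$ in this product equals its $E^{\mathbf s}_{\mathbf r}$-coefficient for any single pair with $\kappa(\mathbf s,\mathbf r)=\varrho$. From $E^{\mathbf s}_{\mathbf t}E^{\mathbf t'}_{\mathbf r}=\delta_{\mathbf t,\mathbf t'}E^{\mathbf s}_{\mathbf r}$, that coefficient is the number of ``middle'' functions $\mathbf t$ with $\kappa(\mathbf s,\mathbf t)=\pi_1$ and $\kappa(\mathbf t,\mathbf r)=\pi_2$.

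I would then analyse this count. Any valid $\mathbf t$ is constant on each block of the partition that $\pi_1$ induces on its bottom row and also on each block that $\pi_2$ induces on its top row (under the identification of these $k$ vertices); so if these induced partitions differ --- i.e.\ if $\pi_1\ast\pi_2$ does not exactly match in the middle --- one finds two middle vertices forced to have both equal and unequal $\mathbf t$-values, no valid $\mathbf t$ exists, and the product is $0$. When the induced partitions coincide, call their common value $\sigma$. Each block $B$ of $\sigma$ sits inside a block of $\pi_1$ and a block of $\pi_2$: if $B$ is joined to the top row in $\pi_1$ then $\mathbf t|_B$ is forced to the corresponding value of $\mathbf s$; if $B$ is joined to the bottom row in $\pi_2$ then $\mathbf t|_B$ is forced to the corresponding value of $\mathbf r$; the remaining blocks of $\sigma$ --- those constituting a full block of $\pi_1$ lying entirely in its bottom row and a full block of $\pi_2$ lying entirely in its top row, which are precisely the $[\pi_1\ast\pi_2]$ blocks of $\pi_1\ast\pi_2$ contained in the middle row --- carry values of $\mathbf t$ that are free except that the kernel conditions force them pairwise distinct and distinct from every value of $\mathbf s$ and of $\mathbf r$. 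Conversely, the only coincidences among the values of $\mathbf s$ and $\mathbf r$ that are permitted are identifications of a block of $\pi_1$ lying entirely in its top row with a block of $\pi_2$ lying entirely in its bottom row; hence the kernels $\varrho=\kappa(\mathbf s,\mathbf r)$ that occur are exactly the coarsenings of $\pi_1\ast\pi_2$ obtained by connecting top-row-only blocks of $\pi_1$ to bottom-row-only blocks of $\pi_2$, as in the statement. For such a $\varrho$, the values used by $\mathbf s$ and $\mathbf r$ (together with the forced middle values, which merely repeat them) number $|\varrho|$, and the $[\pi_1\ast\pi_2]$ free middle blocks may be assigned distinct unused values in $(n-|\varrho|)(n-|\varrho|-1)\cdots(n-|\varrho|-[\pi_1\ast\pi_2]+1)=(n-|\varrho|)_{[\pi_1\ast\pi_2]}$ ways. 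By the equivariance observation this number is the coefficient of $\Phi_{k,n}(x_\varrho)$, which gives the formula in $\Z_{k,n}$, hence --- via the isomorphism and then polynomiality --- in $\P_k(n)$ for every $n$; in $\Z_{k,n}$ itself the terms with $|\varrho|>n$ drop out because $\Phi_{k,n}(x_\varrho)=0$ there (no function into $\{1,\dots,n\}$ has kernel with more than $n$ blocks), which is the convention recorded in the statement.

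The step I expect to be the real obstacle is the block-by-block bookkeeping in the third paragraph: one must carefully classify how each block of $\sigma$ embeds into $\pi_1$ and into $\pi_2$ (joined upward through $\pi_1$, joined downward through $\pi_2$, or interior), verify that the interior blocks are exactly the $[\pi_1\ast\pi_2]$ blocks removed in the diagram product and that they alone carry free values, and confirm that the \emph{only} residual freedom in the pair $(\mathbf s,\mathbf r)$ is the merging of $\pi_1$-top-only blocks with $\pi_2$-bottom-only blocks --- no further coincidences among $\mathbf s$- and $\mathbf r$-values being either forced or allowed. A purely combinatorial alternative substitutes $x_{\pi_i}=\sum_{\pi_i\preceq\gamma_i}\mu_{2k}(\pi_i,\gamma_i)\,d_{\gamma_i}$, applies $d_{\gamma_1}d_{\gamma_2}=n^{[\gamma_1\ast\gamma_2]}d_{\gamma_1\ast\gamma_2}$ and re-expands $d_{\gamma_1\ast\gamma_2}=\sum x_\varrho$, then collapses the resulting double M\"obius sum over $\Pi_{2k}$; this reproduces the falling factorial but through a less transparent lattice computation, so I would keep the tensor-space argument as the primary proof.
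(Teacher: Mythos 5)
The paper itself defers the proof of this theorem to \cite[Cor.~4.12]{BH}, so there is no in-paper argument to compare against. Your proposal is, however, a correct and natural route. The polynomiality reduction to $n\ge 2k$ is sound: the diagram-basis structure constants are monomials $n^{[\gamma_1\ast\gamma_2]}$, the transition matrices $\zeta_{2k},\mu_{2k}$ are integer and $n$-independent, so the orbit-basis structure constants are polynomials in $n$, and two polynomials agreeing for all integers $n\ge 2k$ coincide identically. The tensor-space computation is also right: identifying the coefficient of $x_\varrho$ with the number of middle functions $\mathbf t$ satisfying exact kernel conditions, forcing $\mathbf t$ on non-interior blocks of the shared middle partition, and freely assigning distinct unused values to the $[\pi_1\ast\pi_2]$ interior blocks yields exactly $(n-|\varrho|)_{[\pi_1\ast\pi_2]}$.

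You rightly flag the block-by-block bookkeeping as the place demanding care, and a full writeup should make explicit three points you leave implicit: (i) the interior blocks of the shared middle partition $\sigma$ are exactly the $[\pi_1\ast\pi_2]$ components deleted in the diagram product; (ii) the $\mathbf t$ built by ``forced plus free'' assignments realizes the kernels $\pi_1$ and $\pi_2$ exactly, not merely as refinements; and (iii) the admissible $\varrho=\kappa(\mathbf s,\mathbf r)$ are precisely the coarsenings given by a (possibly partial) matching of top-only blocks of $\pi_1$ with bottom-only blocks of $\pi_2$ --- in particular no two top-only blocks of $\pi_1$ can be merged in $\varrho$, since exactness of $\kappa(\mathbf s,\mathbf t)=\pi_1$ forces the kernel of $\mathbf s$ to equal $\pi_1$ restricted to its top row, and dually for $\mathbf r$. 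All of these do hold, so with them spelled out the argument is complete. Your closing observation about the $|\varrho|>n$ convention is also well taken: as a polynomial identity in $\P_k(n)$ (which is what your proof establishes) such terms should not be dropped --- e.g.\ in $\P_1(1)$ with $\pi=\{1\,|\,2\}$ one gets $x_\pi^2=-x_\pi$, a nonzero right-hand side whose block count exceeds $n=1$ --- the dropping convention is correct only after applying $\Phi_{k,n}$.
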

\begin{examp}\label{ex:32}{\rm  Suppose $k=3$,  $n \ge 2$, and $\pi = \{1,2,3 \,|\,  4,5,6\}\in \Pi_6$. Then,
according to Theorem \ref{T:mult},  
$$\begin{array}{c}
\begin{tikzpicture}[scale=.6,line width=1.25pt] 
\foreach \i in {1,...,3} 
{ \path (\i,1) coordinate (T\i); \path (\i,0) coordinate (B\i); } 
\filldraw[fill= gray!50,draw=gray!50,line width=4pt]  (T1) -- (T3) -- (B3) -- (B1) -- (T1);
\draw[blue] (B1) .. controls +(0,+.3) and +(0,.3) .. (B2).. controls +(0,+.3) and +(0,.3) .. (B3);
\draw[blue] (T1) .. controls +(0,-.3) and +(0,-.3) .. (T2) .. controls +(0,-.3) and +(0,-.3) .. (T3);
\foreach \i in {1,...,3} 
{  \filldraw[fill=white,draw=black,line width = 1pt]  (T\i) circle (4pt);  \filldraw[fill=white,draw=black,line width = 1pt] (B\i) circle (4pt); } 
\end{tikzpicture} \\
\begin{tikzpicture}[scale=.6,line width=1.25pt] 
\foreach \i in {1,...,3} 
{ \path (\i,1) coordinate (T\i); \path (\i,0) coordinate (B\i); } 
\filldraw[fill= gray!50,draw=gray!50,line width=4pt]  (T1) -- (T3) -- (B3) -- (B1) -- (T1);
\draw[blue] (B1) .. controls +(0,+.3) and +(0,.3) .. (B2).. controls +(0,+.3) and +(0,.3) .. (B3);
\draw[blue] (T1) .. controls +(0,-.3) and +(0,-.3) .. (T2) .. controls +(0,-.3) and +(0,-.3) .. (T3);
\foreach \i in {1,...,3} 
{ \filldraw[fill=white,draw=black,line width = 1pt] (T\i) circle (4pt); \filldraw[fill=white,draw=black,line width = 1pt] (B\i) circle (4pt); } 
\end{tikzpicture}
\end{array} = (n-2)
\begin{array}{c}
\begin{tikzpicture}[scale=.6,line width=1.25pt] 
\foreach \i in {1,...,3} 
{ \path (\i,1) coordinate (T\i); \path (\i,0) coordinate (B\i); } 
\filldraw[fill= gray!50,draw=gray!50,line width=4pt]  (T1) -- (T3) -- (B3) -- (B1) -- (T1);
\draw[blue] (B1) .. controls +(0,+.3) and +(0,.3) .. (B2).. controls +(0,+.3) and +(0,.3) .. (B3);
\draw[blue] (T1) .. controls +(0,-.3) and +(0,-.3) .. (T2) .. controls +(0,-.3) and +(0,-.3) .. (T3);
\foreach \i in {1,...,3} 
{ \filldraw[fill=white,draw=black,line width = 1pt] (T\i) circle (4pt);  \filldraw[fill=white,draw=black,line width = 1pt]  (B\i) circle (4pt); } 
\end{tikzpicture}\end{array} + (n-1)
\begin{array}{c}
\begin{tikzpicture}[scale=.6,line width=1.25pt] 
\foreach \i in {1,...,3} 
{ \path (\i,1) coordinate (T\i); \path (\i,0) coordinate (B\i); } 
\filldraw[fill= gray!50,draw=gray!50,line width=4pt]  (T1) -- (T3) -- (B3) -- (B1) -- (T1);
\draw[blue] (B1) .. controls +(0,+.30) and +(0,.30) .. (B2).. controls +(0,+.30) and +(0,.30) .. (B3);
\draw[blue] (T1) .. controls +(0,-.30) and +(0,-.30) .. (T2) .. controls +(0,-.30) and +(0,-.30) .. (T3);
\draw[orange,line width=1.5pt] (T1) -- (B1);
\foreach \i in {1,...,3} 
{ \filldraw[fill=white,draw=black,line width = 1pt]  (T\i) circle (4pt); \filldraw[fill=white,draw=black,line width = 1pt]  (B\i) circle (4pt); } 
\end{tikzpicture}\end{array},
\qquad\hbox{$n \ge 2$}.
$$
}
\end{examp} 

 \begin{examp}\label{Example:TwoBlocksRemoved}{\rm Here $k = 4$, $n \ge 5$, and $[\pi_1 \ast \pi_2] = 2$ (two blocks are removed upon concatenation
of  ${\pi_1}$ and ${\pi_2}$).  
$$
\begin{array}{l}
\begin{array}{c}
\begin{tikzpicture}[scale=.6,line width=1.25pt] 
\foreach \i in {1,...,4} 
{ \path (\i,1) coordinate (T\i); \path (\i,0) coordinate (B\i); } 
\filldraw[fill= gray!50,draw=gray!50,line width=4pt]  (T1) -- (T4) -- (B4) -- (B1) -- (T1);
\draw[blue] (T2) .. controls +(0,-.50) and +(0,-.50) .. (T4);
\draw[blue] (B2) .. controls +(0,+.50) and +(0,+.50) .. (B4);
\draw[blue] (T1) -- (B3);
\foreach \i in {1,...,4} 
{\filldraw[fill=white,draw=black,line width = 1pt] (T\i) circle (4pt); \filldraw[fill=white,draw=black,line width = 1pt](B\i) circle (4pt); } 
\end{tikzpicture} \\
\begin{tikzpicture}[scale=.6,line width=1.25pt] 
\foreach \i in {1,...,4} 
{ \path (\i,1) coordinate (T\i); \path (\i,0) coordinate (B\i); } 
\filldraw[fill= gray!50,draw=gray!50,line width=4pt]  (T1) -- (T4) -- (B4) -- (B1) -- (T1);
\draw[blue] (T3) -- (B2);
\draw[blue] (T2) .. controls +(0,-.50) and +(0,-.50) .. (T4);
\draw[blue] (B1) .. controls +(0,+.40) and +(0,+.40) .. (B3);
\foreach \i in {1,...,4} 
{\filldraw[fill=white,draw=black,line width = 1pt](T\i) circle (4pt);\filldraw[fill=white,draw=black,line width = 1pt](B\i) circle (4pt); } 
\end{tikzpicture}
\end{array} =  (n-5)(n-6)
\begin{array}{c}
\begin{tikzpicture}[scale=.6,line width=1.25pt] 
\foreach \i in {1,...,4} 
{ \path (\i,1) coordinate (T\i); \path (\i,0) coordinate (B\i); } 
\filldraw[fill= gray!50,draw=gray!50,line width=4pt]  (T1) -- (T4) -- (B4) -- (B1) -- (T1);
\draw[blue] (T2) .. controls +(0,-.50) and +(0,-.50) .. (T4);
\draw[blue] (B1) .. controls +(0,+.40) and +(0,+.40) .. (B3);
\draw[blue] (T1) -- (B2);
\foreach \i in {1,...,4} 
{\filldraw[fill=white,draw=black,line width = 1pt] (T\i) circle (4pt); \filldraw[fill=white,draw=black,line width = 1pt] (B\i) circle (4pt); } 
\end{tikzpicture}
\end{array}  \\
\hskip.10in+ (n-4)(n-5) \Big(
\begin{array}{c}
\begin{tikzpicture}[scale=.6,line width=1.25pt] 
\foreach \i in {1,...,4} 
{ \path (\i,1) coordinate (T\i); \path (\i,0) coordinate (B\i); } 
\filldraw[fill= gray!50,draw=gray!50,line width=4pt]  (T1) -- (T4) -- (B4) -- (B1) -- (T1);
\draw[blue] (T2) .. controls +(0,-.50) and +(0,-.50) .. (T4);
\draw[blue] (B1) .. controls +(0,+.40) and +(0,+.40) .. (B3);
\draw[blue] (T1) -- (B2);
\draw[orange,line width=1.5pt] (T2)--(B1);
\foreach \i in {1,...,4} 
{\filldraw[fill=white,draw=black,line width = 1pt] (T\i) circle (4pt); \filldraw[fill=white,draw=black,line width = 1pt] (B\i) circle (4pt); } 
\end{tikzpicture}
\end{array}+
\begin{array}{c}
\begin{tikzpicture}[scale=.6,line width=1.25pt] 
\foreach \i in {1,...,4} 
{ \path (\i,1) coordinate (T\i); \path (\i,0) coordinate (B\i); } 
\filldraw[fill= gray!50,draw=gray!50,line width=4pt]  (T1) -- (T4) -- (B4) -- (B1) -- (T1);
\draw[blue] (T2) .. controls +(0,-.50) and +(0,-.50) .. (T4);
\draw[blue] (B1) .. controls +(0,+.40) and +(0,+.40) .. (B3);
\draw[blue] (T1) -- (B2);
\draw[orange,line width=1.5pt] (T4)--(B4);
\foreach \i in {1,...,4} 
{\filldraw[fill=white,draw=black,line width = 1pt](T\i) circle (4pt); \filldraw[fill=white,draw=black,line width = 1pt](B\i) circle (4pt); } 
\end{tikzpicture}
\end{array}
+
\begin{array}{c}
\begin{tikzpicture}[scale=.6,line width=1.25pt] 
\foreach \i in {1,...,4} 
{ \path (\i,1) coordinate (T\i); \path (\i,0) coordinate (B\i); } 
\filldraw[fill= gray!50,draw=gray!50,line width=4pt]  (T1) -- (T4) -- (B4) -- (B1) -- (T1);
\draw[blue] (T2) .. controls +(0,-.50) and +(0,-.50) .. (T4);
\draw[blue] (B1) .. controls +(0,+.40) and +(0,+.40) .. (B3);
\draw[blue] (T1) -- (B2);
\draw[orange,line width=1.5pt] (T3).. controls +(0,-.30) and +(0,+.30) ..(B3); 
\foreach \i in {1,...,4} 
{ \filldraw[fill=white,draw=black,line width = 1pt] (T\i) circle (4pt); \filldraw[fill=white,draw=black,line width = 1pt](B\i) circle (4pt); } 
\end{tikzpicture}
\end{array}+
\begin{array}{c}
\begin{tikzpicture}[scale=.6,line width=1.25pt] 
\foreach \i in {1,...,4} 
{ \path (\i,1) coordinate (T\i); \path (\i,0) coordinate (B\i); } 
\filldraw[fill= gray!50,draw=gray!50,line width=4pt]  (T1) -- (T4) -- (B4) -- (B1) -- (T1);
\draw[blue] (T2) .. controls +(0,-.50) and +(0,-.50) .. (T4);
\draw[blue] (B1) .. controls +(0,+.40) and +(0,+.40) .. (B3);
\draw[blue] (T1) -- (B2);
\draw[orange,line width=1.5pt] (T3).. controls +(0,-.30) and +(0,+.30) ..(B4);
\foreach \i in {1,...,4} 
{\filldraw[fill=white,draw=black,line width = 1pt](T\i) circle (4pt); \filldraw[fill=white,draw=black,line width = 1pt](B\i) circle (4pt); } 
\end{tikzpicture}
\end{array}\Big) \\
\hskip.10in+(n-3)(n-4) \left(
\begin{array}{c}
\begin{tikzpicture}[scale=.6,line width=1.25pt] 
\foreach \i in {1,...,4} 
{ \path (\i,1) coordinate (T\i); \path (\i,0) coordinate (B\i); } 
\filldraw[fill= gray!50,draw=gray!50,line width=4pt]  (T1) -- (T4) -- (B4) -- (B1) -- (T1);
\draw[blue] (T2) .. controls +(0,-.50) and +(0,-.50) .. (T4);
\draw[blue] (B1) .. controls +(0,+.40) and +(0,+.40) .. (B3);
\draw[blue] (T1) -- (B2);
\draw[orange,line width=1.5pt] (T2)--(B1);
\draw[orange,line width=1.5pt] (T3).. controls +(0,-.30) and +(0,+.30) ..(B4);
\foreach \i in {1,...,4} 
{\filldraw[fill=white,draw=black,line width = 1pt](T\i) circle (4pt); \filldraw[fill=white,draw=black,line width = 1pt] (B\i) circle (4pt); } 
\end{tikzpicture}
\end{array}+
\begin{array}{c}
\begin{tikzpicture}[scale=.6,line width=1.25pt] 
\foreach \i in {1,...,4} 
{ \path (\i,1) coordinate (T\i); \path (\i,0) coordinate (B\i); } 
\filldraw[fill= gray!50,draw=gray!50,line width=4pt]  (T1) -- (T4) -- (B4) -- (B1) -- (T1);
\draw[blue] (T2) .. controls +(0,-.50) and +(0,-.50) .. (T4);
\draw[blue] (B1) .. controls +(0,+.40) and +(0,+.40) .. (B3);
\draw[blue] (T1) -- (B2);
\draw[orange,line width=1.5pt] (T4)--(B4);
\draw[orange,line width=1.5pt] (T3).. controls +(0,-.30) and +(0,+.30) ..(B3);
\foreach \i in {1,...,4} 
{\filldraw[fill=white,draw=black,line width = 1pt] (T\i) circle (4pt); \filldraw[fill=white,draw=black,line width = 1pt] (B\i) circle (4pt); } 
\end{tikzpicture}
\end{array}\right).
\end{array}
$$
}
\end{examp}   

\subsection{Characters  of partition algebras}\label{subsec:chars} 

Let $\gamma_r$ be the $r$-cycle $(1,2,\dots, r)$ in $\S_r \subseteq \P_r(n)$. For a partition $\mu  = [\mu_1, \mu_2, \ldots, \mu_t] \vdash \ell$, with $\mu_t > 0$ and $0 \le \ell \le k$, define 
$\gamma_\mu = \gamma_{\mu_1} \otimes \gamma_{\mu_2} \otimes \cdots \otimes \gamma_{\mu_t} \otimes (\emptydiagram)^{\otimes (k-\ell)} \in \P_k(n)$,  where ``$\otimes$'' here stands for juxtaposing diagrams from smaller partition algebras.  For example, if $\mu = [5,3,2,2]$ and $k = 15$, then 
$$
\gamma_{[5,3,2,2]} = 
\begin{array}{c}
\begin{tikzpicture}[scale=.6,line width=1.25pt] 
\foreach \i in {1,...,15} 
{ \path (\i,1) coordinate (T\i); \path (\i,0) coordinate (B\i); } 
\filldraw[fill= gray!50,draw=gray!50,line width=4pt]  (T1) -- (T15) -- (B15) -- (B1) -- (T1);
\draw[blue] (B1) -- (T2);
\draw[blue] (B2) -- (T3);
\draw[blue] (B3) -- (T4);
\draw[blue] (B4) -- (T5);
\draw[blue] (T1).. controls +(0,-.50) and +(0,+.50) ..(B5);
\draw[blue] (B6) -- (T7);
\draw[blue] (B7) -- (T8);
\draw[blue] (T6).. controls +(0,-.50) and +(0,+.50) ..(B8);
\draw[blue] (B9) -- (T10);
\draw[blue] (T9) -- (B10);
\draw[blue] (B11) -- (T12);
\draw[blue] (T11) -- (B12);
\foreach \i in {1,...,15} 
{\filldraw[fill=black,draw=black,line width = 1pt] (T\i) circle (3pt); \filldraw[fill=black,draw=black,line width = 1pt] (B\i) circle (3pt); } 
\end{tikzpicture}
\end{array}.
$$
In \cite[Sec.~2.2]{H},  it is shown that characters of the partition algebra $\P_k(n)$ are completely determined by their values on the elements of the set $\{\gamma_{\mu} \mid \mu \vdash \ell, \ 0 \le \ell \le k\}$, and thus,  the $\gamma_\mu$ are analogous to conjugacy class representatives in a group. 

For any integer $m \in \ZZ_{\ge 1}$,  let $\mathsf{F}_m(\sigma) = \mathsf{F}(\sigma^m)$, the fixed points of $\sigma^m$ for  $\sigma \in \S_n$.
Then for the partition $\mu=[\mu_1, \mu_2, \ldots, \mu_t] \vdash \ell$ above,  set 
$$\mathsf{F}_\mu(\sigma) = \mathsf{F}_{\mu_1}(\sigma) \cdots \mathsf{F}_{\mu_t}(\sigma).$$
Applying the duality between $\S_n$ and $\P_k(n)$, Halverson \cite[Thm.~3.22]{H} showed that the character value of
$\sigma \times \gamma_\mu$ on $\M_n^{\ot k}$ is $n^{k-\ell} \mathsf{F}_\mu(\sigma)$.    

The conjugacy classes of $\S_n$ are indexed by the
 partitions  $\delta \vdash n$ which correspond to the cycle type of a  permutation.  The number of permutations of cycle type $\delta$ is
$n!/z_\delta$,  where  $z_\delta = 1^{d_1} 2^{d_1}\, \cdots\, n^{d_n} \,d_1! \cdots d_n!\,$  when $\delta$ has $d_i$ parts equal to $i$.  The fixed points of a permutation
depend only on its cycle type,  so $\mathsf{F}_\mu$ is a class function, and we let $\mathsf{F}_\mu(\delta)$  be the value of
$\mathsf{F}_\mu$ on the conjugacy class labeled by $\delta$. Similarly,  we let $\chi_\lambda(\delta)$ denote the value of the irreducible 
 $\S_n$-character $\chi_\lambda$, $\lambda \vdash n$, on the class labeled by $\delta$.Then, applying \eqref{eq:SWchar},  we have the following (compare \cite[Cor.~3.25]{H}):   

\begin{thm}\label{T:partitionchars} Assume $n \ge 2k$.  For $\lambda \in \Lambda_{k,\S_n}$, and $\mu = [\mu_1,\mu_2,\dots,\mu_t] \vdash \ell$ with $0 \leq \ell \leq k$,  
the value of the  irreducible character $\xi_\lambda$ for $\P_k(n)$ on $\gamma_\mu$ is given by 
\begin{align*} 
\xi_{\lambda}(\gamma_\mu) =  \frac{n^{k-\ell}}{n!} \sum_{\sigma \in \S_n} \mathsf{F}_\mu(\sigma)\chi_\lambda(\sigma^{-1})
& =  \frac{n^{k-\ell}}{n!} \sum_{\sigma \in \S_n} \mathsf{F}_\mu(\sigma)\chi_\lambda(\sigma) 
=   n^{k-\ell}  \sum_{\delta \vdash n} \ \frac{1}{z_\delta}\, \mathsf{F}_\mu(\delta)\chi_\lambda(\delta). 
\end{align*}
\end{thm}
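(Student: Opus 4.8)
The plan is to derive Theorem~\ref{T:partitionchars} directly from the Schur--Weyl character identity~\eqref{eq:SWchar} applied to the pair $\G = \S_n$ and $\Z_{k,\G} = \Z_{k,n} = \End_{\S_n}(\M_n^{\ot k})$, using the hypothesis $n \ge 2k$ to transfer the result from $\Z_{k,n}$ to $\P_k(n)$. First I would invoke the isomorphism $\Phi_{k,n}: \P_k(n) \xrightarrow{\ \sim\ } \Z_{k,n}$ valid for $n \ge 2k$, so that the irreducible characters $\xi_\lambda$ of $\P_k(n)$ are identified with the irreducible characters of $\Z_{k,n}$ indexed by $\lambda \in \Lambda_{k,\S_n}$ via~\eqref{SW1}--\eqref{SW3}. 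Under this identification, $\xi_\lambda(\gamma_\mu)$ equals the value $\xi_\lambda(\Phi_{k,n}(\gamma_\mu))$ of the $\Z_{k,n}$-character on the operator $z = \Phi_{k,n}(\gamma_\mu) \in \Z_{k,n}$ coming from the diagram $\gamma_\mu$.

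Next I would specialize~\eqref{eq:SWchar} to this situation: for any $z \in \Z_{k,n}$,
\begin{equation*}
\xi_\lambda(z) = \frac{1}{n!}\sum_{\sigma \in \S_n} \psi_{\M_n^{\ot k}}(\sigma \times z)\,\chi_\lambda(\sigma^{-1}),
\end{equation*}
where $\psi_{\M_n^{\ot k}}$ is the character of the $\S_n \times \Z_{k,n}$-bimodule $\M_n^{\ot k}$. The key input is then the evaluation of this bimodule character on the specific elements $\sigma \times \gamma_\mu$. This is exactly Halverson's computation \cite[Thm.~3.22]{H}, which (via the duality between $\S_n$ and $\P_k(n)$) gives
\begin{equation*}
\psi_{\M_n^{\ot k}}(\sigma \times \gamma_\mu) = n^{k-\ell}\,\mathsf{F}_\mu(\sigma),
\end{equation*}
with $\mathsf{F}_\mu(\sigma) = \mathsf{F}_{\mu_1}(\sigma)\cdots\mathsf{F}_{\mu_t}(\sigma)$ and $\mathsf{F}_m(\sigma) = \mathsf{F}(\sigma^m)$, as recalled in the paragraph preceding the theorem. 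Substituting this into the displayed formula yields
\begin{equation*}
\xi_\lambda(\gamma_\mu) = \frac{n^{k-\ell}}{n!}\sum_{\sigma \in \S_n}\mathsf{F}_\mu(\sigma)\,\chi_\lambda(\sigma^{-1}).
\end{equation*}
The first stated equality then follows because $\sigma$ and $\sigma^{-1}$ have the same cycle type, hence $\chi_\lambda(\sigma^{-1}) = \chi_\lambda(\sigma)$ (just as noted after~\eqref{eq:dim3}). The last stated equality is a routine repackaging: since $\mathsf{F}_\mu$ and $\chi_\lambda$ are class functions, grouping the sum over $\sigma \in \S_n$ by conjugacy class $\delta \vdash n$ and using that the class of type $\delta$ has size $n!/z_\delta$ converts $\frac{1}{n!}\sum_{\sigma}$ into $\sum_{\delta \vdash n}\frac{1}{z_\delta}$.

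The part requiring the most care --- and the main obstacle --- is justifying the bimodule character evaluation $\psi_{\M_n^{\ot k}}(\sigma \times \gamma_\mu) = n^{k-\ell}\mathsf{F}_\mu(\sigma)$ and confirming that the action $\Phi_{k,n}(\gamma_\mu)$ of the diagram $\gamma_\mu$ on $\M_n^{\ot k}$ is the operator intended in \cite[Thm.~3.22]{H}. Since we are allowed to cite results from the excerpt and its references, I would quote \cite[Thm.~3.22]{H} directly; if a self-contained argument were wanted, one would trace through the explicit description of $\Phi_{k,n}$ on the diagram basis (Section~\ref{subsec:repn}) --- the $r$-cycle block $\gamma_r$ acts on $\M_n^{\ot r}$ by cyclically permuting tensor factors composed with the diagonal identification, contributing a trace $\mathsf{F}(\sigma^r) = \mathsf{F}_r(\sigma)$ to the $\sigma$-twisted character, each empty block $\emptydiagram$ contributes a factor $n$ (the trace of the rank-one idempotent composed with $\sigma$ on $\M_n$, which is $\mathrm{tr}(\sigma|_{\M_n}$ acting through the all-ones vector) $= 1$... more precisely the bilinear-form normalization gives $n$), and these multiply across the juxtaposition, yielding $n^{k-\ell}\prod_i \mathsf{F}_{\mu_i}(\sigma)$. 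I would present the proof in the short form: cite $\Phi_{k,n}$ isomorphism for $n\ge 2k$, cite~\eqref{eq:SWchar}, cite \cite[Thm.~3.22]{H} for the bimodule character, then do the two elementary rewrites.

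\begin{proof}
Since $n \ge 2k$, the surjection $\Phi_{k,n}: \P_k(n) \to \Z_{k,n} = \End_{\S_n}(\M_n^{\ot k})$ of~\eqref{eq:Phikn} is an isomorphism, so the irreducible characters $\xi_\lambda$ of $\P_k(n)$ are precisely the irreducible characters of $\Z_{k,n}$, indexed by $\lambda \in \Lambda_{k,\S_n}$ as in~\eqref{SW1}--\eqref{SW3}, and $\xi_\lambda(\gamma_\mu)$ is the value of the corresponding $\Z_{k,n}$-character on $\Phi_{k,n}(\gamma_\mu)$. Applying the character identity~\eqref{eq:SWchar} with $\G = \S_n$, $\VV = \M_n$, $\Z_{k,\G} = \Z_{k,n}$ and $z = \Phi_{k,n}(\gamma_\mu)$ gives
\begin{equation*}
\xi_\lambda(\gamma_\mu) = \frac{1}{n!}\sum_{\sigma \in \S_n} \psi_{\M_n^{\ot k}}\big(\sigma \times \gamma_\mu\big)\,\chi_\lambda(\sigma^{-1}),
\end{equation*}
where $\psi_{\M_n^{\ot k}}$ is the character of $\M_n^{\ot k}$ as an $\S_n \times \P_k(n)$-bimodule. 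By \cite[Thm.~3.22]{H}, which is proved using the duality between $\S_n$ and $\P_k(n)$, one has $\psi_{\M_n^{\ot k}}(\sigma \times \gamma_\mu) = n^{k-\ell}\,\mathsf{F}_\mu(\sigma)$ for $\mu = [\mu_1,\dots,\mu_t] \vdash \ell$ with $0 \le \ell \le k$, where $\mathsf{F}_\mu(\sigma) = \mathsf{F}_{\mu_1}(\sigma)\cdots\mathsf{F}_{\mu_t}(\sigma)$ and $\mathsf{F}_m(\sigma) = \mathsf{F}(\sigma^m)$. Substituting,
\begin{equation*}
\xi_\lambda(\gamma_\mu) = \frac{n^{k-\ell}}{n!}\sum_{\sigma \in \S_n} \mathsf{F}_\mu(\sigma)\,\chi_\lambda(\sigma^{-1}).
\end{equation*}
Because $\sigma$ and $\sigma^{-1}$ have the same cycle type, $\chi_\lambda(\sigma^{-1}) = \chi_\lambda(\sigma)$, giving the second displayed expression. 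Finally, both $\mathsf{F}_\mu$ and $\chi_\lambda$ are class functions, so grouping the sum over $\sigma \in \S_n$ according to conjugacy class $\delta \vdash n$ and using that the class of cycle type $\delta$ has cardinality $n!/z_\delta$ yields
\begin{equation*}
\xi_\lambda(\gamma_\mu) = n^{k-\ell}\sum_{\delta \vdash n} \frac{1}{z_\delta}\,\mathsf{F}_\mu(\delta)\,\chi_\lambda(\delta),
\end{equation*}
which completes the proof.
\end{proof}
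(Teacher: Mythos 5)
Your proof is correct and follows essentially the same path the paper takes: identify $\P_k(n)$ with $\Z_{k,n}$ via the isomorphism $\Phi_{k,n}$ (valid since $n\ge 2k$), feed Halverson's bimodule character formula $\psi_{\M_n^{\ot k}}(\sigma\times\gamma_\mu)=n^{k-\ell}\mathsf{F}_\mu(\sigma)$ from \cite[Thm.~3.22]{H} into the Schur--Weyl character identity \eqref{eq:SWchar}, and then do the two routine rewrites. The paper's argument is given informally in the paragraphs preceding the theorem rather than in a \verb|proof| environment, but the content is the same; your write-up is simply a slightly more explicit packaging of it.
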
    

\begin{rem}{\rm In the special case that  $\mu = [1^k]$  (the partition of $k$ with all parts equal to 1), then 
$\mathsf{F}_\mu(\sigma) = \mathsf{F}_1(\sigma)^k
= \mathsf{F}(\sigma)^k$, and 
$\gamma_\mu = \mathsf{I}_k$, the identity element of $\P_k(n)$.  The result in Theorem \ref{T:partitionchars}  reduces to

$$\dim \Z_{k,n}^\lambda = \xi_\lambda(\mathsf{I}_k) = \frac{1}{n!} \sum_{\sigma \in \S_n} \mathsf{F}(\sigma)^k \chi_\lambda(\sigma),$$
which is exactly the expression in Theorem \ref{T:cent}\,(a)
for the dimension of  the irreducible $\Z_{k,n}$-module $\Z_{k,n}^\lambda$ indexed by $\lambda$, since we can identify $\P_k(n)$ with
the centralizer algebra $\Z_{k,n} = \End_{\S_n}(\M_n^{\ot k})$ when
$n \ge 2k$. } \end{rem}

\section{The Representation $\Phi_{k,n}: \P_k(n) \rightarrow \End_{\S_n}(\M_n^{\ot k})$ and its Kernel}
\label{sec:tensorrepresentation}

Jones \cite{J} defined an action of the partition algebra $\P_k(n)$ on  the tensor space $\M_n^{\otimes k}$  that commutes with the diagonal action of the symmetric group $\S_n$ on that same space  and  showed that this action affords a representation of $\P_k(n)$ onto the centralizer algebra $\End_{\S_n}(\M_n^{\otimes k})$. In this section, we describe the action of each diagram basis element $d_\pi$ and each orbit basis element $x_\pi$  of $\P_k(n)$ on $\M_n^{\otimes k}$, and we use the orbit basis to describe the image and the kernel of this action.  
 
\subsection{The orbit basis of $\End_{\G}(\M_n^{\ot k})$  for $\G$ a subgroup of $\S_n$}\label{subsec:orbitbasisaction} 

Assume $k,n \in \ZZ_{\ge 1}$, and let $\{\vf_j \mid  1\le j \le n\}$ be the basis for the permutation module $\modu$ of $\S_n$.   The elements $\vf_\rs = \vf_{r_1} \ot \cdots \ot \vf_{r_k}$ for $\rs = (r_1,\dots, r_k) \in [1,n]^k = \{1,2,\dots, n\}^k$ form a basis for the $\S_n$-module $\modu^{\ot k}$ with $\S_n$ acting diagonally,  
 $\sigma. \vf_\rs = \vf_{\sigma(\rs)} := \vf_{\sigma(r_1)} \ot \cdots \ot \vf_{\sigma(r_n)},$
as in the Introduction. 

Suppose $\varphi =  \sum_{\rs, \sff \in [1,n]^k} \varphi_{\rs}^{\sff}\, \EE_{\rs}^\sff  \in \End(\modu^{\ot k})$ 
where $\{\EE_{\rs}^\sff\}$ is a basis for $\End(\modu^{\ot k})$ of matrix units, and the 
coefficients $\varphi_{\rs}^{\sff}$ belong to $\FF$. 
Then  $\EE_{\rs}^\sff \vf_{\tf} = \delta_{\rs,\tf} \vf_{\sff}$,  with $\delta_{\rs,\tf}$ being the Kronecker delta, 
and
the action of $\varphi$ on the basis of simple tensors is given by
\begin{equation}
\varphi (\v_\rs) = \sum_{\sff \in [1,n]^k}  \varphi_{\rs}^{\sff} \v_{\sff}.
\end{equation}

For  any subgroup $\GG \subseteq \S_n$ (in particular,  for $\S_n$ itself) and for the centralizer algebra 
$\End_{\GG}(\modu^{\ot k}) = \{ \varphi \in \End(\modu^{\ot k}) \mid \varphi \sigma = \sigma \varphi$ for all $\sigma \in \GG$\}, we have
\begin{align*}
\begin{split}
 \varphi \in \End_{\GG}(\modu^{\ot k}) & \iff   \sigma \varphi  = \varphi \sigma  \ \ \hbox{\rm for all} \ \ \sigma \in \GG   \\
& \iff  \sum_{\sff \in [1,n]^k}  \varphi_{\rs}^{\sff} \, \vf_{\sigma(\sff)} = \sum_{\sff \in [1,n]^k}  \varphi_{\sigma(\rs)}^{\sff}\, \vf_{\sff}  \quad\hbox{\rm for all} \ \ \rs \in [1,n]^k  
\end{split}\end{align*}
and so
\begin{equation} \label{eq:centcond}
 \varphi \in \End_{\GG}(\modu^{\ot k})  \iff  \varphi_{\rs}^{\sff}  = \varphi_{\sigma(\rs)}^{\sigma(\sff)} \quad \hbox{\rm for all} \ \ \rs, \sff  \in [1,n]^k,  \sigma \in \GG. \hskip.7in
\end{equation} 
It is convenient to view the pair of $k$-tuples $\rs, \sff \in [1,n]^k$ in \eqref{eq:centcond} as a single $2k$-tuple $(\rs,\sff) \in [1,n]^{2k}$.
In this notation, condition \eqref{eq:centcond} tells us that the elements of $\End_\G(\modu^{\ot k})$ are in one-to-one correspondence with the $\G$-orbits on $[1,n]^{2k}$, 
where $\sigma \in \G$ acts on $(r_1, \ldots, r_{2k}) \in [1,n]^{2k}$ by $\sigma(r_1, \ldots, r_{2k}) = (\sigma(r_1), \ldots, \sigma(r_{2k}))$.

We adopt the shorthand notation  $(\rs|\rs') = (r_1,\dots, r_{2k}) \in [1,n]^{2k}$ when $\rs = (r_1,\dots,r_k) \in [1,n]^k$ and $\rs' = (r_{k+1},\dots, r_{2k}) \in [1,n]^k$.
Let the $\G$-orbit of $(\rs|\rs') \in [1,n]^{2k}$ be denoted by $\G(\rs|\rs') = \{ \sigma(\rs|\rs') \mid \sigma \in \G\}$ and define
\begin{equation}\label{eq:orbitindicator}
\mathsf{X}_{(\rs|\rs')}  = \sum_{(\sff|\sff') \in \G (\rs|\rs')} \mathsf{E}_{\sff}^{\sff'},
\end{equation}
where the sum is over the distinct elements in the orbit.   This is the indicator  function of the orbit $\G(\rs|\rs')$, and it satisfies \eqref{eq:centcond},  so  $\mathsf{X}_{(\rs|\rs')} \in \End_{\G}(\M_n^{\ot k})$. 
Let $[1,n]^{2k}/\G$ be the set consisting of one $2k$-tuple  $(\rs|\rs')$ for each   $\G$-orbit.
Since \eqref{eq:centcond} is a necessary and sufficient condition for a transformation to belong to $\End_{\G}(\M_n^{\ot k})$, and the
elements $\mathsf{X}_{(\rs|\rs')}$ for $(\rs|\rs') \in [1,n]^{2k}/\G$ are linearly independent,  we have the following result.

\begin{thm}\label{thm:Generalorbitbasis} For $\G \subseteq \S_n$ and $n, k \in \ZZ_{\ge 1}$,   the centralizer algebra $\End_\G(\M_n^{\ot k})$ has a basis 
$\{\mathsf{X}_{(\rs|\rs')} \mid (\rs|\rs') \in [1,n]^{2k}/\G\}$. 
In particular, $\dim(\End_\G(\M_n^{\ot k}))$ equals the number of $\G$-orbits on $[1,n]^{2k}$. 
\end{thm}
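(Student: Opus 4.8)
The plan is to prove Theorem \ref{thm:Generalorbitbasis} directly from the orbit characterization \eqref{eq:centcond} of the centralizer algebra, which has already been established in the excerpt. The strategy has two halves: first show that each $\mathsf{X}_{(\rs|\rs')}$ genuinely lies in $\End_\G(\M_n^{\ot k})$, and then show that the collection indexed by $[1,n]^{2k}/\G$ is both linearly independent and spanning.

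\medskip\noindent\textbf{Step 1: Each orbit indicator lies in the centralizer.} Fix a representative $(\rs|\rs')\in[1,n]^{2k}$ and recall $\mathsf{X}_{(\rs|\rs')}=\sum_{(\sff|\sff')\in\G(\rs|\rs')}\EE_{\sff}^{\sff'}$, so its matrix entry is $(\mathsf{X}_{(\rs|\rs')})_{\mathsf{p}}^{\mathsf{q}}=1$ if $(\mathsf{p}|\mathsf{q})\in\G(\rs|\rs')$ and $0$ otherwise. For $\tau\in\G$, the condition $(\tau(\mathsf{p})|\tau(\mathsf{q}))\in\G(\rs|\rs')$ is equivalent to $(\mathsf{p}|\mathsf{q})\in\G(\rs|\rs')$, because $\G$ acts on $[1,n]^{2k}$ by a group action and hence orbits are $\G$-stable. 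Therefore $(\mathsf{X}_{(\rs|\rs')})_{\mathsf{p}}^{\mathsf{q}}=(\mathsf{X}_{(\rs|\rs')})_{\tau(\mathsf{p})}^{\tau(\mathsf{q})}$ for all $\mathsf{p},\mathsf{q}\in[1,n]^k$ and all $\tau\in\G$, which is exactly criterion \eqref{eq:centcond}; hence $\mathsf{X}_{(\rs|\rs')}\in\End_\G(\M_n^{\ot k})$. This step is routine once \eqref{eq:centcond} is in hand.

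\medskip\noindent\textbf{Step 2: Linear independence and spanning.} The matrix units $\{\EE_{\sff}^{\sff'}\mid (\sff|\sff')\in[1,n]^{2k}\}$ form a basis of $\End(\M_n^{\ot k})$, and the $2k$-tuples occurring in $\mathsf{X}_{(\rs|\rs')}$ for distinct orbit representatives $(\rs|\rs')$ are pairwise disjoint (they are distinct $\G$-orbits, which partition $[1,n]^{2k}$). Thus $\{\mathsf{X}_{(\rs|\rs')}\mid (\rs|\rs')\in[1,n]^{2k}/\G\}$ is a set of vectors with disjoint supports relative to the matrix-unit basis, so it is linearly independent. For spanning, take any $\varphi=\sum_{(\mathsf{p}|\mathsf{q})}\varphi_{\mathsf{p}}^{\mathsf{q}}\EE_{\mathsf{p}}^{\mathsf{q}}\in\End_\G(\M_n^{\ot k})$. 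By \eqref{eq:centcond} the coefficient $\varphi_{\mathsf{p}}^{\mathsf{q}}$ depends only on the $\G$-orbit of $(\mathsf{p}|\mathsf{q})$; writing $c_{(\rs|\rs')}$ for the common value on the orbit $\G(\rs|\rs')$, we get $\varphi=\sum_{(\rs|\rs')\in[1,n]^{2k}/\G}c_{(\rs|\rs')}\,\mathsf{X}_{(\rs|\rs')}$. Hence the $\mathsf{X}_{(\rs|\rs')}$ span, and together with linear independence they form a basis. The dimension count $\dim\End_\G(\M_n^{\ot k})=|[1,n]^{2k}/\G|$ is then immediate.

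\medskip\noindent I do not anticipate a serious obstacle here: the heavy lifting — translating the centralizer condition into a statement about $\G$-invariance of matrix coefficients indexed by $2k$-tuples — is already done in \eqref{eq:centcond}. The only point requiring a little care is bookkeeping: making sure that a transformation on $\M_n^{\ot k}$ is being consistently identified with a function on $[1,n]^{2k}$ via $(\rs,\sff)\mapsto(\rs|\sff)$, and that the $\G$-action on $2k$-tuples is the simultaneous (diagonal) one. Once those identifications are fixed, the argument is essentially the standard orbit-counting proof that $\End_\G(\M_n^{\ot k})\cong(\M_n^{\ot 2k})^\G$ has dimension equal to the number of $\G$-orbits on $[1,n]^{2k}$, specialized to exhibit the explicit orbit-indicator basis.
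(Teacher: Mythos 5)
Your argument is correct and matches the paper's own reasoning: the paper derives \eqref{eq:centcond}, observes that the orbit indicators $\mathsf{X}_{(\rs|\rs')}$ satisfy it and are linearly independent, and concludes spanning from the fact that any centralizer element has coefficients constant on $\G$-orbits. Your Steps 1 and 2 are precisely that argument, spelled out in slightly more detail.
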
 

When $\G = \S_n$, then since $\S_n$ acts transitively on $[1,n]$, the  $\S_n$-orbits on $[1,n]^{2k}$ correspond to set partitions of $[1,2k]$ into \emph{at most $n$ blocks}. In particular, if $\pi \in \Pi_{2k}$ is a set partition of $[1,2k]$, then 
\begin{equation}\label{eq:OrbitPermutationCondition}
\left\{(r_1,r_2, \ldots, r_{2k}) \mid r_a = r_b\ \iff\ a, b \text{ are in the same block of } \pi\right\}
\end{equation}
is the $\S_n$-orbit corresponding to $\pi$ (compare this with \eqref{eq:PermutationCondition}).  The condition \eqref{eq:OrbitPermutationCondition} requires there to be $n$ or fewer blocks in $\pi$; otherwise there are not enough distinct values $r_a \in [1,n]$ to assign to each of the blocks.

If $\G \subset \S_n$ is a proper subgroup, then the $\S_n$-orbits may split into smaller  $\G$-orbits. For example, if $k=2$ and $\G = \mathsf{A}_4 \subseteq \S_4$, the alternating subgroup, then the  $\S_4$-orbit corresponding to $\pi = \{1 \mid 2 \mid 3, 4\}$ contains 
$(1,2,3,3)$ and $(1,2,4,4)$, but no element of $\mathsf{A}_4$ sends $(1,2,3,3)$ to $(1,2,4,4)$, since it would have to fix 1 and 2 and swap 3 and 4.

When $\G = \S_n$, the orbit basis has an especially nice form, since the  $\S_n$-orbits of $[1,n]^{2k}$ correspond to set partitions of $[1,2k]$ having at most $n$ parts.  For $\pi \in \Pi_{2k}$, we designate a special labeling associated to $\pi$ as follows. 

\begin{definition}\label{D:std}   Let $\mathsf{B}_1$ be the block of $\pi$ containing 1, and for $1< j \le |\pi|$, let $\mathsf{B}_j$ be the block of $\pi$ containing the smallest number not in $\mathsf{B}_1 \cup \mathsf{B}_2 \cup \cdots \cup \mathsf{B}_{j-1}$.   
The \emph{standard labeling of $\pi$}  is  $(\bs_\pi | \bs_{\pi}')$,  where 
$\bs_\pi = (\textrm{b}_1,\dots, \textrm{b}_k)$ and $\bs_\pi' = (\textrm{b}_{k+1}, \dots,\textrm{b}_{2k})$ in $[1,n]^k$,  and 
\begin{align}\begin{split}\label{eq:beta}  
&\textrm{b}_\ell = j \ \ \hbox{\rm if} \ \  \ell \in \mathsf{B}_j\ \ \hbox{\rm for} \ \  \ell \in [1,2k].  \end{split} \end{align}  
\end{definition}
Rather than writing $\mathsf{X}_{(\bs_\pi | \bs_{\pi}')}$ for the $\S_n$-orbit element  determined by  $(\bs_\pi | \bs_{\pi}') \in [1,n]^{2k}$, we denote this simply
as $\mathsf{X}_\pi$.     Then it follows that 

\begin{equation} \mathsf{X}_\pi  =  \sum_{(\sff|\sff')\, \in \, \S_n(\bs_\pi | \bs_{\pi}')} \mathsf{E}_{\sff}^{\sff'} =
 \sum_{(\rs|\rs') \in [1,n]^{2k}} (\mathsf{X}_\pi)_\rs^{\rs'} \EE_{\rs}^{\rs'}, \end{equation}
where
 \begin{equation}
(\mathsf{X}_\pi)_{\rs}^{\rs'} = 
  \begin{cases}  1  & \quad \text{if $r_a = r_b$ \emph{if and only if} $a$ and $b$ are in the same block of $\pi$,}  \\
 0  & \quad \text{otherwise.}
  \end{cases} 
  \end{equation} 
 In this case, Theorem \ref{thm:Generalorbitbasis} specializes to the following.

\begin{thm}\label{thm:orbitbasis} For  $k,n \in \ZZ_{\ge 1}$, $\End_{\S_n}(\M_n^{\ot k})$ has a basis $\{ \mathsf{X}_\pi \mid \pi \in \Pi_{2k,n}\}$,  and therefore $$\dim(\End_{\S_n}(\M_n^{\ot k})) = \B(2k,n).$$
\end{thm}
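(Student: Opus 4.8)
The plan is to derive this statement as an immediate specialization of Theorem~\ref{thm:Generalorbitbasis} to the case $\G = \S_n$, so essentially all the work has already been done and what remains is an enumeration. First I would invoke Theorem~\ref{thm:Generalorbitbasis} with $\G = \S_n$: the centralizer algebra $\End_{\S_n}(\M_n^{\ot k})$ has basis $\{\mathsf{X}_{(\rs|\rs')} \mid (\rs|\rs') \in [1,n]^{2k}/\S_n\}$, and its dimension equals the number of $\S_n$-orbits on $[1,n]^{2k}$. So the theorem reduces to the combinatorial claim that the number of $\S_n$-orbits on $[1,n]^{2k}$ is exactly $\B(2k,n) = \sum_{t=1}^n \stirlinginline{2k}{t}$.

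Next I would establish the bijection between $\S_n$-orbits on $[1,n]^{2k}$ and set partitions of $[1,2k]$ into at most $n$ blocks. Given a tuple $\rs = (r_1,\dots,r_{2k}) \in [1,n]^{2k}$, associate to it the set partition $\pi(\rs)$ of $[1,2k]$ whose blocks are the fibers of the map $a \mapsto r_a$; that is, $a$ and $b$ lie in the same block iff $r_a = r_b$. Since $\S_n$ acts on $[1,n]$ transitively and by bijections, two tuples $\rs, \sff$ lie in the same $\S_n$-orbit if and only if $\pi(\rs) = \pi(\sff)$: applying $\sigma$ relabels the values but preserves exactly which coordinates agree. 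Moreover, $\pi(\rs)$ can have at most $n$ blocks, since the values $r_a$ lie in a set of size $n$, and every set partition of $[1,2k]$ with at most $n$ blocks arises this way (assign distinct values in $[1,n]$ to the blocks, as noted in~\eqref{eq:OrbitPermutationCondition}). This gives a bijection between $[1,n]^{2k}/\S_n$ and $\Pi_{2k,n}$, and under the standard labeling of Definition~\ref{D:std} the orbit-indicator element $\mathsf{X}_{(\rs|\rs')}$ corresponding to $\pi$ is precisely $\mathsf{X}_\pi$, so $\{\mathsf{X}_\pi \mid \pi \in \Pi_{2k,n}\}$ is the claimed basis.

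Finally I would count $|\Pi_{2k,n}|$. A set partition of a $2k$-element set into exactly $t$ nonempty blocks is counted by the Stirling number $\stirlinginline{2k}{t}$, so partitioning according to the number $t$ of blocks and summing over $1 \le t \le n$ gives $|\Pi_{2k,n}| = \sum_{t=1}^n \stirlinginline{2k}{t} = \B(2k,n)$, which is exactly the $n$-restricted Bell number as defined in~\eqref{def:restrictedsetparts}. Combining with Theorem~\ref{thm:Generalorbitbasis} yields $\dim(\End_{\S_n}(\M_n^{\ot k})) = \B(2k,n)$.

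There is no real obstacle here: every ingredient — the orbit-basis theorem, the correspondence~\eqref{eq:OrbitPermutationCondition} between $\S_n$-orbits and set partitions, and the standard labeling — has been set up in the preceding paragraphs. The only point requiring a line of care is verifying that the $\S_n$-orbit of a tuple depends only on the associated partition (both directions), which follows because $\S_n$ acts transitively on $[1,n]$, so that the number of available labels for the blocks, not their identity, is all that matters; this is also where the ``at most $n$ blocks'' constraint enters.
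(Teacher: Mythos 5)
Your proposal follows exactly the same route the paper takes: Theorem~\ref{thm:orbitbasis} is presented there as the specialization of Theorem~\ref{thm:Generalorbitbasis} to $\G = \S_n$, using the observation around~\eqref{eq:OrbitPermutationCondition} that transitivity of $\S_n$ on $[1,n]$ identifies $\S_n$-orbits on $[1,n]^{2k}$ with set partitions of $[1,2k]$ having at most $n$ blocks, and the count $|\Pi_{2k,n}| = \B(2k,n)$ is immediate from the definition of the restricted Bell number. Your write-up is correct and, if anything, spells out the bijection and the counting a bit more explicitly than the paper does.
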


\medskip

\begin{rem}{\rm   If $\G$ is any  finite group and  $\M$ is any permutation module for $\G$ (that is, $g \in \G$ acts as a permutation on a distinguished basis of $n$ elements of $\M$), then $\G$ can be viewed as a subgroup of $\S_n$  and $\M$ can be regarded as the module $\M_n$. Thus, the method of this section applies to tensor powers of any permutation module for any group $\G$. For example,  an action of  a group $\G$ on a finite set $\{ x_1, \ldots, x_n\}$, can be viewed as an action of the subgroup $\G$ of $\S_n$ on the permutation module $\M_n = \text{span}_\FF\{x_1,x_2, \ldots, x_n\}$.
}
\end{rem}

\subsection{The definition of $\Phi_{k,n}$}\label{subsec:repn}

For $k,n \in \ZZ_{\ge 1}$, define  $\Phi_{k,n}: \P_k(n) \rightarrow \End(\modu^{\ot k})$ by 
\begin{equation} \label{eq:Phi-orbit}
\Phi_{k,n}(x_\pi) = 
\begin{cases}
\mathsf{X}_\pi & \text{if $\pi$ has $n$ or fewer blocks, and}\\
0 & \text{if $\pi$ has more than $n$ blocks.}
\end{cases} 
\end{equation}
As $\{x_\pi \mid \pi \in \Pi_{2k}\}$ is a basis for $\P_k(n)$, we can extend $\Phi_{k,n}$ linearly to get a transformation,  $\Phi_{k,n}: \P_k(n) \rightarrow \End(\modu^{\ot k})$. 
It follows from Theorem \ref{thm:orbitbasis} that $\Phi_{k,n}$ maps $\P_k(n)$ surjectively onto $\End_{\S_n}(\modu^{\ot k})$ for all $k,n \in \ZZ_{\ge 1}$.  When $n \ge 2k$, we have $\dimm(\P_k(n)) = \mathsf{B}(2k) = \dim(\End_{\S_n}(\M_n^{\ot k}))$, and thus $\Phi_{k,n}$ is an isomorphism.  

From \eqref{eq:Phi-orbit} we see that
 \begin{equation}\label{eq:Phi-coeffs-orbit}
  \Phi_{k,n}(x_\pi)_{\rs}^{\rs'} = 
  \begin{cases}  1  & \quad \text{if $r_a = r_b$ \emph{if and only if} $a$ and $b$ are in the same block of $\pi$,}  \\
 0  & \quad \text{otherwise}.
  \end{cases} 
  \end{equation} 
  Since the diagram basis  $\{d_\pi \mid \pi \in \Pi_{2k}\}$ is related to the orbit basis $\{x_\pi \mid \pi \in \Pi_{2k}\}$ by 
 the refinement relation \eqref{refinement-relation}, we have as an immediate consequence, 
 \begin{equation}\label{eq:Phi-coeffs-diagram}
  \Phi_{k,n}(d_\pi)_{\rs}^{\rs'} = 
  \begin{cases}  1  & \quad \text{if $r_a = r_b$ \emph{when} $a$ and $b$ are in the same block of $\pi$,}  \\
 0  & \quad \text{otherwise}.
  \end{cases} 
  \end{equation} 
  \smallskip
   The map $\Phi_{k,n}$ can be shown to be an algebra homomorphism (the diagram basis works especially
   well for doing that,  see  \cite[Prop.~3.6]{BH}), and so 
  $\Phi_{k,n}$  affords a representation of the partition algebra $\P_k(n)$.   
 
\begin{examp}{\rm (Example \ref{ex:32} revisited) \    Recall that in this example  $\pi = \{123\,|\, 456\}$
and $\varrho = \{1,2,3,4,5,6\mid\}$ are elements of $\Pi_{6}$.   Then, by definition,  
$\Phi_{3,n}(x_\pi) = \sum_{i\neq j \in [1,n]}  \EE_{iii}^{jjj}$ for all $n \ge 2$ so that  
\begin{align*} \Phi_{3,n}(x_\pi^2) =\left(\Phi_{3,n}(x_\pi)\right)^2  &
=  (n-2)\sum_{i \neq j \in [1,n]}  \EE_{iii}^{jjj} +
  (n-1)\sum_{i \in [1,n]}  \EE_{iii}^{iii} \\
& = (n-2)\Phi_{3,n}(x_\pi)+ (n-1)\Phi_{3,n}(x_\varrho) 
 = \Phi_{3,n}\big((n-2)x_\pi + (n-1)x_\varrho\big).\
 \end{align*} 
Such examples inspired the product rule 
in Theorem \ref{T:mult}.
}   \end{examp} 
   
We identify  $\S_{n-1}$ with the subgroup of $\S_n$ of permutations
that fix $n$ and make the identification $\modu^{\ot k} \cong \modu^{\ot k} \ot \vf_n  \subseteq \modu^{\ot (k+1)}$, so that $\modu^{\ot k}$ is a submodule for both $\S_{n-1}$ and $\P_{k+\frac{1}{2}}(n) \subset \P_{k+1}(n)$. 
Then for tuples $\tilde \rs, \tilde \sff \in [1,n]^{k+1}$  having  $r_{k+1} = n = s_{k+1}$,  condition \eqref{eq:centcond} for $\GG= \S_{n-1}$ becomes
$$
\varphi_{\tilde \rs}^{\tilde \sff} = \varphi_{\sigma(\tilde \rs)}^{\sigma(\tilde \sff)} \quad 
\hbox{\rm for all} \ \ \tilde \rs, \tilde \sff  \in [1,n]^{k+1},  \sigma \in \S_{n-1}.
$$
Thus, the matrix units for $\GG=\S_{n-1}$ in  \eqref{eq:orbitindicator} correspond to set partitions in  $\Pi_{2k+1}$; that is, set partitions of $\{1,2,\ldots,2(k+1)\}$ having $k+1$ and $2(k+1)$ in the same block. 

 Let $\Phi_{k+\half,n}: \P_{k+\half}(n)  \rightarrow \End(\modu^{\ot k} \ot \mathsf{v}_n)$ be defined by
$$\Phi_{k+\half,n}(x_\pi) = \sum_{(\tilde\rs|\tilde\rs') \in [1,n]^{2(k+1)}} (X_\pi)_{\tilde \rs}^{\tilde \rs'} \EE_{\tilde \rs}^{\tilde \rs'}$$
where the sum is over tuples of the form $\tilde \rs = (r_1,\dots,r_k,n), \tilde \rs' = (r_{k+1},\dots,r_{2k},n)$ in $[1,n]^{k+1}$, and   
 \begin{equation}\label{eq:Phi2-coeffs-orbit}
(\mathsf{X}_\pi)_{\tilde \rs}^{\tilde \rs'} = 
  \begin{cases}  1  & \quad \text{if $r_a = r_b$ \emph{if and only if} $a$ and $b$ are in the same block of $\pi$,}  \\
 0  & \quad \text{otherwise.}
  \end{cases} 
  \end{equation}  
 Then the argument
proving  \cite[Prop.~3.6]{BH} can be easily adapted to show that 
$\Phi_{k+\half,n}$  is 
a representation of $\P_{k+\half}(n)$.
  
  The next theorem  describes a basis for the image and the kernel of $\Phi_{k,n}$. Part (a) follows from our work in Section \ref{subsec:orbitbasisaction} and is originally due to Jones \cite{J}. The extension
  to $\Phi_{k+\half,n}$ can be found in \cite[Thm.~3.6]{HR}.
 \medskip
 
\begin{thm}\label{T:Phi}  Assume  $n \in \mathbb Z_{\ge 1}$  and   $\{x_\pi \mid \pi \in \Pi_{2k}\}$ is the orbit basis
for $\P_k(n)$.   
\begin{itemize}
\item[{\rm (a)}]  For $k \in \ZZ_{\ge 1}$, the representation  $\Phi_{k,n}: \P_k(n) \rightarrow \End(\modu^{\ot k})$ has
\begin{align*} \im \Phi_{k,n} &= \End_{\S_n}(\modu^{\ot k}) = \spann_{\CC}\{\Phi_{k,n}(x_\pi) \mid \pi \in \Pi_{2k} \ \text{has $\le n$ blocks}\}\ \ \hbox{\rm and}  \\ 
\ker \Phi_{k,n} &= \spann_{\CC}\{x_\pi \mid \pi \in \Pi_{2k} \ \text{has more than $n$ blocks}\}.\end{align*} 
Consequently, $\End_{\S_n}(\modu^{\ot k})$ is isomorphic to $\P_k(n)$ for $n \geq 2k$.  
\item[{\rm (b)}] For $k \in \ZZ_{\ge 0}$,  the representation   $\Phi_{k+\half}: \P_{k+\half}(n) \rightarrow \End(\modu^{\ot k})$ has
\begin{align*} \im \Phi_{k+\half,n} &= \End_{\S_{n-1}}(\modu^{\ot k}) = \spann_{\CC}\{\Phi_{k+\half,n}(x_\pi) \mid \pi \in \Pi_{2k+1} \ \text{has $\le n$ blocks}\} \ \ \hbox{\rm and} \\
\ker \Phi_{k+\half,n} &= \spann_{\CC}\{x_\pi \mid \pi \in \Pi_{2k+1} \ \text{has more than $n$ blocks}\}.\end{align*}
Consequently, $\End_{\S_{n-1}}(\modu^{\ot k})$ is isomorphic to $ \P_{k+\half}(n)$ for $n \geq 2k+1$.   \end{itemize}
\end{thm}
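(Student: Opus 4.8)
The plan is to read both parts off directly from the definition of $\Phi_{k,n}$ on the orbit basis, combined with the orbit-basis description of the centralizer algebra established in Section \ref{subsec:orbitbasisaction}. For part (a): by \eqref{eq:Phi-orbit}, $\Phi_{k,n}$ sends the orbit basis element $x_\pi$ to $\mathsf{X}_\pi$ when $\pi$ has at most $n$ blocks and to $0$ otherwise. Since $\mathsf{X}_\pi \in \End_{\S_n}(\modu^{\ot k})$ by the computation \eqref{eq:centcond}--\eqref{eq:orbitindicator}, the image of $\Phi_{k,n}$ lies inside $\End_{\S_n}(\modu^{\ot k})$ and equals $\spann_{\CC}\{\mathsf{X}_\pi \mid \pi \in \Pi_{2k},\ |\pi|\le n\}$; by Theorem \ref{thm:orbitbasis} this span is all of $\End_{\S_n}(\modu^{\ot k})$, which gives the stated description of $\im\Phi_{k,n}$.

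For the kernel, I would split the orbit basis $\{x_\pi \mid \pi \in \Pi_{2k}\}$ of $\P_k(n)$ into the elements with $|\pi|\le n$ and those with $|\pi|>n$. The latter lie in $\ker\Phi_{k,n}$ by definition. Conversely, if $\sum_\pi c_\pi x_\pi$ is annihilated by $\Phi_{k,n}$, then $\sum_{|\pi|\le n} c_\pi \mathsf{X}_\pi = 0$ in $\End_{\S_n}(\modu^{\ot k})$; since the $\mathsf{X}_\pi$ with $|\pi|\le n$ are linearly independent (again Theorem \ref{thm:orbitbasis}), every $c_\pi$ with $|\pi|\le n$ vanishes, so the element lies in $\spann_{\CC}\{x_\pi \mid |\pi|>n\}$. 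This establishes the kernel formula. The final consequence is then a block count: if $n\ge 2k$, every $\pi\in\Pi_{2k}$ has at most $2k\le n$ blocks, so $\ker\Phi_{k,n}=0$ and $\Phi_{k,n}$ is injective; being also surjective onto $\End_{\S_n}(\modu^{\ot k})$, it is an isomorphism. (Equivalently, in this range $\dim\P_k(n)=\B(2k)=\B(2k,n)=\dim\End_{\S_n}(\modu^{\ot k})$, since $\stirling{2k}{t}=0$ for $t>2k$.)

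Part (b) proceeds along identical lines, with $\Pi_{2k}$ replaced by $\Pi_{2k+1}$, $\S_n$ by $\S_{n-1}$, and $\modu^{\ot k}$ by $\modu^{\ot k}\ot\vf_n$. The one extra ingredient is the bijection, under the constraint $r_{k+1}=n=s_{k+1}$, between $\S_{n-1}$-orbits on the relevant $2(k+1)$-tuples and set partitions $\pi\in\Pi_{2k+1}$ with at most $n$ blocks: the block of $\pi$ containing position $k+1$ is forced to carry the value $n$, no other block can carry $n$ (else that position would be equal to $r_{k+1}$ and hence in the same block), and $\S_{n-1}$ acts transitively on the remaining values, so the set partition pins down the orbit and conversely. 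Feeding this into Theorem \ref{thm:Generalorbitbasis} with $\G=\S_{n-1}$ gives that $\{\mathsf{X}_\pi \mid \pi\in\Pi_{2k+1},\ |\pi|\le n\}$ is a basis of $\End_{\S_{n-1}}(\modu^{\ot k})$, and the image and kernel of $\Phi_{k+\half,n}$ follow verbatim as in (a); the isomorphism for $n\ge 2k+1$ follows since then every $\pi\in\Pi_{2k+1}$ has at most $2k+1\le n$ blocks.

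I do not expect a serious obstacle here: the substantive content is entirely contained in the orbit-basis results of Section \ref{subsec:orbitbasisaction}, which we take as given, and in the fact (needed only for the algebra-homomorphism assertion, not for the span descriptions) that $\Phi_{k,n}$ and $\Phi_{k+\half,n}$ are representations, which is cited from \cite{BH} and \cite{HR}. The only place calling for a little care is the orbit/set-partition correspondence in part (b) just described, namely the verification that the value $n$ is confined to the block containing position $k+1$.
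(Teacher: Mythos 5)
Your proposal is correct and matches the paper's own route: the paper explicitly says Theorem~\ref{T:Phi}(a) ``follows from our work in Section~\ref{subsec:orbitbasisaction}'' (i.e.\ from \eqref{eq:Phi-orbit} together with Theorem~\ref{thm:orbitbasis}), and for part~(b) it simply cites~\cite{HR}. Your argument for (a) — image from the orbit-basis description, kernel from linear independence of the $\mathsf{X}_\pi$ with $\le n$ blocks, isomorphism by counting blocks — is exactly what the paper has in mind. Your added detail in (b), verifying that the fixed value $n$ in positions $k+1$ and $2(k+1)$ forces a single block of $\pi$ to carry $n$ and that $\S_{n-1}$ acts transitively on the remaining assignments, is a correct fleshing-out of the point the paper leaves to the citation; it fills in the only step the paper does not spell out.
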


\begin{rem}{\rm The assertion that the map $\Phi_{k,n}$ (resp. $\Phi_{k+\half,n}$) is an isomorphism when $n \geq 2k$ (resp. when $n \geq 2k+1$) holds because set partitions  $\pi \in \Pi_{2k}$ (resp. $\pi \in \Pi_{2k+1}$) have no more
than $n$ blocks under those assumptions. 
}
\end{rem}       

\begin{examp} {\rm When $k = 2$ and $n = 2$, the image of $\Phi_{2,2}: \P_2(2) \to \End_{\S_2}(\M_2^{\otimes 2})$ is spanned by the  images of the following 8 diagrams,
$$
\begin{array}{cccccccc}
\begin{array}{c}\orbitablcd\end{array}, &
\begin{array}{c}\orbitadlbc\end{array}, & \begin{array}{c}\orbitaclbd\end{array}, & \begin{array}{c}\orbitacdlb\end{array}, & \begin{array}{c}\orbitalbcd\end{array},  & 
\begin{array}{c}\orbitabcld\end{array}, & \begin{array}{c}\orbitabdlc\end{array}, & \begin{array}{c}\orbitabcd\end{array} ,
\end{array}
$$ 
and the kernel is spanned by the following 7 diagrams,
$$
\begin{array}{ccccccc}
\begin{array}{c}\orbitalblcld\end{array},  &  \begin{array}{c}\orbitablcld\end{array},  & \begin{array}{c}\orbitaclbld\end{array}, &  \begin{array}{c}\orbitadlblc\end{array}, & \begin{array}{c}\orbitalbcld\end{array},  & \begin{array}{c}\orbitalbdlc\end{array}, & \begin{array}{c}\orbitalblcd\end{array}.
\end{array}
$$ 
}
\end{examp}

\begin{figure}
$$
\begin{array}{|l|rrrrrrr|r|}
\hline
k & \B(2k,2) & \B(2k,3) & \B(2k,4) & \B(2k,5) & \B(2k,6) & \B(2k,7) & \B(2k,8) &  \B(2k) \\
\hline
\half &\phantom{\Big\vert} 1 &  1 &  1 &  1 &  1 &  1 &  1 &  1 \\
1 &\phantom{\big\vert} 2 &  2 &  2 &  2 &  2 &  2 &  2 &  2 \\
1\half &\phantom{\Big\vert} 4 &  5 &  5 &  5 &  5 &  5 &  5 &  5 \\
2 &\phantom{\big\vert} 8 &  14 &  15 &  15 &  15 &  15 &  15 &  15 \\
2\half &\phantom{\Big\vert} 16 &  41 &  51 &  52 &  52 &  52 &  52 &  52 \\
3 &\phantom{\big\vert} 32 &  122 &  187 &  202 &  203 &  203 &  203 &  203 \\
3\half &\phantom{\Big\vert} 64 &  365 &  715 &  855 &  876 &  877 &  877 &  877 \\
4 &\phantom{\big\vert} 128 &  1094 &  2795 &  3845 &  4111 &  4139 &  4140 &  4140 \\
4\half &\phantom{\Big\vert} 256 &  3281 &  11051 &  18002 &  20648 &  21110 &  21146 &  21147 \\
5 &\phantom{\big\vert} 512 &  9842 &  43947 &  86472 &  109299 &  115179 &  115929 &  115975 \\
5\half &\phantom{\Big\vert} 1024 &  29525 &  175275 &  422005 &  601492 &  665479 &  677359 &  678570 \\
6 &\phantom{\big\vert} 2048 &  88574 &  700075 &  2079475 &  3403127 &  4030523 &  4189550 &  4213597 \\
\hline
\end{array}
$$
\caption{Table of values of the restricted Bell number $\B(2k,n)$, which equals the dimension of the image of the surjection $\Phi_{k,n}: \P_k(n) \to \End_{\S_n}(\M_n^{\ot k})$. The rightmost column gives $\dimm(\P_k(n)) = \B(2k)$, the $2k$-th (unrestricted) Bell number. Compare column $\B(2k,5)$ to the rightmost column of Figure \ref{fig:Sbratteli}.
\label{table:BellNumbers}}
\end{figure} 

 \begin{rem} {\rm  Recall from Corollary \ref{C:fixedBell}, that $\B(\ell,n) = (n\,!)^{-1} \sum_{\sigma \in \S_n} \mathsf{F}(\sigma)^{\ell}$ for all $\ell \in \ZZ_{\ge 0}$.  When $n = 2$, only the
identity element of $\S_2$ has fixed points, and we see that $\B(2k,2) = \half (2^{2k}) = 2^{2k-1}$ for all $k \in \half \ZZ_{\ge 0}$,  in agreement with the values in the first column of Figure \ref{table:BellNumbers}. When $n =3$,  there are three transpositions in $\S_3$, each having one fixed point, and two cycles of length 3 that have no fixed points.   Thus,
$$\B(2k,3) = \frac{1}{6} \Big( 3^{2k} + 3 \cdot 1^{2k} + 2 \cdot 0^{2k}\Big)= \frac{3^{2k-1}+1}{2}  \quad \text{for} \ \  k \in \half \ZZ_{\ge 0}.$$
These values correspond to the numbers in the second column of the table.} \end{rem}

\subsection{The  kernel of  the surjection $\Phi_{k,n}: \P_k(n) \to \End_{\S_n}(\modu^{\ot k})$ when $2k > n$}
\label{subsec:kernels}

This section is devoted to a description of the kernel of the map $\Phi_{k,n}$ (and also of $\Phi_{k-\half,n}$) when $2k > n$.  Towards this purpose, the 
following orbit basis elements $\ef_{k,n}$ for $k,n \in \ZZ_{\ge 1}$ and  $2k > n$ were introduced
in \cite[Sec.~5.3]{BH}: 
\vspace{-.1cm}
\begin{align}\label{eq:ef}
\ef_{k,n}  &= \begin{cases}
\!\!\begin{array}{c} 
\begin{tikzpicture}[xscale=.5,yscale=.5,line width=1.25pt] 
\foreach \i in {1,2,3,4,5,6,7}  { \path (\i,1.25) coordinate (T\i); \path (\i,.25) coordinate (B\i); } 
\filldraw[fill= black!12,draw=black!12,line width=4pt]  (T1) -- (T7) -- (B7) -- (B1) -- (T1);
\draw[blue] (T4) -- (B4);
\draw[blue] (T5) -- (B5);
\draw[blue] (T7) -- (B7);
\draw (T2) node {$\cdots$};\draw (B2) node {$\cdots$};
\draw (T6) node {$\cdots$};\draw (B6) node {$\cdots$};
\draw (2,-.5) node {$\underbrace{\phantom{iiiiiiiiiiii}}_{n+1-k}$};
\draw (5.5,-.5) node {$\underbrace{\phantom{iiiiiiiiiiiiiiiii}}_{2k-n-1}$};
\foreach \i in {1,3,4,5,7}  { \filldraw[fill=white,draw=black,line width = 1pt] (T\i) circle (4pt); \filldraw[fill=white,draw=black,line width = 1pt]  (B\i) circle (4pt); } 
\end{tikzpicture}
\end{array} & \quad \text{if \ $n \ge k > n/2$,}\\ 
\begin{array}{c} 
\begin{tikzpicture}[xscale=.5,yscale=.5,line width=1.25pt] 
\foreach \i in {1,2,3,4,5,6,7}  { \path (\i,1.25) coordinate (T\i); \path (\i,.25) coordinate (B\i); } 
\filldraw[fill= black!12,draw=black!12,line width=4pt]  (T1) -- (T7) -- (B7) -- (B1) -- (T1);
\draw[blue] (T1) -- (B1);
\draw[blue] (T2) -- (B2);
\draw[blue] (T3) -- (B3);
\draw[blue] (T4) -- (B4);
\draw (T5) node {$\cdots$};\draw (B5) node {$\cdots$};
\draw (T6) node {$\cdots$};\draw (B6) node {$\cdots$};
\draw[blue] (T7) -- (B7);
\draw (4,-.5) node {$\underbrace{\phantom{iiiiiiiiiiiiiiiiiiiiiiiiiiiiiiiii}}_{k}$};
\foreach \i in {1,2,3,4,7}  { \filldraw[fill=white,draw=black,line width = 1pt] (T\i) circle (4pt); \filldraw[fill=white,draw=black,line width = 1pt]  (B\i) circle (4pt); } 
\end{tikzpicture} 
\end{array}&\quad  \text{if \ $k >n$.}
\end{cases}\\
\ef_{k-\half,n}  &= \ef_{k,n},  \hskip.2in  \text{if \  $ 2k - 1 >  n$}. \label{eq:efhalf} 
\end{align}
Observe that if $n \ge k > n/2$, then the number of
blocks in $\ef_{k,n}$ is  $|\ef_{k,n}| =  2(n+1-k) + 2k-n-1 = n+1$,  so $\ef_{k,n}$ is in the kernel of $\Phi_{k,n}$.
For example, 
\vspace{-.17cm} 
\begin{center}{$
\ef_{4\half,6} = \ef_{5,6} =   \begin{array}{c} 
\begin{tikzpicture}[xscale=.5,yscale=.5,line width=1.25pt] 
\foreach \i in {1,2,3,4,5}  { \path (\i,1.25) coordinate (T\i); \path (\i,.25) coordinate (B\i); } 
\filldraw[fill= black!12,draw=black!12,line width=4pt]  (T1) -- (T5) -- (B5) -- (B1) -- (T1);
\draw[blue] (T3) -- (B3);
\draw[blue] (T4) -- (B4);
\draw[blue] (T5) -- (B5);
\foreach \i in {1,2,3,4,5}  { \filldraw[fill=white,draw=black,line width = 1pt] (T\i) circle (4pt); \filldraw[fill=white,draw=black,line width = 1pt]  (B\i) circle (4pt); } 
\end{tikzpicture} 
\end{array}
$} \end{center}
 has $|\ef_{5,6}|  = 7$ blocks. The elements $\ef_{k,n}$  for $k \le 5$ and $n \le 9$ are displayed
in Figure \ref{fig:etable}.

\smallskip
\begin{thm} \label{thm:generator}{\rm (\cite[Thms.~5.6 and 5.9]{BH})}  Assume $k,n\in \ZZ_{\ge 1}$  and $2k > n$. 
\begin{itemize} \item[{\rm (a)}]  The orbit basis element $\ef_{k,n}$ in \eqref{eq:ef} 
is an essential idempotent such that  $(\ef_{k,n})^2 = c_{k,n} \ef_{k,n}$, where 
\begin{equation}\label{eq:ckn}  c_{k,n} = \begin{cases} 
(-1)^{n+1-k}\,  (n+1-k)! & \quad \text{if} \ \  n \ge k> n/2,  \\
1 &  \quad \text{if} \ \ k > n.\end{cases} \end{equation}   
\item[{\rm(b)}]  The kernel of the representation $ \Phi_{k,n}$ is the ideal of $\P_k(n)$ generated
by $\ef_{k,n}$ when $2k>n$.
\item [{\rm (c)}]  The kernel of the representation $\Phi_{k-\half,n}$ is the ideal of $\P_{k-\half}(n)$
generated by $\ef_{k-\half,n} = \ef_{k,n}$ when $2k > n+1$.  \end{itemize}
\end{thm}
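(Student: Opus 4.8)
The plan is to obtain (a) by a direct computation with the orbit‑basis multiplication rule of Theorem~\ref{T:mult}, and then to deduce (b) and (c) from the description of $\ker\Phi_{k,n}$ in Theorem~\ref{T:Phi} together with an explicit argument showing that the two‑sided ideal generated by $\ef_{k,n}$ already exhausts that kernel. For (a), note first that $\ef_{k,n}=x_{\pi_0}$ for an explicit $\pi_0\in\Pi_{2k}$, and since the orbit basis is a genuine basis of $\P_k(n)$ we have $\ef_{k,n}\neq 0$; hence it is enough to verify the relation $(\ef_{k,n})^2=c_{k,n}\ef_{k,n}$ of \eqref{eq:ckn}, for then $c_{k,n}^{-1}\ef_{k,n}$ is a nonzero idempotent (an essential idempotent). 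When $k>n$, $\pi_0$ is the all‑through partition $\{1,k+1\mid\cdots\mid k,2k\}$, i.e. $\ef_{k,n}=\mathsf{I}_k^{\boldsymbol\circ}$; then $\pi_0\ast\pi_0=\pi_0$ with $[\pi_0\ast\pi_0]=0$ and no block of $\pi_0$ lies entirely in the top or bottom row, so Theorem~\ref{T:mult} gives $(\ef_{k,n})^2=(n-k)_0\,\ef_{k,n}=\ef_{k,n}$, which is $c_{k,n}=1$. When $n\ge k>n/2$, put $m=n+1-k\ge 1$; here $\pi_0$ consists of the $m$ top singletons, the $m$ bottom singletons, and the $2k-n-1=k-m$ through‑blocks $\{j,k+j\}$, $m<j\le k$. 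One checks that $\pi_0$ induces the all‑singletons partition on both its top and bottom rows, so $\pi_0\ast\pi_0$ exactly matches in the middle; concatenating removes precisely the $m$ middle singletons coming from columns $1,\dots,m$, giving $[\pi_0\ast\pi_0]=m$ and $\pi_0\ast\pi_0=\pi_0$. By Theorem~\ref{T:mult}, $(\ef_{k,n})^2=\sum_{\vr}(n-|\vr|)_m\,x_\vr$, where $\vr$ runs over the coarsenings of $\pi_0$ obtained by matching some of the $m$ top singletons with some of the $m$ bottom singletons; if $j$ pairs are matched then $|\vr|=n+1-j$ and the coefficient is $(n-|\vr|)_m=(j-1)_m=(j-1)(j-2)\cdots(j-m)$. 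For $1\le j\le m$ this product contains the factor $0$ and hence vanishes, so only $j=0$ (namely $\vr=\pi_0$) survives, with coefficient $(-1)_m=(-1)^{m}m!=(-1)^{n+1-k}(n+1-k)!$. This proves (a) for integer $k$; since $k>m$ exactly when $2k-1>n$, in that range the last column of $\pi_0$ is a through‑block, so $\ef_{k,n}\in\P_{k-\half}(n)$, and because multiplication in $\P_{k-\half}(n)$ is the restriction of that in $\P_k(n)$ (Remark~\ref{R:Hasse}) the identical computation yields $(\ef_{k-\half,n})^2=c_{k,n}\ef_{k-\half,n}$.

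For (b), Theorem~\ref{T:Phi}(a) identifies $\ker\Phi_{k,n}$ with $\spann_{\FF}\{x_\pi\mid\pi\in\Pi_{2k},\ |\pi|>n\}$. Since $|\pi_0|=n+1>n$ (in the case $n\ge k>n/2$) or $|\pi_0|=k>n$ (in the case $k>n$), we have $\ef_{k,n}\in\ker\Phi_{k,n}$, so the two‑sided ideal $\langle\ef_{k,n}\rangle$ is contained in $\ker\Phi_{k,n}$. The substance is the reverse inclusion, and I would argue as follows: for each $\pi\in\Pi_{2k}$ with $|\pi|>n$ one produces diagram‑basis elements $d_\alpha,d_\beta\in\P_k(n)$ with $d_\alpha\,\ef_{k,n}\,d_\beta=x_\pi+(\text{an }\FF\text{-combination of }x_\rho\text{ with }|\rho|>|\pi|)$, evaluating the mixed product by expanding $\ef_{k,n}$ into the diagram basis via \eqref{eq:mobiusa}, multiplying diagrams, and converting back with \eqref{refinement-relation}. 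Granting this, a descending induction on $|\pi|$ — the base case $|\pi|=2k$ having no correction terms — shows $x_\pi\in\langle\ef_{k,n}\rangle$ for every $\pi$ with $|\pi|>n$, whence $\ker\Phi_{k,n}\subseteq\langle\ef_{k,n}\rangle$. In the execution one uses the left/right action of $\S_k\times\S_k\subseteq\P_k(n)$ on the top and bottom rows to reduce, for each block count, to a handful of normal‑form choices of $(\alpha,\beta)$; the case $k>n$ is easiest, since there $\ef_{k,n}=\mathsf{I}_k^{\boldsymbol\circ}$ and the products $x_\pi\,\mathsf{I}_k^{\boldsymbol\circ}$ and $\mathsf{I}_k^{\boldsymbol\circ}\,x_\pi$ are already essentially $x_\pi$.

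Part (c) is then obtained by running the argument of (b) inside the subalgebra $\P_{k-\half}(n)\subseteq\P_k(n)$: by (a), $\ef_{k-\half,n}=\ef_{k,n}$ lies in $\P_{k-\half}(n)$ as soon as $2k>n+1$; Theorem~\ref{T:Phi}(b) identifies $\ker\Phi_{k-\half,n}$ with $\spann_{\FF}\{x_\pi\mid\pi\in\Pi_{2k-1},\ |\pi|>n\}$; and all the diagram manipulations in (b) may be carried out using only diagrams of $\P_{k-\half}(n)$ (those with $k$ and $2k$ in the same block), since that subalgebra is closed under the multiplication and the relations \eqref{refinement-relation} and \eqref{eq:mobiusa} restrict to it (Remark~\ref{R:Hasse}). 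The main obstacle throughout is the reverse inclusion in (b): constructing the pair $(d_\alpha,d_\beta)$ uniformly in $\pi$ and controlling the higher‑block correction terms is exactly where the bulk of the bookkeeping in \cite{BH} resides, whereas part (a) and the two ``easy'' inclusions are short computations.
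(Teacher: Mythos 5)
Your argument for part (a) is correct and complete.  The identification of the exact-match condition, the count $[\pi_0\ast\pi_0]=n+1-k$ (resp.\ $0$) of middle blocks, the observation that the coarsenings $\vr$ of $\pi_0\ast\pi_0=\pi_0$ arising in Theorem~\ref{T:mult} are the partial matchings between the $m=n+1-k$ top singletons of $\pi_1$ and the $m$ bottom singletons of $\pi_2$, and the computation $(j-1)_m=0$ for $1\le j\le m$ with $(-1)_m=(-1)^m m!$ for $j=0$, all check out and reproduce $c_{k,n}$ as in \eqref{eq:ckn}.  The observation that $\ef_{k,n}\in\P_{k-\half}(n)$ exactly when $2k>n+1$, so that the same computation gives the idempotent relation for $\ef_{k-\half,n}$ inside the subalgebra, is also right.  (Note that this paper records Theorem~\ref{thm:generator} only by citation to \cite{BH}, so there is no in-paper proof to compare against; your part (a) does, however, match the orbit-basis computation one expects.)

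The issue is parts (b) and (c).  The inclusion $\langle\ef_{k,n}\rangle\subseteq\ker\Phi_{k,n}$ follows immediately from $|\pi_0|>n$ and Theorem~\ref{T:Phi}, as you say.  But the reverse inclusion is the entire content of the statement, and there you only assert the existence of diagram-basis elements $d_\alpha,d_\beta$ with $d_\alpha\,\ef_{k,n}\,d_\beta=x_\pi+\sum_{|\rho|>|\pi|}c_\rho x_\rho$ without constructing them or proving that the error terms have strictly larger block count.  This is not a routine bookkeeping step that can be granted; it is precisely the nontrivial lemma that makes the descending induction work, and it requires an explicit choice of $(\alpha,\beta)$ depending on the block structure of $\pi$ and a careful analysis (via \eqref{eq:mobiusa}, \eqref{refinement-relation}, and the diagram product) of the resulting coarsenings.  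Your passing remark that for $k>n$ the products $x_\pi\,\mathsf{I}_k^{\boldsymbol\circ}$ and $\mathsf{I}_k^{\boldsymbol\circ}\,x_\pi$ are ``essentially $x_\pi$'' is also not literally true: by Theorem~\ref{T:mult}, $x_\pi\,\mathsf{I}_k^{\boldsymbol\circ}=0$ unless $\pi$ restricts to the all-singleton partition on the bottom row, so even the base case needs an actual argument with a nontrivial $(\alpha,\beta)$.  You are candid that this is where the work in \cite{BH} lies, and the strategy you outline (descending induction on $|\pi|$, normal forms under the $\S_k\times\S_k$ action) is the right one, but as written parts (b) and (c) are unproved.
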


\smallskip
\begin{rem}{\rm  For a fixed value of $n$,  the first time the kernel is nonzero is when $k = \half(n+1)$ (i.e. when $n=2k-1$).  This is 
the first entry in each row in  the table in Figure \ref{fig:etable}.  For that particular value of $n$,
$(\ef_{k,2k-1})^2  = (-1)^k/k! \ \ef_{k,2k-1}$ by Theorem \ref{thm:generator}\,(a).
}\end{rem}

The expression for $\ef_{k,n}$ in the diagram basis is given by 
\begin{equation}  
\ef_{k,n}  =  \sum_{\vr \in \Pi_{2k},\ \pi_{k,n} \preceq \vr}  \mu_{2k}(\pi_{k,n},\vr)  d_{\vr},
\end{equation}
where $\pi_{k,n}$ is the set partition of $[1,2k]$ corresponding to $\ef_{k,n}$.  
When $k = \half(n+1)$ and $n$ is odd,  $\ef_{k,n} =   \begin{array}{c}
\begin{tikzpicture}[xscale=.3,yscale=.3,line width=1.25pt] 
\foreach \i in {1,2,3} 
{ \path (\i,.4) coordinate (T\i); \path (\i,-.4) coordinate (B\i); } 
\filldraw[fill= black!12,draw=black!12,line width=4pt]  (T1) -- (T3) -- (B3) -- (B1) -- (T1);
\foreach \i in {1,3} 
{ \filldraw[fill=white,draw=black,line width = 1pt] (T\i) circle (5pt); \filldraw[fill=white,draw=black,line width = 1pt]  (B\i) circle (5pt); } 
\draw (T2) node {$\cdots$};
\draw (B2) node {$\cdots$};
\end{tikzpicture}
\end{array},$ 
and all 
 $\vr$ in  $\Pi_{2k}$ occur in the expression for $\ef_{k,n}$.  
 When $k = \half(n+1)$ and $n$ is even,  $\ef_{k,n} =   \begin{array}{c}
\begin{tikzpicture}[xscale=.3,yscale=.3,line width=1.25pt] 
\foreach \i in {1,2,3,4} 
{ \path (\i,.4) coordinate (T\i); \path (\i,-.4) coordinate (B\i); } 
\filldraw[fill= black!12,draw=black!12,line width=4pt]  (T1) -- (T4) -- (B4) -- (B1) -- (T1);
\draw[blue] (T4) -- (B4);
\foreach \i in {1,3,4} 
{ \filldraw[fill=white,draw=black,line width = 1pt] (T\i) circle (5pt); \filldraw[fill=white,draw=black,line width = 1pt]  (B\i) circle (5pt); } 
\draw (T2) node {$\cdots$};
\draw (B2) node {$\cdots$};
\end{tikzpicture}
\end{array}$, 
and all 
 $\vr$ in  $\Pi_{2k-1}$ occur in the expression for $\ef_{k,n}$.   
If $k > n$,  then $\ef_{k,n} =   \begin{array}{c}
\begin{tikzpicture}[xscale=.3,yscale=.3,line width=1.25pt] 
\foreach \i in {1,2,3} 
{ \path (\i,.4) coordinate (T\i); \path (\i,-.4) coordinate (B\i); } 
\filldraw[fill= black!12,draw=black!12,line width=4pt]  (T1) -- (T3) -- (B3) -- (B1) -- (T1);
\draw[blue] (T1) -- (B1);\draw[blue] (T3) -- (B3);
\foreach \i in {1,3} 
{ \filldraw[fill=white,draw=black,line width = 1pt] (T\i) circle (5pt); \filldraw[fill=white,draw=black,line width = 1pt]  (B\i) circle (5pt); } 
\draw (T2) node {$\cdots$};
\draw (B2) node {$\cdots$};
\end{tikzpicture}\end{array},$ 
and the 
 set partitions $\vr \in\Pi_{2k}$  that occur in the expression for $\ef_{k,n}$ correspond to the coarsenings of the $k$ columns of the 
 diagram  $\pi_{k,n}$. 
 There are Bell number $\B(k)$ such terms.
In each case, the integer coefficients $\mu_{2k}(\pi_{k,n},\vr)$ 
can be computed using \eqref{eq:mobiusb}.

\begin{figure}
\noindent $$\begin{array}{c|ccccccccc}
\ef_{k,n}& k=1&k = 1 \frac{1}{2}&k=2&k = 2 \frac{1}{2}&k = 3&k = 3\frac{1}{2}&k = 4& k = 4\frac{1}{2}& k = 5\\
\hline
n = 1 & 
\vphantom{\bigg\vert}
\begin{array}{c} 
\begin{tikzpicture}[xscale=.3,yscale=.3,line width=1.25pt] 
\foreach \i in {1}  { \path (\i,1.25) coordinate (T\i); \path (\i,.25) coordinate (B\i); } 
\foreach \i in {1}  { \filldraw[fill=white,draw=black,line width = 1pt] (T\i) circle (4pt); \filldraw[fill=white,draw=black,line width = 1pt]  (B\i) circle (4pt); } 
\end{tikzpicture} 
\end{array}
&
\begin{array}{c} 
\begin{tikzpicture}[xscale=.3,yscale=.3,line width=1.25pt] 
\foreach \i in {1,2}  { \path (\i,1.25) coordinate (T\i); \path (\i,.25) coordinate (B\i); } 
\filldraw[fill= black!12,draw=black!12,line width=4pt]  (T1) -- (T2) -- (B2) -- (B1) -- (T1);
\draw[blue] (T1) -- (B1);
\draw[blue] (T2) -- (B2);
\foreach \i in {1,2}  { \filldraw[fill=white,draw=black,line width = 1pt] (T\i) circle (4pt); \filldraw[fill=white,draw=black,line width = 1pt]  (B\i) circle (4pt); } 
\end{tikzpicture} 
\end{array}
&
\begin{array}{c} 
\begin{tikzpicture}[xscale=.3,yscale=.3,line width=1.25pt] 
\foreach \i in {1,2}  { \path (\i,1.25) coordinate (T\i); \path (\i,.25) coordinate (B\i); } 
\filldraw[fill= black!12,draw=black!12,line width=4pt]  (T1) -- (T2) -- (B2) -- (B1) -- (T1);
\draw[blue] (T1) -- (B1);
\draw[blue] (T2) -- (B2);
\foreach \i in {1,2}  { \filldraw[fill=white,draw=black,line width = 1pt] (T\i) circle (4pt); \filldraw[fill=white,draw=black,line width = 1pt]  (B\i) circle (4pt); } 
\end{tikzpicture} 
\end{array}
&
\begin{array}{c} 
\begin{tikzpicture}[xscale=.3,yscale=.3,line width=1.25pt] 
\foreach \i in {1,2,3}  { \path (\i,1.25) coordinate (T\i); \path (\i,.25) coordinate (B\i); } 
\filldraw[fill= black!12,draw=black!12,line width=4pt]  (T1) -- (T3) -- (B3) -- (B1) -- (T1);
\draw[blue] (T1) -- (B1);
\draw[blue] (T2) -- (B2);
\draw[blue] (T3) -- (B3);
\foreach \i in {1,2,3}  { \filldraw[fill=white,draw=black,line width = 1pt] (T\i) circle (4pt); \filldraw[fill=white,draw=black,line width = 1pt]  (B\i) circle (4pt); } 
\end{tikzpicture} 
\end{array}
&
\begin{array}{c} 
\begin{tikzpicture}[xscale=.3,yscale=.3,line width=1.25pt] 
\foreach \i in {1,2,3}  { \path (\i,1.25) coordinate (T\i); \path (\i,.25) coordinate (B\i); } 
\filldraw[fill= black!12,draw=black!12,line width=4pt]  (T1) -- (T3) -- (B3) -- (B1) -- (T1);
\draw[blue] (T1) -- (B1);
\draw[blue] (T2) -- (B2);
\draw[blue] (T3) -- (B3);
\foreach \i in {1,2,3}  { \filldraw[fill=white,draw=black,line width = 1pt] (T\i) circle (4pt); \filldraw[fill=white,draw=black,line width = 1pt]  (B\i) circle (4pt); } 
\end{tikzpicture} 
\end{array}
&
\begin{array}{c} 
\begin{tikzpicture}[xscale=.3,yscale=.3,line width=1.25pt] 
\foreach \i in {1,2,3,4}  { \path (\i,1.25) coordinate (T\i); \path (\i,.25) coordinate (B\i); } 
\filldraw[fill= black!12,draw=black!12,line width=4pt]  (T1) -- (T4) -- (B4) -- (B1) -- (T1);
\draw[blue] (T1) -- (B1);
\draw[blue] (T2) -- (B2);
\draw[blue] (T3) -- (B3);
\draw[blue] (T4) -- (B4);
\foreach \i in {1,2,3,4}  { \filldraw[fill=white,draw=black,line width = 1pt] (T\i) circle (4pt); \filldraw[fill=white,draw=black,line width = 1pt]  (B\i) circle (4pt); } 
\end{tikzpicture} 
\end{array}
&
\begin{array}{c} 
\begin{tikzpicture}[xscale=.3,yscale=.3,line width=1.25pt] 
\foreach \i in {1,2,3,4}  { \path (\i,1.25) coordinate (T\i); \path (\i,.25) coordinate (B\i); } 
\filldraw[fill= black!12,draw=black!12,line width=4pt]  (T1) -- (T4) -- (B4) -- (B1) -- (T1);
\draw[blue] (T1) -- (B1);
\draw[blue] (T2) -- (B2);
\draw[blue] (T3) -- (B3);
\draw[blue] (T4) -- (B4);
\foreach \i in {1,2,3,4}  { \filldraw[fill=white,draw=black,line width = 1pt] (T\i) circle (4pt); \filldraw[fill=white,draw=black,line width = 1pt]  (B\i) circle (4pt); } 
\end{tikzpicture} 
\end{array}
&
\begin{array}{c} 
\begin{tikzpicture}[xscale=.3,yscale=.3,line width=1.25pt] 
\foreach \i in {1,2,3,4,5}  { \path (\i,1.25) coordinate (T\i); \path (\i,.25) coordinate (B\i); } 
\filldraw[fill= black!12,draw=black!12,line width=4pt]  (T1) -- (T5) -- (B5) -- (B1) -- (T1);
\draw[blue] (T1) -- (B1);
\draw[blue] (T2) -- (B2);
\draw[blue] (T3) -- (B3);
\draw[blue] (T4) -- (B4);
\draw[blue] (T5) -- (B5);
\foreach \i in {1,2,3,4,5}  { \filldraw[fill=white,draw=black,line width = 1pt] (T\i) circle (4pt); \filldraw[fill=white,draw=black,line width = 1pt]  (B\i) circle (4pt); } 
\end{tikzpicture} 
\end{array}
&
\begin{array}{c} 
\begin{tikzpicture}[xscale=.3,yscale=.3,line width=1.25pt] 
\foreach \i in {1,2,3,4,5}  { \path (\i,1.25) coordinate (T\i); \path (\i,.25) coordinate (B\i); } 
\filldraw[fill= black!12,draw=black!12,line width=4pt]  (T1) -- (T5) -- (B5) -- (B1) -- (T1);
\draw[blue] (T1) -- (B1);
\draw[blue] (T2) -- (B2);
\draw[blue] (T3) -- (B3);
\draw[blue] (T4) -- (B4);
\draw[blue] (T5) -- (B5);
\foreach \i in {1,2,3,4,5}  { \filldraw[fill=white,draw=black,line width = 1pt] (T\i) circle (4pt); \filldraw[fill=white,draw=black,line width = 1pt]  (B\i) circle (4pt); } 
\end{tikzpicture} 
\end{array}
\\
n = 2 
& 
\vphantom{\bigg\vert}
&
\begin{array}{c} 
\begin{tikzpicture}[xscale=.3,yscale=.3,line width=1.25pt] 
\foreach \i in {1,2}  { \path (\i,1.25) coordinate (T\i); \path (\i,.25) coordinate (B\i); } 
\filldraw[fill= black!12,draw=black!12,line width=4pt]  (T1) -- (T2) -- (B2) -- (B1) -- (T1);
\draw[blue] (T2) -- (B2);
\foreach \i in {1,2}  { \filldraw[fill=white,draw=black,line width = 1pt] (T\i) circle (4pt); \filldraw[fill=white,draw=black,line width = 1pt]  (B\i) circle (4pt); } 
\end{tikzpicture} 
\end{array}
&
\begin{array}{c} 
\begin{tikzpicture}[xscale=.3,yscale=.3,line width=1.25pt] 
\foreach \i in {1,2}  { \path (\i,1.25) coordinate (T\i); \path (\i,.25) coordinate (B\i); } 
\filldraw[fill= black!12,draw=black!12,line width=4pt]  (T1) -- (T2) -- (B2) -- (B1) -- (T1);
\draw[blue] (T2) -- (B2);
\foreach \i in {1,2}  { \filldraw[fill=white,draw=black,line width = 1pt] (T\i) circle (4pt); \filldraw[fill=white,draw=black,line width = 1pt]  (B\i) circle (4pt); } 
\end{tikzpicture} 
\end{array}
&
\begin{array}{c} 
\begin{tikzpicture}[xscale=.3,yscale=.3,line width=1.25pt] 
\foreach \i in {1,2,3}  { \path (\i,1.25) coordinate (T\i); \path (\i,.25) coordinate (B\i); } 
\filldraw[fill= black!12,draw=black!12,line width=4pt]  (T1) -- (T3) -- (B3) -- (B1) -- (T1);
\draw[blue] (T1) -- (B1);
\draw[blue] (T2) -- (B2);
\draw[blue] (T3) -- (B3);
\foreach \i in {1,2,3}  { \filldraw[fill=white,draw=black,line width = 1pt] (T\i) circle (4pt); \filldraw[fill=white,draw=black,line width = 1pt]  (B\i) circle (4pt); } 
\end{tikzpicture} 
\end{array}
&
\begin{array}{c} 
\begin{tikzpicture}[xscale=.3,yscale=.3,line width=1.25pt] 
\foreach \i in {1,2,3}  { \path (\i,1.25) coordinate (T\i); \path (\i,.25) coordinate (B\i); } 
\filldraw[fill= black!12,draw=black!12,line width=4pt]  (T1) -- (T3) -- (B3) -- (B1) -- (T1);
\draw[blue] (T1) -- (B1);
\draw[blue] (T2) -- (B2);
\draw[blue] (T3) -- (B3);
\foreach \i in {1,2,3}  { \filldraw[fill=white,draw=black,line width = 1pt] (T\i) circle (4pt); \filldraw[fill=white,draw=black,line width = 1pt]  (B\i) circle (4pt); } 
\end{tikzpicture} 
\end{array}
&
\begin{array}{c} 
\begin{tikzpicture}[xscale=.3,yscale=.3,line width=1.25pt] 
\foreach \i in {1,2,3,4}  { \path (\i,1.25) coordinate (T\i); \path (\i,.25) coordinate (B\i); } 
\filldraw[fill= black!12,draw=black!12,line width=4pt]  (T1) -- (T4) -- (B4) -- (B1) -- (T1);
\draw[blue] (T1) -- (B1);
\draw[blue] (T2) -- (B2);
\draw[blue] (T3) -- (B3);
\draw[blue] (T4) -- (B4);
\foreach \i in {1,2,3,4}  { \filldraw[fill=white,draw=black,line width = 1pt] (T\i) circle (4pt); \filldraw[fill=white,draw=black,line width = 1pt]  (B\i) circle (4pt); } 
\end{tikzpicture} 
\end{array}
&
\begin{array}{c} 
\begin{tikzpicture}[xscale=.3,yscale=.3,line width=1.25pt] 
\foreach \i in {1,2,3,4}  { \path (\i,1.25) coordinate (T\i); \path (\i,.25) coordinate (B\i); } 
\filldraw[fill= black!12,draw=black!12,line width=4pt]  (T1) -- (T4) -- (B4) -- (B1) -- (T1);
\draw[blue] (T1) -- (B1);
\draw[blue] (T2) -- (B2);
\draw[blue] (T3) -- (B3);
\draw[blue] (T4) -- (B4);
\foreach \i in {1,2,3,4}  { \filldraw[fill=white,draw=black,line width = 1pt] (T\i) circle (4pt); \filldraw[fill=white,draw=black,line width = 1pt]  (B\i) circle (4pt); } 
\end{tikzpicture} 
\end{array}
&
\begin{array}{c} 
\begin{tikzpicture}[xscale=.3,yscale=.3,line width=1.25pt] 
\foreach \i in {1,2,3,4,5}  { \path (\i,1.25) coordinate (T\i); \path (\i,.25) coordinate (B\i); } 
\filldraw[fill= black!12,draw=black!12,line width=4pt]  (T1) -- (T5) -- (B5) -- (B1) -- (T1);
\draw[blue] (T1) -- (B1);
\draw[blue] (T2) -- (B2);
\draw[blue] (T3) -- (B3);
\draw[blue] (T4) -- (B4);
\draw[blue] (T5) -- (B5);
\foreach \i in {1,2,3,4,5}  { \filldraw[fill=white,draw=black,line width = 1pt] (T\i) circle (4pt); \filldraw[fill=white,draw=black,line width = 1pt]  (B\i) circle (4pt); } 
\end{tikzpicture} 
\end{array}
&
\begin{array}{c} 
\begin{tikzpicture}[xscale=.3,yscale=.3,line width=1.25pt] 
\foreach \i in {1,2,3,4,5}  { \path (\i,1.25) coordinate (T\i); \path (\i,.25) coordinate (B\i); } 
\filldraw[fill= black!12,draw=black!12,line width=4pt]  (T1) -- (T5) -- (B5) -- (B1) -- (T1);
\draw[blue] (T1) -- (B1);
\draw[blue] (T2) -- (B2);
\draw[blue] (T3) -- (B3);
\draw[blue] (T4) -- (B4);
\draw[blue] (T5) -- (B5);
\foreach \i in {1,2,3,4,5}  { \filldraw[fill=white,draw=black,line width = 1pt] (T\i) circle (4pt); \filldraw[fill=white,draw=black,line width = 1pt]  (B\i) circle (4pt); } 
\end{tikzpicture} 
\end{array}
\\
n = 3
& 
\vphantom{\bigg\vert}
&
&
\begin{array}{c} 
\begin{tikzpicture}[xscale=.3,yscale=.3,line width=1.25pt] 
\foreach \i in {1,2}  { \path (\i,1.25) coordinate (T\i); \path (\i,.25) coordinate (B\i); } 
\filldraw[fill= black!12,draw=black!12,line width=4pt]  (T1) -- (T2) -- (B2) -- (B1) -- (T1);
\foreach \i in {1,2}  { \filldraw[fill=white,draw=black,line width = 1pt] (T\i) circle (4pt); \filldraw[fill=white,draw=black,line width = 1pt]  (B\i) circle (4pt); } 
\end{tikzpicture} 
\end{array}
&
\begin{array}{c} 
\begin{tikzpicture}[xscale=.3,yscale=.3,line width=1.25pt] 
\foreach \i in {1,2,3}  { \path (\i,1.25) coordinate (T\i); \path (\i,.25) coordinate (B\i); } 
\filldraw[fill= black!12,draw=black!12,line width=4pt]  (T1) -- (T3) -- (B3) -- (B1) -- (T1);
\draw[blue] (T2) -- (B2);
\draw[blue] (T3) -- (B3);
\foreach \i in {1,2,3}  { \filldraw[fill=white,draw=black,line width = 1pt] (T\i) circle (4pt); \filldraw[fill=white,draw=black,line width = 1pt]  (B\i) circle (4pt); } 
\end{tikzpicture} 
\end{array}
&
\begin{array}{c} 
\begin{tikzpicture}[xscale=.3,yscale=.3,line width=1.25pt] 
\foreach \i in {1,2,3}  { \path (\i,1.25) coordinate (T\i); \path (\i,.25) coordinate (B\i); } 
\filldraw[fill= black!12,draw=black!12,line width=4pt]  (T1) -- (T3) -- (B3) -- (B1) -- (T1);
\draw[blue] (T2) -- (B2);
\draw[blue] (T3) -- (B3);
\foreach \i in {1,2,3}  { \filldraw[fill=white,draw=black,line width = 1pt] (T\i) circle (4pt); \filldraw[fill=white,draw=black,line width = 1pt]  (B\i) circle (4pt); } 
\end{tikzpicture} 
\end{array}
&
\begin{array}{c} 
\begin{tikzpicture}[xscale=.3,yscale=.3,line width=1.25pt] 
\foreach \i in {1,2,3,4}  { \path (\i,1.25) coordinate (T\i); \path (\i,.25) coordinate (B\i); } 
\filldraw[fill= black!12,draw=black!12,line width=4pt]  (T1) -- (T4) -- (B4) -- (B1) -- (T1);
\draw[blue] (T1) -- (B1);
\draw[blue] (T2) -- (B2);
\draw[blue] (T3) -- (B3);
\draw[blue] (T4) -- (B4);
\foreach \i in {1,2,3,4}  { \filldraw[fill=white,draw=black,line width = 1pt] (T\i) circle (4pt); \filldraw[fill=white,draw=black,line width = 1pt]  (B\i) circle (4pt); } 
\end{tikzpicture} 
\end{array}
&
\begin{array}{c} 
\begin{tikzpicture}[xscale=.3,yscale=.3,line width=1.25pt] 
\foreach \i in {1,2,3,4}  { \path (\i,1.25) coordinate (T\i); \path (\i,.25) coordinate (B\i); } 
\filldraw[fill= black!12,draw=black!12,line width=4pt]  (T1) -- (T4) -- (B4) -- (B1) -- (T1);
\draw[blue] (T1) -- (B1);
\draw[blue] (T2) -- (B2);
\draw[blue] (T3) -- (B3);
\draw[blue] (T4) -- (B4);
\foreach \i in {1,2,3,4}  { \filldraw[fill=white,draw=black,line width = 1pt] (T\i) circle (4pt); \filldraw[fill=white,draw=black,line width = 1pt]  (B\i) circle (4pt); } 
\end{tikzpicture} 
\end{array}
&
\begin{array}{c} 
\begin{tikzpicture}[xscale=.3,yscale=.3,line width=1.25pt] 
\foreach \i in {1,2,3,4,5}  { \path (\i,1.25) coordinate (T\i); \path (\i,.25) coordinate (B\i); } 
\filldraw[fill= black!12,draw=black!12,line width=4pt]  (T1) -- (T5) -- (B5) -- (B1) -- (T1);
\draw[blue] (T1) -- (B1);
\draw[blue] (T2) -- (B2);
\draw[blue] (T3) -- (B3);
\draw[blue] (T4) -- (B4);
\draw[blue] (T5) -- (B5);
\foreach \i in {1,2,3,4,5}  { \filldraw[fill=white,draw=black,line width = 1pt] (T\i) circle (4pt); \filldraw[fill=white,draw=black,line width = 1pt]  (B\i) circle (4pt); } 
\end{tikzpicture} 
\end{array}
&
\begin{array}{c} 
\begin{tikzpicture}[xscale=.3,yscale=.3,line width=1.25pt] 
\foreach \i in {1,2,3,4,5}  { \path (\i,1.25) coordinate (T\i); \path (\i,.25) coordinate (B\i); } 
\filldraw[fill= black!12,draw=black!12,line width=4pt]  (T1) -- (T5) -- (B5) -- (B1) -- (T1);
\draw[blue] (T1) -- (B1);
\draw[blue] (T2) -- (B2);
\draw[blue] (T3) -- (B3);
\draw[blue] (T4) -- (B4);
\draw[blue] (T5) -- (B5);
\foreach \i in {1,2,3,4,5}  { \filldraw[fill=white,draw=black,line width = 1pt] (T\i) circle (4pt); \filldraw[fill=white,draw=black,line width = 1pt]  (B\i) circle (4pt); } 
\end{tikzpicture} 
\end{array}
\\
n = 4
& 
\vphantom{\bigg\vert}
&
&
&
\begin{array}{c} 
\begin{tikzpicture}[xscale=.3,yscale=.3,line width=1.25pt] 
\foreach \i in {1,2,3}  { \path (\i,1.25) coordinate (T\i); \path (\i,.25) coordinate (B\i); } 
\filldraw[fill= black!12,draw=black!12,line width=4pt]  (T1) -- (T3) -- (B3) -- (B1) -- (T1);
\draw[blue] (T3) -- (B3);
\foreach \i in {1,2,3}  { \filldraw[fill=white,draw=black,line width = 1pt] (T\i) circle (4pt); \filldraw[fill=white,draw=black,line width = 1pt]  (B\i) circle (4pt); } 
\end{tikzpicture} 
\end{array}
&
\begin{array}{c} 
\begin{tikzpicture}[xscale=.3,yscale=.3,line width=1.25pt] 
\foreach \i in {1,2,3}  { \path (\i,1.25) coordinate (T\i); \path (\i,.25) coordinate (B\i); } 
\filldraw[fill= black!12,draw=black!12,line width=4pt]  (T1) -- (T3) -- (B3) -- (B1) -- (T1);
\draw[blue] (T3) -- (B3);
\foreach \i in {1,2,3}  { \filldraw[fill=white,draw=black,line width = 1pt] (T\i) circle (4pt); \filldraw[fill=white,draw=black,line width = 1pt]  (B\i) circle (4pt); } 
\end{tikzpicture} 
\end{array}
&
\begin{array}{c} 
\begin{tikzpicture}[xscale=.3,yscale=.3,line width=1.25pt] 
\foreach \i in {1,2,3,4}  { \path (\i,1.25) coordinate (T\i); \path (\i,.25) coordinate (B\i); } 
\filldraw[fill= black!12,draw=black!12,line width=4pt]  (T1) -- (T4) -- (B4) -- (B1) -- (T1);
\draw[blue] (T2) -- (B2);
\draw[blue] (T3) -- (B3);
\draw[blue] (T4) -- (B4);
\foreach \i in {1,2,3,4}  { \filldraw[fill=white,draw=black,line width = 1pt] (T\i) circle (4pt); \filldraw[fill=white,draw=black,line width = 1pt]  (B\i) circle (4pt); } 
\end{tikzpicture} 
\end{array}
&
\begin{array}{c} 
\begin{tikzpicture}[xscale=.3,yscale=.3,line width=1.25pt] 
\foreach \i in {1,2,3,4}  { \path (\i,1.25) coordinate (T\i); \path (\i,.25) coordinate (B\i); } 
\filldraw[fill= black!12,draw=black!12,line width=4pt]  (T1) -- (T4) -- (B4) -- (B1) -- (T1);
\draw[blue] (T2) -- (B2);
\draw[blue] (T3) -- (B3);
\draw[blue] (T4) -- (B4);
\foreach \i in {1,2,3,4}  { \filldraw[fill=white,draw=black,line width = 1pt] (T\i) circle (4pt); \filldraw[fill=white,draw=black,line width = 1pt]  (B\i) circle (4pt); } 
\end{tikzpicture} 
\end{array}
&
\begin{array}{c} 
\begin{tikzpicture}[xscale=.3,yscale=.3,line width=1.25pt] 
\foreach \i in {1,2,3,4,5}  { \path (\i,1.25) coordinate (T\i); \path (\i,.25) coordinate (B\i); } 
\filldraw[fill= black!12,draw=black!12,line width=4pt]  (T1) -- (T5) -- (B5) -- (B1) -- (T1);
\draw[blue] (T1) -- (B1);
\draw[blue] (T2) -- (B2);
\draw[blue] (T3) -- (B3);
\draw[blue] (T4) -- (B4);
\draw[blue] (T5) -- (B5);
\foreach \i in {1,2,3,4,5}  { \filldraw[fill=white,draw=black,line width = 1pt] (T\i) circle (4pt); \filldraw[fill=white,draw=black,line width = 1pt]  (B\i) circle (4pt); } 
\end{tikzpicture} 
\end{array}
&
\begin{array}{c} 
\begin{tikzpicture}[xscale=.3,yscale=.3,line width=1.25pt] 
\foreach \i in {1,2,3,4,5}  { \path (\i,1.25) coordinate (T\i); \path (\i,.25) coordinate (B\i); } 
\filldraw[fill= black!12,draw=black!12,line width=4pt]  (T1) -- (T5) -- (B5) -- (B1) -- (T1);
\draw[blue] (T1) -- (B1);
\draw[blue] (T2) -- (B2);
\draw[blue] (T3) -- (B3);
\draw[blue] (T4) -- (B4);
\draw[blue] (T5) -- (B5);
\foreach \i in {1,2,3,4,5}  { \filldraw[fill=white,draw=black,line width = 1pt] (T\i) circle (4pt); \filldraw[fill=white,draw=black,line width = 1pt]  (B\i) circle (4pt); } 
\end{tikzpicture} 
\end{array}
\\
n = 5
& 
\vphantom{\bigg\vert}
&
&
&
&
\begin{array}{c} 
\begin{tikzpicture}[xscale=.3,yscale=.3,line width=1.25pt] 
\foreach \i in {1,2,3}  { \path (\i,1.25) coordinate (T\i); \path (\i,.25) coordinate (B\i); } 
\filldraw[fill= black!12,draw=black!12,line width=4pt]  (T1) -- (T3) -- (B3) -- (B1) -- (T1);
\foreach \i in {1,2,3}  { \filldraw[fill=white,draw=black,line width = 1pt] (T\i) circle (4pt); \filldraw[fill=white,draw=black,line width = 1pt]  (B\i) circle (4pt); } 
\end{tikzpicture} 
\end{array}
&
\begin{array}{c} 
\begin{tikzpicture}[xscale=.3,yscale=.3,line width=1.25pt] 
\foreach \i in {1,2,3,4}  { \path (\i,1.25) coordinate (T\i); \path (\i,.25) coordinate (B\i); } 
\filldraw[fill= black!12,draw=black!12,line width=4pt]  (T1) -- (T4) -- (B4) -- (B1) -- (T1);
\draw[blue] (T3) -- (B3);
\draw[blue] (T4) -- (B4);
\foreach \i in {1,2,3,4}  { \filldraw[fill=white,draw=black,line width = 1pt] (T\i) circle (4pt); \filldraw[fill=white,draw=black,line width = 1pt]  (B\i) circle (4pt); } 
\end{tikzpicture} 
\end{array}
&
\begin{array}{c} 
\begin{tikzpicture}[xscale=.3,yscale=.3,line width=1.25pt] 
\foreach \i in {1,2,3,4}  { \path (\i,1.25) coordinate (T\i); \path (\i,.25) coordinate (B\i); } 
\filldraw[fill= black!12,draw=black!12,line width=4pt]  (T1) -- (T4) -- (B4) -- (B1) -- (T1);
\draw[blue] (T3) -- (B3);
\draw[blue] (T4) -- (B4);
\foreach \i in {1,2,3,4}  { \filldraw[fill=white,draw=black,line width = 1pt] (T\i) circle (4pt); \filldraw[fill=white,draw=black,line width = 1pt]  (B\i) circle (4pt); } 
\end{tikzpicture} 
\end{array}
&
\begin{array}{c} 
\begin{tikzpicture}[xscale=.3,yscale=.3,line width=1.25pt] 
\foreach \i in {1,2,3,4,5}  { \path (\i,1.25) coordinate (T\i); \path (\i,.25) coordinate (B\i); } 
\filldraw[fill= black!12,draw=black!12,line width=4pt]  (T1) -- (T5) -- (B5) -- (B1) -- (T1);
\draw[blue] (T2) -- (B2);
\draw[blue] (T3) -- (B3);
\draw[blue] (T4) -- (B4);
\draw[blue] (T5) -- (B5);
\foreach \i in {1,2,3,4,5}  { \filldraw[fill=white,draw=black,line width = 1pt] (T\i) circle (4pt); \filldraw[fill=white,draw=black,line width = 1pt]  (B\i) circle (4pt); } 
\end{tikzpicture} 
\end{array}
&
\begin{array}{c} 
\begin{tikzpicture}[xscale=.3,yscale=.3,line width=1.25pt] 
\foreach \i in {1,2,3,4,5}  { \path (\i,1.25) coordinate (T\i); \path (\i,.25) coordinate (B\i); } 
\filldraw[fill= black!12,draw=black!12,line width=4pt]  (T1) -- (T5) -- (B5) -- (B1) -- (T1);
\draw[blue] (T2) -- (B2);
\draw[blue] (T3) -- (B3);
\draw[blue] (T4) -- (B4);
\draw[blue] (T5) -- (B5);
\foreach \i in {1,2,3,4,5}  { \filldraw[fill=white,draw=black,line width = 1pt] (T\i) circle (4pt); \filldraw[fill=white,draw=black,line width = 1pt]  (B\i) circle (4pt); } 
\end{tikzpicture} 
\end{array}
\\
n = 6
& 
\vphantom{\bigg\vert}
&
&
&
&
&
\begin{array}{c} 
\begin{tikzpicture}[xscale=.3,yscale=.3,line width=1.25pt] 
\foreach \i in {1,2,3,4}  { \path (\i,1.25) coordinate (T\i); \path (\i,.25) coordinate (B\i); } 
\filldraw[fill= black!12,draw=black!12,line width=4pt]  (T1) -- (T4) -- (B4) -- (B1) -- (T1);
\draw[blue] (T4) -- (B4);
\foreach \i in {1,2,3,4}  { \filldraw[fill=white,draw=black,line width = 1pt] (T\i) circle (4pt); \filldraw[fill=white,draw=black,line width = 1pt]  (B\i) circle (4pt); } 
\end{tikzpicture} 
\end{array}
&
\begin{array}{c} 
\begin{tikzpicture}[xscale=.3,yscale=.3,line width=1.25pt] 
\foreach \i in {1,2,3,4}  { \path (\i,1.25) coordinate (T\i); \path (\i,.25) coordinate (B\i); } 
\filldraw[fill= black!12,draw=black!12,line width=4pt]  (T1) -- (T4) -- (B4) -- (B1) -- (T1);
\draw[blue] (T4) -- (B4);
\foreach \i in {1,2,3,4}  { \filldraw[fill=white,draw=black,line width = 1pt] (T\i) circle (4pt); \filldraw[fill=white,draw=black,line width = 1pt]  (B\i) circle (4pt); } 
\end{tikzpicture} 
\end{array}
&
\begin{array}{c} 
\begin{tikzpicture}[xscale=.3,yscale=.3,line width=1.25pt] 
\foreach \i in {1,2,3,4,5}  { \path (\i,1.25) coordinate (T\i); \path (\i,.25) coordinate (B\i); } 
\filldraw[fill= black!12,draw=black!12,line width=4pt]  (T1) -- (T5) -- (B5) -- (B1) -- (T1);
\draw[blue] (T3) -- (B3);
\draw[blue] (T4) -- (B4);
\draw[blue] (T5) -- (B5);
\foreach \i in {1,2,3,4,5}  { \filldraw[fill=white,draw=black,line width = 1pt] (T\i) circle (4pt); \filldraw[fill=white,draw=black,line width = 1pt]  (B\i) circle (4pt); } 
\end{tikzpicture} 
\end{array}
&
\begin{array}{c} 
\begin{tikzpicture}[xscale=.3,yscale=.3,line width=1.25pt] 
\foreach \i in {1,2,3,4,5}  { \path (\i,1.25) coordinate (T\i); \path (\i,.25) coordinate (B\i); } 
\filldraw[fill= black!12,draw=black!12,line width=4pt]  (T1) -- (T5) -- (B5) -- (B1) -- (T1);
\draw[blue] (T3) -- (B3);
\draw[blue] (T4) -- (B4);
\draw[blue] (T5) -- (B5);
\foreach \i in {1,2,3,4,5}  { \filldraw[fill=white,draw=black,line width = 1pt] (T\i) circle (4pt); \filldraw[fill=white,draw=black,line width = 1pt]  (B\i) circle (4pt); } 
\end{tikzpicture} 
\end{array}
\\
n = 7
& 
\vphantom{\bigg\vert}
&
&
&
&
&
&
\begin{array}{c} 
\begin{tikzpicture}[xscale=.3,yscale=.3,line width=1.25pt] 
\foreach \i in {1,2,3,4}  { \path (\i,1.25) coordinate (T\i); \path (\i,.25) coordinate (B\i); } 
\filldraw[fill= black!12,draw=black!12,line width=4pt]  (T1) -- (T4) -- (B4) -- (B1) -- (T1);
\foreach \i in {1,2,3,4}  { \filldraw[fill=white,draw=black,line width = 1pt] (T\i) circle (4pt); \filldraw[fill=white,draw=black,line width = 1pt]  (B\i) circle (4pt); } 
\end{tikzpicture} 
\end{array}
&
\begin{array}{c} 
\begin{tikzpicture}[xscale=.3,yscale=.3,line width=1.25pt] 
\foreach \i in {1,2,3,4,5}  { \path (\i,1.25) coordinate (T\i); \path (\i,.25) coordinate (B\i); } 
\filldraw[fill= black!12,draw=black!12,line width=4pt]  (T1) -- (T5) -- (B5) -- (B1) -- (T1);
\draw[blue] (T4) -- (B4);
\draw[blue] (T5) -- (B5);
\foreach \i in {1,2,3,4,5}  { \filldraw[fill=white,draw=black,line width = 1pt] (T\i) circle (4pt); \filldraw[fill=white,draw=black,line width = 1pt]  (B\i) circle (4pt); } 
\end{tikzpicture} 
\end{array}
&
\begin{array}{c} 
\begin{tikzpicture}[xscale=.3,yscale=.3,line width=1.25pt] 
\foreach \i in {1,2,3,4,5}  { \path (\i,1.25) coordinate (T\i); \path (\i,.25) coordinate (B\i); } 
\filldraw[fill= black!12,draw=black!12,line width=4pt]  (T1) -- (T5) -- (B5) -- (B1) -- (T1);
\draw[blue] (T4) -- (B4);
\draw[blue] (T5) -- (B5);
\foreach \i in {1,2,3,4,5}  { \filldraw[fill=white,draw=black,line width = 1pt] (T\i) circle (4pt); \filldraw[fill=white,draw=black,line width = 1pt]  (B\i) circle (4pt); } 
\end{tikzpicture} 
\end{array}
\\
n = 8
& 
\vphantom{\bigg\vert}
&
&
&
&
&
&
&
\begin{array}{c} 
\begin{tikzpicture}[xscale=.3,yscale=.3,line width=1.25pt] 
\foreach \i in {1,2,3,4,5}  { \path (\i,1.25) coordinate (T\i); \path (\i,.25) coordinate (B\i); } 
\filldraw[fill= black!12,draw=black!12,line width=4pt]  (T1) -- (T5) -- (B5) -- (B1) -- (T1);
\draw[blue] (T5) -- (B5);
\foreach \i in {1,2,3,4,5}  { \filldraw[fill=white,draw=black,line width = 1pt] (T\i) circle (4pt); \filldraw[fill=white,draw=black,line width = 1pt]  (B\i) circle (4pt); } 
\end{tikzpicture} 
\end{array}
&
\begin{array}{c} 
\begin{tikzpicture}[xscale=.3,yscale=.3,line width=1.25pt] 
\foreach \i in {1,2,3,4,5}  { \path (\i,1.25) coordinate (T\i); \path (\i,.25) coordinate (B\i); } 
\filldraw[fill= black!12,draw=black!12,line width=4pt]  (T1) -- (T5) -- (B5) -- (B1) -- (T1);
\draw[blue] (T5) -- (B5);
\foreach \i in {1,2,3,4,5}  { \filldraw[fill=white,draw=black,line width = 1pt] (T\i) circle (4pt); \filldraw[fill=white,draw=black,line width = 1pt]  (B\i) circle (4pt); } 
\end{tikzpicture} 
\end{array}
\\
n = 9
& 
\vphantom{\bigg\vert}
&
&
&
&
&
&
&
&
\begin{array}{c} 
\begin{tikzpicture}[xscale=.3,yscale=.3,line width=1.25pt] 
\foreach \i in {1,2,3,4,5}  { \path (\i,1.25) coordinate (T\i); \path (\i,.25) coordinate (B\i); } 
\filldraw[fill= black!12,draw=black!12,line width=4pt]  (T1) -- (T5) -- (B5) -- (B1) -- (T1);
\foreach \i in {1,2,3,4,5}  { \filldraw[fill=white,draw=black,line width = 1pt] (T\i) circle (4pt); \filldraw[fill=white,draw=black,line width = 1pt]  (B\i) circle (4pt); } 
\end{tikzpicture} 
\end{array}
\end{array}
$$ 
\caption{The essential idempotent $\ef_{k,n}$ for $k \le 5$ and $n \le 9$. When $n < 2k$ the kernel of $\Phi_{k,n}$ equals the principal ideal $\langle \ef_{k,n} \rangle$. \label{fig:etable}}
\end{figure}

\begin{rem}{\rm The element $\ef_{4,3} =  \begin{array}{c}
\begin{tikzpicture}[xscale=.30,yscale=.30,line width=1.25pt] 
\foreach \i in {1,2,3,4} 
{ \path (\i,.5) coordinate (T\i); \path (\i,-.5) coordinate (B\i); } 
\filldraw[fill= black!12,draw=black!12,line width=4pt]  (T1) -- (T4) -- (B4) -- (B1) -- (T1);
\draw[blue] (T1) -- (B1);
\draw[blue] (T2) -- (B2);
\draw[blue] (T3) -- (B3);
\draw[blue] (T4) -- (B4);
\foreach \i in {1,2,3,4} 
{ \filldraw[fill=white,draw=black,line width = 1pt] (T\i) circle (5pt); \filldraw[fill=white,draw=black,line width = 1pt]  (B\i) circle (5pt); } 
\end{tikzpicture}
\end{array}$  generates the 
kernel of  $\Phi_{4,3}: \P_4(3) \to \End_{\S_3}(\M_3^{\otimes 4})$. 
Here is the diagram basis expansion for $\ef_{4,3}$  using the M\"obius formulas in \eqref{eq:mobiusa} and \eqref{eq:mobiusb}.    The diagram basis expansion for  $\ef_{2,3}$  can be found
in \eqref{eq:orbit2diag}.

 \begin{align*}
\ef_{4,3} = \begin{array}{c}
\begin{tikzpicture}[xscale=.40,yscale=.40,line width=1.25pt] 
\foreach \i in {1,2,3,4} 
{ \path (\i,.5) coordinate (T\i); \path (\i,-.5) coordinate (B\i); } 
\filldraw[fill= black!12,draw=black!12,line width=4pt]  (T1) -- (T4) -- (B4) -- (B1) -- (T1);
\draw[blue] (T1) -- (B1);
\draw[blue] (T2) -- (B2);
\draw[blue] (T3) -- (B3);
\draw[blue] (T4) -- (B4);
\foreach \i in {1,2,3,4} 
{ \filldraw[fill=white,draw=black,line width = 1pt] (T\i) circle (5pt); \filldraw[fill=white,draw=black,line width = 1pt]  (B\i) circle (5pt); } 
\end{tikzpicture}
\end{array} 
& =
\begin{array}{c}
\begin{tikzpicture}[xscale=.40,yscale=.40,line width=1.25pt] 
\foreach \i in {1,2,3,4} 
{ \path (\i,.5) coordinate (T\i); \path (\i,-.5) coordinate (B\i); } 
\filldraw[fill= black!12,draw=black!12,line width=4pt]  (T1) -- (T4) -- (B4) -- (B1) -- (T1);
\draw[blue] (T1) -- (B1);
\draw[blue] (T2) -- (B2);
\draw[blue] (T3) -- (B3);
\draw[blue] (T4) -- (B4);
\foreach \i in {1,2,3,4} 
{ \fill (T\i) circle (5pt); \fill (B\i) circle (5pt); } 
\end{tikzpicture}
\end{array}
-
\begin{array}{c}
\begin{tikzpicture}[xscale=.40,yscale=.40,line width=1.25pt] 
\foreach \i in {1,2,3,4} 
{ \path (\i,1.25) coordinate (T\i); \path (\i,.25) coordinate (B\i); } 
\filldraw[fill= black!12,draw=black!12,line width=4pt]  (T1) -- (T4) -- (B4) -- (B1) -- (T1);
\draw[blue] (T1) -- (B1);
\draw[blue] (T2) -- (B2);
\draw[blue] (T3) -- (B3);
\draw[blue] (T4) -- (B4);
\draw[blue] (T1) .. controls +(0,-.40) and +(0,-.40) .. (T2);
\foreach \i in {1,2,3,4} 
{ \fill (T\i) circle (5pt); \fill (B\i) circle (5pt); } 
\end{tikzpicture}
\end{array}
-
\begin{array}{c}
\begin{tikzpicture}[xscale=.40,yscale=.40,line width=1.25pt] 
\foreach \i in {1,2,3,4} 
{ \path (\i,1.25) coordinate (T\i); \path (\i,.25) coordinate (B\i); } 
\filldraw[fill= black!12,draw=black!12,line width=4pt]  (T1) -- (T4) -- (B4) -- (B1) -- (T1);
\draw[blue] (T1) -- (B1);
\draw[blue] (T2) -- (B2);
\draw[blue] (T3) -- (B3);
\draw[blue] (T4) -- (B4);
\draw[blue] (T1) .. controls +(0,-.60) and +(0,-.60) .. (T3);
\foreach \i in {1,2,3,4} 
{ \fill (T\i) circle (5pt); \fill (B\i) circle (5pt); } 
\end{tikzpicture}
\end{array}
-
\begin{array}{c}
\begin{tikzpicture}[xscale=.40,yscale=.40,line width=1.25pt] 
\foreach \i in {1,2,3,4} 
{ \path (\i,1.25) coordinate (T\i); \path (\i,.25) coordinate (B\i); } 
\filldraw[fill= black!12,draw=black!12,line width=4pt]  (T1) -- (T4) -- (B4) -- (B1) -- (T1);
\draw[blue] (T1) -- (B1);
\draw[blue] (T2) -- (B2);
\draw[blue] (T3) -- (B3);
\draw[blue] (T4) -- (B4);
\draw[blue] (T1) .. controls +(0,-.80) and +(0,-.80) .. (T4);
\foreach \i in {1,2,3,4} 
{ \fill (T\i) circle (5pt); \fill (B\i) circle (5pt); } 
\end{tikzpicture}
\end{array}
-
\begin{array}{c}
\begin{tikzpicture}[xscale=.40,yscale=.40,line width=1.25pt] 
\foreach \i in {1,2,3,4} 
{ \path (\i,1.25) coordinate (T\i); \path (\i,.25) coordinate (B\i); } 
\filldraw[fill= black!12,draw=black!12,line width=4pt]  (T1) -- (T4) -- (B4) -- (B1) -- (T1);
\draw[blue] (T1) -- (B1);
\draw[blue] (T2) -- (B2);
\draw[blue] (T3) -- (B3);
\draw[blue] (T4) -- (B4);
\draw[blue] (T2) .. controls +(0,-.40) and +(0,-.40) .. (T3);
\foreach \i in {1,2,3,4} 
{ \fill (T\i) circle (5pt); \fill (B\i) circle (5pt); } 
\end{tikzpicture}
\end{array}
\\
&
-
\begin{array}{c}
\begin{tikzpicture}[xscale=.40,yscale=.40,line width=1.25pt] 
\foreach \i in {1,2,3,4} 
{ \path (\i,1.25) coordinate (T\i); \path (\i,.25) coordinate (B\i); } 
\filldraw[fill= black!12,draw=black!12,line width=4pt]  (T1) -- (T4) -- (B4) -- (B1) -- (T1);
\draw[blue] (T1) -- (B1);
\draw[blue] (T2) -- (B2);
\draw[blue] (T3) -- (B3);
\draw[blue] (T4) -- (B4);
\draw[blue] (T2) .. controls +(0,-.60) and +(0,-.60) .. (T4);
\foreach \i in {1,2,3,4} 
{ \fill (T\i) circle (5pt); \fill (B\i) circle (5pt); } 
\end{tikzpicture}
\end{array}
-
\begin{array}{c}
\begin{tikzpicture}[xscale=.40,yscale=.40,line width=1.25pt] 
\foreach \i in {1,2,3,4} 
{ \path (\i,1.25) coordinate (T\i); \path (\i,.25) coordinate (B\i); } 
\filldraw[fill= black!12,draw=black!12,line width=4pt]  (T1) -- (T4) -- (B4) -- (B1) -- (T1);
\draw[blue] (T1) -- (B1);
\draw[blue] (T2) -- (B2);
\draw[blue] (T3) -- (B3);
\draw[blue] (T4) -- (B4);
\draw[blue] (T3) .. controls +(0,-.40) and +(0,-.40) .. (T4);
\foreach \i in {1,2,3,4} 
{ \fill (T\i) circle (5pt); \fill (B\i) circle (5pt); } 
\end{tikzpicture}
\end{array}
 + 2 
\begin{array}{c}
\begin{tikzpicture}[xscale=.40,yscale=.40,line width=1.25pt] 
\foreach \i in {1,2,3,4} 
{ \path (\i,1.25) coordinate (T\i); \path (\i,.25) coordinate (B\i); } 
\filldraw[fill= black!12,draw=black!12,line width=4pt]  (T1) -- (T4) -- (B4) -- (B1) -- (T1);
\draw[blue] (T1) -- (B1);
\draw[blue] (T2) -- (B2);
\draw[blue] (T3) -- (B3);
\draw[blue] (T4) -- (B4);
\draw[blue] (T1) .. controls +(0,-.40) and +(0,-.40) .. (T2) .. controls +(0,-.40) and +(0,-.40) .. (T3);
\foreach \i in {1,2,3,4} 
{ \fill (T\i) circle (5pt); \fill (B\i) circle (5pt); } 
\end{tikzpicture}
\end{array}
+2
\begin{array}{c}
\begin{tikzpicture}[xscale=.40,yscale=.40,line width=1.25pt] 
\foreach \i in {1,2,3,4} 
{ \path (\i,1.25) coordinate (T\i); \path (\i,.25) coordinate (B\i); } 
\filldraw[fill= black!12,draw=black!12,line width=4pt]  (T1) -- (T4) -- (B4) -- (B1) -- (T1);
\draw[blue] (T1) -- (B1);
\draw[blue] (T2) -- (B2);
\draw[blue] (T3) -- (B3);
\draw[blue] (T4) -- (B4);
\draw[blue] (T1) .. controls +(0,-.40) and +(0,-.40) .. (T2) .. controls +(0,-.60) and +(0,-.60) .. (T4);
\foreach \i in {1,2,3,4} 
{ \fill (T\i) circle (5pt); \fill (B\i) circle (5pt); } 
\end{tikzpicture}
\end{array} 
+2
\begin{array}{c}
\begin{tikzpicture}[xscale=.40,yscale=.40,line width=1.25pt] 
\foreach \i in {1,2,3,4} 
{ \path (\i,1.25) coordinate (T\i); \path (\i,.25) coordinate (B\i); } 
\filldraw[fill= black!12,draw=black!12,line width=4pt]  (T1) -- (T4) -- (B4) -- (B1) -- (T1);
\draw[blue] (T1) -- (B1);
\draw[blue] (T2) -- (B2);
\draw[blue] (T3) -- (B3);
\draw[blue] (T4) -- (B4);
\draw[blue] (T1) .. controls +(0,-.60) and +(0,-.60) .. (T3) .. controls +(0,-.40) and +(0,-.40) .. (T4);
\foreach \i in {1,2,3,4} 
{ \fill (T\i) circle (5pt); \fill (B\i) circle (5pt); } 
\end{tikzpicture}
\end{array}\\
&
+2
\begin{array}{c}
\begin{tikzpicture}[xscale=.40,yscale=.40,line width=1.25pt] 
\foreach \i in {1,2,3,4} 
{ \path (\i,1.25) coordinate (T\i); \path (\i,.25) coordinate (B\i); } 
\filldraw[fill= black!12,draw=black!12,line width=4pt]  (T1) -- (T4) -- (B4) -- (B1) -- (T1);
\draw[blue] (T1) -- (B1);
\draw[blue] (T2) -- (B2);
\draw[blue] (T3) -- (B3);
\draw[blue] (T4) -- (B4);
\draw[blue] (T2) .. controls +(0,-.40) and +(0,-.40) .. (T3) .. controls  +(0,-.40) and +(0,-.40) .. (T4);
\foreach \i in {1,2,3,4} 
{ \fill (T\i) circle (5pt); \fill (B\i) circle (5pt); } 
\end{tikzpicture}
\end{array}
+
\begin{array}{c}
\begin{tikzpicture}[xscale=.40,yscale=.40,line width=1.25pt] 
\foreach \i in {1,2,3,4} 
{ \path (\i,1.25) coordinate (T\i); \path (\i,.25) coordinate (B\i); } 
\filldraw[fill= black!12,draw=black!12,line width=4pt]  (T1) -- (T4) -- (B4) -- (B1) -- (T1);
\draw[blue] (T1) -- (B1);
\draw[blue] (T2) -- (B2);
\draw[blue] (T3) -- (B3);
\draw[blue] (T4) -- (B4);
\draw[blue] (T1) .. controls +(0,-.40) and +(0,-.40) .. (T2);
\draw[blue] (T3) .. controls +(0,-.40) and +(0,-.40) .. (T4);
\foreach \i in {1,2,3,4} 
{ \fill (T\i) circle (5pt); \fill (B\i) circle (5pt); } 
\end{tikzpicture}
\end{array}
+
\begin{array}{c}
\begin{tikzpicture}[xscale=.40,yscale=.40,line width=1.25pt] 
\foreach \i in {1,2,3,4} 
{ \path (\i,1.25) coordinate (T\i); \path (\i,.25) coordinate (B\i); } 
\filldraw[fill= black!12,draw=black!12,line width=4pt]  (T1) -- (T4) -- (B4) -- (B1) -- (T1);
\draw[blue] (T1) -- (B1);
\draw[blue] (T2) -- (B2);
\draw[blue] (T3) -- (B3);
\draw[blue] (T4) -- (B4);
\draw[blue] (T1) .. controls +(0,-.60) and +(0,-.60) .. (T3);
\draw[blue] (T2) .. controls +(0,-.60) and +(0,-.60) .. (T4);
\foreach \i in {1,2,3,4} 
{ \fill (T\i) circle (5pt); \fill (B\i) circle (5pt); } 
\end{tikzpicture}
\end{array}
+
\begin{array}{c}
\begin{tikzpicture}[xscale=.40,yscale=.40,line width=1.25pt] 
\foreach \i in {1,2,3,4} 
{ \path (\i,1.25) coordinate (T\i); \path (\i,.25) coordinate (B\i); } 
\filldraw[fill= black!12,draw=black!12,line width=4pt]  (T1) -- (T4) -- (B4) -- (B1) -- (T1);
\draw[blue] (T1) -- (B1);
\draw[blue] (T2) -- (B2);
\draw[blue] (T3) -- (B3);
\draw[blue] (T4) -- (B4);
\draw[blue] (T1) .. controls +(0,-.80) and +(0,-.80) .. (T4);
\draw[blue] (T2) .. controls +(0,-.40) and +(0,-.40) .. (T3);
\foreach \i in {1,2,3,4} 
{ \fill (T\i) circle (5pt); \fill (B\i) circle (5pt); } 
\end{tikzpicture}
\end{array}
- 6
\begin{array}{c}
\begin{tikzpicture}[xscale=.40,yscale=.40,line width=1.25pt] 
\foreach \i in {1,2,3,4} 
{ \path (\i,1.25) coordinate (T\i); \path (\i,.25) coordinate (B\i); } 
\filldraw[fill= black!12,draw=black!12,line width=4pt]  (T1) -- (T4) -- (B4) -- (B1) -- (T1);
\draw[blue] (T1) -- (B1);
\draw[blue] (T2) -- (B2);
\draw[blue] (T3) -- (B3);
\draw[blue] (T4) -- (B4);
\draw[blue] (T1) .. controls +(0,-.40) and +(0,-.40) .. (T2);
\draw[blue] (T2) .. controls +(0,-.40) and +(0,-.40) .. (T3);
\draw[blue] (T3) .. controls +(0,-.40) and +(0,-.40) .. (T4);
\foreach \i in {1,2,3,4} 
{ \fill (T\i) circle (5pt); \fill (B\i) circle (5pt); } 
\end{tikzpicture}
\end{array}.
\end{align*}
There are $15 = \mathsf{B}(4)$ diagram basis elements in the expression for orbit element $\ef_{k,n}$.}
\end{rem}

\section{The Fundamental Theorems of Invariant Theory for $\S_n$}  \label{sec:fun}

 Section \ref{subsec:repn} gives the explicit construction of the algebra homomorphism $\Phi_{k,n}: \P_k(n) \rightarrow \End_{\S_n}(\M_n^{\ot k})$  and shows that
the partition algebra generates the tensor invariants of the symmetric group $\S_n$. The  First Fundamental Theorem of Invariant Theory for $\S_n$ says that the partition algebra generates \emph{all} tensor invariants of $\S_n$ on   $\End_{\S_n}(\M_n^{\ot k}) \cong \big(\M_n^{\ot {2k}}\big)^{\S_n}$, as $\Phi_{k,n}$ is a surjection for all $k,n$. 
 A more precise statement is the following: \medskip

\begin{thm}\label{T:1stfund}{\rm (\cite{J})}\, {\rm (First Fundamental Theorem of Invariant Theory for $\S_n$)} \, 
For all $k,n\in \ZZ_{\ge 1}$, $\Phi_{k,n}: \P_k(n) \to  \End_{\S_n}(\modu^{\ot k})$ is a surjective algebra homomorphism,
and when $n \ge 2k$, $\Phi_{k,n}$ is an isomorphism, so  $\P_k(n) \cong  \End_{\S_n}(\modu^{\ot k})$ when $n \ge 2k$.
\end{thm}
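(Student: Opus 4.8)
The plan is to assemble the First Fundamental Theorem from the machinery already developed in the excerpt, so that essentially nothing new needs to be proved. First I would establish that $\Phi_{k,n}$ is a well-defined algebra homomorphism. The map is defined on the orbit basis by \eqref{eq:Phi-orbit}, extended linearly; that it respects multiplication is exactly the content cited from \cite[Prop.~3.6]{BH}, and I would simply invoke that reference (noting that the diagram basis is the convenient one for checking multiplicativity, since diagram multiplication \eqref{eq:diagmult} matches composition of the operators with coefficients \eqref{eq:Phi-coeffs-diagram}). So this step is a citation plus a one-line remark.

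Next I would prove surjectivity onto $\End_{\S_n}(\M_n^{\ot k})$. Here the key input is Theorem~\ref{thm:orbitbasis}: the centralizer algebra $\End_{\S_n}(\M_n^{\ot k})$ has basis $\{\mathsf{X}_\pi \mid \pi \in \Pi_{2k,n}\}$, i.e.\ one operator $\mathsf{X}_\pi$ for each set partition of $[1,2k]$ with at most $n$ blocks. By the very definition \eqref{eq:Phi-orbit}, $\Phi_{k,n}(x_\pi) = \mathsf{X}_\pi$ whenever $\pi$ has $\le n$ blocks. Hence the image of $\Phi_{k,n}$ contains every basis element $\mathsf{X}_\pi$ of $\End_{\S_n}(\M_n^{\ot k})$, and since the image is contained in $\End_{\S_n}(\M_n^{\ot k})$ — because each $\mathsf{X}_\pi$ satisfies the centralizer condition \eqref{eq:centcond}, as shown in Section~\ref{subsec:orbitbasisaction} — we get $\im\Phi_{k,n} = \End_{\S_n}(\M_n^{\ot k})$. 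This establishes surjectivity for all $k,n\in\ZZ_{\ge1}$.

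Finally, for the isomorphism claim when $n \ge 2k$: I would argue by dimension count. We have $\dim\P_k(n) = \mathsf{B}(2k)$, the $2k$-th Bell number, since $\{d_\pi \mid \pi\in\Pi_{2k}\}$ (equivalently the orbit basis) indexes a basis. On the other side, $\dim\End_{\S_n}(\M_n^{\ot k}) = \mathsf{B}(2k,n)$ by Theorem~\ref{thm:orbitbasis}. When $n \ge 2k$, every set partition of a $2k$-element set has at most $2k \le n$ blocks, so $\Pi_{2k,n} = \Pi_{2k}$ and $\mathsf{B}(2k,n) = \mathsf{B}(2k)$; equivalently, by Theorem~\ref{T:Phi}(a) the kernel of $\Phi_{k,n}$ is spanned by the $x_\pi$ with $\pi$ having more than $n$ blocks, of which there are none. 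A surjective linear map between finite-dimensional spaces of equal dimension is injective, hence bijective, so $\Phi_{k,n}$ is an algebra isomorphism $\P_k(n) \xrightarrow{\;\sim\;} \End_{\S_n}(\M_n^{\ot k})$.

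I do not anticipate a genuine obstacle here: every ingredient — multiplicativity, the orbit basis of the centralizer, and the Bell-number dimension bookkeeping — is already in place in the excerpt, so the proof is a short assembly. If anything requires care, it is making the surjectivity argument cleanly self-contained: one should be explicit that the $\mathsf{X}_\pi$ for $\pi$ with $\le n$ blocks are precisely a basis of the target (Theorem~\ref{thm:orbitbasis}) and are exactly the nonzero images of orbit basis elements (definition \eqref{eq:Phi-orbit}), rather than appeal circularly to Theorem~\ref{T:Phi}, whose part (a) itself packages this statement. The cleanest exposition presents the surjectivity directly from Theorem~\ref{thm:orbitbasis} and then cites Theorem~\ref{T:Phi}(a) only for the kernel description used in the dimension step.
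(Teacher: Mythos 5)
Your proposal is correct and matches the paper's own (implicit) argument: the paper attributes the theorem to Jones and, in Section~\ref{subsec:repn}, derives surjectivity from Theorem~\ref{thm:orbitbasis}, obtains the isomorphism for $n \ge 2k$ by the Bell-number dimension count, and cites \cite[Prop.~3.6]{BH} for multiplicativity of $\Phi_{k,n}$ — exactly your three steps. Your concluding remark about deriving surjectivity directly from Theorem~\ref{thm:orbitbasis} rather than circularly from Theorem~\ref{T:Phi}(a) is a sensible expository point, since Theorem~\ref{T:Phi}(a) is itself just a restatement of this fact together with the kernel description.
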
 

 For $k \in \ZZ_{\ge 1}$,  the partition algebra $\P_k(n)$  has a presentation by the generators
\begin{align}
\label{s-gen}
\mathfrak{s}_i &=  
\begin{array}{c}\begin{tikzpicture}[scale=.6,line width=1.25pt] 
\foreach \i in {1,...,8} 
{ \path (\i,1) coordinate (T\i); \path (\i,0) coordinate (B\i); } 
\filldraw[fill= gray!40,draw=gray!40,line width=3.2pt]  (T1) -- (T8) -- (B8) -- (B1) -- (T1);
\draw[blue] (T1) -- (B1);
\draw[blue] (T3) -- (B3);
\draw[blue] (T4) -- (B5);
\draw[blue] (T5) -- (B4);
\draw[blue] (T6) -- (B6);
\draw[blue] (T8) -- (B8);
\foreach \i in {1,3,4,5,6,8} { \fill (T\i) circle (4pt); \fill (B\i) circle (4pt); } 
\draw (T2) node  {$\cdots$}; \draw (B2) node  {$\cdots$}; \draw (T7) node  {$\cdots$}; \draw (B7) node  {$\cdots$}; 
\draw  (T4)  node[black,above=0.05cm]{$\scriptstyle{i}$};
\draw  (T5)  node[black,above=0.0cm]{$\scriptstyle{i+1}$};
\end{tikzpicture}\end{array} \qquad 1 \le i \le k-1,
\\
\label{p-gen}
\mathfrak{p}_i &=  \frac{1}{n}
\begin{array}{c}\begin{tikzpicture}[scale=.6,line width=1.25pt] 
\foreach \i in {1,...,8} 
{ \path (\i,1) coordinate (T\i); \path (\i,0) coordinate (B\i); } 
\filldraw[fill= gray!40,draw=gray!40,line width=3.2pt]  (T1) -- (T8) -- (B8) -- (B1) -- (T1);
\draw[blue] (T1) -- (B1);
\draw[blue] (T3) -- (B3);
\draw[blue] (T5) -- (B5);
\draw[blue] (T6) -- (B6);
\draw[blue] (T8) -- (B8);
\foreach \i in {1,3,4,5,6,8} { \fill (T\i) circle (4pt); \fill (B\i) circle (4pt); } 
\draw (T2) node  {$\cdots$}; \draw (B2) node  {$\cdots$}; \draw (T7) node  {$\cdots$}; \draw (B7) node  {$\cdots$}; 
\draw  (T4)  node[black,above=0.05cm]{$\scriptstyle{i}$};
\end{tikzpicture}\end{array} \qquad 1 \le i \le k, \\
\label{b-gen}
\mathfrak{b}_i &=  
\begin{array}{c}\begin{tikzpicture}[scale=.6,line width=1.25pt] 
\foreach \i in {1,...,8} 
{ \path (\i,1) coordinate (T\i); \path (\i,0) coordinate (B\i); } 
\filldraw[fill= gray!40,draw=gray!40,line width=3.2pt]  (T1) -- (T8) -- (B8) -- (B1) -- (T1);
\draw[blue] (T1) -- (B1);
\draw[blue] (T3) -- (B3);
\draw[blue] (T4) .. controls +(.1,-.30) and +(-.1,-.30) .. (T5);
\draw[blue] (B4) .. controls +(.1,+.30) and +(-.1,+.30) .. (B5);
\draw[blue] (T4) -- (B4);
\draw[blue] (T5) -- (B5);
\draw[blue] (T6) -- (B6);
\draw[blue] (T8) -- (B8);
\foreach \i in {1,3,4,5,6,8} { \fill (T\i) circle (4pt); \fill (B\i) circle (4pt); } 
\draw (T2) node  {$\cdots$}; \draw (B2) node  {$\cdots$}; \draw (T7) node  {$\cdots$}; \draw (B7) node  {$\cdots$}; 
\draw  (T4)  node[black,above=0.05cm]{$\scriptstyle{i}$};
\draw  (T5)  node[black,above=0.0cm]{$\scriptstyle{i+1}$};
\end{tikzpicture}\end{array} \qquad 1 \le i \le k-1,
\end{align}
and the relations in the next result.    

\begin{thm}{\rm \cite[Thm.~1.11]{HR}}\label{T:present}  Assume $k \in \ZZ_{\ge 1}$,  and set $\mathfrak{p}_{i+\half} = \mathfrak{b}_i \ \  (1 \le i \le k-1)$.     Then $\P_k( n)$ has a presentation as a unital associative algebra by  
generators $\mathfrak{s}_i \ \ (1 \leq i \le k-1)$,  $\mathfrak{p}_\ell \ \  (\ell \in \half \ZZ_{\ge 1},  \  1 \le \ell \le k)$,  and the following relations:
{\rm
\begin{itemize}
\item[{\rm(a)}]  $ \mathfrak{s}_i^2 = \mathsf{I}_k, \quad   \mathfrak{s}_i \mathfrak{s}_j = \mathfrak{s}_j\mathfrak{s}_i \quad  (|i-j| > 1)$, \quad    $\mathfrak{s}_i \mathfrak{s}_{i+1} \mathfrak{s}_i =  \mathfrak{s}_{i+1}
 \mathfrak{s}_i \mathfrak{s}_{i+1} \quad (1 \leq i \leq k-2)$;
\item[{\rm(b)}]  $\mathfrak{p}_\ell^2 = \mathfrak{p}_\ell,  \quad   \mathfrak{p}_\ell \mathfrak{p}_m = 
 \mathfrak{p}_m \mathfrak{p}_\ell \quad (m \ne \ell\pm \half), \quad   \mathfrak{p}_\ell \mathfrak{p}_{\ell\pm \half}  \mathfrak{p}_\ell =\mathfrak{p}_\ell  \quad   ( \mathfrak{p}_\half := \mathsf{I}_k =: \mathfrak{p}_{k+\half})$;
 \item[{\rm(c)}]  $\mathfrak{s}_i \mathfrak{p}_i \mathfrak{p}_{i +1} =  \mathfrak{p}_i \mathfrak{p}_{i +1},  \   \quad  
  \mathfrak{s}_i \mathfrak{p}_i \mathfrak{s}_i =  \mathfrak{p}_{i+1},   \ \quad   \mathfrak{s}_i \mathfrak{p}_{i+\half}= 
 \mathfrak{p}_{i+\half} \mathfrak{s}_i = \mathfrak{p}_{i+\half}  \ \ \ \, (1 \leq i \leq k-1), \newline 
 \mathfrak{s}_i  \mathfrak{s}_{i+1}  \mathfrak{p}_{i+\half} \mathfrak{s}_{i+1}\mathfrak{s}_i = \mathfrak{p}_{i+\frac{3}{2}} \ \ \ (1 \le i \le k-2), \quad  \mathfrak{s}_i \mathfrak{p}_\ell = \mathfrak{p}_\ell \mathfrak{s}_i \quad  (\ell \neq i-\half, i,i+\half, i+1, i+\frac{3}{2})$.
 \end{itemize}
 }
\end{thm}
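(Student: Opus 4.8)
Let $\widehat{\P}_k$ denote the unital associative $\FF$-algebra presented by the generators $\mathfrak{s}_i$ $(1\le i\le k-1)$ and $\mathfrak{p}_\ell$ $(\ell\in\half\ZZ_{\ge 1},\ 1\le\ell\le k)$ subject only to the relations (a)--(c), with the conventions $\mathfrak{p}_{i+\half}=\mathfrak{b}_i$ and $\mathfrak{p}_\half=\mathsf{I}_k=\mathfrak{p}_{k+\half}$. The goal is to show that the assignment of the diagrams in \eqref{s-gen}--\eqref{b-gen} to the abstract generators extends to an algebra isomorphism $\Psi\colon\widehat{\P}_k\to\P_k(n)$. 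First I would verify that the specific diagrams $\mathfrak{s}_i,\mathfrak{p}_\ell\in\P_k(n)$ actually satisfy every relation in (a)--(c); each verification is a finite computation with the diagram product \eqref{eq:diagmult}, and this yields a well-defined algebra homomorphism $\Psi$.

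\textbf{Surjectivity of $\Psi$.}
Next I would show that every diagram basis element $d_\pi$ $(\pi\in\Pi_{2k})$ lies in $\im\Psi$. By relations (a), the $\mathfrak{s}_i$ satisfy the Coxeter relations of type $A_{k-1}$, so they generate a homomorphic image of $\FF\S_k$, which one checks is precisely the span of the permutation diagrams. The generator $\mathfrak{p}_i$ is $n^{-1}$ times the diagram that separates column $i$ into the singletons $\{i\}$ and $\{k+i\}$, and $\mathfrak{b}_i=\mathfrak{p}_{i+\half}$ merges columns $i$ and $i+1$. Conjugating $\mathfrak{p}_i$ and $\mathfrak{b}_i$ by permutation words in the $\mathfrak{s}_j$ and composing, one assembles an arbitrary set-partition diagram block by block: first build the partition of the top row, then of the bottom row, then install the propagating blocks joining the two. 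This is a standard normal-form construction, and it shows $\Psi$ is onto.

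\textbf{Injectivity via a dimension count.}
Since $\dim_\FF\P_k(n)=\mathsf{B}(2k)$, it suffices to prove $\dim_\FF\widehat{\P}_k\le\mathsf{B}(2k)$. I would do this by induction on $k$ along the tower of subalgebras $\widehat{\P}_0\subseteq\widehat{\P}_\half\subseteq\widehat{\P}_1\subseteq\cdots\subseteq\widehat{\P}_{k-1}\subseteq\widehat{\P}_{k-\half}\subseteq\widehat{\P}_k$, first checking that omitting the last generators does give algebras with compatible presentations (each defining relation of the smaller algebra occurs among those of the larger). The inductive hypothesis is that $\widehat{\P}_j\cong\P_j(n)$ for $j\le k-\half$, with basis indexed by $\Pi_{2j}$. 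Using relations (b) and (c) I would establish the two structural identities
\begin{equation*}
\widehat{\P}_k=\widehat{\P}_{k-\half}+\widehat{\P}_{k-\half}\,\mathfrak{p}_k\,\widehat{\P}_{k-\half}
\qquad\text{and}\qquad
\mathfrak{p}_k\,\widehat{\P}_{k-\half}\,\mathfrak{p}_k=\mathfrak{p}_k\,\widehat{\P}_{k-1},
\end{equation*}
the first by pushing every occurrence of $\mathfrak{p}_k$ to a canonical position with $\mathfrak{s}_{k-1}$ and the commutation/untwisting relations, the second a Jones-type collapse deduced from $\mathfrak{s}_{k-1}\mathfrak{p}_{k-1}\mathfrak{s}_{k-1}=\mathfrak{p}_k$, $\mathfrak{p}_k\mathfrak{p}_{k-\half}\mathfrak{p}_k=\mathfrak{p}_k$, and $\mathfrak{s}_{k-1}\mathfrak{p}_{k-1}\mathfrak{p}_k=\mathfrak{p}_{k-1}\mathfrak{p}_k$. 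Feeding the inductive bases of $\widehat{\P}_{k-\half}$ and $\widehat{\P}_{k-1}$ into these identities produces a spanning set of $\widehat{\P}_k$ whose natural index set is $\Pi_{2k}$: the summand $\widehat{\P}_{k-\half}$ supplies the set partitions with $k$ and $2k$ in a common block, and the basic-construction summand $\widehat{\P}_{k-\half}\mathfrak{p}_k\widehat{\P}_{k-\half}$, cut down by $\mathfrak{p}_k\widehat{\P}_{k-\half}\mathfrak{p}_k=\mathfrak{p}_k\widehat{\P}_{k-1}$, supplies exactly the remaining $\mathsf{B}(2k)-\mathsf{B}(2k-1)$ set partitions (those with $k\not\sim 2k$). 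Applying the surjection $\Psi$ to this spanning set of size $\le\mathsf{B}(2k)$ maps it onto the diagram basis $\{d_\pi\}$, which has size $\mathsf{B}(2k)$; hence the spanning set is a basis, $\dim\widehat{\P}_k=\mathsf{B}(2k)$, and $\Psi$ is an isomorphism. The base cases $\widehat{\P}_0=\widehat{\P}_\half=\FF$ and $\widehat{\P}_1\cong\P_1(n)$ are immediate.

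\textbf{The main obstacle.}
The delicate step is the last one: proving that the spanning set produced by the basic-construction decomposition has cardinality \emph{exactly} $\mathsf{B}(2k)$, i.e.\ that relations (a)--(c) already remove all redundancy from $\widehat{\P}_{k-\half}\mathfrak{p}_k\widehat{\P}_{k-\half}$ beyond the collapse $\mathfrak{p}_k\widehat{\P}_{k-\half}\mathfrak{p}_k=\mathfrak{p}_k\widehat{\P}_{k-1}$. This is where one genuinely uses that the inductive hypothesis gives a \emph{basis} and not merely a spanning set, and where the count is pinned down by the Bell-number recursion for set partitions of $[1,2k]$; the rest of the proof is bookkeeping of diagram words through the relations.
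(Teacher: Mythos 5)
You are proposing to prove a theorem that the paper itself does not prove: the paper simply cites \cite[Thm.~1.11]{HR}, so there is no argument in the text to compare against. Your overall scaffolding (define the abstract algebra $\widehat{\P}_k$, verify the diagrams satisfy (a)--(c), show the induced map $\Psi$ is onto, then bound $\dim\widehat{\P}_k$ by $\mathsf{B}(2k)$ via induction along the tower) is the right framework and is broadly the strategy of Halverson--Ram.

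However, the pivotal step as you have written it is false. You claim the integer-step decomposition
\begin{equation*}
\widehat{\P}_k=\widehat{\P}_{k-\half}+\widehat{\P}_{k-\half}\,\mathfrak{p}_k\,\widehat{\P}_{k-\half}.
\end{equation*}
Under $\Psi$, the image of the right-hand side lies in the span of diagrams $d_\pi$ with either $k$ and $2k$ in the same block of $\pi$ (from $\P_{k-\half}(n)$), or at most $k-1$ propagating blocks (from $\P_{k-\half}(n)\mathfrak{p}_k\P_{k-\half}(n)$, since $\mathfrak{p}_k$ has exactly $k-1$ propagating blocks and a product $a\,\mathfrak{p}_k\,b$ cannot have more propagating blocks than any of its factors). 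The transposition $\mathfrak{s}_{k-1}$ has $k$ propagating blocks and does not join $k$ with $2k$, so $\Psi(\mathfrak{s}_{k-1})$ lies in neither piece; already for $k=2$ the diagram $d_{\{1,4\mid 2,3\}}=\mathfrak{s}_1$ is not in $\P_{1\half}(n)+\P_{1\half}(n)\mathfrak{p}_2\P_{1\half}(n)$. Since the integer step of the tower adjoins precisely the new generator $\mathfrak{s}_{k-1}$ (note $\mathfrak{p}_k=\mathfrak{s}_{k-1}\mathfrak{p}_{k-1}\mathfrak{s}_{k-1}$ is then redundant), your decomposition drops exactly the new material, and the dimension bound collapses.

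The half-integer step $\widehat{\P}_{k-\half}=\widehat{\P}_{k-1}+\widehat{\P}_{k-1}\mathfrak{p}_{k-\half}\widehat{\P}_{k-1}$ is the one that has the shape you want, since the new generator there is the idempotent $\mathfrak{p}_{k-\half}=\mathfrak{b}_{k-1}$; but the integer step needs a genuinely different device to absorb $\mathfrak{s}_{k-1}$ --- for instance the $\S_k$-cover $\widehat{\P}_k=\sum_{w\in\S_k}w\,\widehat{\P}_{k-\half}$, or a normal form $d_\pi=\sigma_1 e_r\sigma_2$ with $\sigma_1,\sigma_2$ permutation words and $e_r$ a word in the $\mathfrak{p}$'s, which is what Halverson--Ram actually use to pin down the spanning set. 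As written, the central dimension count does not go through.
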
 
  
\begin{rem}{\rm  It is easily seen from the relations that $\P_k(n)$ is generated by $\mathfrak{s}_i\ (1 \le i \le k-1), \mathfrak{p}_1$,  and $\mathfrak{b}_1 = \mathfrak{p}_{1+\half}.$   
}
\end{rem} 

 Theorems 5.6 and 5.8 of \cite{BH} 
prove that $\ef_{k,n}$ is an essential idempotent that generates the kernel of $\Phi_{k,n}$ as a two-sided ideal.  Moreover,  Theorem 5.15 of \cite{BH} shows that the kernel of $\Phi_{k,n}$ is generated as a two-sided ideal by the embedded image $\ef_{n,n} \ot (\blvertedge)^{\ot k-n}$ (the diagram of $\ef_{n,n}$ with $k-n$ vertical edges $\blvertedge$ juxtaposed to its right)
 of the essential idempotent $\ef_{n,n}$ in $\P_k(n)$  for all $k \ge n$.  By \cite[Remark 5.20]{BH},  $\ker \Phi_{k,n}$
 cannot be generated by $\ef_{\ell,n} \ot (\blvertedge)^{\ot k-\ell}$ for any $\ell$ such that $k \ge n > \ell \ge \half(n+1)$.  
Identifying $\ef_{n,n}$ with its image in $\P_k(n)$ for $k \ge n$, we have  \smallskip
 
\begin{thm}\label{T:SecondFund} {\rm \cite[Thm.~5.19]{BH}} \ (Second Fundamental Theorem of Invariant Theory for $\S_n$)  \ For all $k,n\in \ZZ_{\ge 1}$,   $\im \Phi_{k,n}
= \End_{\S_n}(\modu^{\ot k})$ is generated by the partition algebra generators and relations in Theorem \ref{T:present} (a)-(c) together with the one additional relation $\ef_{k,n} = 0$ in the case that $2k > n$.   When $k \ge n$,  the relation $\ef_{k,n} = 0$ can be replaced with  $\ef_{n,n} = 0$.
\end{thm}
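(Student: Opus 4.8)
The plan is to convert the structural facts about $\ker\Phi_{k,n}$ that are already in hand into a statement about generators and relations. Since $\Phi_{k,n}\colon\P_k(n)\to\End_{\S_n}(\modu^{\ot k})$ is a surjective algebra homomorphism (Theorem \ref{T:1stfund}), the first isomorphism theorem gives $\im\Phi_{k,n}\cong\P_k(n)/\ker\Phi_{k,n}$, so it is enough to present this quotient. By Theorem \ref{T:present}, $\P_k(n)$ is presented as a unital associative algebra by the generators $\mathfrak{s}_i$ and $\mathfrak{p}_\ell$ subject to the relations (a)--(c) listed there. I would then invoke the elementary fact that if an algebra has presentation $\langle X\mid R\rangle$ and $w$ is an element written as a noncommutative polynomial in $X$, then $\langle X\mid R\rangle/\langle w\rangle$ has presentation $\langle X\mid R\cup\{w=0\}\rangle$ (because $F/(I+\langle w\rangle)=F/\langle R\cup\{w\}\rangle$ for $F$ free on $X$ and $I=\langle R\rangle$). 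This reduces the theorem to identifying a generating set for $\ker\Phi_{k,n}$ and expressing each generator in terms of the $\mathfrak{s}_i$ and $\mathfrak{p}_\ell$.

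First I would handle the case $2k\le n$: here Theorem \ref{T:Phi}(a) gives $\ker\Phi_{k,n}=0$, so $\im\Phi_{k,n}\cong\P_k(n)$ and the presentation is exactly (a)--(c) with no extra relation, which matches the statement since $2k>n$ does not hold. For the case $2k>n$, Theorem \ref{thm:generator}(b) says $\ker\Phi_{k,n}$ is the two-sided ideal $\langle\ef_{k,n}\rangle$ generated by the single essential idempotent $\ef_{k,n}$. To apply the presentation fact above I need $\ef_{k,n}$ as a polynomial in the generators; this comes from expanding $\ef_{k,n}$ in the diagram basis using the M\"obius formulas \eqref{eq:mobiusa}--\eqref{eq:mobiusb} (exactly as in the displayed expansion of $\ef_{4,3}$) and then writing each diagram $d_\vr$ appearing there as a product of the $\mathfrak{s}_i$ and $\mathfrak{p}_\ell$, which is possible since those elements generate $\P_k(n)$ as noted after Theorem \ref{T:present}. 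Thus $\im\Phi_{k,n}$ is presented by the generators $\mathfrak{s}_i$, $\mathfrak{p}_\ell$ and the relations (a)--(c) of Theorem \ref{T:present} together with the one relation $\ef_{k,n}=0$.

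For the last sentence of the theorem, when $k\ge n$ I would use \cite[Thm.~5.15]{BH}, which asserts that $\ker\Phi_{k,n}$ is the two-sided ideal of $\P_k(n)$ generated by the embedded image $\ef_{n,n}\ot(\blvertedge)^{\ot k-n}$ of $\ef_{n,n}$; after identifying $\ef_{n,n}$ with that image, the same argument shows the relation $\ef_{k,n}=0$ may be replaced by $\ef_{n,n}=0$. Equivalently, the two-sided ideals $\langle\ef_{k,n}\rangle$ and $\langle\ef_{n,n}\rangle$ coincide in $\P_k(n)$ for $k\ge n$, so the two quotient presentations agree. I would also remark, following \cite[Remark~5.20]{BH}, that no analogous simplification is available using $\ef_{\ell,n}$ for intermediate $\ell$ with $k\ge n>\ell\ge\half(n+1)$, so the statement is sharp.

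The only genuine subtlety, beyond assembling the cited results, is the bookkeeping in the middle step: one must check carefully that adjoining ``$\ef_{k,n}=0$'' to the presentation of $\P_k(n)$ really presents the quotient by the \emph{principal} two-sided ideal $\langle\ef_{k,n}\rangle$, and that the chosen expression of $\ef_{k,n}$ in terms of $\mathfrak{s}_i$ and $\mathfrak{p}_\ell$ is legitimate, so that ``$\ef_{k,n}=0$'' is a bona fide relation among the generators rather than merely an identity among basis elements. Everything else follows directly from Theorems \ref{T:present}, \ref{T:Phi}, and \ref{thm:generator}, together with \cite{BH}.
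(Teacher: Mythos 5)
Your proposal is correct and is essentially the same argument the paper sketches in the paragraph preceding Theorem \ref{T:SecondFund}: apply the first isomorphism theorem, invoke the presentation of $\P_k(n)$ from Theorem \ref{T:present}, and use Theorems \ref{T:Phi}(a) and \ref{thm:generator}(b) (plus the embedded-generator result from \cite[Thm.~5.15]{BH} when $k\ge n$) to identify the kernel with the principal ideal $\langle\ef_{k,n}\rangle$ (resp.\ $\langle\ef_{n,n}\rangle$), which becomes the one additional relation. You are also right to flag the presentation bookkeeping step, and your resolution of it---writing $\ef_{k,n}$ in the diagram basis via M\"obius inversion and then in the generators---is the standard and correct way to make ``$\ef_{k,n}=0$'' a genuine relation.
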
  

\begin{examp} {\rm The kernel of $\Phi_{3,3}: \P_3(3) \to \End_{\S_3}(\M_3^{\otimes 3})$ is generated by $\ef_{3,3} =  \begin{array}{c}
\begin{tikzpicture}[xscale=.30,yscale=.30,line width=1.25pt] 
\foreach \i in {1,2,3} 
{ \path (\i,.5) coordinate (T\i); \path (\i,-.5) coordinate (B\i); } 
\filldraw[fill= black!12,draw=black!12,line width=4pt]  (T1) -- (T3) -- (B3) -- (B1) -- (T1);
\draw[blue] (T2) -- (B2);
\draw[blue] (T3) -- (B3);
\foreach \i in {1,2,3} 
{ \filldraw[fill=white,draw=black,line width = 1pt] (T\i) circle (5pt); \filldraw[fill=white,draw=black,line width = 1pt]  (B\i) circle (5pt); } 
\end{tikzpicture}
\end{array}$, and the embedded element $\ef_{3,3} \ot (\blvertedge)^{\ot k-3}$ principally generates the kernel for $\P_k(3)$ for all $k \ge 3$.
The image of $\Phi_{3,3}$ is generated by the partition algebra $\P_3(3)$ with the additional dependence relation

\begin{align*}
 0 = & \begin{array}{c}\begin{tikzpicture}[xscale=.45,yscale=.45,line width=1.0pt] 
\foreach \i in {1,2,3}  { \path (\i,1.25) coordinate (T\i); \path (\i,.25) coordinate (B\i); } 
\filldraw[fill= black!12,draw=black!12,line width=4pt]  (T1) -- (T3) -- (B3) -- (B1) -- (T1);
\draw[blue] (T2) -- (B2);
\draw[blue] (T3) -- (B3);
\foreach \i in {1,2,3}  { \filldraw[fill=white,draw=black,line width = 1pt] (T\i) circle (4pt); \filldraw[fill=white,draw=black,line width = 1pt]  (B\i) circle (4pt); } 
\end{tikzpicture}\end{array}
=
\begin{array}{c}\begin{tikzpicture}[xscale=.45,yscale=.45,line width=1.0pt] 
\foreach \i in {1,2,3}  { \path (\i,1.25) coordinate (T\i); \path (\i,.25) coordinate (B\i); } 
\filldraw[fill= black!12,draw=black!12,line width=4pt]  (T1) -- (T3) -- (B3) -- (B1) -- (T1);
\draw[blue] (T2) -- (B2);
\draw[blue] (T3) -- (B3);
\foreach \i in {1,2,3}  { \filldraw[fill=black,draw=black,line width = 1pt] (T\i) circle (4pt); \filldraw[fill=black,draw=black,line width = 1pt]  (B\i) circle (4pt); } 
\end{tikzpicture}\end{array}
-
\begin{array}{c}\begin{tikzpicture}[xscale=.45,yscale=.45,line width=1.0pt] 
\foreach \i in {1,2,3}  { \path (\i,1.25) coordinate (T\i); \path (\i,.25) coordinate (B\i); } 
\filldraw[fill= black!12,draw=black!12,line width=4pt]  (T1) -- (T3) -- (B3) -- (B1) -- (T1);
\draw[blue] (T1) -- (B1);
\draw[blue] (T2) -- (B2);
\draw[blue] (T3) -- (B3);
\foreach \i in {1,2,3}  { \filldraw[fill=black,draw=black,line width = 1pt] (T\i) circle (4pt); \filldraw[fill=black,draw=black,line width = 1pt]  (B\i) circle (4pt); } 
\end{tikzpicture}\end{array}
-
\begin{array}{c}\begin{tikzpicture}[xscale=.45,yscale=.45,line width=1.0pt] 
\foreach \i in {1,2,3}  { \path (\i,1.25) coordinate (T\i); \path (\i,.25) coordinate (B\i); } 
\filldraw[fill= black!12,draw=black!12,line width=4pt]  (T1) -- (T3) -- (B3) -- (B1) -- (T1);
\draw[blue] (T1)--(T2);
\draw[blue] (T2)--(B2);
\draw[blue] (T3) -- (B3);
\foreach \i in {1,2,3}  { \filldraw[fill=black,draw=black,line width = 1pt] (T\i) circle (4pt); \filldraw[fill=black,draw=black,line width = 1pt]  (B\i) circle (4pt); } 
\end{tikzpicture}\end{array}
-
\begin{array}{c}\begin{tikzpicture}[xscale=.45,yscale=.45,line width=1.0pt] 
\foreach \i in {1,2,3}  { \path (\i,1.25) coordinate (T\i); \path (\i,.25) coordinate (B\i); } 
\filldraw[fill= black!12,draw=black!12,line width=4pt]  (T1) -- (T3) -- (B3) -- (B1) -- (T1);
\draw[blue] (B1)--(B2);
\draw[blue] (T2)--(B2);
\draw[blue] (T3) -- (B3);
\foreach \i in {1,2,3}  { \filldraw[fill=black,draw=black,line width = 1pt] (T\i) circle (4pt); \filldraw[fill=black,draw=black,line width = 1pt]  (B\i) circle (4pt); } 
\end{tikzpicture}\end{array}
-
\begin{array}{c}\begin{tikzpicture}[xscale=.45,yscale=.45,line width=1.0pt] 
\foreach \i in {1,2,3}  { \path (\i,1.25) coordinate (T\i); \path (\i,.25) coordinate (B\i); } 
\filldraw[fill= black!12,draw=black!12,line width=4pt]  (T1) -- (T3) -- (B3) -- (B1) -- (T1);
\draw[blue] (T2) -- (B2);
\draw[blue] (T3) -- (B3);
\draw[blue] (T1) .. controls +(0,-.50) and +(0,-.50) .. (T3);
\foreach \i in {1,2,3}  { \filldraw[fill=black,draw=black,line width = 1pt] (T\i) circle (4pt); \filldraw[fill=black,draw=black,line width = 1pt]  (B\i) circle (4pt); } 
\end{tikzpicture}\end{array}
-
\begin{array}{c}\begin{tikzpicture}[xscale=.45,yscale=.45,line width=1.0pt] 
\foreach \i in {1,2,3}  { \path (\i,1.25) coordinate (T\i); \path (\i,.25) coordinate (B\i); } 
\filldraw[fill= black!12,draw=black!12,line width=4pt]  (T1) -- (T3) -- (B3) -- (B1) -- (T1);
\draw[blue] (T2) -- (B2);
\draw[blue] (T3) -- (B3);
\draw[blue] (B1) .. controls +(0,.50) and +(0,.50) .. (B3);
\foreach \i in {1,2,3}  { \filldraw[fill=black,draw=black,line width = 1pt] (T\i) circle (4pt); \filldraw[fill=black,draw=black,line width = 1pt]  (B\i) circle (4pt); } 
\end{tikzpicture}\end{array}
\\
& \hskip.8in
-
\begin{array}{c}\begin{tikzpicture}[xscale=.45,yscale=.45,line width=1.0pt] 
\foreach \i in {1,2,3}  { \path (\i,1.25) coordinate (T\i); \path (\i,.25) coordinate (B\i); } 
\filldraw[fill= black!12,draw=black!12,line width=4pt]  (T1) -- (T3) -- (B3) -- (B1) -- (T1);
\draw[blue] (T2) -- (B2)--(B3);
\draw[blue] (T2)--(T3) -- (B3);
\foreach \i in {1,2,3}  { \filldraw[fill=black,draw=black,line width = 1pt] (T\i) circle (4pt); \filldraw[fill=black,draw=black,line width = 1pt]  (B\i) circle (4pt); } 
\end{tikzpicture}\end{array} 
+ 
\begin{array}{c}\begin{tikzpicture}[xscale=.45,yscale=.45,line width=1.0pt] 
\foreach \i in {1,2,3}  { \path (\i,1.25) coordinate (T\i); \path (\i,.25) coordinate (B\i); } 
\filldraw[fill= black!12,draw=black!12,line width=4pt]  (T1) -- (T3) -- (B3) -- (B1) -- (T1);
\draw[blue] (T1) -- (B1);
\draw[blue] (T3)--(T2) -- (B2) -- (B3);
\draw[blue] (T3) -- (B3);
\foreach \i in {1,2,3}  { \filldraw[fill=black,draw=black,line width = 1pt] (T\i) circle (4pt); \filldraw[fill=black,draw=black,line width = 1pt]  (B\i) circle (4pt); } 
\end{tikzpicture}\end{array}
+
\begin{array}{c}\begin{tikzpicture}[xscale=.45,yscale=.45,line width=1.0pt] 
\foreach \i in {1,2,3}  { \path (\i,1.25) coordinate (T\i); \path (\i,.25) coordinate (B\i); } 
\filldraw[fill= black!12,draw=black!12,line width=4pt]  (T1) -- (T3) -- (B3) -- (B1) -- (T1);
\draw[blue] (T1)--(T2);
\draw[blue] (T2)--(B2);
\draw[blue] (T3) -- (B3);
\draw[blue] (B1) .. controls +(0,.50) and +(0,.50) .. (B3);
\foreach \i in {1,2,3}  { \filldraw[fill=black,draw=black,line width = 1pt] (T\i) circle (4pt); \filldraw[fill=black,draw=black,line width = 1pt]  (B\i) circle (4pt); } 
\end{tikzpicture}\end{array}
+
\begin{array}{c}\begin{tikzpicture}[xscale=.45,yscale=.45,line width=1.0pt] 
\foreach \i in {1,2,3}  { \path (\i,1.25) coordinate (T\i); \path (\i,.25) coordinate (B\i); } 
\filldraw[fill= black!12,draw=black!12,line width=4pt]  (T1) -- (T3) -- (B3) -- (B1) -- (T1);
\draw[blue] (B1)--(B2);
\draw[blue] (T2)--(B2);
\draw[blue] (T3) -- (B3);
\draw[blue] (T1) .. controls +(0,-.50) and +(0,-.50) .. (T3);
\foreach \i in {1,2,3}  { \filldraw[fill=black,draw=black,line width = 1pt] (T\i) circle (4pt); \filldraw[fill=black,draw=black,line width = 1pt]  (B\i) circle (4pt); } 
\end{tikzpicture}\end{array}
+2
\begin{array}{c}\begin{tikzpicture}[xscale=.45,yscale=.45,line width=1.0pt] 
\foreach \i in {1,2,3}  { \path (\i,1.25) coordinate (T\i); \path (\i,.25) coordinate (B\i); } 
\filldraw[fill= black!12,draw=black!12,line width=4pt]  (T1) -- (T3) -- (B3) -- (B1) -- (T1);
\draw[blue] (T1)--(T2) -- (B2) -- (B3);
\draw[blue] (T2)--(T3) -- (B3);
\foreach \i in {1,2,3}  { \filldraw[fill=black,draw=black,line width = 1pt] (T\i) circle (4pt); \filldraw[fill=black,draw=black,line width = 1pt]  (B\i) circle (4pt); } 
\end{tikzpicture}\end{array}
\\
& \hskip.8in
+2
\begin{array}{c}\begin{tikzpicture}[xscale=.45,yscale=.45,line width=1.0pt] 
\foreach \i in {1,2,3}  { \path (\i,1.25) coordinate (T\i); \path (\i,.25) coordinate (B\i); } 
\filldraw[fill= black!12,draw=black!12,line width=4pt]  (T1) -- (T3) -- (B3) -- (B1) -- (T1);
\draw[blue] (B1)--(B2) -- (T2);
\draw[blue] (T2)--(T3) -- (B3)--(B2);
\foreach \i in {1,2,3}  { \filldraw[fill=black,draw=black,line width = 1pt] (T\i) circle (4pt); \filldraw[fill=black,draw=black,line width = 1pt]  (B\i) circle (4pt); } 
\end{tikzpicture}\end{array} +2
\begin{array}{c}\begin{tikzpicture}[xscale=.45,yscale=.45,line width=1.0pt] 
\foreach \i in {1,2,3}  { \path (\i,1.25) coordinate (T\i); \path (\i,.25) coordinate (B\i); } 
\filldraw[fill= black!12,draw=black!12,line width=4pt]  (T1) -- (T3) -- (B3) -- (B1) -- (T1);
\draw[blue] (T1) -- (T2) -- (B2) -- (B1) -- (T1);
\draw[blue] (T3)--(B3);
\foreach \i in {1,2,3}  { \filldraw[fill=black,draw=black,line width = 1pt] (T\i) circle (4pt); \filldraw[fill=black,draw=black,line width = 1pt]  (B\i) circle (4pt); } 
\end{tikzpicture}\end{array}
+ 2 
\begin{array}{c}\begin{tikzpicture}[xscale=.45,yscale=.45,line width=1.0pt] 
\foreach \i in {1,2,3}  { \path (\i,1.25) coordinate (T\i); \path (\i,.25) coordinate (B\i); } 
\filldraw[fill= black!12,draw=black!12,line width=4pt]  (T1) -- (T3) -- (B3) -- (B1) -- (T1);
\draw[blue] (T1) -- (B1);
\draw[blue] (T2) -- (B2);
\draw[blue] (T3) -- (B3);
\draw[blue] (T1) .. controls +(0,-.50) and +(0,-.50) .. (T3);
\draw[blue] (B1) .. controls +(0,.50) and +(0,.50) .. (B3);
\foreach \i in {1,2,3}  { \filldraw[fill=black,draw=black,line width = 1pt] (T\i) circle (4pt); \filldraw[fill=black,draw=black,line width = 1pt]  (B\i) circle (4pt); } 
\end{tikzpicture}\end{array}
- 6
\begin{array}{c}\begin{tikzpicture}[xscale=.45,yscale=.45,line width=1.0pt] 
\foreach \i in {1,2,3}  { \path (\i,1.25) coordinate (T\i); \path (\i,.25) coordinate (B\i); } 
\filldraw[fill= black!12,draw=black!12,line width=4pt]  (T1) -- (T3) -- (B3) -- (B1) -- (T1);
\draw[blue] (T1) -- (B1) -- (B2);
\draw[blue] (T1)--(T2) -- (B2) -- (B3);
\draw[blue] (T2)--(T3) -- (B3);
\foreach \i in {1,2,3}  { \filldraw[fill=black,draw=black,line width = 1pt] (T\i) circle (4pt); \filldraw[fill=black,draw=black,line width = 1pt]  (B\i) circle (4pt); } 
\end{tikzpicture}\end{array}.
\end{align*}
 This dependence relation  is analogous to the  one that 
comes from setting the kernel generator $\sum_{\sigma \in \S_{n+1}} (-1)^{\mathsf{sgn}(\sigma)} \sigma$
of the surjection $\FF \S_k \rightarrow \End_{\mathsf{GL}_n}(\VV^{\ot k})$ ($\VV = \FF^n$) equal to 0 in the Second Fundamental Theorem of Invariant Theory for $\mathsf{GL}_n$. 
}
\end{examp}



\end{document}